\def\ZZ{{\mathbb Z}}
\def\RR{{\mathbb R}}
\def\Sphere{{\mathbb S}}
\def\Qgl{Q_\eps^{GL}}
\newcommand{\f}{\varphi}
\def\bS{{\mathbb S}}
\def\eps{{\varepsilon}}
\def\f{\varphi}
\numberwithin{equation}{section}
\newtheorem{theorem} {\sc  Theorem\rm} [section]
\newtheorem{corollary} [theorem] {\sc  Corollary\rm}
\newtheorem{lemma} [theorem] {\sc  Lemma\rm}
\newtheorem{proposition} [theorem] {\sc  Proposition\rm}
\newtheorem{definition}[theorem]{\sc  Definition\rm}
\newtheorem{remark}[theorem]{\sc  Remark\rm}
\def\nd{\noindent}
\newcounter{marnote}
\newcommand\marginnote[1]{\stepcounter{marnote}$^{\bullet\,\themarnote}$\marginpar{\tiny$\bullet\,\themarnote$:\,#1}}
\DeclareFontFamily{OT1}{rsfs}{}
\DeclareFontShape{OT1}{rsfs}{m}{n}{ <-7> rsfs5 <7-10> rsfs7 <10-> rsfs10}{}
\DeclareMathAlphabet{\mycal}{OT1}{rsfs}{m}{n}
\def\be{\begin{equation}}
\def\ee{\end{equation}}
\newcommand{\R}{\mathbb{R}}
\def\be{\begin{equation}}
\def\ee{\end{equation}}
\def\bea#1\eea{\begin{align}#1\end{align}}
\def\mcA{{\mycal A}}
\def\mcB{{\mycal B}}
\def\mcH{{\mycal H}}
\def\mcM{{\mycal M}}
\def\barfeps{f_\eps}
\def\nespm{\bar m}
\begin{document}
\title{Local minimality of $\RR^N$-valued and $\bS^N$-valued Ginzburg--Landau vortex solutions in the unit ball $B^N$}

\author{Radu Ignat\thanks{Institut de Math\'ematiques de Toulouse \& Institut Universitaire de France, UMR 5219, Universit\'e de Toulouse, CNRS, UPS
IMT, F-31062 Toulouse Cedex 9, France. Email: Radu.Ignat@math.univ-toulouse.fr
}~ and Luc Nguyen\thanks{Mathematical Institute and St Edmund Hall, University of Oxford, Andrew Wiles Building, Radcliffe Observatory Quarter, Woodstock Road, Oxford OX2 6GG, United Kingdom. Email: luc.nguyen@maths.ox.ac.uk}}

\date{}

\maketitle
\begin{abstract}
We study the existence, uniqueness and minimality of critical points of the form $m_{\eps,\eta}(x) = (f_{\eps,\eta}(|x|)\frac{x}{|x|}, g_{\eps,\eta}(|x|))$ of the functional
\[
E_{\eps,\eta}[m] = \int_{B^N} \Big[\frac{1}{2} |\nabla m|^2 + \frac{1}{2\eps^2} (1 - |m|^2)^2 + \frac{1}{2\eta^2} m_{N+1}^2\Big]\,dx
\]
for $m=(m_1, \dots, m_N, m_{N+1}) \in H^1(B^N,\RR^{N+1})$ with $m(x) = (x,0)$ on $\partial B^N$. We establish a necessary and sufficient condition on the dimension $N$ and the parameters $\eps$ and $\eta$ for the existence of an escaping vortex solution $(f_{\eps,\eta}, g_{\eps,\eta})$ with $g_{\eps,\eta}> 0$. We also establish its uniqueness and local minimality. In the limiting case $\eta = 0$, we prove the local minimality of the degree-one vortex solution for the Ginzburg--Landau (GL) energy for every $\eps > 0$ and $N \geq 2$. Similarly, when $\eps = 0$, we prove the local minimality of the degree-one escaping vortex solution to an $\Sphere^N$-valued GL model arising in micromagnetics for every $\eta > 0$ and $2 \leq N \leq 6$.

\smallskip

\noindent {\it Keywords:} minimality, stability, uniqueness, Ginzburg--Landau vortex, micromagnetics.

\end{abstract}

\tableofcontents

\section{Introduction}

The minimality of the degree-one vortex solution for the Ginzburg-Landau system in the unit ball $B^N\subset \RR^N$ in dimension $2 \leq N \leq 6$ is an important open question for which a rich literature is available. In dimension $N \geq 7$, this has been proved recently in a joint work of the authors with Slastikov and Zarnescu \cite{INSZ18_CRAS}. In this paper, we address the local minimality of this solution. Motivated by the theory of magnetic materials, we also consider the local minimality of a similar vortex structure taking values into the unit sphere $\Sphere^{N}$. Our strategy is to treat the local minimality of the vortex solution for an extended model of which the previous two models are special limit cases.

We introduce first the Ginzburg--Landau (GL) functional
\[
E^{GL}_\eps [u] = \int_{B^N} \Big[\frac{1}{2} |\nabla u|^2 + \frac{1}{2\eps^2} W(1 - |u|^2)\Big]\,dx, 
\]
where $\eps > 0$, $W(t)=\frac{t^2}2$ and $u$ belongs to the set
\[
\mcA^{GL} = \{u \in H^1(B^N,\RR^N) : u(x) = x \text{ on } \partial B^N\}.
\]
The functional $E^{GL}_\eps$ has a unique radially symmetric critical point of the form (see Definition \ref{Def:RSMap} and Lemma \ref{Lem:ONSym} in Appendix \ref{App})
 \begin{equation}
u_\eps(x) = f_\eps(r) n(x) \in \mcA^{GL}, \quad n(x) = \frac{x}{r}, \quad r = |x|,
	\label{Eq:ueH1}
\end{equation}
where the radial profile $f_\eps$ is the unique solution to the ODE (see e.g. \cite{Hervex2, ODE_INSZ})
\begin{align}
&f_\eps'' + \frac{N-1}{r} f_\eps' - \frac{N-1}{r^2} f_\eps
	= -\frac{1}{\eps^2} W'(1 - f_\eps^2) f_\eps \quad \textrm{in} \quad (0,1),\label{Eq:24VII18-X1}\\
&f_\eps(1) = 1
	.\label{Eq:24VII18-X2}
\end{align}
Note that $f_\eps(0) = 0$ (see Lemma \ref{Lem:ONSym}). Here a map $u_{crit} \in \mcA^{GL}$ is said to be a \emph{bounded} critical point of $E_\eps^{GL}$ if $u_{crit} \in L^\infty(B^N, \RR^N)$ and $\langle DE_{\eps}^{GL}[u_{crit}], \varphi\rangle := \frac{d}{dt}\big|_{t = 0} E_{\eps}^{GL}[u_{crit} + t\varphi] = 0$ for all $\varphi \in C_c^\infty(B^N \setminus \{0\}, \RR^N)$ (which is dense in $H_0^1(B^N,\RR^N)$), and is said to be a \emph{radially symmetric} critical point of $E_\eps^{GL}$ if $u_{crit}$ is radially symmetric\footnote{By Lemma \ref{Lem:RSRep}, radially symmetric maps in $H^1(B^N,\RR^N)$ belong to $L^\infty_{loc}(\bar B^N \setminus \{0\},\RR^N)$.} in the sense of Definition \ref{Def:RSMap} and $\langle DE_{\eps}^{GL}[u_{crit}], \varphi\rangle = 0$ for all $\varphi \in C_c^\infty(B^N \setminus \{0\}, \RR^N)$. By Lemma \ref{lem:f2g2}, radially symmetric critical points of $E_\eps^{GL}$ are bounded.

The map $u_\eps$ in \eqref{Eq:ueH1}, called the \emph{($\RR^N$-valued) Ginzburg--Landau vortex solution} of topological degree one, can be considered as a regularization of the singular harmonic map $n: B^N \rightarrow \Sphere^{N-1}$ given by $n(x) = \frac{x}{|x|}$ for every $x\in B^N$, which is the unique minimizing $\bS^{N-1}$-valued harmonic map for $N \geq 3$ within the boundary condition $n(x)=x$ on $\partial B^N$ (see Brezis, Coron and Lieb \cite{BrezisCoronLieb} and Lin \cite{Lin-CR87}). It is not hard to see that, when $\eps$ is sufficiently large, $E^{GL}_\eps$ is strictly convex and so $u_\eps$ is the unique bounded critical point of $E^{GL}_\eps$ in $\mcA^{GL}$ for every $N\geq 2$ (see e.g. \cite{BBH_book} or \cite[Remark 3.3]{INSZ_AnnENS}). In dimension $N = 2$, Pacard and Rivi\`ere showed in \cite{Pacard_Riviere} that, for small $\eps > 0$, $u_\eps$ is the unique critical point of $E^{GL}_\eps$ in $\mcA^{GL}$; however, whether $u_\eps$ is the unique minimizer of $E_\eps^{GL}$ for all $\eps > 0$ remains an open question. In dimensions $N \geq 7$, it was shown in a recent work of Ignat, Nguyen, Slastikov and Zarnescu \cite{INSZ18_CRAS} that $u_\eps$ is the unique minimizer of $E^{GL}_\eps$ in $\mcA^{GL}$ \emph{for every} $\eps > 0$. It is not known whether $u_\eps$ minimizes $E^{GL}_\eps$ in $\mcA^{GL}$ in dimensions $3 \leq N \leq 6$ when $\eps$ is small.

A different way to regularize the singular harmonic map $n$ is to add an $(N+1)$-st direction in the target space while keeping the constraint of unit length and minimize
\[
E^{MM}_\eta [m] = \int_{B^N} \Big[\frac{1}{2} |\nabla m|^2 + \frac{1}{2\eta^2} \tilde W(m_{N+1}^2)\Big]\,dx
\]
where $\eta > 0$, $\tilde W(t)=t$ and $m$ belongs to
\[
\mcA^{MM} = \{m \in H^1(B^N,\Sphere^N): m(x) = (x,0) \text{ on } \partial B^N\}.
\]
This model comes from micromagnetics, where the order parameter $m$ stands for the magnetization in ferromagnetic materials.\footnote{In fact, in a reduced micromagnetic model in dimension $N=2$ (see e.g. \cite[Section 4.5]{DKMO02-CPAM} or \cite[Section~7]{Ignat09-SEDP}) and after a rotation by $\frac{\pi}{2}$ in the first two components, the condition $\nabla \times (m_1,m_2)=0$ is also imposed in the space of admissible configurations in $\mcA^{MM}$. Note that the vortex solution $m_\eta$ in \eqref{Eq:metaform} satisfies the above curl-free condition and we will prove its local minimality in the larger class of $H^1_0$ perturbations (that are not necessarily curl-free in the in-plane components). See also \cite{Gioia-James} for a different thin-film regime where this curl-free constraint on $(m_1, m_2)$ can be neglected.
}
Considering radially symmetric critical points of $E^{MM}_\eta$ over $\mcA^{MM}$, one is led to (see Appendix \ref{App})
\begin{equation}
m_{\eta}(x) =( \tilde f_\eta(r) n(x), g_\eta(r))\in \mcA^{MM}
	\label{Eq:metaform}
\end{equation}
where the radial profiles $\tilde f_\eta$ and $g_\eta$ satisfy 
\be
\label{f2g2}
\tilde f_\eta^2+g_\eta^2=1 \quad \textrm{in} \quad (0,1),
\ee
and the system of ODEs:
\begin{align}
\tilde f_{\eta}'' + \frac{N-1}{r} \tilde f_{\eta}' - \frac{N-1}{r^2} \tilde f_{\eta}
	&=- \lambda(r) \tilde f_{\eta}  \quad \textrm{in} \quad (0,1),
	\label{Eq:MM-fee}\\
g_{\eta}'' + \frac{N-1}{r} g_{\eta}' 
	&=\frac1{\eta^2}\tilde W'(g_\eta^2) g_\eta- \lambda(r) g_{\eta}  \quad \textrm{in} \quad (0,1),
	\label{Eq:MM-gee}\\
\tilde f_{\eta}(1) &= 1 \text{ and } g_{\eta}(1) = 0,
	\label{Eq:MM-feegeeBC}
\end{align}
where 
\begin{equation}
\lambda(r)=(\tilde f_{\eta}')^2+\frac{N-1}{r^2}\tilde f_{\eta}^2+(g_{\eta}')^2+\frac1{\eta^2}\tilde W'(g_\eta^2)g_{\eta}^2
\label{Eq:lamDef}
\end{equation} is the Lagrange multiplier due to the unit length constraint in $\mcA^{MM}$.

\begin{remark}\label{Rem:R1}
We will see in Lemma \ref{Lem:MMONSym} that solutions to \eqref{Eq:metaform}--\eqref{Eq:MM-feegeeBC} satisfy the dichotomy: either $\tilde f_\eta(0) = 0$ or $\tilde f_\eta(0) = 1$. Furthermore, in the latter case, it holds that $N \geq 3$ and $(\tilde f_\eta=1, g_\eta=0)$ in $(0,1)$, which corresponds to the \emph{equator map}
$$
\nespm(x):=(n(x),0).
$$
In dimension $N \geq 7$, $\nespm$ is the unique minimizing harmonic map from $B^N$ into $\Sphere^N$ in $\mcA^{MM}$ (J\"ager and Kaul \cite{JagerKaul83-JRAM}; see also \cite[Example 1.6]{INSZ_AnnENS}), and so is the unique minimizer of $E^{MM}_\eta$ in $\mcA^{MM}$ \emph{for every} $\eta>0$.
\end{remark}

We will focus in the following on ``escaping" solutions $m_{\eta}(x) =( \tilde f_\eta(r) n(x), \pm g_\eta(r))$ satisfying $g_\eta>0$ in $(0,1)$ which exist \emph{only} in dimension $2\leq N\leq 6$ (see Theorem \ref{Thm:MMExist}). More precisely, 
we will show in these dimensions that, \emph{for every} $\eta>0$, there exists a unique solution $(\tilde f_\eta, g_\eta)$ with $g_\eta>0$ in $(0,1)$ of the system \eqref{f2g2}--\eqref{Eq:MM-feegeeBC} and we call the two configurations $m_{\eta}=( \tilde f_\eta(r) n(x), \pm g_\eta(r))\in \mcA^{MM}$ the \emph{escaping ($\Sphere^N$-valued) Ginzburg--Landau vortex solutions}, or simply the \emph{micromagnetic  vortex solutions}. In addition, the micromagnetic vortex solutions $m_\eta$ have lower energy than the equator map; in particular, the equator map is \emph{no longer} a minimizer of $E^{MM}_\eta$ in $\mcA^{MM}$ (see Proposition \ref{Prop:MMUniqueness}). It is not known whether the micromagnetic vortex solutions $m_\eta$ minimize $E^{MM}_\eta$ in $\mcA^{MM}$ in dimension $2 \leq N \leq 6$.

The goal of this paper is to study the local minimality of the vortex solutions $u_\eps$ and $m_\eta$ with respect to $E^{GL}_\eps$ over the set $\mcA^{GL}$ and $E^{MM}_\eta$ over the set $\mcA^{MM}$ respectively. We will in fact consider $C^2$ potentials $W: (-\infty,1] \rightarrow [0,\infty)$ and $\tilde W: [0,\infty) \rightarrow [0,\infty)$ more general than the ones described above. We make the following assumptions: 
\begin{align}
&W(0) =  0, W(t)\geq 0, W''(t) \geq 0 \text{ in } (-\infty,1]\setminus\{0\},
	\label{Eq:WCond'}\\
& \tilde W(0) = 0, \tilde W(t)\geq 0, \tilde W''(t) \geq 0 \text{ in } (0,\infty).
	\label{Eq:TWCond'}
\end{align}
We point out that \eqref{Eq:WCond'} implies that $W'(0) = 0$ and $tW'(t) \geq 0$ in $(-\infty,1] \setminus \{0\}$. Likewise, \eqref{Eq:TWCond'} implies that $\tilde W'(0) \geq 0$ and $\tilde W'(t) \geq 0$ in $(0,\infty)$. However, we allow the possibility that $W$ or $\tilde W$ are zero in a neighborhood of the origin. This leads to new difficulties as well as new behaviors of solutions; see for example Proposition \ref{Prop:B.1}(ii). 

Under assumptions \eqref{Eq:WCond'} and \eqref{Eq:TWCond'} for $W$ and $\tilde W$, we will prove the existence and uniqueness of the radial profiles $f_\eps$ and $(\tilde f_\eta, g_\eta)$  with $g_\eta > 0$ solving \eqref{Eq:ueH1}--\eqref{Eq:24VII18-X2} and \eqref{Eq:metaform}--\eqref{Eq:MM-feegeeBC}, respectively. See Theorems \ref{Thm:GLExist} and \ref{Thm:MMExist} where the global minimality of these solutions in the class of radial symmetric maps is also established. For these unique radial profiles, we will continue to refer to the maps $u_\eps(x) = f_\eps(|x|) n(x)$ and $m_{\eta}(x) =( \tilde f_\eta(r) n(x), g_\eta(r))$ as the $\RR^N$-valued and $\Sphere^N$-valued Ginzburg--Landau vortex solutions. Our main results concern the local minimizing property of these vortex solutions, in particular the positive definiteness of the second variation at those solutions (see 
Section~\ref{Sec:Stab} for the definition).

\begin{theorem}\label{Thm:GLStab}
Suppose $W \in C^2((-\infty,1])$ satisfies \eqref{Eq:WCond'}. For $N \geq 2$ and every $\eps > 0$,  the $\RR^N$-valued Ginzburg--Landau vortex solution $u_\eps(x) = f_\eps(r) n(x)$ is a local minimizer of $E^{GL}_\eps$ in $\mcA^{GL}$ with a positive definite second variation.
\end{theorem}

\begin{theorem}\label{Thm:MStab}
Suppose $\tilde W \in C^2([0,\infty))$ satisfies \eqref{Eq:TWCond'}. For $2\leq N \leq 6$ and every $\eta > 0$, the escaping $\Sphere^N$-valued Ginzburg--Landau vortex solution $m_{\eta}(x) =( \tilde f_\eta(r) n(x), g_\eta(r))$ with $g_\eta > 0$ is a local minimizer $m_\eta$ of $E^{MM}_\eta$ in $\mcA^{MM}$ with a positive definite second variation. For $3 \leq N \leq 6$ and every $\eta > 0$, the equator map $\nespm=(n(x),0)$ is an unstable critical point of $E^{MM}_\eta$ in $\mcA^{MM}$ and $E^{MM}_\eta(m_\eta)<E^{MM}_\eta(\nespm)$.
\end{theorem}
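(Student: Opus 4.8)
\noindent\emph{Overall strategy.} Following the paper's plan, I would first establish local minimality and positive definiteness for the extended functional $E_{\eps,\eta}$ at its vortex $m_{\eps,\eta}$, with estimates uniform in $\eps$, and then read off the statement about $m_\eta$ by passing to the limit $\eps\to0^+$ (which restores the constraint $|m|=1$, with $m_{\eps,\eta}\to m_\eta$); the analysis of $E^{MM}_\eta$ directly follows the same lines. The first step is to record the second variation of $E^{MM}_\eta$ at $m_\eta$ on $\{|m|=1\}$. Using the Euler--Lagrange relations \eqref{Eq:MM-fee}--\eqref{Eq:lamDef}, the identity $m_\eta\cdot\chi=-|\varphi|^2$ for the acceleration $\chi$ of any curve on $\{|m|=1\}$ with velocity $\varphi$, and the cancellation of the two terms containing $\chi_{N+1}$, one obtains
\[
Q^{MM}_\eta[\varphi]=\int_{B^N}\Big[|\nabla\varphi|^2-\lambda(r)\,|\varphi|^2+\tfrac1{\eta^2}\big(\tilde W'(g_\eta^2)+2\,\tilde W''(g_\eta^2)\,g_\eta^2\big)\varphi_{N+1}^2\Big]\,dx
\]
for all $\varphi\in H^1_0(B^N,\RR^{N+1})$ with $\varphi\cdot m_\eta=0$ a.e.; note $\lambda\in L^\infty(B^N)$ by the behaviour of $\tilde f_\eta,g_\eta$ near $r=0$, so the natural coercivity is in the $H^1_0$ norm.

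\noindent\emph{Reduction to one dimension.} On $B^N\setminus\{0\}$ I would write $\varphi=(\psi,0)+\rho\,\xi$, where $\psi(x)\cdot n(x)=0$, $\xi(x)=(g_\eta(r)n(x),-\tilde f_\eta(r))$ is the unit escape direction, and $\rho$ is scalar, the vectors $(\psi,0)$, $\xi$, $m_\eta$ being mutually orthogonal at each point. Expanding $Q^{MM}_\eta$ in this frame, decomposing $\rho$ into spherical harmonics on $\Sphere^{N-1}$ and $\psi$ into a gradient part $\slnabla\sigma$ and a divergence-free part, the form becomes a direct sum over angular modes $k$ of one-dimensional weighted quadratic forms. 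The divergence-free sector decouples and is coercive via a ground-state substitution based on the positive solution $\tilde f_\eta$ of \eqref{Eq:MM-fee}. The $k=0$ part involves $\rho$ alone and equals, up to a positive factor, the second variation at $\theta_\eta$ (with $\tilde f_\eta=\cos\theta_\eta$, $g_\eta=\sin\theta_\eta$) of the reduced radial energy; it is nonnegative since $\theta_\eta$ is the radial minimizer (Theorem \ref{Thm:MMExist}), and strictly coercive because the associated Jacobi equation has a positive solution $w$ with $w(1)\neq0$, obtained from the one-parameter family of radial solutions indexed by the boundary value at $r=1$. For $k\ge1$ the escape component $\rho_k$ couples to $\sigma_k$, and one must prove positive definiteness of a $2\times2$ one-dimensional system, by ODE comparison built from $\tilde f_\eta$, $g_\eta$ and explicit powers of $r$, with the lowest mode $k=1$ treated separately. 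Summing the mode-wise lower bounds, uniformly in $k$, yields $Q^{MM}_\eta[\varphi]\ge c\,\|\varphi\|_{H^1_0}^2$, whence $m_\eta$ is a strict local minimizer by a standard argument. The restriction $2\le N\le6$ is inherited only from the existence of $m_\eta$ (Theorem \ref{Thm:MMExist}).

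\noindent\emph{Equator map and energy gap.} At $\nespm=(n,0)$ one has $\tilde f\equiv1$, $g\equiv0$, $\lambda\equiv\frac{N-1}{r^2}$, so the second variation in the vertical direction $\varphi=\rho\,e_{N+1}$, $e_{N+1}=(0,\dots,0,1)$ (admissible since $e_{N+1}\cdot\nespm=0$), equals $\int_{B^N}[|\nabla\rho|^2-\frac{N-1}{r^2}\rho^2+\frac1{\eta^2}\tilde W'(0)\rho^2]\,dx$. For $3\le N\le6$ we have $N-1>\frac{(N-2)^2}{4}$, so $-\Delta-\frac{N-1}{r^2}$ is unbounded below on $H^1_0(B^N)$; I would test with $\rho_s=\min\{r^{-(N-2)/2},s^{-(N-2)/2}\}$ truncated near $\partial B^N$, for which $\int[|\nabla\rho_s|^2-\frac{N-1}{r^2}\rho_s^2]\le\big(\frac{(N-2)^2}{4}-(N-1)\big)|\Sphere^{N-1}|\log\frac1s+O(1)\to-\infty$ while $\int\rho_s^2=O(1)$, so this form is negative for $s$ small and $\eta$ fixed; hence $\nespm$ is unstable. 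Finally, the renormalized competitor $m_s=(n,t\rho_s)/|(n,t\rho_s)|\in\mcA^{MM}$ is radially symmetric and satisfies $E^{MM}_\eta(m_s)<E^{MM}_\eta(\nespm)$ for $t$ small; since $m_\eta$ minimizes $E^{MM}_\eta$ among radially symmetric maps (Theorem \ref{Thm:MMExist}), we get $E^{MM}_\eta(m_\eta)\le E^{MM}_\eta(m_s)<E^{MM}_\eta(\nespm)$, as also recorded in Proposition \ref{Prop:MMUniqueness}.

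\noindent\emph{Expected main obstacle.} The crux is the positive definiteness of the coupled $2\times2$ one-dimensional systems governing the interaction between the escape component $\rho_k$ and the gradient component $\sigma_k$ of the horizontal perturbation, together with lower bounds uniform in $k$ (and, in the approach via $E_{\eps,\eta}$, uniform as $\eps\to0^+$); this uniformity is exactly what is needed to turn mode-wise positivity into the coercivity estimate, hence into strict local minimality.
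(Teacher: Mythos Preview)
Your overall strategy---prove positive definiteness for $E_{\eps,\eta}$ at $m_{\eps,\eta}$ with bounds uniform in $\eps$, then pass to $\eps\to0^+$---is exactly the paper's. The difference is in execution: the paper does \emph{not} redo the mode analysis on $Q^{MM}_\eta$ via your $\xi$-parameterization of the tangent space. Instead it keeps the $(s,w,q)$-decomposition used for the extended model, takes the explicit Hardy-type lower bound \eqref{Eq:ExtPD1} for $Q_{\eps,\eta}$ (obtained in Lemmas~\ref{Lem:2VarDivFree}--\ref{Lem:2VarRest} with factoring functions $f_{\eps,\eta}$, $f_{\eps,\eta}'$, $f_{\eps,\eta}/r$, $g_{\eps,\eta}$, $g_{\eps,\eta}'$), builds for each $V\perp m_\eta$ an approximation $V_\eps\perp m_{\eps,\eta}$, shows $Q_{\eps,\eta}[V_\eps]\to Q^{MM}_\eta[V]$, and sends the right-hand side of \eqref{Eq:ExtPD1} to the limit by Fatou. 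Positive definiteness on $\{V\cdot m_\eta=0\}$ is then read off from the limit inequality \eqref{Eq:MMS-2} together with the constraint $s_i\tilde f_\eta+q_i g_\eta=0$. Your direct route is natural and equivalent in principle, but your flagged ``main obstacle''---the coupled $2\times2$ system in the $k\ge1$ modes---would require reproving the analogue of Lemma~\ref{Lem:2VarRest} with the Lagrange multiplier $\lambda(r)$ in place of $\frac{1}{\eps^2}W'$; the paper's detour through the extended model is precisely what circumvents this. For local minimality, note also that ``a standard argument'' needs care since $H^1(B^N,\Sphere^N)$ is not a manifold: the paper writes $V=\tilde V+(V\cdot m_\eta)m_\eta$ with $\tilde V\perp m_\eta$ and controls all the cross terms $|\nabla V|^2-|\nabla\tilde V|^2$, $|V|^2-|\tilde V|^2$, $q^2-\tilde q^2$ explicitly.

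Your equator-map instability and energy-gap arguments are correct and match the paper's (Theorem~\ref{Thm:MMExist}(b) and Proposition~\ref{Prop:MMUniqueness}), up to the cosmetic difference between your capped power $\rho_s$ and the paper's $q=r^{-(N-2)/2}\tilde q$ with a logarithmic $\tilde q$.
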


\begin{remark}
\it{(a)} In Theorem \ref{Thm:MStab}, we can replace \eqref{Eq:TWCond'} by $\tilde W \in C^2([0,1])$ satisfying
\[
 \tilde W(0) = 0, \tilde W(t)\geq 0, \tilde W''(t) \geq 0 \text{ in } [0,1],
\]
since any such function $\tilde W$ can be extended to a function satisfying \eqref{Eq:TWCond'}.

\it{(b)} In dimension $N=2$, the equator map $\nespm\notin H^1(B^N, \Sphere^N)$, so $\nespm\notin \mcA^{MM}$. However, the second variation of $E^{MM}_\eta$ at $\nespm$ can still be defined and it is negative in a certain direction compactly supported in $B^N\setminus \{0\}$, leading to the instability of $\nespm$ also for $N=2$ (see \eqref{new-pp}). 
\end{remark}

In the $\RR^N$-valued Ginzburg--Landau case, when $N = 2$, Theorem \ref{Thm:GLStab} was proved by Mironescu \cite{Mironescu-radial} for $W(t) = \frac{t^2}{2}$. Also when $N = 2$, the non-negativity of the second variation was proved by Lieb and Loss \cite{LiebLoss95-JEDP} for potentials $W$ which are strictly increasing and convex\footnote{See Remark \ref{Rem:WWLocalCond} for a related comment for $E_{\eps,\eta}$.} in $[0,1]$. In dimension $N \geq 7$, the global minimality of the vortex solution was proved by Ignat, Nguyen, Slastikov and Zarnescu \cite{INSZ18_CRAS, INSZ_AnnENS}. When the domain is $\RR^N$ (instead of $B^N$), the local minimality of the entire vortex solution (in the sense of De Giorgi) was obtained in Mironescu \cite{Mironescu_symmetry} for $N = 2$, Millot and Pisante \cite{Mil-Pis} for $N = 3$, and Pisante \cite{Pisante11-JFA} for $N \geq 4$. For the stability of the entire vortex solution, see Ovchinnikov and Sigal \cite{OvchinSigal95}, del Pino, Felmer and Kowalczyk \cite{Pino-Felmer-Kow} for $N = 2$, and  Gustafson \cite{Gustafson} for  $N \geq 3$. 

In the micromagnetic case, in dimension $N = 2$ and for $\tilde W(t) = t$,  Theorem \ref{Thm:MStab} was proved by Hang and Lin \cite{HangLin01-ActaSin}. For dimension $N \geq 7$, see Remark \ref{Rem:R1}. See also Li and Melcher \cite{LiMelcher18-JFA} for related stability analysis in the study of micromagnetics skyrmions.

More generally, we consider a family of extended energy functionals $E_{\eps,\eta}$ depending on two positive parameters $\eps, \eta$ of which $E_\eps^{GL}$ and $E_\eta^{MM}$ are limiting cases: 
\[
E_{\eps,\eta}[m]= \int_{B^N} \Big[\frac{1}{2}|\nabla m|^2 + \frac{1}{2\eps^2} W(1 - |m|^2)  + \frac{1}{2\eta^2} \tilde W(m_{N+1}^2)\Big]\,dx, \quad \eps,\eta > 0,
\]
where $W$ and $\tilde W$ satisfy \eqref{Eq:WCond'}--\eqref{Eq:TWCond'} and $m$ belongs to
\[
\mcA = \{m \in H^1(B^N,\RR^{N+1}): m(x) = (x,0) \text{ on } \partial B^N\}.
\]
Under suitable conditions on $\tilde W$ (e.g. $\tilde W(t) > 0$ for $t > 0$), it can be shown that for a fixed $\eps > 0$, minimizers of $E_{\eps,\eta}$ in $\mcA$ converge in $H^1$ to minimizers of $E_{\eps}^{GL}$ in $\mcA^{GL}$ as $\eta \rightarrow 0$. Likewise under suitable conditions on $W$, for a fixed $\eta > 0$, minimizers of $E_{\eps,\eta}$ in $\mcA$ converge in $H^1$ to minimizers of $E_{\eta}^{MM}$ in $\mcA^{MM}$ as $\eps \rightarrow 0$. We hope that having a good understanding on critical points of $E_{\eps,\eta}$ would lead to new insights on the open problem concerning of the minimality of the vortex solutions $u_\eps$ and $m_\eta$.

We define a map $m_{crit} \in \mcA$ to be a \emph{bounded} critical point of $E_{\eps,\eta}$ if $m_{crit} \in L^\infty(B^N, \RR^{N+1})$ and $\langle DE_{\eps,\eta}[m_{crit}], \varphi\rangle := \frac{d}{dt}\big|_{t = 0} E_{\eps,\eta}[m_{crit} + t\varphi] = 0$ for all $\varphi \in C_c^\infty(B^N \setminus \{0\}, \RR^{N+1})$, and to be a \emph{radially symmetric} critical point of $E_{\eps,\eta}$ if $m_{crit}$ is radially symmetric in the sense of Definition \ref{Def:RSMap} and $\langle DE_{\eps,\eta}[m_{crit}], \varphi\rangle = 0$ for all $\varphi \in C_c^\infty(B^N \setminus \{0\}, \RR^{N+1})$. By Lemma \ref{lem:f2g2}, radially symmetric critical points of $E_{\eps,\eta}$ are bounded. Radially symmetric critical points of $E_{\eps,\eta}$ in $\mcA$ take the form 
\begin{equation}
(f_{\eps,\eta}(r)n(x),g_{\eps,\eta}(r)) \in \mcA
	\label{Eq:feegeeH1}
\end{equation}
where $(f_{\eps,\eta},g_{\eps,\eta})$ satisfies the system of  ODEs
\begin{align}
f_{\eps,\eta}'' + \frac{N-1}{r} f_{\eps,\eta}' - \frac{N-1}{r^2} f_{\eps,\eta}
	&= -\frac{1}{\eps^2} W'(1 -  f_{\eps,\eta}^2 - g_{\eps,\eta}^2) f_{\eps,\eta}
	,\label{Eq:20III21-fee}\\
g_{\eps,\eta}'' + \frac{N-1}{r} g_{\eps,\eta}' 
	&= -\frac{1}{\eps^2} W'(1 -  f_{\eps,\eta}^2 - g_{\eps,\eta}^2) g_{\eps,\eta} + \frac{1}{\eta^2} \tilde W'(g_{\eps,\eta}^2)g_{\eps,\eta}
	,\label{Eq:20III21-gee}\\
 f_{\eps,\eta}(1) &= 1  \text{ and } g_{\eps,\eta}(1) = 0
	.\label{Eq:20III21-feegeeBC}
\end{align}
Note that the above implies $f_{\eps,\eta}(0) = 0$ and $g_{\eps,\eta}'(0) = 0$ (see Lemma \ref{Lem:FullONSym}).

Of special interest to our discussion will be solutions to \eqref{Eq:feegeeH1}--\eqref{Eq:20III21-feegeeBC} satisfying the sign constraint $g_{\eps,\eta} \geq 0$ in $(0,1)$. It is easy to see by the strong maximum principle that either $g_{\eps,\eta} \equiv 0$ or $g_{\eps,\eta} > 0$ in $(0,1)$. When $g_{\eps,\eta} \equiv 0$, we obtain an $\eta$-independent solution given by $(\barfeps, 0)$ where $\barfeps$ is the unique radial profile in \eqref{Eq:ueH1}--\eqref{Eq:24VII18-X2}. We will sometimes refer to $(\barfeps, 0)$ as the \emph{non-escaping solution} to \eqref{Eq:feegeeH1}--\eqref{Eq:20III21-feegeeBC} and 
\[
\bar m_\eps(x) = (\barfeps(r)n(x),0)
\]
as the \emph{non-escaping (radially symmetric) critical point} of the extended energy functional $E_{\eps,\eta}$ in $\mcA$. In contrast, we will refer to solutions $(f_{\eps,\eta}, g_{\eps,\eta})$ of \eqref{Eq:feegeeH1}--\eqref{Eq:20III21-feegeeBC} satisfying $g_{\eps,\eta} > 0$ as \emph{escaping solutions} and the corresponding maps
\[
m_{\eps,\eta}(x) = (f_{\eps,\eta}(r)n(x),\pm g_{\eps,\eta}(r)) 
\]
as\footnote{In the following, when discussing escaping and non-escaping critical points, we will drop the term ``radially symmetric" as we only study here radially symmetric critical points.} \emph{escaping (radially symmetric) critical points} of the extended energy functional $E_{\eps,\eta}$ in $\mcA$. The escaping phenomenon refers to the positivity of $g_{\eps,\eta}$. We will prove that such escaping solutions satisfy $f_{\eps,\eta}>0$ in $(0,1)$, see Proposition \ref{Prop:fPos}.

There exists a sufficiently large $\eps_*$ such that $E_{\eps,\eta}$ is strictly convex for all $\eps > \eps_*$ and $\eta > 0$  and so $\bar m_\eps$ is the unique critical point and hence the unique global minimizer of $E_{\eps,\eta}$ in $\mcA$ if $N\geq 2$. In dimensions $N \geq 7$, it follows from \cite[Theorem 2]{INSZ18_CRAS}\footnote{In the cited paper, beside the convexity of $W$, it is assumed that $W$ is strictly positive away from $0$; but it can be seen from the proof there that non-negativity $W\geq 0$ is sufficient as in \eqref{Eq:WCond'}.} (compare \cite[Theorem~1.7]{INSZ_AnnENS}) that $\bar m_\eps(x)$ is the unique global minimizer of $E_{\eps,\eta}$ in $\mcA$ for every $\eps>0$. In dimension $2\leq N\leq 6$ and for small $\eps>0$, it is not known if a solution to \eqref{Eq:feegeeH1}--\eqref{Eq:20III21-feegeeBC} satisfying $g_{\eps,\eta} \geq 0$ gives a global minimizer of  $E_{\eps,\eta}$ in $\mcA$. Our next theorem concerns the existence, uniqueness and local minimality of these solutions. See Figure \ref{Fig1}.

\begin{figure}[h]
\caption{Radial critical points of the extended functional $E_{\eps, \eta}$ when $W'(1) > 0$ and $\tilde W'(0) > 0$. In the escaping region, there is a co-existence of non-escaping and escaping critical points. In the non-escaping region, only the non-escaping critical point exists.}\label{Fig1}
\begin{center}
\begin{tikzpicture}
\draw (-.3,0)--(4,0);
\draw (4,-.2) node {$\eps$};
\draw (0,-.3)--(0,4);
\draw (-.3,4) node {$\eta$};
\draw[dashed] (3,-.3)--(3,3.75);
\draw (3.3,-.3) node {$\eps_*$};
\draw[dashed] (2,-.3)--(2,3.75);
\draw (2.3,-.3) node {$\eps_0$};

\begin{scope}
\clip (0,0) rectangle (3,3.75);
\draw [domain=0:2, samples = 25, variable=\t, dashed, pattern = north west lines] plot ({\t},{\t/(2.3-\t)}) -- (0,4);
\end{scope}
\draw[pattern = north west lines] (5,3) circle (.3);
\draw (7.2,3) node {Escaping region};
\draw (5,1.5) circle (.3);
\draw (7.2,1.5) node {Non-escaping region};
\end{tikzpicture}
\end{center}
\end{figure}
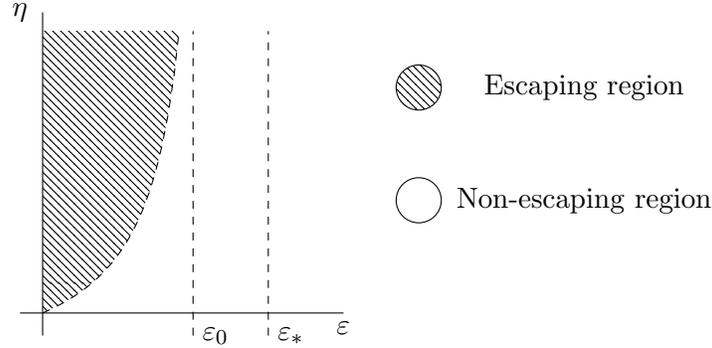

\begin{theorem}
\label{Thm:ExtendedMain}
Let $N \geq 2$, $W \in C^2((-\infty,1])$ and $\tilde W \in C^2([0,\infty))$ satisfy \eqref{Eq:WCond'} and \eqref{Eq:TWCond'}.
\begin{enumerate}[(a)]
\item There is at most one escaping critical point $m_{\eps,\eta}(x) = (f_{\eps,\eta}(r)n(x), g_{\eps,\eta}(r))$ of $E_{\eps,\eta}$ in $\mcA$ with $g_{\eps,\eta} > 0$. Moreover, if such escaping critical point exists, then it is a local minimizer of $E_{\eps,\eta}$ in $\mcA$ with a positive definite second variation, and the non-escaping critical point $\bar m_\eps(x) = (f_\eps(r) n(x),0)$ is unstable for $E_{\eps,\eta}$.

\item An escaping critical point $m_{\eps,\eta}(x) = (f_{\eps,\eta}(r)n(x), g_{\eps,\eta}(r))$ with $g_{\eps,\eta} > 0$ exists if and only if $2 \leq N \leq 6$, $W'(1) > 0$, $0 < \eps < \eps_0$ and $\eta > \eta_0(\eps)$ for some $\eps_0 \in (0,\infty)$ and a continuous non-decreasing function\footnote{For further information about the constant $\eps_0$ and the function $\eta_0$, see Lemma \ref{Lem:eigen-gl}(c) and Remark \ref{Rem:eta0}.} $\eta_0: [0,\eps_0) \rightarrow [0,\infty)$ with $\eta_0(0) = 0$.

\item In the absence of an escaping critical point $m_{\eps,\eta}(x) = (f_{\eps,\eta}(r)n(x), g_{\eps,\eta}(r))$ with $g_{\eps,\eta} > 0$ for $E_{\eps,\eta}$, the non-escaping critical point $\bar m_\eps(x) = (f_\eps(r) n(x),0)$  is a local minimizer of $E_{\eps,\eta}$ in $\mcA$ with a positive definite second variation unless $2 \leq N \leq 6$, $W'(1) > 0$, $\tilde W'(0) > 0$, $0 < \eps < \eps_0$ and $\eta = \eta_0(\eps)$. Moreover, in the latter case, the second variation of $E_{\eps,\eta}$ at $\bar m_\eps$ is non-negative semi-definite with a one-dimensional kernel generated by $(0,q_{\eps})\in C^2(\bar B^N, \R^{N+1})$ for some positive  smooth function $q_{\eps}>0$ in $B^N$ with $q_{\eps} = 0$ on $\partial B^N$.
\end{enumerate}
\end{theorem}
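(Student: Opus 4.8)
## Proof Proposal for Theorem \ref{Thm:ExtendedMain}

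\textbf{Proof proposal.} The plan is to reduce everything to a single object --- the second variation $Q_{\eps,\eta}[m_\star;\cdot]$ of $E_{\eps,\eta}$ at a radial critical point $m_\star=(fn,g)$ --- and to analyse it through the block structure coming from the $O(N)$-symmetry together with two Hardy-type (ground-state) substitutions provided by the positivity of the profiles $f$ and $g$. Writing $\rho:=1-f^2-g^2$ and expanding $E_{\eps,\eta}[m_\star+t\varphi]$ to second order gives
\[
Q_{\eps,\eta}[\varphi]=\int_{B^N}|\nabla\varphi|^2+\frac{2}{\eps^2}W''(\rho)(fn\cdot\varphi'+g\,\varphi_{N+1})^2-\frac1{\eps^2}W'(\rho)|\varphi|^2+\frac2{\eta^2}\tilde W''(g^2)g^2\varphi_{N+1}^2+\frac1{\eta^2}\tilde W'(g^2)\varphi_{N+1}^2 .
\]
I would then split $\varphi'=\psi\,n+\varphi^\perp$ with $\varphi^\perp\cdot n=0$, use $|\nabla(\psi n)|^2=|\nabla\psi|^2+\frac{N-1}{r^2}\psi^2$ and $\Delta(\psi n)=(\Delta\psi)n+\frac2r\nabla^T\psi-\frac{N-1}{r^2}\psi\,n$, and expand $\psi,\varphi_{N+1}$ and $\varphi^\perp$ in scalar and vector spherical harmonics, so that $Q_{\eps,\eta}$ becomes an orthogonal sum over angular modes $\ell\ge 0$: in each mode the radial-direction coefficient of $\varphi'$ couples to $\varphi_{N+1}$ (through $W''$) and to the gradient part of $\varphi^\perp$ (through the tangential term $\tfrac2r\nabla^T\psi$), while the divergence-free ("Killing") part of $\varphi^\perp$ decouples.

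The engine is two substitutions. First, since $f$ solves $-\Delta f+\frac{N-1}{r^2}f=\frac1{\eps^2}W'(\rho)f$ with $f>0$ in $(0,1)$ (Proposition \ref{Prop:fPos}), setting $\varphi'=fW$ turns the in-plane part of $Q_{\eps,\eta}$ into $\int_{B^N}f^2|\nabla W|^2-(N-1)\int_{B^N}\frac{f^2}{r^2}|W|^2+\frac2{\eps^2}\int_{B^N}W''(\rho)f^4(n\cdot W)^2$, with $W$ vanishing on $\partial B^N$. Second, in the escaping case $g>0$ solves $-\Delta g=(\frac1{\eps^2}W'(\rho)-\frac1{\eta^2}\tilde W'(g^2))g$, so for each angular mode $\varphi_{N+1}=t(r)Y_\ell$, after discarding the non-negative terms $\frac{\ell(\ell+N-2)}{r^2}t^2$ and $\frac2{\eta^2}\tilde W''(g^2)g^2t^2$, the remaining $(N+1)$-contribution equals $\int_0^1 g^2\,|(t/g)'|^2 r^{N-1}\,dr\ge 0$. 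Combining the two, for an escaping $m_{\eps,\eta}$ one obtains $Q_{m_{\eps,\eta}}[\varphi]\ge\int f^2|\nabla W|^2-(N-1)\int\frac{f^2}{r^2}|W|^2+\frac2{\eps^2}\int W''(\rho)f^4(n\cdot W)^2+\sum_\ell\int_0^1 g^2|(t_\ell/g)'|^2r^{N-1}\,dr$, and a short analysis of the equality case (using $f,g>0$, $f(1)=1$, and that $W'$ resp. $\tilde W'$ cannot be affine on the whole relevant range if an escaping solution exists) upgrades non-negativity to positive definiteness; this is the stability statement in part (a). Restricting to $\varphi_{N+1}=0$ at $m_\star=\bar m_\eps$ this is exactly the positive-definiteness of $D^2E^{GL}_\eps[u_\eps]$, i.e.\ Theorem \ref{Thm:GLStab}.

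For the non-escaping critical point $\bar m_\eps=(f_\eps n,0)$ the last component decouples, and the $\varphi_{N+1}$-part of $Q_{\bar m_\eps}$ is the quadratic form of the scalar operator $\mathcal L_{\eps,\eta}=-\Delta+\frac1{\eta^2}\tilde W'(0)-\frac1{\eps^2}W'(1-f_\eps^2)$ on $H^1_0(B^N)$, whose potential is radial, so its (positive) ground state $q_\eps$ is radial and its first eigenvalue is $\mu_1(\eps,\eta)=\lambda_1(\eps)+\frac{\tilde W'(0)}{\eta^2}$ with $\lambda_1(\eps)$ the first eigenvalue of $-\Delta-\frac1{\eps^2}W'(1-f_\eps^2)$. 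The key auxiliary fact (Lemma \ref{Lem:eigen-gl}) I would establish is: $\lambda_1(\eps)\ge 0$ for all $\eps>0$ iff $N\ge 7$ or $W'(1)=0$ (using $W'(1)=0\Rightarrow W'\equiv 0$ on $[0,1]$ by convexity, the fact that $\frac1{\eps^2}W'(1-f_\eps^2)\to\frac{N-1}{r^2}$ as $\eps\to 0$ together with $\frac{(N-2)^2}{4}-(N-1)<0$ precisely for $2\le N\le 6$, monotonicity of $\lambda_1$ in $\eps$, and shallowness of the well for large $\eps$); and when $2\le N\le 6$, $W'(1)>0$, that $\lambda_1$ is increasing with $\lambda_1(0^+)=-\infty$ and $\lambda_1(\eps_0)=0$ for a unique $\eps_0\in(0,\infty)$. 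Then $\eta_0(\eps):=(\tilde W'(0))^{1/2}(-\lambda_1(\eps))^{-1/2}$ on $(0,\eps_0)$ and $0$ elsewhere is continuous, non-decreasing, $\eta_0(0)=0$, and $\mu_1(\eps,\eta)<0$ iff $2\le N\le 6$, $W'(1)>0$, $\eps<\eps_0$, $\eta>\eta_0(\eps)$. Since $Q_{\bar m_\eps}=D^2E^{GL}_\eps[u_\eps](\varphi',\varphi')+\langle\mathcal L_{\eps,\eta}\varphi_{N+1},\varphi_{N+1}\rangle$ with the first term positive definite, $Q_{\bar m_\eps}$ is positive definite iff $\mu_1>0$, is non-negative with kernel exactly $\mathrm{span}\{(0,q_\eps)\}$ iff $\mu_1=0$ (higher modes being strictly positive because of the extra $\frac{\ell(\ell+N-2)}{r^2}$), and is indefinite iff $\mu_1<0$. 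This gives part (c) and the "$\bar m_\eps$ unstable" clauses in parts (a),(b) once one knows that an escaping critical point exists iff $\mu_1<0$: if $\mu_1<0$ the radial direction $(0,q_\eps)$ makes $\bar m_\eps$ non-minimal among radial maps, so the radial minimizer (which exists and, by the dichotomy of Remark \ref{Rem:R1}/Theorems \ref{Thm:GLExist},\ref{Thm:MMExist}, is either $\bar m_\eps$ or escaping) is escaping; conversely an escaping solution is a radial local minimizer distinct from $\bar m_\eps$, which by the ODE-uniqueness below forces $\mu_1<0$; feeding in the characterization of $\{\mu_1<0\}$ yields part (b). The remaining "at most one escaping critical point" is a pure ODE statement about \eqref{Eq:20III21-fee}--\eqref{Eq:20III21-feegeeBC}, which I would prove by a shooting/monotonicity argument in the data at $r=0$, using $f,g>0$ and a Sturm-type comparison for the linearized system.

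The main obstacle --- everything else being either a bookkeeping reduction or a one-dimensional eigenvalue/ODE fact --- is the in-plane positivity $\int_{B^N}f^2|\nabla W|^2-(N-1)\int_{B^N}\frac{f^2}{r^2}|W|^2+\frac2{\eps^2}\int_{B^N}W''(\rho)f^4(n\cdot W)^2\ge 0$ in dimensions $2\le N\le 6$. Because $f^2\sim r^2$ near the vortex, this is a sharp weighted Hardy-type inequality concentrated in the vortex core; as $\frac{(N-2)^2}{4}<N-1$ for $2\le N\le 6$, the classical Hardy inequality is insufficient and one genuinely needs the $W''$-term together with fine pointwise/ODE estimates on the core profile of $f_\eps$ (and on $f_{\eps,\eta}$ for the $x$-dependent convex potential $W(\cdot-g^2(|x|))$ in the escaping case). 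Here I expect to have to follow and extend the delicate analysis behind Mironescu's $N=2$ result and its higher-dimensional counterparts; the ODE uniqueness of the escaping solution is the second hardest point.
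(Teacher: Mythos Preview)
Your overall architecture is right: decompose the second variation in spherical harmonics, use ground-state (Hardy) substitutions, and for $\bar m_\eps$ reduce the $(N{+}1)$-component to the first eigenvalue of $L_\eps^{GL}=-\Delta-\eps^{-2}W'(1-f_\eps^2)$. The description of part (c) and the dichotomy in terms of $\mu_1(\eps,\eta)=\ell(\eps)+\eta^{-2}\tilde W'(0)$ matches the paper. However, there are two genuine gaps.

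\medskip
\noindent\textbf{The in-plane positivity.} Your single factorisation $\varphi'=fW$ does \emph{not} yield the needed inequality for the coupled $(s,\psi)$-system in the first angular mode $\ell=1$ (where $\lambda_1=N-1$). After your substitution the $(s,\psi)$-block becomes
\[
\int_0^1 r^{N-1}\Big\{f^2(\hat s')^2+(N-1)f^2(\hat\psi')^2+\frac{N-1}{r^2}\big[s^2+(3-N)\psi^2-4s\psi\big]\Big\}\,dr,
\]
and the matrix $\begin{pmatrix}1&-2\\-2&3-N\end{pmatrix}$ has determinant $-(N+1)<0$, so the pointwise quadratic form is indefinite for every $N\ge2$; no amount of ``fine core estimates on $f_\eps$'' will repair this. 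The paper's key idea, which you are missing, is to use \emph{mode-dependent} factorisations: for $\ell=0$ one factors $s_0=f\hat s_0$, $q_0=g\hat q_0$; for the translation modes $1\le i\le N$ one factors by the \emph{derivatives}, $s_i=f'\hat s_i$, $\psi_i=(f/r)\hat\psi_i$, $q_i=g'\hat q_i$ (these are the components of $\partial_j m_{\eps,\eta}$, the near-kernel coming from translations), which produces a manifestly non-negative expression with a perfect square $\frac{2}{r^3}ff'(\sqrt{N-1}\,\hat s_i-\sqrt{\lambda_i}\,\hat\psi_i)^2$; for the divergence-free tangential part one factors $\mathring w=r^\alpha f\hat w$ with $\alpha\in(-(N-2),0)$ and invokes the sharp Poincar\'e inequality $\int_{\Sphere^{N-1}}|\slashed D\hat w|^2\ge(N-2)\int_{\Sphere^{N-1}}|\hat w|^2$ for solenoidal fields; and for $\ell\ge N{+}1$ one uses $\lambda_i\ge2N$. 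The monotonicity $f'>0$, $g'<0$ (needed to factor by $f'$, $g'$) is obtained by a moving-plane argument on the ODE system. No delicate core-profile analysis is required; the argument is algebraic once the right factors are chosen.

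\medskip
\noindent\textbf{The converse in (b) and uniqueness.} Your sentence ``an escaping solution is a radial local minimizer distinct from $\bar m_\eps$, which by the ODE-uniqueness below forces $\mu_1<0$'' is not a proof: a local radial minimizer different from $\bar m_\eps$ does not by itself force $\bar m_\eps$ to be unstable. The paper closes this with a mountain-pass argument (Lemma~\ref{Lem:NENM}): if $\bar m_\eps$ were a local minimizer of the reduced radial functional and an escaping solution existed (which, by the Hardy substitution, is a \emph{global} radial minimizer), one would obtain a third radial critical point, contradicting the uniqueness of escaping solutions. That uniqueness, in turn, is not proved by shooting but by the same Hardy decomposition (Proposition~\ref{pro:unique_eps_eta}): for two escaping solutions the difference is tested against the convex lower bound of $I_{\eps,\eta}$, forcing $\tilde f/f$ and $\tilde g/g$ to be constant; the boundary data then give equality. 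Your shooting/Sturm sketch would need substantial work because the linearised system is not cooperative in the oscillation region.
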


A main part of our paper concerns the local minimality of vortex solutions. Let us explain our strategy for the Ginzburg--Landau model. We establish
$$E_\eps^{GL}[u_\eps + v] \geq E_\eps^{GL}[u_\eps]+ c\|v\|^2_{H^1} \textrm{ for } u_\eps + v \in \mcA^{GL}, \|v\|_{H^1}<\delta,$$
 for some small $c>0$ and $\delta>0$. This draws on a careful study of the second variation of $E_\eps^{GL}$ at $u_\eps$ based on a separation of variables and a Hardy decomposition technique \cite{INSZ3}. To separate variables, we first decompose $v = s n + w$ where $w \cdot n =0$, and then, for each $0 < r < 1$, we use the Helmholtz decomposition to write $w = \mathring{w} + \slashed{D} \psi$ on $\partial B_r$ where $\mathring{w}$ is a divergence-free vector field on $\partial B_r$ and $\slashed{D}$ is the gradient operator. In the context of Ginzburg--Landau theory, our use of the Helmholtz decomposition appears new in dimension $N \geq 3$. The contribution of $\mathring{w}$ to the second variation is treated at once using the sharp Poincar\'e inequality in Appendix \ref{App:C} and the Hardy decomposition technique. Finally, we decompose $s$ and $\psi$ into spherical harmonics and treat them using again the Hardy decomposition technique with special choices of factoring functions.

An important point in proving our results resides in the analysis of the radial profiles $f_\eps$, $(\tilde f_\eta, g_\eta)$ and $(f_{\eps, \eta}, g_{\eps, \eta})$ for general potentials $W$ and $\tilde W$ that goes beyond the existing (very rich) literature. For example, the choice of factoring functions in our use of the Hardy decomposition technique is based on the positivity and monotonicity of (a-priori, nodal solutions) $f_\eps$, $\tilde f_\eta$ and $f_{\eps, \eta}$. The proof of these uses the moving plane method for cooperative systems \cite{Sirakov07, G-N-N-1979, Se}. A novel part of our argument is in the fact that cooperativity is obtained alongside the application of the moving plane method. Another issue is the uniqueness of the radial profiles, which is established again using the Hardy decomposition technique that handles the nonlinear part in the ODE. This analysis enables us to prove the dichotomy of escaping vs. non-escaping critical points in the extended model introduced here for the first time.

The rest of the paper is organized as follows. In Section \ref{Sec:EU}, we establish the existence and uniqueness of vortex radial profiles and discuss their minimality within radially symmetric configurations. In Section \ref{Sec:Stab} we analyze their stability and give the proof of the main theorems. We include also four appendices on some miscellaneous results.

\medskip
\noindent{\bf Acknowledgment.} R.I. thanks the Mathematical Institute and St Edmund Hall, University of Oxford and L.N. thanks the Institut de Math\'ematiques de Toulouse where part of this work was done.

\section{Existence and uniqueness of vortex radial profiles}\label{Sec:EU}

We study existence and uniqueness properties of radially symmetric critical points of $E_\eps^{GL}$, $E_\eta^{MM}$ and $E_{\eps,\eta}$. We define the following reduced energy functionals relevant in the discussion of radially symmetric critical points in $\mcA^{GL}$, $\mcA^{MM}$ and $\mcA$ (see Appendix \ref{App}).
\begin{itemize}
\item The reduced $\RR^N$-valued Ginzburg--Landau functional
\begin{align*}
I_\eps^{GL}[f] 
	&= \frac{1}{|\Sphere^{N-1}|} E_{\eps}^{GL}[f(|x|)n(x)] 
	=\frac{1}{2}\int_0^1 \Big[(f')^2 + \frac{N-1}{r^2} f^2 + \frac{1}{\eps^2} W(1 - f^2)\Big]r^{N-1}\,dr
\end{align*}
where $f$ belongs to
\[
\mcB^{GL} = \Big\{ f: r^{\frac{N-1}{2}}f', r^{\frac{N-3}{2}}f \in L^2(0,1), f(1) = 1\Big\}.
\]

\item The reduced $\Sphere^N$-valued Ginzburg--Landau functional:
\begin{align*}
I_\eta^{MM}[f,g]
	&= \frac{1}{|\Sphere^{N-1}|} E_{\eta}^{MM}[(f(r) n(x), g(r))]\\
	&=\frac12\int_0^1 \Big[(f')^2 + (g')^2 + \frac{N-1}{r^2} f^2 + \frac{1}{\eta^2} \tilde W(g^2)\Big]\,r^{N-1}\,dr,
\end{align*}
where $(f,g)$ belongs to
\begin{align*}
\mcB^{MM}= \Big\{(f,g):~ 
		&r^{\frac{N-1}{2}}f', r^{\frac{N-3}{2}} f, r^{\frac{N-1}{2}}g', r^{\frac{N-1}{2}}g \in L^2(0,1),\\
		& f^2 + g^2 = 1, f(1) = 1, g(1) = 0\Big\}.
\end{align*}

\item The reduced extended functional
\begin{align*}
I_{\eps,\eta}[f,g] 
	&= \frac{1}{|\Sphere^{N-1}|} E_{\eps,\eta}[(f(r) n(x), g(r))]\\
	&= \frac{1}{2}\int_0^1 \Big[(f')^2 + (g')^2 + \frac{N-1}{r^2} f^2 + \frac{1}{\eps^2} W(1 - f^2 - g^2) + \frac{1}{\eta^2} \tilde W(g^2)\Big]\,r^{N-1}\,dr
\end{align*}
where $(f,g)$ belongs to
\[
\mcB = \Big\{(f,g): r^{\frac{N-1}{2}}f', r^{\frac{N-3}{2}} f, r^{\frac{N-1}{2}}g', r^{\frac{N-1}{2}}g \in L^2(0,1), f(1) = 1, g(1) = 0\Big\}.
\]

\end{itemize}

Note that $(f,g) \in \mcB$ is equivalent to  $m(x)=(f(r) n(x), g(r))\in H^1(B^N, \RR^{N+1})$ with $m(x) =(x,0)$ on $\partial B^N$, and
\[
\int_{B^N} |\nabla m|^2\,dx = |\Sphere^{N-1}|\int_0^1\Big[(f')^2 + (g')^2 + \frac{N-1}{r^2} f^2\Big]\, r^{N-1}\,dr.
\]
It is straightforward to check that bounded critical points of $I_\eps^{GL}$, $I_\eta^{MM}$ and $I_{\eps,\eta}$ correspond to bounded radially symmetric critical points of $E_\eps^{GL}$, $E_\eta^{MM}$ and $E_{\eps,\eta}$, respectively.\footnote{In this radially symmetric setting, when $W$ and $\tilde W$ satisfy \eqref{Eq:WCond'} and \eqref{Eq:TWCond'}, the boundedness assumption on critical points can be dropped, in view of Lemma \ref{lem:f2g2}.}

\medskip

\noindent\underline{The $\RR^N$-valued Ginzburg--Landau model}

\begin{theorem}\label{Thm:GLExist}
Let $N \geq 2$ and suppose that $W \in C^2((-\infty,1])$ satisfies $W(0) = 0$ and $W \geq 0$. Then, for every $\eps > 0$, \eqref{Eq:24VII18-X1}--\eqref{Eq:24VII18-X2} has a solution $f_\eps$ such that $\frac{f_\eps}{r} \in C^2([0,1])$, $0 < f_\eps < 1$ in $(0,1)$, and $f_\eps(0)=0$. If, in addition, $W$ satisfies \eqref{Eq:WCond'}, then $f'_\eps>0$ in $(0,1]$ and $f_\eps$ is the unique solution to \eqref{Eq:ueH1}--\eqref{Eq:24VII18-X2}; in particular, $f_\eps$ is the unique minimizer of $I_\eps^{GL}$ in $\mcB^{GL}$.
\end{theorem}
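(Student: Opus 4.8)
The plan is to treat the existence part via the direct method applied to the reduced functional $I_\eps^{GL}$, and the uniqueness (hence global minimality) part via a Hardy-type substitution that linearizes the nonlinearity. For existence, I would first show that $I_\eps^{GL}$ is coercive and weakly lower semicontinuous on $\mcB^{GL}$: the quadratic gradient and Hardy-type terms $\int_0^1 [(f')^2 + \frac{N-1}{r^2} f^2] r^{N-1}\,dr$ control the natural Hilbert-space norm on $\mcB^{GL}$, and the potential term is nonnegative and (by Fatou, after passing to a subsequence converging a.e.) lower semicontinuous. The boundary condition $f(1) = 1$ is preserved under weak limits because of the trace estimate encoded in the weighted $H^1$ norm (near $r = 1$ the weights are nondegenerate). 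This yields a minimizer $f_\eps \in \mcB^{GL}$. Regularity: away from $r = 0$ the equation \eqref{Eq:24VII18-X1} is a regular second-order ODE, so $f_\eps$ is smooth there; near $r = 0$ one uses the substitution $h = f/r$ and the membership $r^{\frac{N-3}{2}} f \in L^2$ to deduce $h \in C^2([0,1])$ by a standard indicial-equation/Frobenius analysis of the ODE for $h$, which in particular forces $f_\eps(0) = 0$. The strict bounds $0 < f_\eps < 1$ in $(0,1)$ follow from truncation (replacing $f$ by $\max(0,\min(f,1))$ does not increase the energy, using $W(0)=0$, $W\ge 0$, and that $|f|\le 1$ decreases the Hardy term) together with the strong maximum principle applied to \eqref{Eq:24VII18-X1} — note $W'(1-f^2)f$ has a favorable sign near the endpoints since $tW'(t)\ge 0$.

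For the monotonicity $f_\eps' > 0$ in $(0,1]$ under the extra hypothesis \eqref{Eq:WCond'}, I would differentiate \eqref{Eq:24VII18-X1}: setting $\phi = f_\eps'$, one gets a linear ODE for $\phi$ whose zeroth-order coefficient involves $W''(1-f_\eps^2)$; using $W'' \ge 0$ and the sign information already obtained, together with the behavior $\phi(0) = h(0) > 0$ (from $f/r \in C^2$, $f(0)=0$, and $f>0$ nearby) and Hopf's lemma at $r=1$, a maximum-principle / sub-supersolution argument excludes an interior zero of $\phi$. Alternatively this is exactly the kind of statement for which the paper invokes the moving-plane method for cooperative systems; I would cite that machinery if the direct ODE argument gets delicate at $r=0$.

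For uniqueness — the crux of the theorem — suppose $f_1, f_2$ both solve \eqref{Eq:ueH1}--\eqref{Eq:24VII18-X2}. The idea is the Hardy decomposition / factoring trick: write $f_i = f_* \,\xi_i$ where $f_* = f_\eps$ is the minimizer just constructed (known to be positive on $(0,1)$ with controlled behavior at both endpoints), so $\xi_i \to 1$ at $r = 1$. Plugging into the energy and using that $f_*$ solves the Euler–Lagrange equation, one obtains an identity of the form
\[
I_\eps^{GL}[f_* \xi] = I_\eps^{GL}[f_*] + \frac{1}{2}\int_0^1 f_*^2 (\xi')^2 r^{N-1}\,dr + \frac{1}{2\eps^2}\int_0^1 \big[W(1 - f_*^2\xi^2) - W(1-f_*^2) + W'(1-f_*^2) f_*^2(\xi^2 - 1)\big] r^{N-1}\,dr,
\]
where the boundary terms from integration by parts vanish thanks to the endpoint behavior (this is where $r^{\frac{N-1}{2}} f_*', r^{\frac{N-3}{2}} f_* \in L^2$ and $f_*/r \in C^2$ are used to justify the manipulation at $r = 0$, and $\xi(1) = 1$ at $r=1$). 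Convexity of $W$ (from \eqref{Eq:WCond'}) makes the last bracket $\ge 0$ pointwise — it is the Bregman distance of the convex function $W$ — and it vanishes only where $\xi^2 = 1$. Hence $I_\eps^{GL}[f]\ge I_\eps^{GL}[f_*]$ for every competitor, with equality forcing $\xi' \equiv 0$, i.e. $\xi \equiv 1$, so $f = f_*$. This proves simultaneously that any solution equals $f_\eps$ and that $f_\eps$ is the unique minimizer. The main obstacle I anticipate is the rigorous justification of the integration-by-parts step and the well-posedness of the factorization $\xi_i = f_i / f_*$ near $r = 0$ (where both numerator and denominator vanish linearly): this requires the precise $C^2$ statement for $f/r$ and a careful check that $f_*^2 (\xi')^2 r^{N-1}$ is integrable, which is exactly the technical content that the "Hardy decomposition technique" of \cite{INSZ3} is designed to supply, so I would lean on that.
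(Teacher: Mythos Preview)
Your approach is essentially the one the paper takes: existence via the direct method on $I_\eps^{GL}$ with truncation and the strong maximum principle, regularity at $r=0$ via $h=f/r$ (the paper packages this as Lemma~\ref{Lem:ONSym}), monotonicity via the moving-plane argument (the paper's Corollary~\ref{Prop:GLMonotonicity}), and uniqueness via the Hardy factorization combined with convexity of $W$ (the paper refers to the scalar analogue of Proposition~\ref{pro:unique_eps_eta}). Your Bregman-distance formulation of the remainder is equivalent to the paper's use of the convexity inequality followed by Hardy decomposition of the resulting quadratic form.

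There is, however, one genuine gap in your uniqueness argument. Your identity
\[
I_\eps^{GL}[f_*\xi] = I_\eps^{GL}[f_*] + \tfrac12\!\int_0^1 f_*^2(\xi')^2 r^{N-1}\,dr + (\text{nonnegative Bregman term})
\]
uses only that $f_*$ is a critical point, and it shows that $f_*$ is the \emph{unique minimizer}. But it does \emph{not} by itself show that every solution equals $f_*$: for an arbitrary solution $f_2$ you only get $I_\eps^{GL}[f_2]\ge I_\eps^{GL}[f_*]$, and you need the reverse inequality to trigger the rigidity $\xi\equiv 1$. The way out is to run the same identity with $f_2$ in the role of the base, which requires $f_2>0$ in $(0,1)$. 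That positivity for an \emph{a priori sign-changing} solution is not free; this is precisely what the paper establishes via the moving-plane argument (Proposition~\ref{Prop:fPos} specialized to $g\equiv 0$, i.e.\ Corollary~\ref{Prop:GLMonotonicity}). You invoke moving planes for monotonicity of the constructed minimizer, but you should also invoke it (or the positivity result directly) for an arbitrary solution before the factorization step. A minor side remark: your appeal to $tW'(t)\ge 0$ in the existence part already presupposes convexity, whereas the existence statement assumes only $W(0)=0$, $W\ge 0$; the strict bounds $0<f_\eps<1$ follow from the strong maximum principle using only that $W'(0)=0$ (which is automatic from $W\in C^2$, $W(0)=0$, $W\ge 0$).
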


\begin{remark}
The existence and uniqueness of the vortex radial profile for the $\RR^N$-valued Ginzburg--Landau model has been studied by many authors. Closely related to our result above is a result in \cite{INSZ18_CRAS} which gives the uniqueness in dimensions $N \geq 7$. Earlier results in \cite{aguareles-baldoma, tang, farina-guedda, Hervex2, ODE_INSZ} are for all dimensions $N \geq 2$ but assume the inequality $W''(0) > 0$, while Theorem \ref{Thm:GLExist} above allows the case $W''(0) = 0$.
\end{remark}

Let $f_\eps$ be the radial profile in Theorem \ref{Thm:GLExist}. Note that $(f_\eps,0)$ is the non-escaping critical point for the extended functional $I_{\eps,\eta}$ for any $\eta > 0$. For the existence of escaping solutions in the extended model, we give an estimate on the first eigenvalue $\ell(\eps)$ of
\begin{equation}
L^{GL}_\eps = -\Delta - \frac{1}{\eps^2}W'(1 - f_\eps^2)
	\label{Eq:LeGLDef}
\end{equation}
in $B^N$ with respect to the zero Dirichlet boundary condition. Note that since the potential $\frac{1}{\eps^2}W'(1 - f_\eps^2)$ is radially symmetric, any first eigenfunction of $L_{\eps}^{GL}$ is also radially symmetric. It is clear that, under \eqref{Eq:WCond'}, we have $\ell(\eps) >  -W'(1) \eps^{-2}$ for every $\eps>0$.

\begin{lemma}\label{Lem:eigen-gl}
Suppose $W \in C^2((-\infty,1])$ satisfies \eqref{Eq:WCond'}. Then $\ell$ is a continuous function of $\eps$  satisfying
\begin{equation}
\eps^2 \ell(\eps) > \tilde \eps^2 \ell(\tilde \eps) \text{ for all } 0 < \tilde \eps < \eps < \infty,
	\label{Eq:ellMon}
\end{equation}
and the following estimates hold.

\begin{enumerate}[(a)]

\item If $W'(1) = 0$, then $W = 0$ in $(0,1)$, $L_\eps^{GL} = -\Delta$ and
\[
\ell(\eps) = \lambda_1(-\Delta) >  0 \text{ for all } \eps > 0,
\]
where $\lambda_1(-\Delta)$ is the first eigenvalue of the Laplacian on $B^N$ with respect to the zero Dirichlet boundary value.

\item If $N\geq 7$, 
\[
\ell(\eps)\geq \frac{(N-2)^2}4-(N-1)>0 \text{ for all } \eps > 0.
\]

\item If $2 \leq N \leq 6$ and $W'(1) > 0$, then there exists $\eps_0 \in (0,\infty)$ such that $\ell(\eps) < 0$ and increasing in $(0,\eps_0)$, $\ell(\eps_0) = 0$ and $\ell(\eps) > 0$ in $(\eps_0,\infty)$. Furthermore, for some $\eps_1 \in (0,\eps_0)$ and $c_1 \in (0,W'(1))$, 
\[
-\frac{W'(1)}{\eps^2} <  \ell(\eps) \leq -\frac{c_1}{\eps^2} \text{ for } \eps \in (0,\eps_1).
\]

\end{enumerate}
\end{lemma}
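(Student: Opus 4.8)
The plan is to organize the proof around the scaling relation and a monotonicity argument for the rescaled eigenvalue. First I would record the basic scaling: if $f_\eps$ solves \eqref{Eq:24VII18-X1}--\eqref{Eq:24VII18-X2}, then $\eps^2 \ell(\eps)$ is the first Dirichlet eigenvalue of the operator $-\eps^2 \Delta - W'(1-f_\eps^2)$ on $B^N$, which by the min-max principle equals $\inf \{ \int_{B^N} [\eps^2|\nabla \phi|^2 - W'(1-f_\eps^2)\phi^2] : \|\phi\|_{L^2}=1 \}$. Continuity of $\ell$ in $\eps$ should follow from continuous dependence of $f_\eps$ on $\eps$ (available from Theorem \ref{Thm:GLExist} and standard ODE theory, or from uniqueness plus a compactness argument) together with stability of simple eigenvalues under uniform perturbation of the potential. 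For \eqref{Eq:ellMon}, I would differentiate or directly compare: using the radial first eigenfunction $\phi_\eps$ of $L_\eps^{GL}$ as a test function for $L_{\tilde\eps}^{GL}$, one gets $\tilde\eps^2 \ell(\tilde\eps) \le \int [\tilde\eps^2|\nabla\phi_\eps|^2 - W'(1-f_{\tilde\eps}^2)\phi_\eps^2]$; the point is then to show $\tilde\eps^2 |\nabla\phi_\eps|^2 - W'(1-f_{\tilde\eps}^2)\phi_\eps^2 < \eps^2|\nabla\phi_\eps|^2 - W'(1-f_\eps^2)\phi_\eps^2$ after suitable manipulation, which should reduce to the monotonicity $r \mapsto f_\eps(r)$ being such that $f_{\tilde\eps} \le f_\eps$ pointwise (smaller $\eps$ gives a profile closer to the singular harmonic map, hence larger on a relative scale — this comparison of profiles for different $\eps$ needs the convexity of $W$ and a sub/supersolution argument, and is the kind of fact that the paper develops in Section \ref{Sec:EU}). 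Combined with $W'' \ge 0$ so that $t \mapsto W'(1-t)$ is monotone, this yields the strict inequality.

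For part (a): if $W'(1) = 0$, then since $W \ge 0$, $W(0)=0$ and $W$ is convex on $(-\infty,1]$, the function $W$ must vanish identically on $[0,1]$ (a nonnegative convex function that is zero at an interior-type point $0$ and has zero derivative at the endpoint $1$ is forced to be zero on the whole interval by convexity), hence $W'(1-f_\eps^2) \equiv 0$ since $1 - f_\eps^2 \in [0,1]$, so $L_\eps^{GL} = -\Delta$ and $\ell(\eps) = \lambda_1(-\Delta)$ for all $\eps$.

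For part (b): I would use the pointwise bound $1 - f_\eps^2 \ge 0$ together with convexity of $W$ to get $W'(1 - f_\eps^2) \le W'(1)$ only after more care; actually the cleaner route in dimensions $N \ge 7$ is to invoke the known coercivity of the vortex second variation (the $N\ge 7$ global minimality from \cite{INSZ18_CRAS, INSZ_AnnENS}) or, more directly, the Hardy-type inequality: by the classical Hardy inequality on $B^N$, $\int |\nabla\phi|^2 \ge \frac{(N-2)^2}{4}\int \frac{\phi^2}{r^2}$, while $-\frac{1}{\eps^2}W'(1-f_\eps^2) \ge -\frac{N-1}{r^2} f_\eps^2 \cdot (\text{something})$ — more precisely one should exploit equation \eqref{Eq:24VII18-X1} rewritten as $-\frac{1}{\eps^2}W'(1-f_\eps^2) = -\frac{f_\eps''+\frac{N-1}{r}f_\eps'}{f_\eps} + \frac{N-1}{r^2}$, so that $L_\eps^{GL}$ is conjugate (via the factoring function $f_\eps$, the Hardy decomposition technique referenced in the intro) to an operator with potential $\ge -\frac{N-1}{r^2} + \frac{N-1}{r^2}$... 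I would set $\phi = f_\eps \, r \, \psi$ or $\phi = f_\eps \psi$ and compute $\int [|\nabla\phi|^2 - \frac{1}{\eps^2}W'(1-f_\eps^2)\phi^2] = \int f_\eps^2 |\nabla\psi|^2 + (\text{boundary/lower order})$, reducing to $\int |\nabla\psi|^2 r^{N-1} \ge \frac{(N-2)^2/4 - (N-1)}{?}\cdots$; combined with Hardy this gives the stated lower bound $\frac{(N-2)^2}{4} - (N-1)$, which is $> 0$ exactly when $N \ge 7$. For part (c): with $W'(1) > 0$ and $2 \le N \le 6$, one has $\frac{(N-2)^2}{4} - (N-1) \le 0$, so the test function $\phi = r^{1-N/2}\,\chi$ (or a truncation of the borderline Hardy minimizer) near the origin makes $\int |\nabla\phi|^2$ comparable to $\int \frac{N-1}{r^2}\phi^2 + O(1)$ while the potential term $-\frac{1}{\eps^2}W'(1-f_\eps^2)\phi^2$ is strictly negative and, as $\eps \to 0$, $f_\eps \to n$ locally so $W'(1-f_\eps^2) \to W'(1) > 0$ on compact subsets of $B^N\setminus\{0\}$ — this forces $\ell(\eps) \to -\infty$ and in fact $\ell(\eps) \le -c_1\eps^{-2}$ for small $\eps$ by scaling (rescale to the whole space and use that $W'(1-f^2)$ stays bounded below by a positive constant on a fixed annulus). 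Meanwhile $\ell(\eps) > 0$ for large $\eps$ since then $W'(1-f_\eps^2)$ is small (as $f_\eps \to 0$ uniformly as $\eps \to \infty$, because the potential term dominates), so $\frac{1}{\eps^2}W'(1-f_\eps^2) \to 0$ in $L^\infty$ and $\ell(\eps) \to \lambda_1(-\Delta) > 0$. Continuity and the strict monotonicity \eqref{Eq:ellMon} of $\eps^2\ell(\eps)$ then pin down a unique crossing $\eps_0$, and \eqref{Eq:ellMon} plus $\ell$'s sign force $\ell$ itself to be increasing on $(0,\eps_0)$ (where $\eps^2\ell(\eps)$ increasing and $\ell < 0$ give $\ell$ increasing) and positive beyond $\eps_0$.

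\medskip

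The main obstacle I anticipate is the comparison of radial profiles $f_{\tilde\eps}$ versus $f_\eps$ for different values of $\eps$ needed for \eqref{Eq:ellMon}, and the matching asymptotics $f_\eps \to n$ as $\eps \to 0$ and $f_\eps \to 0$ as $\eps \to \infty$ with quantitative control — these require the full existence/uniqueness/monotonicity theory of Theorem \ref{Thm:GLExist} together with barrier arguments, and handling the case where $W$ vanishes near the origin (so $W'$ is not strictly increasing) needs the convexity $W'' \ge 0$ to be used carefully rather than strict convexity. The eigenvalue estimates themselves, once the profile behavior is known, are relatively standard Hardy-type computations of the kind the paper systematizes via its factoring-function (Hardy decomposition) method.
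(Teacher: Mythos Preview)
Your overall strategy tracks the paper's, but the argument for the strict monotonicity \eqref{Eq:ellMon} has a genuine gap. You test $\phi_\eps$ in $L_{\tilde\eps}^{GL}$ and reduce to a pointwise profile comparison ``$f_{\tilde\eps}\le f_\eps$'' on $B^N$. First, the heuristic you give (``smaller $\eps$ gives a profile closer to the singular harmonic map, hence larger'') points the \emph{opposite} way: as $\eps$ decreases $f_\eps$ increases toward $1$, so one expects $f_{\tilde\eps}\ge f_\eps$ for $\tilde\eps<\eps$. Second, and more importantly, whichever direction an unscaled comparison goes, the two contributions in the difference
\[
(\eps^2-\tilde\eps^2)\int_{B^N}|\nabla\phi_\eps|^2 \quad\text{and}\quad \int_{B^N}\big(W'(1-f_{\tilde\eps}^2)-W'(1-f_\eps^2)\big)\phi_\eps^2
\]
have opposite signs, and there is no mechanism to make the first dominate. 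The paper avoids this by \emph{rescaling}: one rewrites $\eps^2\ell(\eps)$ as the first Dirichlet eigenvalue of $-\Delta - W'(1-f_\eps^2(\eps\,\cdot))$ on $B(0,1/\eps)$; for $\tilde\eps<\eps$ one then compares the \emph{rescaled} profiles, for which Proposition~\ref{Prop:B.1}(i) gives $f_\eps(\eps r)\ge f_{\tilde\eps}(\tilde\eps r)$ and hence the potentials go the right way; the remaining gap is closed by the strict domain monotonicity of the first Dirichlet eigenvalue under the inclusion $B(0,1/\eps)\subsetneq B(0,1/\tilde\eps)$. This rescaling--plus--domain-monotonicity step is the missing idea.

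There are also two slips in part~(c). The claim that $W'(1-f_\eps^2)\to W'(1)>0$ on compact subsets of $B^N\setminus\{0\}$ as $\eps\to 0$ is false: there $f_\eps^2\to[1-t_0,1]$ with $t_0=\sup\{t\in[0,1):W(t)=0\}$, so $1-f_\eps^2\in[0,t_0]$ and $W'(1-f_\eps^2)\to 0$. The test function must instead be concentrated at the core scale, on an annulus $[C_\delta\eps,a\eps]$, where Proposition~\ref{Prop:B.1}(ii) pins $f_\eps^2$ near $1-t_0$ so that $W'(1-f_\eps^2)$ does not degenerate; the paper carries this out with the factor $\varphi=r^{-(N-2)/2}$ and an explicit sine--log test function, and the sign comes from $N^2-8N+8<0$ for $2\le N\le 6$. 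Your later remark about rescaling to the whole space and using a fixed annulus is the correct instinct and is essentially equivalent, but it is not consistent with the earlier statement. Finally, $f_\eps\not\to 0$ as $\eps\to\infty$ (rather $f_\eps(r)\to r$), though your conclusion $\ell(\eps)\to\lambda_1(-\Delta)$ is unaffected.
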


\noindent\underline{The extended model}

We are now in position to give a necessary and sufficient condition for the existence of an escaping solution of \eqref{Eq:feegeeH1}--\eqref{Eq:20III21-feegeeBC}. For an illustration see Figure \ref{Fig1}.

\begin{theorem}\label{Thm:ExtendedExist}
Suppose $W \in C^2((-\infty,1])$ and $\tilde W \in C^2([0,\infty))$ satisfy \eqref{Eq:WCond'} and \eqref{Eq:TWCond'}.
\begin{enumerate}[(a)]
\item If $N \geq 7$ or $W'(1) = 0$, then for every $\eps, \eta>0$, \eqref{Eq:feegeeH1}--\eqref{Eq:20III21-feegeeBC} has no solution $(f_{\eps,\eta},g_{\eps,\eta} )$ which satisfies $g_{\eps,\eta} > 0$ in $(0,1)$. Moreover, the non-escaping solution $(f_\eps,0)$ is the unique minimizer of $I_{\eps,\eta}$ in $\mcB$.

\item Suppose $2 \leq N \leq 6$, $W'(1) > 0$. Let $\eps_0 \in (0,\infty)$ be as in Lemma \ref{Lem:eigen-gl} and define
\begin{align*}
\eta_0(\eps) 
	&= \sqrt{\frac{\tilde W'(0)}{|\ell(\eps)|}} \in [0,\infty) \text{ for } \eps \in (0,\eps_0).
\end{align*}

\begin{enumerate}[({\theenumi}1)]

\item The system \eqref{Eq:feegeeH1}--\eqref{Eq:20III21-feegeeBC} has an escaping solution $(f_{\eps,\eta},g_{\eps,\eta} )$ which satisfies $g_{\eps,\eta} > 0$ in $(0,1)$ if and only if $0 < \eps < \eps_0$ and $\eta > \eta_0(\eps)$. In this case, it is the unique escaping solution of \eqref{Eq:feegeeH1}--\eqref{Eq:20III21-feegeeBC}, $\frac{f_{\eps,\eta}}{r}, g_{\eps,\eta} \in C^2([0,1])$, $f_{\eps,\eta}^2 + g_{\eps,\eta}^2 < 1$, $f_{\eps,\eta} > 0$, $f_{\eps,\eta}' > 0$, $g_{\eps,\eta}' < 0$ in $(0,1)$, and there are exactly two minimizers of $I_{\eps,\eta}$ in $\mcB$ given by $(f_{\eps,\eta},\pm g_{\eps,\eta} )$. 

\item If $\eps \geq \eps_0$ or $0 < \eta \leq \eta_0(\eps)$, the non-escaping solution $(f_\eps,0)$ of \eqref{Eq:feegeeH1}--\eqref{Eq:20III21-feegeeBC} is the unique minimizer of $I_{\eps,\eta}$ in $\mcB$. Otherwise (i.e. $0 < \eps < \eps_0$ and $\eta > \eta_0(\eps)$), the non-escaping solution $(f_\eps,0)$ of \eqref{Eq:feegeeH1}--\eqref{Eq:20III21-feegeeBC} is an unstable critical point of $I_{\eps,\eta}$ in $\mcB$.
\end{enumerate}

\end{enumerate}
\end{theorem}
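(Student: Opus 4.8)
The plan is to reduce everything to a careful ODE analysis of the system \eqref{Eq:20III21-fee}--\eqref{Eq:20III21-feegeeBC}, using the variational structure of $I_{\eps,\eta}$ and the spectral information from Lemma \ref{Lem:eigen-gl}. The key observation is that the escaping phenomenon is governed precisely by the sign of the first eigenvalue $\ell(\eps)$ of $L^{GL}_\eps = -\Delta - \frac{1}{\eps^2} W'(1 - f_\eps^2)$, shifted by the mass term $\frac{1}{\eta^2}\tilde W'(0)$. Concretely, linearizing \eqref{Eq:20III21-gee} about the non-escaping solution $(f_\eps, 0)$, the $g$-equation becomes, at leading order, the eigenvalue problem for $L^{GL}_\eps + \frac{1}{\eta^2}\tilde W'(0)$, whose bottom eigenvalue is $\ell(\eps) + \frac{1}{\eta^2}\tilde W'(0)$. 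This is negative exactly when $\eta^2 > \tilde W'(0)/|\ell(\eps)|$, i.e. $\eta > \eta_0(\eps)$, which (by Lemma \ref{Lem:eigen-gl}) forces $2 \le N \le 6$ and $W'(1) > 0$, and $\eps < \eps_0$ (so that $\ell(\eps) < 0$). This immediately gives part (a): if $N \ge 7$ or $W'(1) = 0$ then $\ell(\eps) \ge 0$ (Lemma \ref{Lem:eigen-gl}(a),(b)), so $(f_\eps, 0)$ is a strict local minimum of $I_{\eps,\eta}$ in the $g$-direction, and combined with the convexity properties of $W, \tilde W$ from \eqref{Eq:WCond'}--\eqref{Eq:TWCond'} one upgrades this to global minimality in $\mcB$ — and no escaping solution can exist since any such solution would have to have strictly smaller energy, contradiction (this last point needs the argument that an escaping solution always beats the non-escaping one when it exists, which I would prove separately; see below).

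For part (b), I would argue as follows. \emph{Existence when $0 < \eps < \eps_0$ and $\eta > \eta_0(\eps)$:} minimize $I_{\eps,\eta}$ over $\mcB$. Using \eqref{Eq:WCond'}--\eqref{Eq:TWCond'} the functional is coercive and weakly lower semicontinuous on $\mcB$, so a minimizer $(f, g)$ exists; by replacing $g$ with $|g|$ we may assume $g \ge 0$. It remains to show the minimizer is escaping, i.e. $g \not\equiv 0$. Take $q_\eps$ to be a first eigenfunction of $L^{GL}_\eps$ (radially symmetric, positive in $B^N$, vanishing on $\partial B^N$, by Lemma \ref{Lem:eigen-gl}) and compute $I_{\eps,\eta}[(f_\eps, t q_\eps)]$ for small $t$: the second-order term is $\frac{t^2}{2}\big(\ell(\eps) + \tilde W'(0)/\eta^2\big)\|q_\eps\|^2_{L^2} + o(t^2)$ (here one uses $\tilde W''(0^+) \ge 0$ to control the remainder and the fact that $f_\eps^2$ stays bounded away from $1$ near $r = 0$ so $W'$ is evaluated where it is $C^1$), which is strictly negative for $0 < t \ll 1$ precisely because $\eta > \eta_0(\eps)$. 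Hence the minimizer has energy below $I_{\eps,\eta}[(f_\eps,0)]$ and so $g \not\equiv 0$; by the strong maximum principle $g > 0$ in $(0,1)$. That this escaping minimizer solves the ODE system with the stated regularity $\frac{f}{r}, g \in C^2([0,1])$, together with the sign and monotonicity properties $f > 0$, $f' > 0$, $g' < 0$, and the strict inequality $f^2 + g^2 < 1$, follows from Proposition \ref{Prop:fPos}, the moving-plane/maximum-principle arguments referenced in the introduction, and elliptic regularity — I would quote the relevant statements from Section \ref{Sec:EU}. \emph{Non-existence when $\eps \ge \eps_0$ or $\eta \le \eta_0(\eps)$:} here $\ell(\eps) + \tilde W'(0)/\eta^2 \ge 0$, so multiplying \eqref{Eq:20III21-gee} by $g_{\eps,\eta}$ and integrating (against the measure $r^{N-1}\,dr$), using $\tilde W'(g^2) \ge \tilde W'(0)$ (convexity of $\tilde W$) and $W'(1 - f^2 - g^2) \le W'(1 - f^2)$ appropriately, one obtains $\int_0^1 \big[(g')^2 - (\frac{1}{\eps^2}W'(1-f^2) - \frac{1}{\eta^2}\tilde W'(0))g^2\big] r^{N-1}\,dr \le 0$; but the left side is $\ge (\ell(\eps) + \tilde W'(0)/\eta^2)\|g\|^2 \ge 0$ by the variational characterization of $\ell(\eps)$ — here one must be slightly careful since $f = f_{\eps,\eta} \ne f_\eps$ a priori, so I would instead use that an escaping solution has $f_{\eps,\eta}^2 + g_{\eps,\eta}^2 < 1$ and compare, or run a continuation/bifurcation argument in $\eta$ off the curve $\eta = \eta_0(\eps)$. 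This forces $g \equiv 0$, a contradiction. The uniqueness of the escaping solution, when it exists, I would get from the Hardy-decomposition uniqueness argument advertised in the introduction (applied to the reduced system), and the ``exactly two minimizers $(f_{\eps,\eta}, \pm g_{\eps,\eta})$'' statement then follows from the sign symmetry $g \mapsto -g$.

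The last clause of (b2) — that $(f_\eps, 0)$ is an \emph{unstable} critical point of $I_{\eps,\eta}$ in $\mcB$ when $0 < \eps < \eps_0$ and $\eta > \eta_0(\eps)$ — is immediate from the computation above: the direction $(0, q_\eps)$ lowers the energy to second order. Conversely when $\eps \ge \eps_0$ or $\eta \le \eta_0(\eps)$, the second variation of $I_{\eps,\eta}$ at $(f_\eps,0)$ splits into an $f$-block (which is $\ge 0$ because $f_\eps$ minimizes $I^{GL}_\eps$, using the Hardy trick / positivity of $f_\eps$ from Theorem \ref{Thm:GLExist}) and a $g$-block equal to the quadratic form of $L^{GL}_\eps + \tilde W'(0)/\eta^2$, which is $\ge 0$ since its bottom eigenvalue is $\ell(\eps) + \tilde W'(0)/\eta^2 \ge 0$; combined with strict convexity of $W$ and $\tilde W$ away from their minima one promotes this to global minimality, giving the uniqueness of $(f_\eps, 0)$ as minimizer. \textbf{The main obstacle} I anticipate is the non-existence direction and the uniqueness of the escaping solution when $f_{\eps,\eta}$ genuinely differs from $f_\eps$: one cannot directly invoke the eigenvalue $\ell(\eps)$ of the \emph{linearization at $(f_\eps,0)$} to control a solution whose $f$-component has been perturbed, so the clean energy comparison has to be replaced by either a monotonicity argument along the branch of escaping solutions (continuity in $\eta$, with the branch emanating from $(f_\eps, 0)$ exactly at $\eta = \eta_0(\eps)$) or by the Hardy-decomposition technique applied to the full nonlinear system — this is where the bulk of the technical work lies, and it is also what ties this theorem to the stability statements in Theorem \ref{Thm:ExtendedMain}.
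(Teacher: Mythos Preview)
Your existence direction in (b1) and the instability statement in (b2) match the paper's argument: minimize $I_{\eps,\eta}$ in $\mcB$, use the first eigenfunction $q_\eps$ of $L_\eps^{GL}$ to show $Q_{\eps,\eta}[0,q_\eps] < 0$ when $\eta > \eta_0(\eps)$, hence the minimizer escapes. The regularity, positivity, monotonicity, and uniqueness of the escaping solution are indeed imported from the earlier propositions (Propositions \ref{Prop:ODEMonotonicity}, \ref{Prop:fPos}, \ref{pro:unique_eps_eta}, Lemmas \ref{lem:f2g2}, \ref{Lem:FullONSym}), exactly as you outline.

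The genuine gap is in the non-existence direction, and you correctly diagnose it: your integration-by-parts argument multiplying \eqref{Eq:20III21-gee} by $g$ collapses because the potential in the quadratic form is $W'(1 - f_{\eps,\eta}^2 - g_{\eps,\eta}^2)$, not $W'(1 - f_\eps^2)$, so the variational characterization of $\ell(\eps)$ does not apply. The paper's resolution is \emph{not} a continuation or comparison argument but a mountain-pass argument (Lemma \ref{Lem:NENM}): one first shows, via the Hardy decomposition and the positivity of $\ell(\eps) + \tilde W'(0)/\eta^2$, that $(f_\eps,0)$ is a \emph{local} minimizer of $I_{\eps,\eta}$ in $\mcB$ when $(\eps,\eta)$ lies in the open non-escaping region. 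If an escaping solution also existed, Proposition \ref{pro:unique_eps_eta} says it is the unique global minimizer, so $(f_\eps,0)$ and $(f_{\eps,\eta},g_{\eps,\eta})$ are two distinct relative minima; a mountain-pass theorem (applied to a suitably truncated functional $J$ on the convex set $\mcM = \{(\alpha,\beta): f_\eps+\alpha\ge 0,\ \beta\ge 0,\ (f_\eps+\alpha)^2+\beta^2\le 1\}$, with Palais--Smale checked by hand) then produces a \emph{third} critical point which, after maximum-principle arguments, is shown to be either non-escaping or escaping---either way contradicting the uniqueness statements already proved. The boundary case $\eta = \eta_0(\eps)$ is handled by a limiting argument from the open region.

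A secondary gap is in part (a): your claim that ``convexity upgrades local to global minimality'' is not justified as stated. The paper does not argue this way. For $N \ge 7$ it invokes the global minimality of $\bar m_\eps$ for $E_{\eps,\infty}$ from \cite{INSZ18_CRAS}, and since $\tilde W \ge 0$ this persists for $E_{\eps,\eta}$; for $W'(1) = 0$ it observes that $W \equiv 0$ on $[0,1]$, so on $\{|m|\le 1\}$ the energy $E_{\eps,\infty}$ is just the Dirichlet energy, whose unique minimizer is $(x,0)$. In both cases Proposition \ref{pro:unique_eps_eta} (escaping solutions are minimizers) then rules out escaping solutions. Alternatively, once you have the mountain-pass Lemma \ref{Lem:NENM} in hand, the same mechanism (local minimality of $(f_\eps,0)$ $\Rightarrow$ unique global minimality and no escaping solution) would give (a) without invoking \cite{INSZ18_CRAS}, but this is not the route the paper takes.
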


We note that if $2 \leq N \leq 6$, $W'(1) > 0$ and $\tilde W'(0) = 0$, then $\eta_0(\eps) = 0$ for all $\eps \in (0,\eps_0)$.  In this case, the theorem asserts for all $\eta>0$, an escaping solution of  \eqref{Eq:feegeeH1}--\eqref{Eq:20III21-feegeeBC} exists if and only if $\eps \in (0,\eps_0)$.

\begin{remark}\label{Rem:eta0}
By Lemma \ref{Lem:eigen-gl}, when $2 \leq N \leq 6$, $W'(1) > 0$ and $\tilde W'(0) > 0$, the function $\eta_0$ defined in Theorem \ref{Thm:ExtendedExist}(b) belongs to $C([0,\eps_0))$, $\frac{\eta_0(\eps)}{\eps}$ is increasing with respect to $\eps$, 
\[
\lim_{\eps \rightarrow \eps_0} \eta_0(\eps) = \infty, \quad \lim_{\eps \rightarrow 0} \eta_0(0) = 0,
\]
and, for some $C > 1$ and $\eps_1 \in (0,\eps_0)$, $\frac{\sqrt{\tilde W'(0)} \eps}{C} \leq \eta_0(\eps) \leq C\sqrt{\tilde W'(0)} \eps$ for every $\eps \in (0,\eps_1)$.
\end{remark}

Theorem \ref{Thm:ExtendedExist} can be viewed as an extension of the results in \cite{INSZ_AnnENS} but within radial symmetry, relating the escaping phenomenon with the stability property of critical points.
 
\bigskip

\noindent\underline{The $\Sphere^N$-valued Ginzburg--Landau model}

\begin{theorem}\label{Thm:MMExist}
Suppose that $\tilde W \in C^2([0, \infty))$ satisfies \eqref{Eq:TWCond'}. 
\begin{enumerate}[(a)]
\item If $N \geq 7$, then for every $\eta>0$, the system \eqref{Eq:metaform}--\eqref{Eq:MM-feegeeBC} has no escaping solution $(\tilde f_\eta, g_\eta)$ with $g_\eta > 0$ in $(0,1)$. 
\item If $2 \leq N \leq 6$, then for every $\eta > 0$ the system \eqref{Eq:metaform}--\eqref{Eq:MM-feegeeBC} has a unique escaping solution $(\tilde f_\eta, g_\eta)$ with $g_\eta > 0$.  Furthermore, $(\tilde f_\eta, \pm g_\eta)$ are the only two minimizers of the functional $I_{\eta}^{MM}$ in $\mcB^{MM}$, $\frac{\tilde f_\eta}{r}, g_\eta \in C^2([0,1])$, $\tilde f_\eta > 0$, $\tilde f_\eta' > 0$ and $g_\eta' < 0$ in $(0,1)$. In addition, for $3 \leq N \leq 6$, the non-escaping solution $(1,0)$ is an unstable critical point of $I_\eta^{MM}$ in $\mcB^{MM}$.
\end{enumerate}
\end{theorem}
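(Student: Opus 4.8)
The plan is to exploit the pointwise constraint $\tilde f_\eta^2 + g_\eta^2 = 1$ to reduce to a single scalar phase unknown, handle $2 \le N \le 6$ by the direct method, and rule out $N \ge 7$ by an integral identity. Writing $\tilde f_\eta = \cos\theta$, $g_\eta = \sin\theta$ with $\theta = \theta(r)$ and $\theta(1) = 0$, the reduced energy $I_\eta^{MM}$ becomes the scalar functional
\[
\theta \longmapsto \frac12 \int_0^1 \Big[ (\theta')^2 + \frac{N-1}{r^2}\cos^2\theta + \frac{1}{\eta^2}\tilde W(\sin^2\theta) \Big]\, r^{N-1}\, dr,
\]
whose Euler--Lagrange equation is $\theta'' + \frac{N-1}{r}\theta' + \frac{N-1}{r^2}\sin\theta\cos\theta = \frac{1}{\eta^2}\tilde W'(\sin^2\theta)\sin\theta\cos\theta$ with $\theta(1)=0$. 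In these variables $\nespm$ is $\theta \equiv 0$, and, by the dichotomy of Lemma \ref{Lem:MMONSym} (a radial critical point has $\tilde f_\eta(0) \in \{0,1\}$, the value $1$ forcing $\theta \equiv 0$ and $N \ge 3$), an escaping solution is exactly one with $\theta(0) = \pi/2$ and $\sin\theta > 0$ on $(0,1)$.

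For $2 \le N \le 6$ I would first produce a minimizer by the direct method: coercivity and weak lower semicontinuity are routine (the only delicate term, $\frac{N-1}{r^2}\cos^2\theta$, is l.s.c.), and after replacing $g_\eta$ by $|g_\eta|$ one may take $\sin\theta \ge 0$ and even fold $\theta$ into $[0,\pi/2]$ without changing the energy, since $\cos^2$ and $\sin^2$ are symmetric about $\pi/2$. A minimizer is a radial critical point, hence by Lemma \ref{Lem:MMONSym} it is either $\nespm$ (possible only if $N \ge 3$) or escaping, with $g_\eta > 0$ on $(0,1)$ in the latter case by the strong maximum principle. To exclude $\nespm$ when $3 \le N \le 6$ (and prove its instability), I would compute the second variation of $E_\eta^{MM}$ at $\nespm$ along the admissible geodesic $(\sqrt{1-t^2\psi^2}\,n, t\psi)$, $\psi = \psi(r)$ radial with $\psi(1)=0$, obtaining $\frac{d^2}{dt^2}\big|_{t=0} E_\eta^{MM} = \int_{B^N} \big[ |\nabla\psi|^2 - \frac{N-1}{|x|^2}\psi^2 + \frac{\tilde W'(0)}{\eta^2}\psi^2 \big]\,dx$. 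Since $2 \le N \le 6$ is precisely the range with $\frac{(N-2)^2}{4} < N-1$, the Hardy potential $\frac{N-1}{|x|^2}$ is supercritical, $-\Delta - \frac{N-1}{|x|^2}$ is unbounded below on radial $H^1_0(B^N)$, and testing with a near-extremal Hardy function concentrated at the origin makes the second variation negative for every $\eta > 0$; thus $\nespm$ is unstable. (For $N = 2$, $\nespm \notin \mcA^{MM}$, so the minimizer is escaping automatically.) Hence the minimizer is escaping, and, combined with the uniqueness below and the fact that $I_\eta^{MM}$ depends on $g_\eta$ only through $g_\eta^2$, the two maps $(\tilde f_\eta, \pm g_\eta)$ are the only minimizers.

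The qualitative properties come next. The regularity $\frac{\tilde f_\eta}{r}, g_\eta \in C^2([0,1])$ follows from interior elliptic/ODE regularity together with a Frobenius analysis of the ODE at $r=0$ (indicial roots $1$ and $-(N-1)$ at $\theta = \pi/2$, so the bounded profile behaves like $\pi/2 - cr$). For the signs and monotonicity I would invoke the moving plane method for cooperative systems, as stressed in the introduction, applied to the system solved by $(\tilde f_\eta, g_\eta)$: it yields $g_\eta' < 0$ on $(0,1)$, and since $g_\eta' = \cos\theta\,\theta'$ this forces $\cos\theta = \tilde f_\eta$ never to vanish on $(0,1)$; as $\theta$ runs continuously from $\pi/2$ at $r = 0$ to $0$ at $r = 1$, we get $0 < \theta < \pi/2$ and $\theta' < 0$ on $(0,1)$, i.e. $\tilde f_\eta > 0$ and $\tilde f_\eta' = -\sin\theta\,\theta' > 0$. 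Note this monotonicity holds for \emph{any} escaping solution, in any dimension. Extracting the cooperative structure while running the moving plane is, in my view, one of the two delicate points here.

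The main obstacle is the \emph{uniqueness} of the escaping solution (with the cooperativity just mentioned a close second). Given two escaping solutions — both minimizers of $I_\eta^{MM}$ with the same energy, and both monotone as above — I would prove they coincide by the Hardy decomposition technique used for $I_\eps^{GL}$: factor the second escaping profile through the first (which is positive and solves a linear-type equation), rewrite the energy difference, and exhibit a non-negative term arising from the nonlinear part of the ODE that controls the difference of the profiles; the convexity hypotheses \eqref{Eq:TWCond'} on $\tilde W$ enter precisely here. Finally, for part (a): if an escaping solution existed for some $N \ge 7$, the monotonicity above would give $0 < \theta < \pi/2$ on $(0,1)$, and testing the weak Euler--Lagrange equation against $\theta$ (legitimate since $\theta \in H^1_0(B^N)$ and the singular term $\frac{N-1}{r^2}\sin\theta\cos\theta\,\theta$ is integrable there) and integrating by parts gives
\[
\int_0^1 (\theta')^2 r^{N-1}\, dr = (N-1)\int_0^1 \sin\theta\cos\theta\,\theta\, r^{N-3}\, dr - \frac{1}{\eta^2}\int_0^1 \tilde W'(\sin^2\theta)\sin\theta\cos\theta\,\theta\, r^{N-1}\, dr.
\]
On $(0,\pi/2)$ one has $0 < \sin\theta\cos\theta \le \theta$ and $\tilde W' \ge 0$, so the right-hand side is at most $(N-1)\int_0^1 \theta^2 r^{N-3}\, dr$, while Hardy's inequality gives $\int_0^1 (\theta')^2 r^{N-1}\, dr \ge \frac{(N-2)^2}{4}\int_0^1 \theta^2 r^{N-3}\, dr$. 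Since $\theta \not\equiv 0$, this forces $\frac{(N-2)^2}{4} \le N-1$, i.e. $N \le 4 + 2\sqrt2 < 7$, a contradiction.
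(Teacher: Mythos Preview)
Your proposal is correct, and for part (b) it follows essentially the same route as the paper: existence by the direct method, exclusion of the equator map via the supercritical Hardy second variation, monotonicity and positivity by a moving-plane argument on the lifting $\theta$, regularity by an indicial/Frobenius analysis (the paper packages this as Lemma~\ref{Lem:MMONSym}), and uniqueness by the Hardy decomposition factoring through a known positive solution (the paper's Proposition~\ref{Prop:MMUniqueness}). Your description of the uniqueness step is schematic but accurate.

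Part (a) is where your argument genuinely diverges from the paper. The paper does not argue directly: it first shows (Proposition~\ref{Prop:MMUniqueness}) that any escaping solution must be a global minimizer of $I_\eta^{MM}$, and then appeals to the J\"ager--Kaul theorem (Remark~\ref{Rem:R1}) that for $N\ge 7$ the equator map is the \emph{unique} minimizer, giving a contradiction. Your route is more elementary and self-contained: testing the phase ODE against $\theta$ and combining $\sin\theta\cos\theta\le\theta$, $\tilde W'\ge 0$, and Hardy's inequality yields $\frac{(N-2)^2}{4}\le N-1$, which fails for $N\ge 7$. This is legitimate because (i) Proposition~\ref{Prop:MM-Monotonicity} (your moving-plane step) holds in every dimension, so a hypothetical escaping solution would indeed satisfy $0<\theta<\pi/2$, making $\cos\theta>0$ and allowing you to drop the $\tilde W'$ term with the correct sign; (ii) the regularity $\theta\in C^2([0,1])$ guarantees the boundary term $r^{N-1}\theta'\theta$ vanishes at $r=0$; and (iii) for $N\ge 3$ the weight $\theta^2 r^{N-3}$ is integrable near $0$ despite $\theta(0)=\pi/2$, so Hardy applies. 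The payoff of your approach is that it avoids the external input of J\"ager--Kaul and keeps the whole argument at the ODE level; the paper's route, on the other hand, simultaneously delivers that $(1,0)$ is the unique \emph{global} minimizer for $N\ge 7$, which your testing argument does not.
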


Recall that, when $N \geq 7$, the non-escaping solution $(1,0)$ is the unique minimizer of $I_{\eta}^{MM}$ in $\mcB^{MM}$ for every $\eta>0$ (see Remark \ref{Rem:R1}). Note that when $N=2$, the non-escaping solution $(1,0)\notin \mcB^{MM}$; however, the second variation of $I_{\eta}^{MM}$ at $(1,0)$ can still be defined and it is negative in a certain direction with compact support in the interval $(0,1)$, leading to the instability of the non-escaping solution $(1,0)$ also for $N=2$ (see \eqref{new-pp}).

The rest of the section is organized as follows. In Subsection \ref{SSec:ExtUniq}, for the extended model, we prove the monotonicity (see Proposition \ref{Prop:ODEMonotonicity}) and uniqueness (see 
Proposition~\ref{pro:unique_eps_eta}) of escaping solutions \eqref{Eq:feegeeH1}--\eqref{Eq:20III21-feegeeBC}, if exist, together with the positivity of $f_{\eps, \eta}$ in Proposition~\ref{Prop:fPos}; we also prove the boundedness of arbitrary solutions to \eqref{Eq:feegeeH1}--\eqref{Eq:20III21-feegeeBC}, see Lemma~\ref{lem:f2g2}. In Subsection \ref{SSec:GLEU}, for the $\RR^N$-valued GL model, we give the proof of Theorem \ref{Thm:GLExist} and Lemma \ref{Lem:eigen-gl}. In Subsection \ref{SSec:ExtE}, we give the proof of Theorem \ref{Thm:ExtendedExist} for the extended. Finally, Theorem \ref{Thm:MMExist} for the $\Sphere^N$-valued GL model is proved in Subsection \ref{SSec:MMEU}.

\subsection{The extended model: Monotonicity and uniqueness}\label{SSec:ExtUniq}

In this subsection we establish the monotonicity and the uniqueness of escaping radially symmetric critical points of the extended functional $E_{\eps,\eta}$, which correspond to escaping solutions $(f_{\eps,\eta} , g_{\eps,\eta} )$ with $g_{\eps,\eta} > 0$ of the ODE system \eqref{Eq:feegeeH1}--\eqref{Eq:20III21-feegeeBC}. Furthermore, we show that $f_{\eps,\eta} > 0$ and prove the minimality of this escaping solution with respect to radially symmetric competitors.

\bigskip

The following lemma shows that every solution to \eqref{Eq:feegeeH1}--\eqref{Eq:20III21-feegeeBC} is bounded in $(0,1)$ under conditions \eqref{Eq:WCond'}--\eqref{Eq:TWCond'}. To dispel confusion, in this result, we do not assume a priori the boundedness nor the non-negativity of $f_{\eps,\eta}$ and $g_{\eps,\eta}$.

\begin{lemma}\label{lem:f2g2}
Let $N \geq 2$, $\eps > 0$ and $\eta > 0$. If $W \in C^2((-\infty,1])$ and $\tilde W \in C^2([0,\infty))$ satisfy \eqref{Eq:WCond'}--\eqref{Eq:TWCond'} and $(f_{\eps,\eta},g_{\eps,\eta})$ satisfies \eqref{Eq:feegeeH1}--\eqref{Eq:20III21-feegeeBC}, then $f_{\eps,\eta}^2 + g_{\eps,\eta}^2 < 1$ in $(0,1)$ and the map $x \mapsto m_{\eps,\eta}(x) = (f_{\eps,\eta}(r)n(x), g_{\eps,\eta}(r))$ belongs to $C^2(\bar B^N)$. In particular, $f_{\eps,\eta}(0) = 0$ and $g_{\eps,\eta}'(0) = 0$.
\end{lemma}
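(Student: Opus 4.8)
The plan is to prove boundedness of $(f_{\eps,\eta}, g_{\eps,\eta})$ first, and then bootstrap to $C^2(\bar B^N)$ regularity. The starting point is to work with the function $h := f_{\eps,\eta}^2 + g_{\eps,\eta}^2$, which is the squared length of $m_{\eps,\eta}$, and to derive a differential inequality for it. Writing the system \eqref{Eq:20III21-fee}--\eqref{Eq:20III21-gee} as $\Delta(f n) = \big(\frac{N-1}{r^2} f - \frac{1}{\eps^2}W'(1-h)f\big)n$ componentwise (using $\Delta(f(r)n(x)) = (f'' + \frac{N-1}{r}f' - \frac{N-1}{r^2}f)n$) and $\Delta g = -\frac{1}{\eps^2}W'(1-h)g + \frac{1}{\eta^2}\tilde W'(g^2)g$, a direct computation gives
\be
\tfrac12 \Delta h = |\nabla m_{\eps,\eta}|^2 - \tfrac{1}{\eps^2} W'(1-h)\, h + \tfrac{1}{\eta^2}\tilde W'(g^2)\, g^2,
\ee
where $|\nabla m_{\eps,\eta}|^2 = (f')^2 + (g')^2 + \frac{N-1}{r^2}f^2 \geq 0$. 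Since $tW'(t)\geq 0$ on $(-\infty,1]$ (a consequence of \eqref{Eq:WCond'}, as noted in the paper) and $\tilde W'(g^2)g^2 \geq 0$ (from \eqref{Eq:TWCond'}), the term $-\frac{1}{\eps^2}W'(1-h)h = -\frac{1}{\eps^2}W'(1-h)\big(1-(1-h)\big)$: when $h \geq 1$ we have $1 - h \leq 0$, so $W'(1-h) \leq 0$, hence $-\frac{1}{\eps^2}W'(1-h)h \geq 0$ there. Thus $\Delta h \geq 0$ wherever $h \geq 1$, i.e. $h$ is subharmonic on the open set $\{h > 1\}$; combined with $h = 1$ on $\partial B^N$ and the maximum principle (taking care of the singularity at the origin — see below), one concludes $h \leq 1$ on $B^N$, and then the strong maximum principle upgrades this to $h < 1$ in $(0,1)$ unless $h \equiv 1$, which is excluded since $h(1)=1$ with the equator map not being a solution here (or one argues $h\equiv 1$ forces $|\nabla m|^2 = \frac{1}{\eps^2}W'(0)\cdot 0 - \ldots$, a contradiction with $f(0)=0$-type behavior).

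The main obstacle is the singularity of the radial representation at $r = 0$: a priori $f_{\eps,\eta}/r$ and $g_{\eps,\eta}$ need not be known to be continuous at the origin, so one cannot naively apply the maximum principle on all of $B^N$. The way around this is to first extract the behavior near $0$ from membership in $H^1$ together with the ODE. Since $(f_{\eps,\eta},g_{\eps,\eta})\in\mcB$, we have $r^{(N-3)/2}f, r^{(N-1)/2}f', r^{(N-1)/2}g, r^{(N-1)/2}g' \in L^2(0,1)$, which gives (by Lemma~\ref{Lem:RSRep}, cited in the excerpt) that $m_{\eps,\eta}\in L^\infty_{loc}(\bar B^N\setminus\{0\})$ and in particular $f, g$ are locally bounded away from $0$. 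Then $h$ is locally bounded and subharmonic near where $h\geq 1$ away from the origin; since a bounded subharmonic function on a punctured ball extends across the puncture (the origin is a removable set for bounded subharmonic functions in dimension $N\geq 2$, as points have zero capacity), the maximum principle applies on the full ball and yields $h \leq 1$. Hence $f_{\eps,\eta}, g_{\eps,\eta}$ are globally bounded on $(0,1)$, with $|m_{\eps,\eta}| \leq 1$.

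Once $h \leq 1$ — actually $h < 1$ away from the origin — is established, the right-hand sides of \eqref{Eq:20III21-fee}--\eqref{Eq:20III21-gee} are bounded (recall $W, \tilde W\in C^2$ and their derivatives are evaluated at bounded arguments), so the equations $\Delta(f_{\eps,\eta}n) = F n$ and $\Delta g_{\eps,\eta} = G$ have bounded right-hand sides $F, G$ away from the origin. Viewing $m_{\eps,\eta}$ as a bounded $H^1$ solution of $\Delta m_{\eps,\eta} = \Phi(m_{\eps,\eta})$ on $B^N$ with $\Phi$ locally Lipschitz and $m_{\eps,\eta}$ bounded, the singleton $\{0\}$ is again removable for bounded $H^1$ weak solutions of a semilinear elliptic equation with bounded nonlinearity (standard capacity/removable-singularity argument, e.g. via cutoff $\zeta_\delta$ supported away from $0$ with $\|\nabla\zeta_\delta\|_{L^N}\to 0$), so $m_{\eps,\eta}$ is a weak solution of $\Delta m_{\eps,\eta} = \Phi(m_{\eps,\eta})$ on all of $B^N$. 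Interior elliptic regularity (first $W^{2,p}$ for all $p$, hence $C^{1,\alpha}$, then Schauder bootstrapping using $W,\tilde W\in C^2$, giving $C^{2,\alpha}_{loc}$, and classical elliptic regularity up to the smooth boundary with the smooth boundary data $(x,0)$) then yields $m_{\eps,\eta}\in C^2(\bar B^N)$. Finally, $C^1$ regularity at the origin of a radially symmetric map of the form $(f(r)n(x), g(r))$ forces $f_{\eps,\eta}(0)=0$ (since $n$ is discontinuous at $0$, continuity of $fn$ at $0$ requires $f(0)=0$) and $g_{\eps,\eta}'(0)=0$ (radial symmetry and $C^1$ at $0$), which gives the last assertion. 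I expect the removable-singularity step — justifying that the $H^1$ weak formulation holds across $0$ so that standard regularity applies — to be the crux; it is routine but must be done carefully because $N\geq 2$ and one must use that points have zero $H^1$-capacity together with the a priori boundedness just obtained.
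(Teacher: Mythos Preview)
Your differential inequality for $h=f^2+g^2$ and the overall strategy are on the right track, but the removable-singularity step contains a genuine circularity. You invoke the fact that a \emph{bounded} subharmonic function on the punctured ball extends across the origin, yet all you have established is that $h$ is bounded on compact subsets of $\bar B^N\setminus\{0\}$; the very thing you are trying to prove is that $h$ does not blow up at $r=0$. A priori, membership in $\mcB$ allows $f$ to behave like $r^{-\alpha}$ for any $0<\alpha<\tfrac{N-2}{2}$ (when $N\ge3$), so $h$ could in principle diverge at the origin, and neither the removable-singularity theorem for bounded subharmonic functions nor the weak maximum principle via testing against $(h-1)^+$ (which you cannot place in $H^1_0$) is available.

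The paper closes this gap by a different route. After obtaining the same differential inequality for $X=1-h$, it first shows (by a test-function argument away from $0$) that if $h>1$ somewhere then $h>1$ on an entire interval $(0,r_0)$. On that inner ball one then has $W'(1-h)\le0$, so the \emph{individual} equations for $f$ and $g$ become $\Delta f=c_1 f$ and $\Delta g=c_2 g$ with $c_1,c_2\ge0$. Since $f,g\in H^1(B(0,r_0))$ (as components of $m_{\eps,\eta}\in H^1$), Kato's inequality gives $\Delta f^{\pm}\ge0$, $\Delta g^{\pm}\ge0$ weakly on the full ball $B(0,r_0)$, including the origin, and the $H^1$ weak maximum principle then bounds $|f|,|g|$ by their boundary values. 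The point is that one works with $f$ and $g$ separately---which are genuinely in $H^1$---rather than with $h=f^2+g^2$, whose $H^1$ membership near $0$ is not known. Once $h\le1$ is obtained this way, your bootstrapping and the concluding arguments go through essentially as you wrote them.
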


\begin{proof}
Note that $m_{\eps,\eta} \in H^1(B^N)$ (since $(f_{\eps,\eta},g_{\eps,\eta}) \in \mcB$) and that \eqref{Eq:20III21-fee}--\eqref{Eq:20III21-feegeeBC} gives
\begin{align}
\Delta m_{\eps,\eta} \label{equ_m} 
	&= -\frac{1}{\eps^2} W'(1 - |m_{\eps,\eta}|^2) m_{\eps,\eta} + \frac{1}{\eta^2} \tilde W'(g_{\eps,\eta}^2) g_{\eps,\eta} e_{N+1} \text{ in } B^N \setminus \{0\},\\
m_{\eps,\eta}(x)
	\nonumber&= (n(x),0) \text{ on } \partial B^N.
\end{align}
Let $M = f_{\eps,\eta}^2 + g_{\eps,\eta}^2$. Note that $M(1)=1$ and
\begin{align*}
\frac{1}{2}(M'' + \frac{N-1}{r}M')
	&= (f_{\eps,\eta}')^2 + (g_{\eps,\eta}')^2 + \frac{N-1}{r^2} f_{\eps,\eta}^2 
		- \frac{1}{\eps^2} W'(1 - M)M+ \frac{1}{\eta^2} \tilde W'(g_{\eps,\eta}^2) g_{\eps,\eta}^2\nonumber\\
	&\geq - \frac{1}{\eps^2} W'(1 - M) M.
\end{align*}
In particular, the function $X = 1 - M$ satisfies
\begin{equation}
- X'' - \frac{N-1}r X' + 2 a(r) X \geq 0
	\label{Eq:05V21-M1}
\end{equation}
where $a: (0,1] \rightarrow [0,\infty)$ is given by
\begin{equation}
a(r) = \left\{\begin{array}{ll}
	 \frac{1}{\eps^2} \frac{W'(1 - M(r))}{1 - M(r)} M(r) & \text{ if } M(r) \neq 1,\\
	\frac{1}{\eps^2} W''(0) &\text{ if } M(r) = 1.
\end{array}\right.
	\label{Eq:M1Aux-a1}
\end{equation}
Note that \eqref{Eq:WCond'} and the continuity of $M$ in  $(0,1]$ imply $a\geq 0$ and $a$ is continuous on $(0,1]$. 
Now, define
\[
r_0 = \inf\Big\{r \in (0,1]: M \leq 1 \text{ in } [r,1]\Big\}.
\]
The aim is to show that $r_0=0$.

\medskip

\noindent
\underline{Step 1:} {\it We show that if $r_0 > 0$, then $M > 1$ in $(0,r_0)$.}
Assume by contradiction that $M(r_1) \leq 1$ for some $r_1 \in (0,r_0)$. Multiplying \eqref{Eq:05V21-M1} by $r^{N-1} X^-$ (where $X^\pm = \max\{0,\pm X\}$), noting that $X^-(1) = X^-(r_1) = 0$, and integrating over $[r_1,1]$ give
\[
\int_{r_1}^1 r^{N-1}\big[((X^-)')^2 + 2a(r) (X^-)^2\big]\,dr \leq 0.
\]
This shows that $X^- = 0$ in $[r_1,1]$, i.e. $X \geq 0$ and $M \leq 1$ in $[r_1,1]$. By definition of $r_0$, this implies that $r_0 \leq r_1$, which contradicts the fact that $r_1 \in (0,r_0)$. Step 1 is established.

\medskip
\noindent\underline{Step 2:} {\it We show that $f_{\eps,\eta}^2+g_{\eps,\eta}^2\leq 1$ in $(0,1)$.}
Indeed,  if $r_0 = 0$, this step is clear. Suppose that $r_0 > 0$. By Step 1, we have $M > 1$ and so $W'(1 - M) \leq 0$ in $(0,r_0)$. Returning to \eqref{Eq:20III21-fee}--\eqref{Eq:20III21-gee}, as \eqref{Eq:TWCond'} implies $\tilde W'(t)\geq \tilde W'(0)\geq 0$ for $t\geq 0$, we have that the functions $f_{\eps,\eta}$ and $g_{\eps,\eta}$, considered as functions on the ball $B(0,{r_0})$ in $\RR^N$, satisfy
\begin{align*}
\Delta f_{\eps,\eta} 
	&= c_1 f_{\eps,\eta} \text{ in } B(0,{r_0})\setminus \{0\},\\
\Delta g_{\eps,\eta} 
	&= c_2 g_{\eps,\eta} \text{ in } B(0,{r_0})\setminus \{0\},
\end{align*}
where $c_1 = \frac{N-1}{r^2} - \frac{1}{\eps^2} W'(1 - M) \geq 0$ and $c_2 = - \frac{1}{\eps^2} W'(1 - M) + \frac{1}{\eta^2} \tilde W'(g_{\eps,\eta}^2) \geq 0$ in $(0,{r_0})$. By Kato's inequality (see \cite{Kato72-IJM} or \cite[Lemma A.1]{Brezis84-AMO}), this implies
\begin{align*}
\Delta f_{\eps,\eta}^\pm 
	&\geq 0 \text{ in } B(0,{r_0})\setminus \{0\},\\
\Delta g_{\eps,\eta}^\pm
	&\geq 0 \text{ in } B(0,{r_0})\setminus \{0\}.
\end{align*}
Since $f_{\eps,\eta}, g_{\eps,\eta} \in H^1(B(0,{r_0}))$, these hold in $B(0,{r_0})$. By the maximum principle,
\[
f_{\eps,\eta}^\pm \leq f_{\eps,\eta}^\pm(r_0) \text{ and } 
g_{\eps,\eta}^\pm \leq g_{\eps,\eta}^\pm(r_0)  \text{ in } B(0,{r_0}).
\]
We deduce that $f^2_{\eps,\eta}+ g^2_{\eps,\eta}\leq M(r_0)\leq 1$ in $(0,{r_0})$. As $M=f^2_{\eps,\eta}+ g^2_{\eps,\eta}\leq 1$ in $[r_0, 1]$, the conclusion of Step 2 follows.

\medskip
\noindent\underline{Step 3:} {\it Conclusion.} 
By Step 2 and the fact that $m_\eta\in H^1(B^N)$, we deduce that \eqref{equ_m} holds in the whole $B^N$;  then standard elliptic regularity theory yields $m_{\eps,\eta}$ and so $X$ are $C^2$ in $\bar B^N$. In particular, $f_{\eps,\eta}(0) = 0$  (as $f_{\eps,\eta}(r) n(x)\in C^2(B^N)$) and $g_{\eps,\eta}'(0) = 0$ (since $g_{\eps,\eta}$ extends to an even $C^2$ function on $(-1,1)$). By Step 2, we know that $M \leq 1$ in $(0,1)$. Moreover, since $f_{\eps, \eta}(1) = 1$, we deduce that the inequality in \eqref{Eq:05V21-M1} is strict near $r = 1$, in particular, $X$ cannot be identically $0$. Thus, the strong maximum principle applied to \eqref{Eq:05V21-M1} yields $X>0$ in $(0,1)$ i.e. $M < 1$ in $(0,1)$. The conclusion follows.
\end{proof}

By restricting attention to solutions with $g_{\eps,\eta} \equiv 0$ (for any $\tilde W$ satisfying \eqref{Eq:TWCond'} e.g. $\tilde W(t) = t$), we immediately obtain:

\begin{corollary}\label{Cor:fFinite}
Let $N \geq 2$ and $\eps > 0$. If $W \in C^2((-\infty,1])$ satisfies \eqref{Eq:WCond'} and $f_{\eps}$ satisfies \eqref{Eq:ueH1}--\eqref{Eq:24VII18-X2}, then $|f_{\eps}| < 1$ in $(0,1)$ and the map $x \mapsto u_{\eps}(x) = f_{\eps}(r)n(x)$ belongs to $C^2(\bar B^N)$. In particular, $f_{\eps}(0) = 0$.
\end{corollary}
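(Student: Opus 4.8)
The plan is to obtain Corollary~\ref{Cor:fFinite} as an immediate specialization of Lemma~\ref{lem:f2g2}. The point is that the $\RR^N$-valued Ginzburg--Landau profile equation \eqref{Eq:ueH1}--\eqref{Eq:24VII18-X2} is exactly the $g_{\eps,\eta}\equiv 0$ slice of the extended system \eqref{Eq:feegeeH1}--\eqref{Eq:20III21-feegeeBC}. So first I would fix any admissible $\tilde W$ — for concreteness $\tilde W(t)=t$, which clearly satisfies \eqref{Eq:TWCond'} — and observe that if $f_\eps$ solves \eqref{Eq:ueH1}--\eqref{Eq:24VII18-X2}, then the pair $(f_{\eps,\eta},g_{\eps,\eta}):=(f_\eps,0)$ solves \eqref{Eq:20III21-fee}--\eqref{Eq:20III21-feegeeBC}: equation \eqref{Eq:20III21-gee} holds trivially since every term vanishes when $g_{\eps,\eta}\equiv 0$ (using $\tilde W'(0)\cdot 0 = 0$), and equation \eqref{Eq:20III21-fee} reduces precisely to \eqref{Eq:24VII18-X1} because $W'(1-f_{\eps,\eta}^2-g_{\eps,\eta}^2)=W'(1-f_\eps^2)$; the boundary conditions also match. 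One should also note $(f_\eps,0)\in\mcB$, which is equivalent to the membership $f_\eps\in\mcB^{GL}$ already built into \eqref{Eq:ueH1}.

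Once this identification is in place, I would simply invoke Lemma~\ref{lem:f2g2} with this choice of $(f_{\eps,\eta},g_{\eps,\eta})$: its hypotheses \eqref{Eq:WCond'}--\eqref{Eq:TWCond'} are satisfied, so the lemma yields $f_{\eps,\eta}^2+g_{\eps,\eta}^2<1$ in $(0,1)$, i.e. $f_\eps^2<1$, hence $|f_\eps|<1$ in $(0,1)$; and it yields that $m_{\eps,\eta}(x)=(f_{\eps,\eta}(r)n(x),g_{\eps,\eta}(r))=(f_\eps(r)n(x),0)$ belongs to $C^2(\bar B^N)$, which is exactly the statement $u_\eps=f_\eps(r)n(x)\in C^2(\bar B^N)$. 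Finally, the conclusion $f_\eps(0)=0$ follows from the corresponding assertion $f_{\eps,\eta}(0)=0$ in Lemma~\ref{lem:f2g2} (equivalently, from $u_\eps\in C^2(\bar B^N)$ together with the fact that a $C^2$ map of the form $f_\eps(r)n(x)$ must vanish at the origin since $n$ is not continuous there unless $f_\eps(r)\to 0$).

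There is essentially no obstacle here — the corollary is a direct corollary in the literal sense, and the only thing to be careful about is checking that the degenerate pair $(f_\eps,0)$ genuinely satisfies the full extended ODE system (in particular that the $g$-equation is not violated by the $\tilde W$ terms) and that it lies in the right function space $\mcB$. Both checks are routine. If one prefers to avoid introducing an auxiliary $\tilde W$, an alternative is to repeat the proof of Lemma~\ref{lem:f2g2} verbatim with $g_{\eps,\eta}\equiv 0$ dropped throughout, but the specialization route is cleaner and is what the sentence ``By restricting attention to solutions with $g_{\eps,\eta}\equiv 0$'' preceding the corollary already signals.
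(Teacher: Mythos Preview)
Your proposal is correct and is exactly the approach the paper takes: the paper simply states that by restricting attention to solutions with $g_{\eps,\eta}\equiv 0$ (for any $\tilde W$ satisfying \eqref{Eq:TWCond'}, e.g.\ $\tilde W(t)=t$), the corollary follows immediately from Lemma~\ref{lem:f2g2}. Your verification that $(f_\eps,0)$ satisfies the extended system and lies in $\mcB$ is the routine check implicit in that sentence.
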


We next consider the monotonicity of solutions of \eqref{Eq:feegeeH1}--\eqref{Eq:20III21-feegeeBC} satisfying $g_{\eps,\eta} \geq 0$. We first prove the monotonicity under an additional assumption that $f_{\eps,\eta} \geq 0$ in 
Proposition~\ref{Prop:ODEMonotonicity}. We then show that this additional non-negativity assumption on $f_{\eps,\eta}$ can be removed in Proposition \ref{Prop:fPos}.

\begin{proposition}\label{Prop:ODEMonotonicity}
Suppose $W \in C^2((-\infty,1])$ and $\tilde W \in C^2([0,\infty))$ satisfy \eqref{Eq:WCond'} and \eqref{Eq:TWCond'}, and $(f_{\eps,\eta},g_{\eps,\eta})$ satisfies \eqref{Eq:feegeeH1}--\eqref{Eq:20III21-feegeeBC} with $f_{\eps,\eta} \geq 0, g_{\eps,\eta}\geq 0$ in $(0,1)$. Then $f'_{\eps,\eta}>0$, $\big(\frac{f_{\eps,\eta}}{r}\big)' \leq 0$ and either $g'_{\eps,\eta} < 0$ or $g_{\eps,\eta} =0$ in $(0,1]$. 
\end{proposition}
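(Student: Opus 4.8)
The plan is to study the three stated monotonicity properties by exploiting the cooperative structure of the ODE system together with the boundedness $f_{\eps,\eta}^2 + g_{\eps,\eta}^2 < 1$ and regularity $\tfrac{f_{\eps,\eta}}{r}, g_{\eps,\eta} \in C^2([0,1])$ obtained from Lemma \ref{lem:f2g2}. Throughout write $f = f_{\eps,\eta}$, $g = g_{\eps,\eta}$, $M = f^2 + g^2$, and recall $f(0) = 0$, $g'(0) = 0$, $f(1) = 1$, $g(1) = 0$. For brevity set $v(r) = \tfrac1{\eps^2} W'(1 - M(r))$; by \eqref{Eq:WCond'} and $M < 1$ one has $v \geq 0$, and $\tfrac1{\eta^2}\tilde W'(g^2) \geq 0$ by \eqref{Eq:TWCond'}.

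First I would dispose of the dichotomy for $g$. If $g \not\equiv 0$, then since $g \geq 0$ satisfies $\Delta g = (- v + \tfrac1{\eta^2}\tilde W'(g^2)) g \geq 0$ away from the origin (viewing $g$ as a radial function on $B^N$) with a nonnegative potential, the strong maximum principle gives $g > 0$ in $B^N$; combined with $g(1) = 0$ and Hopf's lemma, $g'(1) < 0$. To upgrade this to $g' < 0$ on all of $(0,1]$, I would differentiate \eqref{Eq:20III21-gee}: the function $h = g'$ satisfies a second-order linear ODE, and one can argue as follows. The quantity $(r^{N-1} g')' = r^{N-1}(-v + \tfrac1{\eta^2}\tilde W'(g^2)) g \geq 0$, so $r^{N-1} g'$ is nondecreasing on $(0,1)$; since $g'(0) = 0$ this forces $r^{N-1} g' \geq 0$, i.e. $g' \geq 0$ — which is the wrong sign. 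Hence I should instead use that $r^{N-1} g'$ is nondecreasing and $\lim_{r \to 1^-} r^{N-1} g'(r) = g'(1) < 0$, so $r^{N-1} g' \leq g'(1) < 0$ on $(0,1)$, giving $g' < 0$ there; and $g'(1) < 0$ closes the endpoint. (I would double-check the sign of $(r^{N-1}g')'$ against \eqref{Eq:20III21-gee}; the point is simply that $(r^{N-1}g')' $ has a fixed sign, so monotonicity of $r^{N-1}g'$ plus the boundary value pins down the sign of $g'$ on the whole interval.)

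Next, for $\big(\tfrac{f}{r}\big)' \leq 0$: let $\phi(r) = f(r)/r$, which is $C^2$ on $[0,1]$ with $\phi(0) = f'(0)$. A direct computation turns \eqref{Eq:20III21-fee} into $\phi'' + \tfrac{N+1}{r}\phi' = -v\,\phi$ on $(0,1)$, equivalently $(r^{N+1}\phi')' = -v\, r^{N+1}\phi$. Since $f \geq 0$ gives $\phi \geq 0$, and $v \geq 0$, the right-hand side is $\leq 0$, so $r^{N+1}\phi'$ is nonincreasing on $(0,1)$; as it vanishes at $r = 0$ (because $\phi' $ is bounded near $0$), we get $r^{N+1}\phi' \leq 0$, hence $\phi' \leq 0$ on $(0,1)$. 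Finally, for $f' > 0$ in $(0,1]$: from $f = r\phi$ we get $f' = \phi + r\phi'$, so I need $\phi + r\phi' > 0$. Multiplying \eqref{Eq:20III21-fee} by $r^{N-1}$ and integrating from $0$ to $r$ gives $r^{N-1} f'(r) = \int_0^r \big[\tfrac{N-1}{s^2} f - v f\big] s^{N-1}\,ds + (\text{boundary term at }0)$; since $f(0) = 0$ and $r^{N-1} f' \to 0$ as $r \to 0$, and since $f > 0$ on $(0,1)$ (which itself follows from $\phi \geq 0$, $\phi$ nonincreasing, $\phi(1) = 1 > 0$, so $\phi > 0$ on $[0,1)$, hence $f = r\phi > 0$ on $(0,1)$), I would show the integrand has the right sign or argue via the maximum principle: $f$ is a nonnegative subsolution-type object. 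More cleanly, apply Hopf/strong maximum principle to $f$ as a radial solution of $\Delta f = \big(\tfrac{N-1}{r^2} - v\big) f$ on an annulus, or note that $r^{N-1}f'$ is increasing wherever $\tfrac{N-1}{r^2} \geq v$, which holds near $0$, and then rule out an interior critical point of $f$ by the strong maximum principle (an interior minimum of the positive function $f$ in the region $\{v \leq \tfrac{N-1}{r^2}\}$ is impossible). I expect the cleanest route is: $f > 0$ on $(0,1)$; at any point where $f'= 0$ one would have, from the ODE, $f'' = \big(\tfrac{N-1}{r^2} - v\big) f \geq 0$ once $r$ is small, and a Sturm-type / maximum-principle argument propagates positivity of $f'$ from near $0$ (where $f \sim f'(0) r$ with $f'(0) > 0$, since $\phi(0) = f'(0)$ and $\phi > 0$) up to $r = 1$.

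The main obstacle I anticipate is the claim $f' > 0$ on all of $(0,1]$, and in particular establishing $f'(0) = \phi(0) > 0$ and excluding an interior zero of $f'$ — the potential $\tfrac{N-1}{r^2} - v$ changes sign in general, so one cannot just invoke a single maximum principle on $(0,1)$. The resolution should combine (i) positivity and nonincreasingness of $\phi = f/r$, which forces $\phi(0) = f'(0) \geq \phi(1) = 1 > 0$ — wait, nonincreasingness actually gives $\phi(0) \geq \phi(1) = 1$, so indeed $f'(0) \geq 1 > 0$ for free — with (ii) the identity $f' = \phi + r\phi'$ and the ODE $(r^{N+1}\phi')' = -v r^{N+1}\phi \leq 0$ to control $r\phi'$ against $\phi$. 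Concretely, $-r\phi'(r) = \tfrac{1}{r^N}\int_0^r v s^{N+1}\phi\,ds \leq \|\phi\|_\infty \tfrac{1}{r^N}\int_0^r v s^{N+1}\,ds$, and one wants this $< \phi(r)$; since $\phi$ is bounded below on compact subsets of $[0,1)$ by positivity and near $r = 1$ by $\phi(1) = 1$, while the left side is $O(r^2)$ times $\sup v$ near $0$, the inequality is immediate near the endpoints and the remaining compact middle range is handled by the strong maximum principle applied to $f$ (no interior minimum of the positive solution of a linear elliptic equation with the boundary data forcing $f$ up). I would present the $f' > 0$ argument last and with the most care, since the other two monotonicity facts reduce quickly to "$r^k(\cdot)'$ is monotone and has a known sign at an endpoint."
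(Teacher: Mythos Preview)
Your argument for $(f/r)' \leq 0$ is correct and coincides with the paper's. The gap is in the other two monotonicity claims.

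For $g' < 0$ you assert that $(r^{N-1}g')'$ has a fixed sign, but this is false. From \eqref{Eq:20III21-gee},
\[
(r^{N-1}g')' \;=\; r^{N-1}\Big[-\tfrac{1}{\eps^2}W'(1-M) + \tfrac{1}{\eta^2}\tilde W'(g^2)\Big]g,
\]
and the bracket is a \emph{difference} of two nonnegative quantities. Near $r=1$ one has $M\to 1$ so $W'(1-M)\to W'(0)=0$ while $\tilde W'(g^2)\to\tilde W'(0)\geq 0$, making the bracket nonnegative; near $r=0$ the bracket can easily be negative (e.g.\ $W(t)=t^2/2$, $\tilde W(t)=t$ gives $-(1-M)/\eps^2 + 1/\eta^2$, which is negative once $1-M>\eps^2/\eta^2$). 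So $r^{N-1}g'$ is not monotone and neither endpoint value pins down its sign on the whole interval. Your first instinct (that the argument gives $g'\geq 0$, the wrong sign) was actually a warning that the mechanism is not there.

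For $f' > 0$ you correctly extract $f'(0)=\phi(0)\geq\phi(1)=1>0$ from the monotonicity of $\phi=f/r$, but the step from there to $f'>0$ on all of $(0,1]$ does not close. The bound $-r\phi'(r)=r^{-N}\int_0^r v\,s^{N+1}\phi\,ds$ uses $\phi(s)\leq\phi(0)$ on the right, but you then need this to be $<\phi(r)$, and there is no a priori control on $\phi(0)/\phi(r)$ or on $\int_0^r v\,s^{N+1}\,ds$ that makes this work in the ``middle range''. Invoking a strong maximum principle for $f$ itself does not help either, since $f$ satisfies $-\Delta f + (\tfrac{N-1}{r^2}-v)f=0$ with a sign-changing zero-order term, and the issue is precisely an interior zero of $f'$, not of $f$.

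The paper circumvents both obstacles simultaneously by a moving plane argument. Writing the system as $f''+\tfrac{N-1}{r}f'-\tfrac{N-1}{r^2}f=A(f,g)$, $g''+\tfrac{N-1}{r}g'=B(f,g)$ with $A(a,b)=-\eps^{-2}W'(1-a^2-b^2)a$, $B(a,b)=-\eps^{-2}W'(1-a^2-b^2)b+\eta^{-2}\tilde W'(b^2)b$, the convexity of $W$ gives the cooperativity $\partial_bA\geq 0$, $\partial_aB\geq 0$. One then reflects $f_s(r)=f(2s-r)$, $g_s(r)=g(2s-r)$ and slides $s$ downward from $1$; Hopf's lemma starts the process (your observations $f'(1)>0$, $g'(1)<0$ are used here), and the cooperative structure plus the strict inequality in the $\tfrac{N-1}{r}$ coefficients under reflection let the comparison $f_s\geq f$, $g_s\leq g$ propagate all the way to $s=0$, yielding $f'>0$ and $g'<0$ on $(0,1]$ at once. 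The sign-changing potentials that block your direct approach are exactly what force this coupled reflection argument.
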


\begin{proof}[Proof of Proposition \ref{Prop:ODEMonotonicity}.] To simplify notation, we drop off the indices $\eps$ and $\eta$, so that in the following we denote  $f$ and $g$ the solution considered in \eqref{Eq:feegeeH1}--\eqref{Eq:20III21-feegeeBC}. 
First, by Lemma~\ref{lem:f2g2}, we know that $f^2+ g^2<1$ in $(0,1)$ and $f(0)=0$ and $g'(0)=0$. By the strong maximum principle applied to \eqref{Eq:20III21-fee} for $f\geq 0$ in $(0,1)$, we get $f>0$ in $(0,1)$ (as $f=0$ in $(0,1)$ would contradict the boundary condition 
$f(1)=1$ in \eqref{Eq:20III21-feegeeBC}). By the strong maximum principle applied to \eqref{Eq:20III21-gee} (as a PDE in $B^N$ for $g\geq 0$) we get $g>0$ in $[0,1)$ or $g=0$ in $(0,1)$.

\medskip

\nd {\it Case 1: $g>0$ in $[0,1)$}. For $a,b\in [0,1]$, let
\begin{equation}
A(a,b)=-\frac1{\eps^2} W'(1-a^2-b^2)a, \quad B(a,b)=-\frac1{\eps^2} W'(1-a^2-b^2)b+
\frac1{\eta^2} \tilde W'(b^2)b.
	\label{Eq:abDef}
\end{equation}
Then \eqref{Eq:20III21-fee} and \eqref{Eq:20III21-gee} rewrite as
\begin{align}
\label{eq:44}
f'' + \frac{N-1}{r} f' - \frac{N-1}{r^2} f&=A(f, g) \quad \textrm{in } (0,1),\\
\label{eq:55}
g'' + \frac{N-1}{r} g' &=B(f, g) \quad \textrm{in } (0,1).
\end{align}
The convexity assumption on $W$ in \eqref{Eq:WCond'} yields $$\partial_b A(a,b) = \partial_a B(a,b) \geq 0 \text{ for all } a, b\in [0,1].$$
These inequalities give the system \eqref{eq:44}--\eqref{eq:55} a cooperative structure, see e.g. \cite{Sirakov07, G-N-N-1979, Se}. In order to prove the monotonicity of $f$ and $g$, we follow the ideas based on a moving plane argument in the proof of \cite[Theorem 1.6]{INSZ_CVPDE}. See also \cite{AFN21-CPDE} for a similar argument in the context of phase segregation in Bose--Einstein condensates.
For $0 < s < 1$, define
\[
f_s(r) = f(2s - r) \text{ and } g_s(r) = g(2s - r) \text{ for } \max(0,2s - 1) < r < s.
\]
By \eqref{Eq:20III21-feegeeBC} and \eqref{Eq:WCond'} (in particular, $W'(0)=0$), we have $A(f(1),g(1)) = B(f(1),g(1)) = 0$ and recall that $0 < f < 1=f(1)$ and $g >0=g(1)$ in $(0,1)$. Combined with the monotonicity of $A(a,b)$ in $b$, we deduce that the function $\hat f = f - f(1)$ satisfies
\begin{align*}
\hat f'' + \frac{N-1}{r} \hat f' - \frac{N-1}{r^2}\,\hat f &= \frac{N-1}{r^2} f(1) + A(f,g) - A(f(1),g(1))\\ 
& \geq A(f,g) - A(f(1),g)= c(r)\,\hat f
\end{align*}
for some continuous function $c \in C[0,1]$. As $\hat f(1) = 0$ and $\hat f < 0$ in $(0,1)$, we deduce from the Hopf lemma (see e.g. \cite[Lemma 3.4]{GT}) that $f'(1) > 0$. Likewise, we can show that $g'(1) < 0$. Consequently, there is some small $\delta > 0$ such that $f_s > f$ and $g_s < g$ in $\max(0,2s - 1) < r < s$ for any $s\in (1 - \delta, 1)$. 
We define
\[
\underline{s} = \inf\Big\{0 < s < 1 \, : \, f_{t} > f \text{ and }g_{t} < g \text{ in }\max(0,2{t} - 1) < r < t \text{ for all } t \in (s,1)\Big\}.
\]
It follows that $\underline{s} \in [0,1-\delta]$. 

\medskip

\noindent\underline{Claim:} $\underline{s}=0$, $f' > 0$ and $g' < 0$ in $(0,1]$. 

\nd \underline{Proof of Claim}: Assume by contradiction that $\underline{s}>0$. By the definition of $\underline s$, we deduce
\begin{enumerate}[(a)]
\item $f' \geq 0$ and $g' \leq 0$ in $(\underline{s},1)$,
\item and $f_{\underline{s}} \geq f > 0$ and $g_{\underline{s}} \leq g $ in $\max(0,2{\underline{s}} - 1) < r < \underline{s}$.
\end{enumerate}
Combined with the monotonicity of $A(a, \cdot)$ and $B(\cdot, b)$, it follows for 
every $s\in [\underline{s},1)$ and every $r\in (\max(0,2s - 1),  s)$:
\begin{align}
\nonumber f_{s}''(r) + &\frac{N-1}{r}f_{s}'(r) - \frac{N-1}{r^2}\,f_{s}(r)
	=f''(2s-r) - \frac{N-1}{r}f'(2s-r) - \frac{N-1}{r^2}\, 
	f(2s-r)\\
	\label{Eq:789} &\leq A(f(2s-r),g(2s-r))=A(f_{s}(r),g_{s}(r)) \leq A(f_{s}(r),g(r)),\\
& \quad \quad \quad g_{s}''(r) + \frac{N-1}{r}g_{s}'(r)
	\geq B(f_{s}(r),g_{s}(r)) \geq B(f(r), g_{s}(r))\label{Eq:567}
\end{align}
 and equality in all the inequalities \eqref{Eq:789} (resp. in \eqref{Eq:567}) for some $s\in [\underline{s},1)$ implies 
 \be
 \label{new_23}
 \textrm{$f'(2s-r)=0$ (resp. $g'(2s-r)=0$) for every $r\in (\max(0,2s - 1),  s)$.} 
 \ee
 Combining \eqref{Eq:789} and \eqref{Eq:567} with \eqref{eq:44} and \eqref{eq:55}, we obtain
for every $s\in [\underline{s},1)$:
\begin{align*}
(f_{s}-f)'' + \frac{N-1}{r}(f_{s}-f)' - \frac{N-1}{r^2}\,(f_{s}-f)
	&\leq A(f_{s},g) - A(f,g)=(f_{s}-f)c_1(r),\\
(g_{s}-g)'' + \frac{N-1}{r}(g_{s}-g)'
	&\geq B(f,g_{s}) - B(f,g)=(g_{s}-g)c_2(r),
\end{align*}
with $c_1, c_2$ being two continuous functions on $[\max(0,2s - 1), s]$ and equality in the above inequalities implies again \eqref{new_23}.

Recall that, by the definition of $\underline{s}$, $f_s>f$ and $g_s < g$ in $(\max(0,2s - 1), s)$ for $s \in (\underline{s},1)$. By the Hopf lemma, applied to the above differential inequalities, we have $f_s'(s) < f'(s)$ and $g_s'(s) > g'(s)$, i.e. $f'(s) > 0$ and $g'(s) < 0$ for $s \in (\underline{s},1)$. We now show that these assertions continue to hold with $s = \underline{s}$, i.e.

\smallskip

\nd \underline{Fact 1:} $f_{\underline{s}} > f$ and $g_{\underline{s}} < g$ in $\max(0,2{\underline{s}} - 1) < r < \underline{s}$.
\smallskip

\nd \underline{Fact 2:} $f'>0$ and $g'<0$ in $[\underline{s},1)$.

\smallskip

\noindent Indeed, since $f' > 0$ and $g' < 0$ in $(\underline{s},1)$, \eqref{new_23} does not hold and so the above differential inequalities for $f_{\underline{s}} - f$ and $g_{\underline{s}}-g$ are strict in $(\max(0,2\underline{s}-1),\underline{s})$. Since $f_{\underline{s}} - f \geq 0$ and $g_{\underline{s}}-g \leq 0$ in $(\max(0,2\underline{s}-1),\underline{s})$, the strong maximum principle applied to those differential inequalities gives Fact 1. By the Hopf lemma, we then have $f_{\underline{s}}'(\underline{s}) < f'(\underline{s})$ and $g_{\underline{s}}'(\underline{s}) > g'(\underline{s})$, i.e. $f'(\underline{s}) > 0$ and $g'(\underline{s}) < 0$, and Fact 2 follows.

\smallskip

\nd \underline{Conclusion:} We now show that Facts 1 and 2 contradict the minimality of $\underline{s}$. 
Indeed, observe first that $(f_{\underline{s}}-f)\big(\max(0,2{\underline{s}} - 1)\big)>0$ since
\begin{align*}
&f_{\underline{s}}(\max(0,2\underline{s}-1)) = 1 > f(\max(0,2\underline{s}-1)) \text{ when } \frac{1}{2} \leq \underline{s} < 1,\\
&f_{\underline{s}}(\max(0,2\underline{s}-1)) > 0 = f(\max(0,2\underline{s}-1)) \text{ when } \underline{s} < \frac{1}{2}.
\end{align*}
Likewise, we have $(g_{\underline{s}}-g)\big(\max(0,2{\underline{s}} - 1)\big)<0$ since
\begin{align*}
&g_{\underline{s}}(\max(0,2\underline{s}-1)) = 0 < g(\max(0,2\underline{s}-1)) \text{ when } \frac{1}{2} \leq \underline{s} < 1,\\
&g_{\underline{s}}'(\max(0,2\underline{s}-1)) = -g'(2\underline{s}) > 0 = g'(0) = g'(\max(0,2\underline{s}-1)) \text{ when } \underline{s} < \frac{1}{2}
\end{align*}
(in the latter case, this is combined with $g_{\underline{s}}<g$ on $(0,{\underline{s}})$ by Fact 1). Thus, thanks to Facts 1 and 2, we deduce by continuity the existence of a small $\tilde \delta>0$ such that, for every $s\in (\underline{s}-\tilde \delta, \underline{s}]$, $f_s > f$ and $g_s < g$ in $\max(0,2s - 1) < r < s$, contradicting the minimality of $\underline{s}$. Thus, $\underline{s}=0$. Also, by Fact 2, $f'>0$ and $g'<0$ in $(0,1]$. The Claim is proved.

\medskip

\nd {\it Case 2: $g=0$ in $(0,1)$}. The above argument applies to solutions $f\geq 0$ of \eqref{Eq:ueH1}--\eqref{Eq:24VII18-X2}, where equation \eqref{Eq:789} is replaced by
\[
f_{s}''(r) + \frac{N-1}{r}f_{s}'(r) - \frac{N-1}{r^2}\,f_{s}(r)
	 \leq A(f_s(r),0),
\]
yielding $f'>0$. (Note that the assumption $W''\geq 0$ is no longer needed in this case, though the condition $W'(0) = 0$ is used.)

\medskip

\nd \underline{Proof of $\big(\frac{f}{r}\big)' \leq 0$ in $(0,1)$.} Indeed, by Lemma \ref{Lem:FullONSym}, we know that $v:=\frac{f}{r}\in C^2([0,1])$. To prove that $v$ is decreasing, we follow the argument in \cite[Proposition 2.2]{ODE_INSZ}:
by \eqref{Eq:WCond'} we have $W'\geq 0$ in $(0,1)$ so that
\[
(r^{N+1} v'(r))' = - \frac{r^{N+1}}{\eps^2}W'(1 - f^2 -g^2) v(r)\leq 0, \quad r\in (0,1).
\]
This implies that $r^{N+1} v'(r)$ is a nonincreasing $C^1$ function in $[0,1]$. Since $\lim_{r \rightarrow 0} r^{N+1} v'(r) = 0$ (as $v\in C^1([0,1])$), we deduce that $v'(r) \leq 0$ in $[0,1]$. 
\end{proof}

Next, we prove the positivity of $f_{\eps,\eta}$ when $g_{\eps,\eta}\geq 0$. When $g_{\eps,\eta} \equiv 0$, the result was obtained in \cite{Hervex2, INSZ3} under some slightly different condition on $W$.

\begin{proposition}\label{Prop:fPos}
Suppose $W \in C^2((-\infty,1])$ and $\tilde W \in C^2([0,\infty))$ satisfy \eqref{Eq:WCond'} and \eqref{Eq:TWCond'}, and $(f_{\eps,\eta},g_{\eps,\eta})$ satisfies \eqref{Eq:feegeeH1}--\eqref{Eq:20III21-feegeeBC} with $g_{\eps,\eta}\geq 0$ in $(0,1)$. Then $f_{\eps,\eta}>0$ in $(0,1)$.
\end{proposition}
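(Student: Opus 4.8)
The plan is to argue by contradiction, assuming that $f_{\eps,\eta}$ vanishes somewhere in $(0,1)$, and to combine this with the monotonicity information already available from Proposition \ref{Prop:ODEMonotonicity}. First I would recall that, by Lemma \ref{lem:f2g2}, $f_{\eps,\eta}^2 + g_{\eps,\eta}^2 < 1$ in $(0,1)$, $f_{\eps,\eta}(0) = 0$, $g_{\eps,\eta}'(0) = 0$, and $v := f_{\eps,\eta}/r \in C^2([0,1])$ with $v(1) = 1 > 0$. Since $W'(0) = 0$ and $t W'(t) \geq 0$ in $(-\infty,1]\setminus\{0\}$ by \eqref{Eq:WCond'}, and $1 - f_{\eps,\eta}^2 - g_{\eps,\eta}^2 > 0$ in $(0,1)$, the right-hand side of \eqref{Eq:20III21-fee} is $-\frac{1}{\eps^2} W'(1 - f_{\eps,\eta}^2 - g_{\eps,\eta}^2) f_{\eps,\eta} = -c(r) f_{\eps,\eta}$ with $c(r) = \frac{1}{\eps^2} W'(1 - f_{\eps,\eta}^2 - g_{\eps,\eta}^2) \geq 0$ continuous on $[0,1)$. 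So $f_{\eps,\eta}$ solves the linear ODE $f'' + \frac{N-1}{r} f' - \frac{N-1}{r^2} f + c(r) f = 0$, equivalently $(r^{N+1} v')' = -c(r)\, r^{N+1} v$ as in the last step of the previous proof.

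The key step is the following: suppose $f_{\eps,\eta}(r_0) = 0$ for some $r_0 \in (0,1)$. Then I want to show that $v$ (and hence $f_{\eps,\eta}$) is forced to be nonpositive on $(0, r_0]$, and then derive a contradiction with $v(1) = 1 > 0$ by a Sturm-type / strong-maximum-principle argument. Concretely, pick the largest $r_0 \in (0,1)$ with $f_{\eps,\eta}(r_0) = 0$ (such a largest zero exists by continuity and $f_{\eps,\eta}(1) = 1$). On $(r_0, 1)$ we have $f_{\eps,\eta}$ of one sign; since $f_{\eps,\eta}(1) = 1$, it is positive there, so $v > 0$ on $(r_0, 1)$, and from $(r^{N+1} v')' = -c(r) r^{N+1} v \leq 0$ on $(r_0,1)$ together with $v(r_0) = 0$ one concludes $v' \le$ the relevant sign information giving $v$ cannot reach $0$ at $r_0$ with the correct monotonicity — more precisely, since $r^{N+1}v'$ is nonincreasing on $(r_0,1)$ and $v(r_0) = 0 < v(1)$, there must be a point where $v' > 0$, hence $r^{N+1} v' > 0$ to the left of it, forcing $v < 0$ just to the right of $r_0$, a contradiction. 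This is essentially a convexity/concavity-in-disguise argument on $r^{N+1} v$.

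Actually the cleanest route is to exploit that Proposition \ref{Prop:ODEMonotonicity} does \emph{not} presume $f_{\eps,\eta} \geq 0$ in its conclusion only — rather it does assume it — so I must be careful. Instead I would argue directly: let $w = f_{\eps,\eta}^-= \max\{0, -f_{\eps,\eta}\}$. Writing the equation for $m_{\eps,\eta}$ as in \eqref{equ_m} and noting that on $B^N$ the component equation reads $\Delta f_{\eps,\eta} = \left(\frac{N-1}{r^2} + c(r)\right) f_{\eps,\eta}$ in $B^N \setminus \{0\}$ with nonnegative coefficient, Kato's inequality gives $\Delta f_{\eps,\eta}^- \geq 0$ in $B^N \setminus \{0\}$, and since $f_{\eps,\eta} \in H^1$ this holds in all of $B^N$; as $f_{\eps,\eta}^- = 0$ on $\partial B^N$, the maximum principle forces $f_{\eps,\eta}^- \equiv 0$, i.e. $f_{\eps,\eta} \geq 0$ in $(0,1)$. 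Then $f_{\eps,\eta} \geq 0$ with $f_{\eps,\eta}$ solving the linear equation above and $f_{\eps,\eta}(1) = 1$; the strong maximum principle (equivalently Hopf's lemma applied to the PDE $\Delta f_{\eps,\eta}(r)n(x) = \cdots$, or directly to the ODE for $v$: $(r^{N+1}v')' = -c(r) r^{N+1} v \leq 0$ with $v \geq 0$, $v \not\equiv 0$) then excludes an interior zero, so $f_{\eps,\eta} > 0$ in $(0,1)$.

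The main obstacle I anticipate is the behaviour at the singular point $r = 0$: one must justify that the subharmonicity of $f_{\eps,\eta}^-$ obtained on $B^N \setminus \{0\}$ extends across the origin (using $f_{\eps,\eta} \in H^1(B^N)$ and the fact that a point has zero capacity in dimension $N \geq 2$, as was done in the proof of Lemma \ref{lem:f2g2}), and that the coefficient $c(r) = \frac{1}{\eps^2}W'(1 - f_{\eps,\eta}^2 - g_{\eps,\eta}^2)$ is genuinely nonnegative and locally bounded near $r = 1$ (it is, since $1 - f_{\eps,\eta}^2 - g_{\eps,\eta}^2 \to 0^+$ and $W \in C^2$ near $0$ with $W'(0) = 0$, $W'' \geq 0$). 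Once these technical points are in place, the positivity follows from the strong maximum principle exactly as above.
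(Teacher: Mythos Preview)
Your Kato-inequality route contains a fatal sign error, and correcting it reveals a genuine obstruction. From the ODE $f'' + \tfrac{N-1}{r}f' - \tfrac{N-1}{r^2}f + c(r)f = 0$ with $c(r) = \tfrac{1}{\eps^2}W'(1-f^2-g^2) \ge 0$, the radial Laplacian on $B^N$ gives
\[
\Delta f = \Big(\frac{N-1}{r^2} - c(r)\Big)f,
\]
not $+c(r)$. The coefficient $\tfrac{N-1}{r^2} - c(r)$ is \emph{not} nonnegative in general: for small $\eps$ it is of order $-1/\eps^2$ on any compact subinterval of $(0,1)$ where $1-f^2-g^2$ stays bounded away from zero. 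So Kato's inequality does not yield subharmonicity of $f^-$, and testing the equation against $f^-$ gives $\int|\nabla f^-|^2 = -\int(\tfrac{N-1}{r^2}-c)(f^-)^2$, which has no sign. In fact the linear equation alone, with $g$ frozen, \emph{can} have sign-changing solutions: for $N=2$ and $c \equiv k^2$ constant, the regular solution is a multiple of $J_1(kr)$, which vanishes inside $(0,1)$ once $k$ exceeds the first zero of $J_1$.

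Your alternative sketch via $v = f/r$ and $(r^{N+1}v')' = -c\,r^{N+1}v$ runs into the same wall. You correctly deduce $v'(r_0) > 0$ at the largest zero $r_0$, hence $v<0$ just to the \emph{left} of $r_0$; but this is not a contradiction, and iterating the argument on $(r_1,r_0)$ (where $v<0$ makes $r^{N+1}v'$ nondecreasing) produces no inconsistency either. The point is that positivity of $f$ is \emph{not} a consequence of the single equation \eqref{Eq:20III21-fee} with $c \ge 0$; it genuinely requires the coupled cooperative structure with $g$. The paper's proof exploits exactly this: one applies the moving-plane machinery of Proposition~\ref{Prop:ODEMonotonicity} to the pair $(|f|,g)$, which satisfies the same system of differential \emph{inequalities} (equality on $(r_1,1)$ where $f>0$), to obtain $(|f|)_{r_1} \ge |f|$ with matching left-derivatives at $r_1$; the Hopf lemma then forces $f' \equiv 0$ on $(r_1, \min(2r_1,1))$, contradicting $f'(r_1)>0$.
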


\begin{proof} As in the proof of the previous proposition, we drop off the indices $\eps$ and $\eta$, so that in the following we denote  $f$ and $g$ the solution considered in \eqref{Eq:feegeeH1}--\eqref{Eq:20III21-feegeeBC}. Suppose by contradiction that $f$ changes sign in $(0,1)$. Let $r_1 \in (0,1)$ be such that $f(r_1) = 0$ and $f > 0$ in $(r_1,1]$. Applying the Hopf lemma to \eqref{Eq:20III21-fee} in $(r_1,1)$, we have $f'(r_1) > 0$. In particular, $f < 0$ in some small interval $(r_1 - \delta, r_1)$. Observe that $(|f|, g)$ satisfies in the sense of distribution
\begin{align*}
|f|'' + \frac{N-1}{r} |f|' - \frac{N-1}{r^2} |f|&= A(|f|, g) \quad \textrm{in } (r_1,1),\\
|f|'' + \frac{N-1}{r} |f|' - \frac{N-1}{r^2} |f|&\geq A(|f|, g) \quad \textrm{in } (0,1),\\
g'' + \frac{N-1}{r} g' &=B(|f|, g) \quad \textrm{in } (0,1),
\end{align*}
where $A$ and $B$ are defined in \eqref{Eq:abDef}. Consequently, we can apply the proof of 
Proposition~\ref{Prop:ODEMonotonicity} to the pair $(|f|, g)$ to obtain
\[
(|f|)_s \geq |f| \text{ and } g_s \leq g \text{ in } \max(0, 2s - 1) < r < s \text{ for all } r_1 \leq s < 1,
\]
where $(|f|)_s(r) = |f|(2s - r)$ and $g_s(r) = g(2s - r)$. Observe also that, by definition, both $|f|$ and $(|f|)_{r_1}$ have the same first left-derivative at $r_1$; thus, we deduce by the Hopf lemma that $(|f|)_{r_1} \equiv  |f|$ and $f'(2r_1 - r) = 0$  in $\max(0, 2r_1 - 1) < r < r_1$ (see \eqref{new_23}). The latter identity is impossible, since $f'(r_1) > 0$.
We conclude that $f \geq 0$ in $(0,1)$. The positivity of $f$ follows by the strong maximum principle applied to \eqref{Eq:20III21-fee} (as $f(1)=1$).
\end{proof}

Applying Propositions \ref{Prop:ODEMonotonicity} and \ref{Prop:fPos} to the non-escaping solution $(f_\eps,0)$, we  obtain:

\begin{corollary}\label{Prop:GLMonotonicity}
Suppose $W \in C^2((-\infty,1])$ satisfies \eqref{Eq:WCond'}, and $f_\eps$ satisfies \eqref{Eq:ueH1}--\eqref{Eq:24VII18-X2}. Then $f_\eps > 0$, $f'_{\eps}>0$ and $\big(\frac{f_{\eps}}{r}\big)' \leq 0$ in $(0,1]$.
\end{corollary}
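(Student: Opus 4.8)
The plan is to obtain this as an immediate specialization of Propositions \ref{Prop:ODEMonotonicity} and \ref{Prop:fPos} to the non-escaping branch, so essentially no new analytic work is required. First I would fix an arbitrary $\eta > 0$ and any admissible $\tilde W$ (for instance $\tilde W(t) = t$, which satisfies \eqref{Eq:TWCond'}), and check that the pair $(f_\eps, 0)$ solves the extended system \eqref{Eq:feegeeH1}--\eqref{Eq:20III21-feegeeBC}: equation \eqref{Eq:20III21-fee} with $g \equiv 0$ is literally \eqref{Eq:24VII18-X1}; equation \eqref{Eq:20III21-gee} reads $0 = 0$ since every term on its right-hand side carries a factor $g_{\eps,\eta}$; and the boundary conditions in \eqref{Eq:20III21-feegeeBC} match \eqref{Eq:24VII18-X2} together with the trivial $g(1) = 0$. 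One also notes that $(f_\eps, 0) \in \mcB$, since by hypothesis $f_\eps$ is the radial profile of the vortex solution $u_\eps(x) = f_\eps(r) n(x) \in \mcA^{GL}$, i.e. $u_\eps \in H^1(B^N)$, which is precisely the integrability encoded in $\mcB$, the vanishing $g$-component being harmless.

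With this identification the sign hypothesis $g_{\eps,\eta} \equiv 0 \geq 0$ holds, so Proposition \ref{Prop:fPos} applies and yields $f_\eps > 0$ in $(0,1)$; combined with $f_\eps(1) = 1$ this gives $f_\eps > 0$ on $(0,1]$. Now that $f_\eps \geq 0$ and $g_{\eps,\eta} \equiv 0 \geq 0$ are both in force, I would invoke Proposition \ref{Prop:ODEMonotonicity}, whose conclusion gives $f_\eps' > 0$ and $\big(\frac{f_\eps}{r}\big)' \leq 0$ in $(0,1]$ — the alternative ``$g_{\eps,\eta} = 0$'' case of that proposition is precisely the present situation, and, as noted there, in this case only $W'(0) = 0$ (rather than the full convexity of $W$) is needed for the monotonicity of $f_\eps$. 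This establishes all three assertions.

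The only point to be careful about — and it is genuinely routine — is the legitimacy of this reduction: that $(f_\eps, 0)$ really belongs to the admissible class $\mcB$ and that the degenerate second ODE \eqref{Eq:20III21-gee} imposes no hidden constraint. Once that is verified there is no further obstacle, since the maximum-principle and moving-plane/Hopf-lemma arguments underlying the positivity and monotonicity have already been carried out in the two cited propositions.
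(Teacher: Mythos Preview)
Your proposal is correct and follows essentially the same approach as the paper: the corollary is stated there as an immediate consequence of applying Propositions \ref{Prop:ODEMonotonicity} and \ref{Prop:fPos} to the non-escaping solution $(f_\eps,0)$. Your only addition is the (routine) verification that $(f_\eps,0)$ indeed solves the extended system and lies in $\mcB$, which the paper leaves implicit.
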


Finally, we prove the uniqueness of escaping solutions of \eqref{Eq:feegeeH1}--\eqref{Eq:20III21-feegeeBC}.

\begin{proposition}
Let $N\geq 2$ and suppose that $W \in C^2((-\infty,1])$ and $\tilde W \in C^2([0,\infty))$ satisfy \eqref{Eq:WCond'} and \eqref{Eq:TWCond'}. Then, for every $\eps > 0$ and $\eta > 0$, the system \eqref{Eq:feegeeH1}--\eqref{Eq:20III21-feegeeBC} has at most one escaping solution $(f_{\eps,\eta} ,g_{\eps,\eta} )$ with $g_{\eps,\eta} > 0$ in $(0,1)$. Furthermore, when it exists, $(f_{\eps,\eta}, \pm g_{\eps,\eta})$ are the only two minimizers of $I_{\eps,\eta}$ over the set $\mcB$; in particular, $I_{\eps,\eta}[f_\eps, 0]>I_{\eps,\eta}[f_{\eps,\eta}, g_{\eps,\eta}]$ where $f_\eps$ is the radial profile satisfying \eqref{Eq:24VII18-X1}--\eqref{Eq:24VII18-X2}.
\label{pro:unique_eps_eta}
\end{proposition}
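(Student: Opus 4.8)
\emph{Strategy.} The plan is to expose a hidden convexity of the reduced functional $I_{\eps,\eta}$ through a Hardy-type change of variables, and combine it with the a priori structure of escaping solutions already established (Lemma \ref{lem:f2g2}, Propositions \ref{Prop:ODEMonotonicity} and \ref{Prop:fPos}). First, since $u=r$ solves $-u''-\tfrac{N-1}{r}u'+\tfrac{N-1}{r^2}u=0$, an integration by parts gives, for $(f,g)\in\mcB$ with $f/r$ regular at $0$,
\[
\int_0^1\Big[(f')^2+\tfrac{N-1}{r^2}f^2\Big]r^{N-1}\,dr=1+\int_0^1\big((f/r)'\big)^2 r^{N+1}\,dr .
\]
Setting $P=(f/r)^2$, $Q=g^2$ and $\widetilde I_{\eps,\eta}[P,Q]:=I_{\eps,\eta}[r\sqrt P,\sqrt Q]$, this gives
\[
\widetilde I_{\eps,\eta}[P,Q]=\tfrac12+\tfrac12\int_0^1\Big[\tfrac{(P')^2}{4P}r^{N+1}+\tfrac{(Q')^2}{4Q}r^{N-1}+\tfrac1{\eps^2}W(1-r^2P-Q)r^{N-1}+\tfrac1{\eta^2}\tilde W(Q)r^{N-1}\Big]dr .
\]
Now $(a,b)\mapsto b^2/(4a)$ is convex on $\{a>0\}$, the argument $1-r^2P-Q$ is affine in $(P,Q)$ with $W$ convex on $(-\infty,1]$, and $\tilde W$ is convex on $[0,\infty)$; hence $\widetilde I_{\eps,\eta}$ is convex on the convex set $\mathcal D$ of admissible pairs $(P,Q)$ (those with $P,Q\ge 0$ for which $(r\sqrt P,\sqrt Q)\in\mcB$, $P(1)=1$, $Q(1)=0$). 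Moreover, since the integrand of $I_{\eps,\eta}$ depends on $(f,g)$ only through $f^2,g^2,(f')^2,(g')^2$, the symmetry $(f,g)\mapsto(|f|,|g|)$ identifies $\inf_{\mcB} I_{\eps,\eta}$ with $\min_{\mathcal D}\widetilde I_{\eps,\eta}$, attained by the direct method.

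\emph{Minimality.} Any minimizer of $I_{\eps,\eta}$ over $\mcB$ may, by $(f,g)\mapsto(|f|,|g|)$, be taken with $f,g\ge 0$; being a critical point it is a (classical, by interior elliptic regularity and Lemma \ref{lem:f2g2}) solution of \eqref{Eq:20III21-fee}--\eqref{Eq:20III21-feegeeBC}, so by Proposition \ref{Prop:fPos} and the strong maximum principle $f>0$ and either $g\equiv 0$ (hence it is $(f_\eps,0)$ by Theorem \ref{Thm:GLExist}) or $g>0$, i.e. it is escaping. Conversely, an escaping solution $(f_{\eps,\eta},g_{\eps,\eta})$ has $f_{\eps,\eta},g_{\eps,\eta}>0$ in $(0,1)$ (Propositions \ref{Prop:fPos}, \ref{Prop:ODEMonotonicity}), so the associated $(P,Q)$ is a critical point of the convex functional $\widetilde I_{\eps,\eta}$ with respect to all admissible variations, hence a global minimizer of $\widetilde I_{\eps,\eta}$; thus $(f_{\eps,\eta},\pm g_{\eps,\eta})$ are global minimizers of $I_{\eps,\eta}$ in $\mcB$.

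\emph{Uniqueness.} Let $(f_1,g_1),(f_2,g_2)$ be escaping solutions; by the previous step both $(P_i,Q_i)$ minimize the convex $\widetilde I_{\eps,\eta}$, so the segment joining them consists of minimizers and every convex summand above is affine along it. Affineness of $\int_0^1\tfrac{(P')^2}{4P}r^{N+1}\,dr$, together with the fact that $b^2/(4a)$ restricted to a segment is affine only if the segment lies on a line through the origin, forces $P_1'/P_1=P_2'/P_2$ a.e., i.e. $P_1=cP_2$; since $P_i(1)=1$ we get $c=1$ and $f_1=f_2=:f$. Likewise $Q_1=c'Q_2$, i.e. $g_1=\sqrt{c'}\,g_2$. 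To force $c'=1$ I re-enter the ODEs: with the same $f$, comparing the $f$-equations \eqref{Eq:20III21-fee} gives $W'(1-f^2-g_1^2)\equiv W'(1-f^2-g_2^2)$, and then comparing the $g$-equations \eqref{Eq:20III21-gee} gives $\tilde W'(g_1^2)\equiv\tilde W'(g_2^2)$ on $(0,1)$. If $c'>1$ (the case $c'<1$ is symmetric), the strict monotonicity of $g_2$ (Proposition \ref{Prop:ODEMonotonicity}) makes $1-f^2-g_i^2$ and $g_i^2$ sweep out intervals, so the monotonicity of $W',\tilde W'$ and $W'(0)=0$ force $W'(1-f^2-g_2^2)\equiv 0$ and $\tilde W'(g_2^2)\equiv\tilde W'(0)$; then \eqref{Eq:20III21-fee} becomes $f''+\tfrac{N-1}{r}f'-\tfrac{N-1}{r^2}f=0$, whence $f\equiv r$, and \eqref{Eq:20III21-gee} becomes the linear equation $g_2''+\tfrac{N-1}{r}g_2'=\tfrac{\tilde W'(0)}{\eta^2}g_2$ on $(0,1)$ with $g_2>0$, $g_2'(0)=0$, $g_2(1)=0$; multiplying by $r^{N-1}g_2$ and integrating gives $\int_0^1 r^{N-1}(g_2')^2\,dr=-\tfrac{\tilde W'(0)}{\eta^2}\int_0^1 r^{N-1}g_2^2\,dr\le 0$, forcing $g_2'\equiv 0$ and $g_2\equiv 0$ (using $\tilde W\equiv 0$ near $0$ when $\tilde W'(0)=0$), a contradiction. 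Hence $c'=1$ and $(f_1,g_1)=(f_2,g_2)$. Finally, if an escaping solution exists and $(f_\eps,0)$ were also a minimizer, the $\mathcal D$-midpoint of $(f_\eps,0)$ and $(f_{\eps,\eta},g_{\eps,\eta})$ would be an escaping minimizer, hence equal to $(f_{\eps,\eta},g_{\eps,\eta})$ by uniqueness, which is absurd; so $(f_{\eps,\eta},\pm g_{\eps,\eta})$ are the only minimizers of $I_{\eps,\eta}$ in $\mcB$, and in particular $I_{\eps,\eta}[f_\eps,0]>I_{\eps,\eta}[f_{\eps,\eta},g_{\eps,\eta}]$.

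\emph{Main obstacle.} The delicate point is the endgame of the uniqueness proof in the non-strictly-convex regime (when $W$ vanishes near $0$ or $\tilde W$ is affine near $0$) --- exactly the ``new behaviors'' flagged in the introduction: hidden convexity alone only yields $f_1=f_2$ and $g_1^2\propto g_2^2$, and eliminating the spurious proportionality factor requires squeezing the ranges of $1-f^2-g_i^2$ and $g_i^2$ using the strict monotonicity of the profiles, then extracting a contradiction from the resulting \emph{linear} ODE with its boundary data at both endpoints. A secondary care point is verifying that $\mathcal D$ is genuinely convex and that criticality of the escaping profile in $\mathcal D$ upgrades to global minimality, which rests on the regularity and positivity in Lemma \ref{lem:f2g2} and Propositions \ref{Prop:ODEMonotonicity}--\ref{Prop:fPos}.
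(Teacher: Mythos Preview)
Your approach is correct and genuinely different from the paper's. The paper does not change variables; instead it writes $(f,g)=(f_{\eps,\eta}+s,\,g_{\eps,\eta}+q)$, uses the convexity of $W,\tilde W$ to bound $I_{\eps,\eta}[f,g]-I_{\eps,\eta}[f_{\eps,\eta},g_{\eps,\eta}]$ below by a quadratic form $F_{\eps,\eta}[V]$, and then applies a Hardy decomposition factoring $s=f_{\eps,\eta}\hat s$, $q=g_{\eps,\eta}\hat q$ to get
\[
F_{\eps,\eta}[V]\ \ge\ \int_{B^N}\Big\{f_{\eps,\eta}^2\big|(s/f_{\eps,\eta})'\big|^2+g_{\eps,\eta}^2\big|(q/g_{\eps,\eta})'\big|^2\Big\}\,dx.
\]
Your hidden-convexity route via $(P,Q)=((f/r)^2,g^2)$ reaches the same intermediate conclusion (any two minimizers satisfy $f_1\equiv f_2$ and $g_1^2=c'g_2^2$) by a more global, Benguria/Brezis--Oswald type argument; it is conceptually cleaner and avoids factoring by the unknown solution. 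The paper's route, on the other hand, sidesteps the need to verify that the change of variables formula $\int[(f')^2+\tfrac{N-1}{r^2}f^2]r^{N-1}=1+\int((f/r)')^2r^{N+1}$ holds for \emph{all} competitors in $\mcB$ (you should note that $\liminf_{r\to0}f(r)^2r^{N-2}=0$ follows from $r^{(N-3)/2}f\in L^2$, which suffices).

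One point deserves more care than you give it. Your claim that $W'(1-f^2-g_2^2)\equiv 0$ from $W'(1-f^2-g_1^2)\equiv W'(1-f^2-g_2^2)$ and ``sweeping intervals'' is not immediate: the intervals $[h_1(r),h_2(r)]$ on which $W'$ is forced to be constant need not overlap near $t_*:=\sup\{t:W'\equiv0\text{ on }[0,t]\}$ when $t_*=0$, since then $h_2(r)-h_1(r)=(c'-1)g_2^2(r)\to0$ as $r\to1$. The paper handles this by working from the top: set $\tau=\max(1-f^2-g_2^2)$ and $t_0=\inf\{t:W'\equiv W'(\tau)\text{ on }[t,\tau]\}$; pick $r_0$ with $1-f^2(r_0)-g_2^2(r_0)=t_0$ and use the identity at $r_0$ to force $g_2(r_0)=0$, hence $r_0=1$ and $t_0=0$. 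Your endgame after that (linearising the $f$- and $g$-equations and testing the $g$-equation against $r^{N-1}g_2$) is correct. The midpoint trick to exclude $(f_\eps,0)$ is a pleasant alternative to the paper's direct equality-case analysis.
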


\begin{proof}
We use ideas from our previous papers \cite{INSZ18_CRAS, INSZ_AnnENS}. Suppose that $(f_{\eps,\eta},g_{\eps,\eta})$ solves \eqref{Eq:feegeeH1}--\eqref{Eq:20III21-feegeeBC} and $g_{\eps,\eta} > 0$ in $(0,1)$. By Proposition \ref{Prop:fPos}, $f_{\eps,\eta} > 0$ in $(0,1)$. For $(f,g) \in \mcB$, we write $(f,g) = (f_{\eps,\eta},g_{\eps,\eta}) + (s,q)$ and
\[
V(x)=(s(r)n(x), q(r))\in H^1_0(B^N, \RR^{N+1}).
\]
Using first the convexity of $W$ and $\tilde W$ and then equations \eqref{Eq:20III21-fee}--\eqref{Eq:20III21-gee}, we compute
\begin{align*}
&I_{\eps,\eta}[f,g] - I_{\eps,\eta}[f_{\eps,\eta},g_{\eps,\eta}]
\geq \frac{1}{2} \int_0^1 \Big\{ 2f_{\eps,\eta}' s' + (s')^2 + 2g_{\eps,\eta}' q' + (q')^2 + \frac{N-1}{r^2}(2f_{\eps,\eta}s + s^2)\\
		&\qquad - \frac{1}{\eps^2} W'(1 - f_{\eps,\eta}^2 - g_{\eps,\eta}^2)[ 2(f_{\eps,\eta}s + g_{\eps,\eta}q) + s^2 + q^2]
		 + \frac{1}{\eta^2} \tilde W'(g_{\eps,\eta}^2)(  2g_{\eps,\eta}q + q^2)  \Big\} r^{N-1} dr
\end{align*}
	\begin{align*}	 
	&= \frac{1}{2} \int_0^1 \Big\{ (s')^2  + (q')^2 + \frac{N-1}{r^2} s^2
		- \frac{1}{\eps^2} W'(1 - f_{\eps,\eta}^2 - g_{\eps,\eta}^2)( s^2 + q^2) + \frac{1}{\eta^2} \tilde W'(g_{\eps,\eta}^2) q^2  \Big\} r^{N-1} dr\\
	&=\frac1{2|\bS^{N-1}|} \int_{B^N}\Big\{ |\nabla V|^2- \frac{1}{\eps^2} W'(1 - f_{\eps,\eta}^2 - g_{\eps,\eta}^2)|V|^2 + \frac{1}{\eta^2} \tilde W'(g_{\eps,\eta}^2) V_{N+1}^2  \Big\}\,dx
	=: \frac{F_{\eps,\eta}[V]}{2|\bS^{N-1}|}.
\end{align*}

\noindent\underline{Claim 1:} For every $V(x)=(s(r)n(x), q(r))\in H^1_0(B^N, \RR^{N+1})$, it holds
\[
F_{\eps,\eta}[V] \geq \int_{B^N} \Big\{f_{\eps,\eta}^2(|x|) \Big| \Big(\frac{s}{f_{\eps,\eta}}\Big)'(|x|)\Big|^2 + g_{\eps,\eta}^2(|x|) \Big|\Big(\frac{q}{g_{\eps,\eta}}\Big)'(|x|)\Big|^2 \Big\}\,dx,
\]
and as a consequence, $(f_{\eps,\eta},g_{\eps,\eta})$ minimizes $I_{\eps,\eta}$ in $\mcB$.

\nd \underline{Proof of Claim 1:}
Since $F_{\eps,\eta}$ is continuous in $H_0^1(B^N, \RR^{N+1})$ (because $W'(1 - f_{\eps,\eta}^2 - g_{\eps,\eta}^2)$, $\tilde W'(g_{\eps,\eta}^2)\in L^\infty(B^N)$ by Lemma \ref{lem:f2g2}), by standard density results and Fatou's lemma, it suffices to show the claim for $V=(s(r)n, q(r)) \in C_c^\infty(B^N\setminus \{0\}, \RR^{N+1})$. For that, we will apply \cite[Lemma~A.1]{ODE_INSZ} for the operators 
\be
\label{opLT}
\begin{cases}
&L:=-\Delta-\frac{1}{\eps^2} W'(1 - f_{\eps,\eta}^2 - g_{\eps,\eta}^2),\\ 
&T:=-\Delta-\frac{1}{\eps^2} W'(1 - f_{\eps,\eta}^2 - g_{\eps,\eta}^2)+ \frac{1}{\eta^2} \tilde W'(g_{\eps,\eta}^2). 
\end{cases}
\ee
Indeed, writing $V=(s(r)n, q(r))=(V_1, \dots, V_N, V_{N+1})\in C_c^\infty(B^N\setminus \{0\}, \RR^{N+1})$ and decomposing $V_j=f_{\eps,\eta}\hat V_j$ with $\hat V_j= \frac{V_j}{f_{\eps,\eta}}$ for $j=1,\dots, N$ and $V_{N+1}=g_{\eps,\eta} \hat V_{N+1}$ with $\hat V_{N+1}=\frac{q}{g_{\eps,\eta}}$,
\begin{align}
F_{\eps,\eta}[V] 
	&= \sum_{j=1}^N \int_{B^N} \, L V_j\cdot V_j \, dx+\int_{B^N} \, T V_{N+1}\cdot  V_{N+1} \, dx 
	\nonumber\\
	&=\sum_{j=1}^N \int_{B^N} \, \Big\{f_{\eps,\eta}^2 |\nabla \hat V_j|^2 + \hat V_j^2 Lf_{\eps,\eta}\cdot f_{\eps,\eta}
	+g_{\eps,\eta}^2 |\nabla \hat V_{N+1}|^2 + \hat V_{N+1}^2 Tg_{\eps,\eta}\cdot g_{\eps,\eta}\Big\}\, dx
		\nonumber \\
	&\nonumber=\int_{B^N} \,\Big\{ f_{\eps,\eta}^2(|x|) \Big|\nabla \big(\frac{s(r)}{f_{\eps,\eta}(r)} n(x)\big)\Big|^2 -\frac{N-1}{r^2}s^2+g_{\eps,\eta}^2(|x|) \Big|\Big(\frac{q}{g_{\eps,\eta}}\Big)'(|x|)\Big|^2 \Big\}\, dx\\
	&=\int_{B^N} \Big\{f_{\eps,\eta}^2(|x|) \Big| \Big(\frac{s}{f_{\eps,\eta}}\Big)'(|x|)\Big|^2 + g_{\eps,\eta}^2(|x|) \Big|\Big(\frac{q}{g_{\eps,\eta}}\Big)'(|x|)\Big|^2 \Big\}\,dx,
	\label{Eq:10IV21-E1}
\end{align}
because $Lf_{\eps,\eta}=-\frac{N-1}{r^2} f_{\eps,\eta}$, $Tg_{\eps,\eta}=0$ (by \eqref{Eq:20III21-fee}--\eqref{Eq:20III21-gee}) and $(\hat V_1, \dots, \hat V_N)=\frac{s(r)}{f_{\eps,\eta}(r)} n(x)$ with $|\nabla n|^2=\frac{N-1}{r^2}$. Hence, the claim is proved.

\medskip
\noindent\underline{Step 1:} {\it We prove that $\{(f_{\eps,\eta},\pm g_{\eps,\eta})\}$ is the set of minimizers of $I_{\eps,\eta}$ in $\mcB$.} Indeed, we have seen that $(f_{\eps,\eta},\pm g_{\eps,\eta})$ minimizes $I_{\eps,\eta}$ in $\mcB$. Suppose $(\tilde f_{\eps,\eta}, \tilde g_{\eps,\eta})$ also minimizes $I_{\eps,\eta}$ in $\mcB$, in particular, $I_{\eps,\eta}[f_{\eps,\eta},g_{\eps,\eta}]=I_{\eps,\eta}[\tilde f_{\eps,\eta}, \tilde g_{\eps,\eta}]$ so that, for $V=\Big((\tilde f_{\eps,\eta}  -  f_{\eps,\eta})n(x), \tilde g_{\eps,\eta}  -  g_{\eps,\eta}\Big)$, one has $F[V]=0$ leading to:
\[
\frac{\tilde f_{\eps,\eta}  -  f_{\eps,\eta} }{ f_{\eps,\eta} } 
	\text{ and }
	\frac{\tilde g_{\eps,\eta}  -  g_{\eps,\eta} }{ g_{\eps,\eta} }
	\text{ are constant in } (0,1).
\]
This together with $\tilde f_{\eps,\eta}(1) - f_{\eps,\eta}(1) = 0$ gives $\tilde f_{\eps,\eta}  \equiv f_{\eps,\eta}$ and 
$\tilde g_{\eps,\eta}  \equiv a g_{\eps,\eta}$ in $(0,1)$ for some constant $a \in \RR$. Since $g_{\eps,\eta} > 0$, this implies that $\tilde g_{\eps,\eta}$ has a fixed sign. Furthermore, either $a=0$ (so $\tilde g_{\eps,\eta}\equiv 0$), or $|\tilde g_{\eps,\eta}|>0$ in $(0,1)$ in which case, we can interchange $g_{\eps,\eta}$ and $\pm \tilde g_{\eps,\eta}$  if necessary (note that $(\tilde f_{\eps,\eta}, -\tilde g_{\eps,\eta})$ also minimizes $I_{\eps,\eta}$ in $\mcB$), so that we may always assume that $0 \leq a \leq 1$.

To finish the proof, we prove that $a=1$, i.e.,  $\tilde g_{\eps,\eta}  \equiv  g_{\eps,\eta}$ in $(0,1)$. Assume by contradiction that $0 \leq a<1$. We will show that
\begin{equation}
W'(1 - f_{\eps,\eta}^2 - g_{\eps,\eta}^2) \equiv 0 \text{ in }(0,1).
	\label{Eq:W'Red0}
\end{equation}
Once this is done, we deduce from \eqref{Eq:20III21-gee} that $-\Delta g_{\eps,\eta}+\frac1{\eta^2} \tilde W'(g^2_{\eps,\eta}) g_{\eps,\eta}=0$ in $B^N$. Since $\tilde W'\geq \tilde W'(0)\geq 0$ in $[0, \infty)$ (by \eqref{Eq:TWCond'}) and $g_{\eps,\eta}=0$ on $\partial B^N$, we deduce that $g_{\eps,\eta}=0$ in $B^N$ which gives a contradiction to the assumption $g_{\eps,\eta}>0$ in $B^N$, and completes the proof.

Let us now prove \eqref{Eq:W'Red0}. Returning to \eqref{Eq:20III21-fee}, we see that
\begin{equation}
W'(1 - f_{\eps,\eta}^2 - g_{\eps,\eta}^2) \equiv W'(1 - f_{\eps,\eta}^2 - a^2 g_{\eps,\eta}^2)  \quad \text{ in } [0,1].
	\label{Eq:Step1}
\end{equation}
Therefore, to prove \eqref{Eq:W'Red0}, it suffices to show that $W'(t)=0$ for every $0\leq t\leq \max_{[0,1]} (1-f_{\eps,\eta}^2 - a^2 g_{\eps,\eta}^2) =: \tau$. For that, we have $f_{\eps,\eta}^2 + a^2 g_{\eps,\eta}^2<f_{\eps,\eta}^2 + g_{\eps,\eta}^2<1$ in $(0,1)$ by Lemma \ref{lem:f2g2}, and hence $\tau >0$. Note that the range of $1-f_{\eps,\eta}^2 - a^2 g_{\eps,\eta}^2$ over $[0,1]$ is $[0,\tau]$ because of \eqref{Eq:20III21-feegeeBC}. Set $t_0=\inf \{t>0\, :\, W'(s)=W'(\tau) \textrm{ for all } s\in [t,\tau]\}$. We show that $t_0=0$. For that, let $r_0\in [0,1]$ such that 
$1-f_{\eps,\eta}^2(r_0) - a^2 g_{\eps,\eta}^2(r_0)=t_0$. By the continuity of $W'$ and \eqref{Eq:Step1}, we deduce for $t_1:=1-f_{\eps,\eta}^2(r_0) - g_{\eps,\eta}^2(r_0)\leq t_0$ that 
$W'(t_1)=W'(t_0)=W'(\tau)$. As $W'$ is nondecreasing (because $W$ is convex), we deduce that $W'(s)=W'(\tau)$ for every $s\in [t_1,\tau]$. By the minimality of $t_0$, it means that $t_1=t_0$, i.e., $g_{\eps,\eta}^2(r_0)=0$. Since $g_{\eps,\eta}>0$ in $[0,1)$ (which is a consequence of the strong maximum principle applied to \eqref{Eq:20III21-gee}, considered as a PDE on $B^N$), this yields $r_0=1$, i.e., $t_0=0$. It follows that $W'\equiv W'(0)=0$ on $[0, \tau]$ as desired (where we use that $0$ is a minimum point of $W$ by the assumption \eqref{Eq:WCond'}). 

\medskip
\noindent\underline{Step 2:} {\it We prove the uniqueness of escaping solutions of \eqref{Eq:feegeeH1}--\eqref{Eq:20III21-feegeeBC}.} Indeed, assume that $(\check f_{\eps,\eta},\check g_{\eps,\eta})$ is also a solution to \eqref{Eq:feegeeH1}--\eqref{Eq:20III21-feegeeBC} with $\check g_{\eps,\eta} > 0$ in $(0,1)$. Then Claim 1 yields that both $(f_{\eps,\eta},g_{\eps,\eta})$ and $(\check f_{\eps,\eta},\check g_{\eps,\eta})$ minimize $I_{\eps,\eta}$ in $\mcB$. By Step 1, we have $f_{\eps,\eta} \equiv \check f_{\eps,\eta}$ and $g_{\eps,\eta} \equiv \check g_{\eps,\eta}$ as desired. The proof is complete.
\end{proof}

\subsection{The $\RR^N$-valued GL model: Existence and uniqueness}\label{SSec:GLEU}

We prove existence and uniqueness of the radial profile and its minimality for $I_\eps^{GL}$ as stated in in Theorem \ref{Thm:GLExist}. Then we prove Lemma \ref{Lem:eigen-gl}.

\begin{proof}[Proof of Theorem \ref{Thm:GLExist}]
Let $f_\eps$ be a minimizer of the reduced energy functional $I_\eps^{GL}$ in $\mcB^{GL}$. (It is easy to see that such minimizer exists.) Since $I_\eps^{GL}[f] \geq I_\eps^{GL}[\min\{|f|,1\}]$, we may also assume that $0 \leq f_\eps \leq 1$. In addition, we have that $f_\eps$ satisfies \eqref{Eq:24VII18-X1}, $f_\eps(1) = 1$ and $f_\eps \in C^2((0,1])$. Noting also that the constant functions $0$ and $1$ are a solution and a super-solution to \eqref{Eq:24VII18-X1} respectively (since $W'(0) = 0$), the strong maximum principle implies that $0 < f_\eps < 1$ in $(0,1)$. By Lemma \ref{Lem:ONSym}, $f_\eps/r \in C^2([0,1])$, in particular, $f_\eps(0) = 0$.

If \eqref{Eq:WCond'} holds, then by Corollary \ref{Prop:GLMonotonicity} we have $f'_\eps> 0$ in $(0,1]$. Also, the same argument as in the proof of Proposition \ref{pro:unique_eps_eta} applies giving also the uniqueness of $f_\eps$ as solution of \eqref{Eq:24VII18-X1}-\eqref{Eq:24VII18-X2}, in particular, as unique minimizer of $I_\eps^{GL}$ over $\mcB^{GL}$. We omit the details.
\end{proof}

We next prove estimates for $\ell(\eps)$.

\begin{proof}[Proof of Lemma \ref{Lem:eigen-gl}.]
Note that by the definition of the first eigenvalue for $L_\eps^{GL}$ and standard elliptic regularity, $\ell$ depends continuously on $\eps$. Let us prove \eqref{Eq:ellMon} for $0<\tilde \eps<\eps<\infty$. We have
\[
\int_{B^N} \Big[|\nabla \varphi|^2 - \frac{1}{\tilde\eps^2} W'(1- f_{\tilde\eps}^2)\varphi^2\Big]\,dx \geq \ell(\tilde\eps) \int_{B^N} \varphi^2\,dx \text{ for all } \varphi \in H_0^1(B^N).
\]
By rescaling, we deduce:
\[
\int_{B(0,1/{\tilde\eps})} \Big[|\nabla \psi|^2 -  W'(1- f_{\tilde\eps}^2(\tilde\eps |x|))\psi^2\Big] \geq \tilde\eps^2 \ell(\tilde\eps)\int_{B(0,1/{\tilde\eps})} \psi^2\,dx  \text{ for all } \psi \in H_0^1(B(0,{1/{\tilde\eps}})).
\]
As $B(0,1/\eps) \subset B(0,1/{\tilde \eps})$, by the strict monotonicity of the first eigenvalue with respect to domains (due to the positivity of the first eigenfunctions), we have
\[
\int_{B(0,1/\eps)} \Big[|\nabla \psi|^2 -  W'(1- f_{\tilde\eps}^2(\tilde\eps |x|))\psi^2\Big] > \tilde\eps^2 \ell(\tilde\eps)\int_{B(0,1/\eps)} \psi^2\,dx  \text{ for all } 0 \not\equiv \psi \in H_0^1(B(0,{1/{\eps}})).
\]
Now using the inequality $1 \geq f_{\eps}(\eps |x|) \geq f_{\tilde\eps}(\tilde\eps |x|) \geq 0$ (see Proposition \ref{Prop:B.1}(a)) for $|x| < 1/\eps$ and the monotonicity of $W'$, we deduce that
\[
\int_{B(0,1/\eps)} \Big[|\nabla \psi|^2 -  W'(1- f_{\eps}^2(\eps |x|))\psi^2\Big] > \tilde\eps^2 \ell(\tilde\eps)\int_{B(0,1/\eps)} \psi^2\,dx  \text{ for all } 0 \not\equiv \psi \in H_0^1(B(0,{1/{\eps}})).
\]
Rescaling once again we get that
\[
\int_{B^N} \Big[|\nabla \varphi|^2 - \frac{1}{\eps^2} W'(1- f_\eps^2)\varphi^2\Big] > \frac{\tilde\eps^2 \ell(\tilde\eps)}{\eps^2} \int_{B^N} \varphi^2\,dx  \text{ for all } 0 \not\equiv \varphi \in H_0^1(B^N),
\]
which is equivalent to \eqref{Eq:ellMon}.

Assertion (a) is clear because if $W'(1) = 0$, then \eqref{Eq:WCond'} implies that $W = 0$ in $(0,1)$. 

Assertion (b) for $N\geq 7$ is a consequence of the inequality
\[
\int_{B^N} L_\eps^{GL} v \cdot v\,dx \geq \Big(\frac{(N-2)^2}4-(N-1)\Big) \int_{B^N} \frac{v^2}{r^2}\,dx \text{ for all } v \in H_0^1(B^N),
\]
which was proved in Step 4 of the proof of \cite[Theorem 2]{INSZ18_CRAS}.

We next prove assertion (c) for $2\leq N\leq 6$ and $W'(1)>0$. We have seen that $\ell(\eps) > -W'(1)\eps^{-2}$. We prove the rest in 2 steps.

\underline{Step 1:} {\it We show that there exist $\eps_1 > 0$ and $c_1 > 0$ such that $\ell(\eps) \leq -\frac{c_1}{\eps^2}$ for $\eps \in (0,\eps_1)$, by exhibiting a non-zero function $q = q_\eps(r) \in Lip_c((0,1)) $ satisfying }
\[ 
\int_{B^N} L_\eps^{GL} q \cdot q\,dx \leq -\frac{c_1}{\eps^2}\int_{B^N} q^2\,dx.
\]
(Note that by the lower bound of $\ell(\eps)$, it is clear that $c_1 < W'(1)$.)

Note that, by \cite[Lemma~A.1]{INSZ3}, for every positive function $\varphi \in C^{1,1}_{loc}((0,1))$, we have the following identity for every $q = f_\eps \varphi \tilde q \in Lip_c(B^N \setminus \{0\})$
\begin{align}
\int_{B^N} L_\eps^{GL} q \cdot q\,dx 
	&= \int_{B^N}\varphi^2 \Big\{ \barfeps^2 |\nabla \tilde q|^2 + \frac{L_{\eps}^{GL}(\f \barfeps) \barfeps}{\varphi}   \tilde q^2\Big\} \,dx.
	\label{Eq:2VIPNew}
\end{align}
We choose\footnote{See \cite[inequality (6)]{INSZ18_CRAS} for an explanation of this choice of $\varphi$.} $\varphi = r^{-\frac{N-2}{2}}\in C^\infty((0,1))$, and note that, by \eqref{Eq:24VII18-X1}, 
\[
L_\eps^{GL}(\f \barfeps) \barfeps=\frac{(N^2 - 8N + 8) f^2_\eps \f}{4r^2} 
	 -2 \barfeps  f'_\eps \f' \quad \textrm{in}\quad (0,1).
\]
The idea now is to exploit the negativity of $N^2 - 8N + 8$ for $2 \leq N \leq 6$ to reach the desired conclusion. Let $t_0 = \sup \{0 \leq t < 1: W(t) = 0\}$.
By Proposition \ref{Prop:B.1}(b), for every small $\delta > 0$, there exists $C_\delta >0$ such that for every $a > C_\delta$ we can find $\eps_1 = \eps_1(\delta,a)$ for which
\be
\label{ineg_feps}
1 - t_0  - \delta \leq f^2_\eps  \leq 1 - t_0   \text{ in } [C_\delta\eps, a \eps] \text{ for all } \eps \in (0,\eps_1).
\ee
The contribution of the term $-2 \barfeps  f'_\eps \f'$ in the above expression of $L_\eps^{GL}(\varphi f_\eps)f_\eps$ to the right hand side of \eqref{Eq:2VIPNew} is handled as follows. (Note that if $N=2$, then $\f'=0$ so that term vanishes and the reader can proceed directly to estimate \eqref{Eq:2VIPX7} below.) We impose that $\tilde q = \tilde q(r)$ is supported in $[C_\delta\eps, a\eps]$, then integration by parts combined with \eqref{ineg_feps} and $\big(r^{N-1}(\f^2)'\big)'=0$ for $r\in (0,1)$ yields by Cauchy-Schwarz:
\begin{align*}
&- 2 \int_0^1 r^{N-1}  \tilde q^2 \barfeps   f'_\eps \f \f'\, dr
	= \frac12 \int_0^1 r^{N-1}  \tilde q^2 (1 - t_0- f^2_\eps)' (\f^2)'\, dr\\
	&\qquad = -\int_0^1 r^{N-1}  \tilde q \tilde q' (1 - t_0 - f^2_\eps) (\f^2)'\, dr
	\leq \delta \int_0^1 (\tilde q')^2 r\, dr+ \frac{(N-2)^2}4\delta  \int_0^1 \frac{\tilde q^2}{r}\, dr.
\end{align*}
Since $2\leq N\leq 6$ implies $N^2 - 8N + 8 <0$,  using \eqref{ineg_feps}, we deduce
\begin{align}
\int_{B^N} \Big[L_\eps^{GL} q \cdot q &+ \frac{c_1 q^2}{\eps^2}\Big]\,dx 
	\leq |\Sphere^{N-1}|\int_0^1 r \Big\{ (1-t_0 + \delta) (\tilde q')^2\nonumber\\
		&\quad + \frac{1}{r^2}\Big[\frac{(N^2 - 8N + 8) (1 - t_0 -\delta)+ (N-2)^2\delta}{4} + \frac{c_1 r^2}{\eps^2} \Big] \tilde q^2		  \Big\}\,dr.
	\label{Eq:2VIPX7}
\end{align}
We now choose a non-negative $\tilde q\in Lip_c((0,1))$ given by
\[
\tilde q(r) = \tilde q_{a,\eps}(r) := 
\left\{\begin{array}{ll}
\sin \Big( \frac{\pi}{\ln \frac{a}{C_\delta} } \ln \frac{r}{C_\delta\eps}\Big) &\text{ for } r \in (C_\delta\eps, a\eps),\\
0 &\text{ elsewhere}.
\end{array}\right.
\]
Note that $(N^2 - 8N + 8)(1 - t_0 - \delta)+ (N-2)^2\delta= (N^2 - 8N + 8)(1-t_0) + c\delta$ for $c=4N-4>0$. Inserting into \eqref{Eq:2VIPX7}, we get
\begin{align}
\int_{B^N} &\Big[L_\eps^{GL} q \cdot q + \frac{c_1q^2}{\eps^2}\Big]\,dx 
	\leq |\Sphere^{N-1}|\int_{C_\delta \eps}^{a\eps}  \Big\{ \Big(\frac{\pi}{\ln \frac{a}{C_\delta} }\Big)^2 \cos^2 \Big( \frac{\pi}{\ln \frac{a}{C_\delta} } \ln \frac{r}{C_\delta\eps}\Big) (1 - t_0 +\delta)
	\nonumber\\
		&\qquad \qquad \qquad \quad + \Big(\frac{(N^2 - 8N + 8)(1-t_0) + c\delta}{4} + c_1a^2   \Big) \sin^2 \Big( \frac{\pi}{\ln \frac{a}{C_\delta\eps} } \ln \frac{r}{C_\delta\eps}\Big)
		  \Big\}\,\frac{1}{r}dr\nonumber
	\end{align}
	\begin{align}	  
	& =  \frac{|\Sphere^{N-1}|\ln \frac{a}{C_\delta} }{2} \Big(\Big(\frac{\pi}{\ln \frac{a}{C_\delta} }\Big)^2 (1 - t_0 +\delta)+\frac{(N^2 - 8N + 8)(1-t_0) + c\delta}{4} 
		+ c_1 a^2
		 \Big).  
	\label{Eq:2VIPX8}
\end{align}
Recalling $N^2 - 8N + 8 < 0$ for $2 \leq N \leq 6$, we can choose $\delta > 0$ sufficiently small, $a = a_\delta > 0$ sufficiently large and then $c_1 = c_1(\delta) > 0$ sufficiently small such that the right hand side of \eqref{Eq:2VIPX8} is negative for $\eps < \eps_1(\delta)$, yielding Step 1.

\medskip
\noindent\underline{Step 2:} {\it We prove that there exists $\eps_0 > 0$ such that $\ell(\eps) < 0$ and increasing in $(0,\eps_0)$, $\ell(\eps_0) = 0$ and $\ell(\eps) > 0$ for $\eps > \eps_0$.} Let $I = \{\eps \in (0,\infty): \ell(\eps) < 0\}$. It is clear that $\ell(\eps) > 0$ for large $\eps$ and so $I$ is bounded. By Step 1, $I$ contains $(0,\eps_1)$. Let 
\[
\eps_0 = \sup\{\tilde \eps: \ell(\eps) < 0 \text{ for } \eps \in (0,\tilde\eps)\} \in (\eps_1, \infty).
\]
By the continuity of $\ell$, we must have $\ell(\eps_0) = 0$. Then \eqref{Eq:ellMon} yields the monotonicity of $\ell$ in $(0, \eps_0)$ and also, $\ell(\eps) > 0$ for $\eps > \eps_0$. Step 2 is proved. 
\end{proof}

\subsection{The extended model: Existence.}\label{SSec:ExtE}

The aim is to prove Theorem \ref{Thm:ExtendedExist} for the extended model.

\begin{proof}[Proof of Theorem \ref{Thm:ExtendedExist}.]

\medskip
\noindent\underline{Proof of (a) when $N \geq 7$.} By \cite[Theorem 2]{INSZ18_CRAS}, when $N \geq 7$, $\bar m_\eps(x) = (f_\eps(|x|)n(x),0)$ \footnote{\cite[Theorem 1]{INSZ18_CRAS} assumes \eqref{Eq:WCond'}, but is clear from the proof there that \eqref{Eq:WCond'} is sufficient.} is the unique minimizer for the functional $E_{\eps,\infty}: \mcA \subset H^1(B^N,\RR^{N+1}) \rightarrow [0,\infty]$, i.e.
\[
E_{\eps,\infty}[m] = 
	\int_{B^N} \Big[\frac{1}{2}|\nabla m|^2 + \frac{1}{2\eps^2} W(1 - | m|^2)\Big]\,dx, \quad \eps>0.
\]
Recalling the fact that $\tilde W \geq 0$, it follows that for every $\eps, \eta>0$, $\bar m_\eps$ is the unique minimizer of $E_{\eps,\eta}$ in $\mcA$ and so  $(f_\eps,0)$ is the unique minimizer of $I_{\eps,\eta}$ in $\mcB$. This together with Proposition \ref{pro:unique_eps_eta} implies that \eqref{Eq:feegeeH1}--\eqref{Eq:20III21-feegeeBC} has no escaping solution.

\medskip
\noindent\underline{Proof of (a) when $W'(1) = 0$.} When $W'(1) = 0$, we have by \eqref{Eq:WCond'} that $W = 0$ in $[0,1]$. In particular, $E_{\eps,\infty}$ is exactly the Dirichlet energy (and hence convex) when restricting to the set $\{m \in \mcA : |m| \leq 1 \text{ a.e.}\}$. This together with the fact that for $m\in \mcA$,
\[
E_{\eps,\infty}[m] \geq E_{\eps,\infty}[m^\sharp] \text{ where } m^\sharp(x) = \left\{\begin{array}{ll} m(x) & \text{ if } |m| \leq 1,\\\frac{m(x)}{|m(x)|} &\text{ if } |m(x)| > 1,\end{array}\right.
\]
implies that the unique minimizer of $E_{\eps,\infty}$ is the map $Y(x) = (x,0)$ (i.e.  the unique $H^1(B^N, \RR^{N+1})$ harmonic map with boundary value $(x,0)$). Also, note that for $W\equiv 0$ in $[0,1]$, then $f_\eps(r) = r$ solves \eqref{Eq:24VII18-X1}--\eqref{Eq:24VII18-X2}, so by Theorem \ref{Thm:GLExist}, $f_\eps$ is the unique solution of \eqref{Eq:24VII18-X1}--\eqref{Eq:24VII18-X2}. Thus, $\bar m_\eps = (f_\eps n(x), 0)=Y$. We thus have that $\bar m_\eps$ is the unique minimizer of $E_{\eps,\infty}$ and hence of $E_{\eps,\eta}$ (since $\tilde W \geq \tilde W(0)$) in $\mcA$; in particular, $(f_\eps,0)$ is the unique minimizer of $I_{\eps, \eta}$ over $\mcB$. By appealing again to Proposition \ref{pro:unique_eps_eta}, we conclude that \eqref{Eq:feegeeH1}--\eqref{Eq:20III21-feegeeBC} has no escaping solution.

\medskip
\noindent\underline{Proof of (b)} First, we focus on the existence of escaping solutions of \eqref{Eq:feegeeH1}--\eqref{Eq:20III21-feegeeBC} when $2 \leq N \leq 6$ and $W'(1) > 0$. It is easy to see that $I_{\eps,\eta}$ admits a minimizer $(f_{\eps,\eta},g_{\eps,\eta}) \in \mcB$. Since $(f_{\eps,\eta},g_{\eps,\eta}) \in \mcB$, $(f_{\eps,\eta},g_{\eps,\eta}) \in C((0,1])$. It follows that $(f_{\eps,\eta},g_{\eps,\eta})$ satisfies \eqref{Eq:20III21-fee}--\eqref{Eq:20III21-feegeeBC} in the weak sense, and so $(f_{\eps,\eta},g_{\eps,\eta}) \in C^2((0,1])$.

Since $(|f_{\eps,\eta}|,|g_{\eps,\eta}|)$ is also a minimizer of $I_{\eps,\eta}$ in $\mcB$, the above argument also shows that $(|f_{\eps,\eta}|,|g_{\eps,\eta}|) \in C^2((0,1])$ satisfies \eqref{Eq:20III21-fee}--\eqref{Eq:20III21-feegeeBC}. Since $|f_{\eps,\eta}|,|g_{\eps,\eta}| \geq 0$ and $f_{\eps,\eta}(1) = 1$, by the strong maximum principle, we have that $|f_{\eps,\eta}| > 0$ in $(0,1)$, and either $|g_{\eps,\eta}| > 0$ in $(0,1)$ or $g_{\eps,\eta} \equiv 0$ in $(0,1)$. It follows that $f_{\eps,\eta} > 0$ in $(0,1)$, and either $g_{\eps,\eta} > 0$ in $(0,1)$ or $g_{\eps,\eta} < 0$ in $(0,1)$ or $g_{\eps,\eta} \equiv 0$ in $(0,1)$. Clearly, when $g_{\eps,\eta} \equiv 0$, $f_{\eps,\eta}$ is equal to the radial profile $f_\eps$ obtained in Theorem \ref{Thm:GLExist}. By considering $(f_{\eps,\eta},-g_{\eps,\eta})$ instead of $(f_{\eps,\eta},g_{\eps,\eta})$ if necessary, we assume in the sequel that $g_{\eps,\eta} \geq 0$.

\medskip
\noindent\underline{Claim:} $g_{\eps,\eta} > 0$ in $(0,1)$ if and only if $(\eps,\eta) \in A := \{(\eps,\eta): 0 < \eps < \eps_0, \eta > \eta_0(\eps)\}$.

\medskip
\noindent\underline{Proof of Claim:} Define 
\begin{align*}
&Q_{\eps,\eta}[\alpha,\beta]\\
	&=  \int_{B^N} \Big[L_\eps^{GL} \alpha \cdot \alpha + L_\eps^{GL} \beta \cdot \beta
		+\frac{N-1}{r^2} \alpha^2  + \frac{2}{\eps^2} W''(1 - f_\eps^2) f_\eps^2 \alpha^2 + \frac{1}{\eta^2} \tilde W'(0)\beta^2\Big]\,dx,
\end{align*}
for $(\alpha, \beta)$ belonging to the Hilbert space 
$$\mcH = \{(\alpha,\beta): (f_\eps + \alpha, \beta) \in \mcB\} \, \textrm{ with the norm} \quad \|(\alpha,\beta)\|_{\mcH} := \|(\alpha n, \beta)\|_{H^1(B^N,\RR^{N+1})}.$$ 
This can be considered as the second variation of $I_{\eps,\eta}$ at $(f_{\eps},0)$; see equation \eqref{Eq:09IV19-2VarX1} in Subsection \ref{SSec:OD}. Note that the $C^2$ regularity of $W$ together with \eqref{Eq:WCond'}, $\tilde W'(0)\geq 0$ and the boundedness of $f_{\eps}$ yield a constant $c_1 > 0$ (independent of $\eps$ and $\eta$) such that 
\be
\label{inerq}
Q_{\eps,\eta}[\alpha, \beta] \geq \|(\alpha, \beta)\|_{\mcH}^2 - \frac{c_1}{\eps^2} \|(\alpha, \beta)\|_{L^2(B^N)}^2 \text{ for all } (\alpha, \beta) \in \mcH.
\ee

\medskip
\noindent\underline{($\Leftarrow$)} 
If $(\eps,\eta) \in A$, then $\frac{\tilde W'(0)}{\eta^2} < -\ell(\eps)$. Taking $\beta \in H_0^1(B^N)$ to be any first eigenfunction of $L_{\eps}^{GL}$, which is radially symmetric, we have $r^{\frac{N-1}{2}}\beta', r^{\frac{N-1}{2}}\beta \in L^2(0,1)$, $\beta(1) = 0$ and $Q_{\eps,\eta}[0,\beta] < 0$. This implies that $(f_\eps,0)$ is not minimizing $I_{\eps,\eta}$ in $\mcB$, and thus $g_{\eps,\eta} > 0$.

\medskip
\noindent\underline{($\Rightarrow$)} For the converse, we suppose by contradiction that there exists $(\eps,\eta) \in B =  (0,\infty)^2 \setminus A$ with $g_{\eps,\eta} > 0$. By \eqref{Eq:2VIPNew} with the choice $\varphi = 1$ and by \eqref{Eq:24VII18-X1}, we have
\[
\int_{B^N} L_\eps^{GL} \alpha \cdot \alpha \, dx= \int_{B^N} \Big\{f_\eps^2 \Big|\nabla\Big(\frac{\alpha}{f_\eps}\Big)\Big|^2 - \frac{N-1}{r^2} \alpha^2 \Big\}\,dx \quad \text{ for every } \alpha \in C_c^\infty(0,1).
\]
By a density argument in $H^1_0(B^N)$ using Fatou's lemma, we deduce by \eqref{Eq:WCond'} that
\[
Q_{\eps,\eta}[\alpha,\beta] \geq \int_{B^N} \Big\{f_\eps^2 \Big|\nabla\Big(\frac{\alpha}{f_\eps}\Big)\Big|^2 + \big(\ell(\eps) +  \frac{\tilde W'(0)}{\eta^2} \big) \beta^2\Big\}\,dx \quad \text{ for every }(\alpha,\beta) \in \mcH.
\]
In view of Lemma \ref{Lem:eigen-gl}, we thus have that $Q_{\eps,\eta}$ is positive definite over $\mcH$ for $(\eps,\eta) \in \mathring{B} = (0,\infty)^2 \setminus \bar A $ where $\ell(\eps) +  \frac{\tilde W'(0)}{\eta^2}>0$. More precisely, there exists a constant $c>0$ (depending on $\eps$ and $\eta$) such that $Q_{\eps,\eta}[\alpha,\beta]\geq c \|(\alpha, \beta)\|^2_{L^2(B^N)}$ for every $(\alpha,\beta) \in \mcH$. This follows by the above inequality for $Q_{\eps,\eta}[\alpha,\beta]$ combined with the following estimate based on the Hardy inequality in $\RR^{N+2}$ using $r\leq f_{\eps}(r)\leq 1$ for every $r\in (0,1)$ (see Corollary \ref{Prop:GLMonotonicity}):
\be
\label{ineg55}
\int_0^1  r^{N-1} f_{\eps}^2 (h')^2\,dr
	\geq \int_0^1  r^{N+1}  (h')^2\,dr \geq \frac{N^2}{4}\int_0^1  r^{N-1} h^2\,dr \geq \frac{N^2}{4} \int_0^1  r^{N-1}f_{\eps}^2 h^2\,dr,
\ee
where $h$ plays the role of $\frac{\alpha}{f_\eps}$. Thus, by \eqref{inerq}, for $(\eps,\eta) \in \mathring{B}$, there exists a constant $\tilde c>0$ (depending on $\eps$ and $\eta$) such that
\be
\label{eusitu}
Q_{\eps,\eta}[\alpha,\beta] \geq \tilde c \|(\alpha, \beta)\|_{\mcH}^2 \quad \textrm{ for all } (\alpha, \beta) \in \mcH.
\ee

\medskip

\nd \underline{Fact}: {\it $(f_\eps,0)$ is a local minimizer of $I_{\eps,\eta}$ if $(\eps,\eta) \in \mathring{B}$.}
Indeed, by \eqref{Eq:24VII18-X1}, for  $(\alpha,\beta) \in \mcH$, 
\begin{align*}
&|\Sphere^{N-1}|\big(I_{\eps,\eta}[f_\eps + \alpha, \beta] - I_{\eps,\eta}[f_\eps , 0]\big) - \frac{1}{2} Q_{\eps,\eta}[\alpha,\beta] = 
\int_{B^N} h(x,\alpha(|x|)n(x),\beta(|x|))\,dx,
\\
h(x,V)
		&=    \frac{1}{2\eps^2} \Big\{W(1 - |f_\eps(r)n(x) + V_{\parallel} |^2 - V_{N+1}^2) - W(1 - f_\eps(r)^2)\\
			&\,\, \, + W'(1 - f_\eps(r)^2)(2 f_\eps(r)n(x)\cdot V_\parallel + |V|^2) - 2 W''(1 - f_\eps(r)^2) f_\eps(r)^2 (n(x)\cdot V_\parallel)^2\Big\} \\
		&\,\, \,   + \frac{1}{2\eta^2} \Big\{ \tilde W(V_{N+1}^2) - \tilde W(0) 
			 - \tilde W'(0) V_{N+1}^2\Big\}, \qquad r = |x|, V = (V_{\parallel}, V_{N+1})\in \R^{N+1}.
\end{align*}
We have $h\in C^0(\bar B^N, C^2(\RR^{N+1}))$ (since $W, \tilde W\in C^2$ and $f_\eps n\in C^2(\bar B^N)$ by Lemma \ref{Lem:ONSym}), $h(x,0)=0$, $\nabla_V h(x, 0)=0$, $\nabla^2_V h(x,0)=0$ (thus, \eqref{rrr} holds true in Lemma~\ref{Lem:RA}) and $h$ satisfies the growth assumption \eqref{add-now} in Lemma \ref{Lem:RA} for $p=2$ (due to the convexity of $W$ and $\tilde W$); therefore, Lemma \ref{Lem:RA} applies and 
yields some small radius $\tilde r>0$ such that  
$$\int_{B^N} h(x,\alpha(|x|)n(x),\beta(|x|))\,dx\geq -\frac{\tilde c }{4} \|(\alpha, \beta)\|^2_{\mcH} \quad \text{ for } for \|(\alpha, \beta)\|_{\mcH}<\tilde r.$$
Combined with \eqref{eusitu}, the local minimality of $(f_\eps,0)$ follows.

\smallskip

\nd \underline{End of proof of Claim}: Recalling our assumption that the constructed minimizer $(f_{\eps, \eta}, 
g_{\eps, \eta})$ of $I_{\eps, \eta}$ satisfies $g_{\eps, \eta}>0$,  the above Fact combined with Lemma \ref{Lem:NENM} below yield $(\eps,\eta) \in B\setminus \mathring{B}$ and, for all $(\tilde \eps,\tilde \eta) \in \mathring{B}$, $(f_{\tilde \eps},0)$ is the unique minimizer for $I_{\tilde \eps,\tilde \eta}$ in $\mcB$. Thanks to the latter, by considering a sequence $\{(\tilde \eps_j,\tilde \eta_j)\} \subset \mathring{B}$ which converges to $(\eps,\eta)$, since $f_{\tilde \eps_j}$ converges to $f_{\eps}$ in $H^1(B^N)$, Fatou's lemma implies that $(f_{\eps},0)$ is a minimizer for $I_{\eps,\eta}$ in $\mcB$, which contradicts the fact that $(f_{\eps,\eta},\pm g_{\eps,\eta})$ are the only two minimizers of $I_{\eps,\eta}$ in $\mcB$ (see Proposition \ref{pro:unique_eps_eta}). This proves the claim.

\medskip
\noindent\underline{Proof of (b1)} By the Claim, an escaping solution of \eqref{Eq:feegeeH1}--\eqref{Eq:20III21-feegeeBC} exists if and only if $0 < \eps < \eps_0$ and $\eta > \eta_0(\eps)$. In this case, the uniqueness of an escaping solution and the classification of minimizers of $I_{\eps,\eta}$ are obtained in Proposition \ref{pro:unique_eps_eta}, Lemma \ref{lem:f2g2} yields $f_{\eps,\eta}^2 + g_{\eps,\eta}^2 < 1$, the regularity of 
$(f_{\eps, \eta}, g_{\eps, \eta})$ follows from Lemma \ref{Lem:FullONSym}, while the positivity of $f_{\eps, \eta}$ and monotonicity of $f_{\eps, \eta}$ and $g_{\eps, \eta}$ are given by Propositions~\ref{Prop:fPos} and \ref{Prop:ODEMonotonicity}. 

\medskip
\noindent\underline{Proof of (b2)} The fact that the non-escaping solution $(f_\eps,0)$ is an unstable critical point (and hence not minimizer) of $I_{\eps,\eta}$ in $\mcB$ when $0 < \eps < \eps_0$ and $\eta > \eta_0(\eps)$ was obtained in the proof of the ($\Leftarrow$) part of the claim. The fact that the non-escaping solution $(f_\eps,0)$ is the unique minimizer of $I_{\eps,\eta}$ in $\mcB$ when $\eps \geq \eps_0$ or $0 < \eta \leq \eta_0(\eps)$ follows from the claim.
\end{proof}

It remains to prove the following lemma used above:

\begin{lemma}\label{Lem:NENM}
Let $N\geq 2$, $\eps, \eta > 0$, and suppose that $W \in C^2((-\infty,1])$ and $\tilde W \in C^2([0,\infty))$ satisfy \eqref{Eq:WCond'} and \eqref{Eq:TWCond'}. If $I_{\eps,\eta}$ admits an escaping critical point $(f_{\eps,\eta}, g_{\eps,\eta})$ in $\mcB$ with $g_{\eps,\eta} > 0$ in $(0,1)$, then the non-escaping critical point $(f_\eps,0)$ is not a local minimizer of $I_{\eps,\eta}$. As a consequence, if the non-escaping critical point $(f_\eps,0)$ is a local minimizer of $I_{\eps,\eta}$, then $(f_\eps,0)$ is the unique {\bf global} minimizer of $I_{\eps,\eta}$ in $\mcB$ and $I_{\eps,\eta}$ does not admit any escaping critical point $(f_{\eps,\eta}, g_{\eps,\eta})$ in $\mcB$ with $g_{\eps,\eta} > 0$ in $(0,1)$.
\end{lemma}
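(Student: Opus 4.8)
Since an escaping critical point $(f_{\eps,\eta},g_{\eps,\eta})$ with $g_{\eps,\eta}>0$ exists, Proposition \ref{pro:unique_eps_eta} already tells us that $(f_{\eps,\eta},\pm g_{\eps,\eta})$ are the only minimizers of $I_{\eps,\eta}$ in $\mcB$ and that $I_{\eps,\eta}[f_\eps,0]>I_{\eps,\eta}[f_{\eps,\eta},g_{\eps,\eta}]$, so $(f_\eps,0)$ is not a \emph{global} minimizer. To upgrade this to ``not a \emph{local} minimizer'', I would reduce to an eigenvalue inequality. The second variation of $I_{\eps,\eta}$ at $(f_\eps,0)$ is $\tfrac12 Q_{\eps,\eta}[\alpha,\beta]$, and — using \eqref{Eq:2VIPNew} with $\varphi=1$ exactly as in the proof of Theorem \ref{Thm:ExtendedExist}(b) — one has
\[
Q_{\eps,\eta}[\alpha,\beta]=\int_{B^N}\!\Big[f_\eps^2\big|\nabla(\alpha/f_\eps)\big|^2+\tfrac{2}{\eps^2}W''(1-f_\eps^2)f_\eps^2\alpha^2\Big]dx+\int_{B^N}\!\Big[L_\eps^{GL}\beta\cdot\beta+\tfrac{\tilde W'(0)}{\eta^2}\beta^2\Big]dx,
\]
whose first integral is nonnegative by \eqref{Eq:TWCond'}-type convexity. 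Hence it suffices to exhibit a radial $\beta\in H_0^1(B^N)$ with $Q_{\eps,\eta}[0,\beta]<0$: then $(f_\eps,t\beta)\in\mcB$ for all $t$ and $I_{\eps,\eta}[f_\eps,t\beta]-I_{\eps,\eta}[f_\eps,0]=\tfrac{1}{|\Sphere^{N-1}|}\big(\tfrac{t^2}{2}Q_{\eps,\eta}[0,\beta]+O(t^4)\big)<0$ for small $t\neq 0$ (the remainder is $O(t^4)$ because $W,\tilde W\in C^2$ and the map depends on $t$ only through $t^2$), so $(f_\eps,0)$ is not a local minimizer. Taking $\beta$ to be a first eigenfunction of $L_\eps^{GL}$ (automatically radial), $Q_{\eps,\eta}[0,\beta]=(\ell(\eps)+\tilde W'(0)\eta^{-2})\|\beta\|_{L^2}^2$, so everything comes down to proving $\ell(\eps)+\tilde W'(0)\eta^{-2}<0$.

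\textbf{The crux: $\ell(\eps)+\tilde W'(0)\eta^{-2}<0$ via the escaping profile.} I would test the Rayleigh quotient of $L_\eps^{GL}+\tilde W'(0)\eta^{-2}$ with $g:=g_{\eps,\eta}$ itself. By Lemma \ref{lem:f2g2} and \eqref{Eq:20III21-gee}, $g>0$ in $B^N$, $g=0$ on $\partial B^N$, and $-\Delta g=\big[\tfrac{1}{\eps^2}W'(1-f_{\eps,\eta}^2-g^2)-\tfrac{1}{\eta^2}\tilde W'(g^2)\big]g$ in $B^N$. Using $\tilde W'(g^2)\ge\tilde W'(0)$ (from \eqref{Eq:TWCond'}) together with the pointwise comparison $W'(1-f_{\eps,\eta}^2-g^2)\le W'(1-f_\eps^2)$, we get $(L_\eps^{GL}+\tilde W'(0)\eta^{-2})g\le 0$ with $g>0$, hence $\ell(\eps)+\tilde W'(0)\eta^{-2}\le \int_{B^N}g\,(L_\eps^{GL}+\tilde W'(0)\eta^{-2})g\,/\!\int_{B^N}g^2\le 0$; strictness follows because equality would force $g$ to be a ground state together with $W'(1-f_{\eps,\eta}^2-g^2)\equiv W'(1-f_\eps^2)$ and $\tilde W'(g^2)\equiv\tilde W'(0)$ a.e., which one excludes using $W'(1)>0$ (and $W'(1)>0$ is itself forced: if $W'(1)=0$ then $W\equiv0$ on $[0,1]$ by \eqref{Eq:WCond'}, and \eqref{Eq:20III21-gee} would make $g$ subharmonic with zero boundary data, contradicting $g>0$). \emph{The hard part} is the pointwise comparison $f_{\eps,\eta}^2+g_{\eps,\eta}^2\ge f_\eps^2$ in $B^N$: both $M:=f_{\eps,\eta}^2+g_{\eps,\eta}^2$ and $M_0:=f_\eps^2$ equal $1$ on $\partial B^N$ while $M(0)=g_{\eps,\eta}(0)^2>0=M_0(0)$, and the computations in the proof of Lemma \ref{lem:f2g2} show $-\Delta M+\tfrac{2}{\eps^2}W'(1-M)M\ge 0$ and $-\Delta M_0+\tfrac{2}{\eps^2}W'(1-M_0)M_0=2(f_\eps')^2+\tfrac{2(N-1)}{r^2}f_\eps^2$. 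I would establish $M\ge M_0$ by a maximum-principle argument evaluated at an interior negative minimum of $M-M_0$, exploiting this structure together with the monotonicities $f_{\eps,\eta}'>0$, $g_{\eps,\eta}'<0$, $f_\eps'>0$ from Propositions \ref{Prop:ODEMonotonicity}--\ref{Prop:fPos} and Corollary \ref{Prop:GLMonotonicity}; alternatively, one may compare $f_{\eps,\eta}$ against a truncated competitor using that $f_{\eps,\eta}$ minimizes $h\mapsto\int_0^1[(h')^2+\tfrac{N-1}{r^2}h^2+\tfrac{1}{\eps^2}W(1-h^2-g_{\eps,\eta}^2)]r^{N-1}dr$ (which follows from Proposition \ref{pro:unique_eps_eta} since $(f_\eps,g_{\eps,\eta})\in\mcB$).

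\textbf{The consequence.} The contrapositive of the first assertion says that if $(f_\eps,0)$ is a local minimizer then $I_{\eps,\eta}$ admits no escaping critical point $(f_{\eps,\eta},g_{\eps,\eta})$ with $g_{\eps,\eta}>0$. Now $I_{\eps,\eta}$ admits a minimizer $(f_*,g_*)\in\mcB$ (elementary), and replacing $(f_*,g_*)$ by $(|f_*|,|g_*|)$ decreases $I_{\eps,\eta}$, so $(|f_*|,|g_*|)$ is also a minimizer and solves \eqref{Eq:20III21-fee}--\eqref{Eq:20III21-feegeeBC}; as in the proof of Theorem \ref{Thm:ExtendedExist}(b), the strong maximum principle then gives $f_*>0$ in $(0,1)$ and either $g_*>0$, or $g_*<0$, or $g_*\equiv0$ in $(0,1)$. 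The first two possibilities would (up to sign) produce an escaping critical point with positive $g$, which is excluded; hence $g_*\equiv0$ and $f_*=f_\eps$ by Theorem \ref{Thm:GLExist}, so $(f_\eps,0)$ is the unique global minimizer of $I_{\eps,\eta}$ in $\mcB$.
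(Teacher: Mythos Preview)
Your approach is genuinely different from the paper's. The paper argues by contradiction via a \emph{mountain-pass theorem}: assuming $(f_\eps,0)$ is a local minimizer, one has two distinct local minima of (a truncated version of) $I_{\eps,\eta}$ on the convex set $\mcM=\{(\alpha,\beta):f_\eps+\alpha\ge0,\ \beta\ge0,\ (f_\eps+\alpha)^2+\beta^2\le1\}$, namely $(0,0)$ and $(f_{\eps,\eta}-f_\eps,g_{\eps,\eta})$. After verifying a Palais--Smale condition, the mountain-pass theorem produces a third critical point $(\hat f,\hat g)$ relative to $\mcM$, which is then shown (via the strong maximum principle) to satisfy $\hat f^2+\hat g^2<1$, $\hat f>0$, and either $\hat g>0$ or $\hat g\equiv0$. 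Either possibility forces $(\hat f,\hat g)$ to coincide with one of the two local minimizers (by Theorem~\ref{Thm:GLExist} and Proposition~\ref{pro:unique_eps_eta}), contradicting that a mountain-pass point is not a local minimizer. No comparison between $f_{\eps,\eta}^2+g_{\eps,\eta}^2$ and $f_\eps^2$ is ever needed.

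Your route---showing directly that $\ell(\eps)+\tilde W'(0)\eta^{-2}<0$ by testing with $g_{\eps,\eta}$---is attractive, but the argument as written has a real gap at precisely the point you flag as ``the hard part.'' The pointwise inequality $f_{\eps,\eta}^2+g_{\eps,\eta}^2\ge f_\eps^2$ is not established. Your maximum-principle sketch runs into the difficulty that the equation for $D:=M-M_0$ reads
\[
-\tfrac12\Delta D+\tfrac{1}{\eps^2}\big[W'(1-M)M-W'(1-M_0)M_0\big]
=(f_{\eps,\eta}')^2+(g_{\eps,\eta}')^2-(f_\eps')^2+\tfrac{N-1}{r^2}\big(f_{\eps,\eta}^2-f_\eps^2\big)+\tfrac{1}{\eta^2}\tilde W'(g_{\eps,\eta}^2)g_{\eps,\eta}^2,
\]
and at a putative interior negative minimum of $D$ one does not control the sign of the right-hand side: nothing prevents $f_{\eps,\eta}<f_\eps$ and $f_{\eps,\eta}'<f_\eps'$ there. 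The monotonicities from Propositions~\ref{Prop:ODEMonotonicity}--\ref{Prop:fPos} and Corollary~\ref{Prop:GLMonotonicity} give signs of the derivatives, not the needed comparisons between the two profiles. Your alternative (``compare $f_{\eps,\eta}$ against a truncated competitor'') is not specified enough to assess; I do not see a candidate truncation that yields $M\ge M_0$. Finally, even granting $M\ge M_0$, your strictness step (``which one excludes using $W'(1)>0$'') is incomplete: equality would only force $W'$ constant on $[1-g_{\eps,\eta}(0)^2,1]$ and $\tilde W'$ constant on $[0,\max g_{\eps,\eta}^2]$, neither of which is immediately ruled out by $W'(1)>0$ alone. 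The paper's mountain-pass argument sidesteps all of these issues. Your treatment of the ``consequence'' paragraph is fine and essentially matches the paper's.
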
 

\begin{proof}
By Proposition \ref{pro:unique_eps_eta}, $(f_{\eps,\eta},\pm g_{\eps,\eta})$ are the only two minimizers of $I_{\eps,\eta}$ in $\mcB$. In particular, $I_{\eps,\eta}[f_{\eps,\eta},g_{\eps,\eta}] < I_{\eps,\eta}[f_\eps,0]$. Suppose by contradiction that $(f_\eps,0)$ is a local minimizer of $I_{\eps,\eta}$. We use some ideas from \cite{ABG-uniqueness, INSZ_CVPDE}: we show, by mean of a mountain-pass theorem, the existence of a second escaping critical point 
$(\hat f, \hat g)$ of $I_{\eps,\eta}$ with $\hat g>0$ which would lead to a contradiction with Proposition \ref{pro:unique_eps_eta}. Along the way, care is given due to the fact that $I_{\eps,\eta}$ is not always finite in $\mcB$. To avoid this problem, let $V, \tilde V  \in C^2(\RR)$ be bounded non-negative functions such that $V|_{[0,1]} = W|_{[0,1]}$, $\tilde V|_{[0,1]} = \tilde W|_{[0,1]}$ and define $J: \mcH \rightarrow \RR$ by
\[
J[\alpha,\beta]
	=  \frac{1}{2}\int_0^1 \Big[((f_\eps + \alpha)')^2 + (\beta')^2 + \frac{N-1}{r^2} (f_\eps + \alpha)^2 + \frac{1}{\eps^2} V(1 - (f_\eps + \alpha)^2 - \beta^2) + \frac{1}{\eta^2} \tilde V(\beta^2)\Big]\,r^{N-1}\,dr.
\]
Let $\mcM := \{(\alpha,\beta) \in \mcH: f_\eps + \alpha \geq 0, \beta  \geq 0 \text{ and } (f_\eps + \alpha)^2 + \beta^2 \leq 1 \text{ in } (0,1)\}$. Then $J \in C^1(\mcH)$, $\mcM$ is a closed convex subset of $\mcH$, $J[\alpha,\beta] = I_{\eps,\eta}[f_\eps + \alpha, \beta]$ for $(\alpha,\beta) \in \mcM$, and $(0,0)$ and $(f_{\eps,\eta} - f_\eps,g_{\eps,\eta})$ are two relative minima of $J$ in $\mcM$ with $J (f_{\eps,\eta} - f_\eps,g_{\eps,\eta}) < J(0,0)$.

We proceed to check that $J$ satisfies the Palais-Smale condition on $\mcM$ (see e.g. \cite[Theorem II.12.8]{Struwe}): if $\{(\alpha_j, \beta_j)\} \subset \mcM$ is such that $\{J[\alpha_j,\beta_j]\}$ is bounded and 
\begin{equation}
G[\alpha_j,\beta_j] := \sup_{(\alpha_{j} - \varphi, \beta_j - \psi) \in \mcM: \|(\varphi,\psi)\|_{\mcH} \leq 1} \langle DJ[\alpha_j, \beta_j], (\varphi, \psi)\rangle \rightarrow 0,
	\label{Eq:PS-1}
\end{equation}
then $\{(\alpha_j, \beta_j)\}$ is relatively compact in $\mcH$. Indeed, since $\{J[\alpha_j,\beta_j]\}$ is bounded, $\{(\alpha_j,\beta_j)\}$ is bounded in $\mcH$. Thus, we assume that $(\alpha_j, \beta_j)$ converges weakly in $\mcH$, strongly in $L^2(B^N)$, and almost everywhere in $(0,1)$ to some $(\alpha_*, \beta_*) \in \mcM$.

Let us note that we may use $(\varphi ,\psi) = t(\alpha_j - \alpha_*, \beta_j - \beta_*) = t((f_\eps + \alpha_j) - (f_\eps + \alpha_*), \beta_j - \beta_*)$ for some small $t > 0$ (which is independent of $j$) in \eqref{Eq:PS-1}, since
$(\alpha_j - \varphi, \beta_j - \psi)$ is a convex combination of $(\alpha_j, \beta_j), (\alpha_*, \beta_*) \in \mcM$ 
and $\mcM$ is convex.
This gives 
\begin{align*}
0
	&\geq \limsup_{j\rightarrow \infty} \langle DJ[\alpha_j, \beta_j], (\alpha_j - \alpha_*, \beta_j - \beta_*)\rangle\\
	&= \limsup_{j\rightarrow \infty} \int_0^1 \Big[
		(f_\eps + \alpha_j)' (\alpha_j - \alpha_*)'
		+ \beta_j' (\beta_j - \beta_*)'
		+ \frac{N-1}{r^2} (f_\eps + \alpha_j) (\alpha_j - \alpha_*)\\
		&\quad - \frac{1}{\eps^2} W'(1 - (f_\eps + \alpha_j)^2 - \beta_j^2)[(f_\eps + \alpha_j) (\alpha_j - \alpha_*) + \beta_j(\beta_j - \beta_*)]\\
		&\quad + \frac{1}{\eta^2} \tilde W'(\beta_j^2) \beta_j (\beta_j - \beta_*)
		\Big]\,r^{N-1}\,dr.
\end{align*}
Using the strong convergence of $(\alpha_j, \beta_j)$ to $(\alpha_*, \beta_*)$ in $L^2(B^N)$ and the boundedness of $(\alpha_j, \beta_j)$ in $L^\infty(B^N)$, the last two lines above converge to $0$ as $j\to \infty$. Then
 writing $\alpha_j - \alpha_* = (f_\eps + \alpha_j) - (f_\eps +\alpha_*)$, by the weak convergence of $(\alpha_j, \beta_j)$ in $\mcH$, we get
\begin{align*}
0
	&\geq \limsup_{j\rightarrow \infty} \int_0^1 \Big[
		((f_\eps + \alpha_j)')^2 + (\beta_j')^2 + \frac{N-1}{r^2} (f_\eps + \alpha_j) ^2\Big]\,r^{N-1}\,dr\\
		&\qquad - \int_0^1 \Big[
		((f_\eps + \alpha_*)')^2 + (\beta_*')^2 + \frac{N-1}{r^2} (f_\eps + \alpha_*) ^2\Big]\,r^{N-1}\,dr.
\end{align*}
This implies that $\|((f_\eps + \alpha_j)n,\beta_j)\|_{H^1(B^N,\RR^{N+1})}$ converges to $\|((f_\eps + \alpha_*)n,\beta_*)\|_{H^1(B^N,\RR^{N+1})}$ and so $((f_\eps + \alpha_j)n,\beta_j)$ converges strongly in $H^1(B^N,\RR^{N+1})$ to $((f_\eps + \alpha_*)n,\beta_*)$. This means also that $(\alpha_j, \beta_j)$ converges strongly in $\mcH$ to $(\alpha_*,\beta_*)$, giving the desired Palais--Smale property for $J$.

Applying the mountain pass theorem (see e.g. \cite[Theorem II.12.8]{Struwe}), we see that $J$ has a mountain-pass critical point $(\hat \alpha_{\eps,\eta},\hat \beta_{\eps,\eta}) \in \mcM$ relative to $\mcM$, i.e.
\begin{equation}
\sup_{(\hat \alpha_{\eps,\eta} - \varphi, \hat\beta_{\eps,\eta} - \psi) \in \mcM: \|(\varphi,\psi)\|_{\mcH} \leq 1} \langle DJ[\hat \alpha_{\eps,\eta},\hat \beta_{\eps,\eta}], (\varphi, \psi)\rangle = 0.
	\label{Eq:MPCrit-1}
\end{equation}
In addition, $(\hat \alpha_{\eps,\eta},\hat \beta_{\eps,\eta})$ is not a local minimizer of $J$ relative to $\mcM$. For ease of exposition, we write $\hat f = f_\eps + \hat \alpha_{\eps,\eta}$ and $\hat g=\hat \beta_{\eps,\eta}$. Then \eqref{Eq:MPCrit-1} means
\begin{align}
&0 = \sup\Big\{
	\int_0^1 r^{N-1}\Big[ \hat f' \varphi' + \hat g' \psi' + \frac{N-1}{r^2} \hat f \varphi - \frac{1}{\eps^2} W'(1 - \hat f^2 - \hat g^2)(\hat f \varphi + \hat g \psi)\label{Eq:MPCrit-1X} \\
		& + \frac{1}{\eta^2}\tilde W'(\hat g^2) \hat g \psi\Big]\,dr : \|(\varphi,\psi)\|_{\mcH} \leq 1,  \hat f - \varphi \geq 0, \hat g - \psi \geq 0, (\hat f - \varphi)^2 + (\hat g - \psi)^2 \leq 1\Big\}.
	\nonumber
\end{align}

To proceed, we show that $\hat f^2 + \hat g^2 < 1$ in $(0,1)$, $\hat f > 0$ in $(0,1)$, and either $\hat g \equiv 0$ in $(0,1)$ or $\hat g > 0$ in $(0,1)$, so that we have in fact that $(\hat f, \hat g)$ is either a non-escaping solution $(f_\eps,0)$ or an escaping solution of \eqref{Eq:feegeeH1}--\eqref{Eq:20III21-feegeeBC}. Once this is proved, by Theorem \ref{Thm:GLExist} and Proposition \ref{pro:unique_eps_eta}, we then have that $(\hat f, \hat g)$ must be identical to either $(f_\eps,0)$ or $(f_{\eps,\eta},g_{\eps,\eta})$, which contradicts the fact that $(\hat \alpha_{\eps,\eta}, \hat \beta_{\eps,\eta})$ is not a local minimizer of $J$ relative to $\mcM$. 

Indeed, using $(\varphi, \psi) = t \zeta (\hat f, \hat g)$ in \eqref{Eq:MPCrit-1X} where $\zeta \in C_c^\infty(0,1)$ is non-negative and $t \geq 0$ is sufficiently small so that $0 \leq 1 - t\zeta \leq 1$ in $(0,1)$, we obtain
\begin{align*}
&- \frac{1}{2}[(\hat f^2)'' + \frac{N-1}{r} (\hat f^2)']  - \frac{1}{2} [(\hat g^2)'' + \frac{N-1}{r} (\hat g^2)'] + (\hat f')^2 + (\hat g')^2  + \frac{N-1}{r^2} \hat f^2 \\
	&\qquad
		- \frac{1}{\eps^2} W'(1 - \hat f^2 - \hat g^2) (\hat f^2 + \hat g^2)  + \frac{1}{\eta^2} \tilde W'(\hat g^2) \hat g^2
		\leq 0 \text{ in } (0,1)
\end{align*}
in the sense of distribution. It follows that the function $\hat X = 1 - \hat f^2 - \hat g^2$, considered as a radially symmetric function in $B^N$, satisfies
\[
- \hat X'' - \frac{N-1}{r} \hat X' + 2 a(r) \hat X \geq \frac{2(N-1)}{r^2} \hat f^2 \geq 0 \text{ in } (0,1)
\]
where the continuous function $a: (0,1] \rightarrow [0,\infty)$ is given in \eqref{Eq:M1Aux-a1}. Since $\hat X \geq 0$, we deduce from the strong max principle that either $\hat X \equiv 0$ in $(0,1)$ or $\hat X > 0$ in $(0,1)$. The case $\hat X \equiv 0$ is impossible since it would imply, in view of the above differential inequality, that $\hat f \equiv 0$, contradicting that $\hat f(1) = 1$. We thus have $\hat X > 0$ and $\hat f^2 + \hat g^2 < 1$ in $(0,1)$.

As $\hat f^2 + \hat g^2 < 1$ in $(0,1)$, we may use $(\varphi, \psi) = (-t \zeta,0)$ in \eqref{Eq:MPCrit-1X} where $\zeta \in C_c^\infty(0,1)$ is non-negative and $t \geq 0$ is sufficiently small so that $(\hat f + t\zeta)^2 + \hat g^2 < 1$ in $(0,1)$ to get
\[
\hat f'' + \frac{N-1}{r} \hat f'  - b(r) \hat f \leq 0 \text{ in } (0,1), \quad b(r) := \frac{N-1}{r^2} - \frac{1}{\eps^2} W'(1 -  \hat f^2 - \hat g^2)\in L^\infty_{loc}((0,1]).
\]
Since $\hat f \geq 0$ and $\hat f(1) = 1$, we have by the strong maximum principle that $\hat f > 0$ in $(0,1)$.

Likewise, we use $(\varphi, \psi) = (0, -t \zeta)$ in \eqref{Eq:MPCrit-1X} where $\zeta \in C_c^\infty(0,1)$ is non-negative and $t \geq 0$ is sufficiently small so that $\hat f^2 + (\hat g + t\zeta)^2 < 1$ in $(0,1)$ to get
\[
\hat g'' + \frac{N-1}{r} \hat g' - c(r) \hat g \leq 0 \text{ in } (0,1), \quad c(r) := -\frac{1}{\eps^2} W'(1 -  \hat f^2 - \hat g^2) + \frac{1}{\eta^2} \hat W'(\hat g^2).
\]
Since $\hat g \geq 0$, we have by the strong maximum principle that either $\hat g \equiv 0$ in $(0,1)$ or $\hat g > 0$ in $(0,1)$. As explained earlier, this together with the previous shown fact that $\hat f^2 + \hat g^2 < 1$ and $\hat f > 0$ in $(0,1)$ shows that the statement that $(\eps,\eta) \in \mathring{B}$ amounts to a contradiction.

Finally, we explain the stated consequence: by the proof of Theorem \ref{Thm:ExtendedExist} b), any minimizer 
$(f_{\eps, \eta},
g_{\eps, \eta})$ of $I_{\eps, \eta}$ in $\mcB$ satisfies $|g_{\eps, \eta}|>0$ or $g_{\eps, \eta}\equiv 0$. As we have just proved that escaping critical points of $I_{\eps, \eta}$  cannot exist whenever $(f_\eps, 0)$ is a local minimizer of  $I_{\eps, \eta}$, we conclude that every minimizer satisfies $g_{\eps, \eta}\equiv 0$, i.e., it is given by $(f_\eps, 0)$.
\end{proof}

\subsection{The $\Sphere^N$-valued GL model: Existence, monotonicity and uniqueness}\label{SSec:MMEU}

We start with positivity of $\tilde f_\eta$ and the monotonicity for an escaping solution $(\tilde f_{\eta}, g_{\eta})$ of \eqref{Eq:metaform}--\eqref{Eq:MM-feegeeBC} with $g_\eta>0$. Next we prove Theorem
\ref{Thm:MMExist}.

\begin{proposition}\label{Prop:MM-Monotonicity}
Suppose $\tilde W \in C^2([0,\infty))$ satisfies \eqref{Eq:TWCond'}, and $(\tilde f_{\eta}, g_{\eta})$ satisfies \eqref{Eq:metaform}--\eqref{Eq:MM-feegeeBC} with $g_{\eta}> 0$ in $(0,1)$. Then $\tilde f_\eta > 0$, $\tilde f'_{\eta}>0$, $g'_{\eta} < 0$ and $\big(\frac{\tilde f_{\eta}}{r}\big)' \leq 0$ in $(0,1]$.
\end{proposition}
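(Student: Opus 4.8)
First I would remove the Lagrange multiplier $\lambda$ by passing to the phase variable. Write $\tilde f:=\tilde f_\eta$, $g:=g_\eta$. The hypotheses $g>0$ in $(0,1)$ and $g(1)=0$ together with the dichotomy in Lemma \ref{Lem:MMONSym} force $\tilde f(0)=0$, hence $g(0)=1$, and $\tilde f/r,\,g\in C^2([0,1])$. Since the constraint \eqref{Eq:metaform} (i.e. $\tilde f^2+g^2\equiv 1$) keeps $(\tilde f,g)$ off the origin while $g>0$ on $[0,1)$, set $\theta(r):=\arctan(\tilde f(r)/g(r))\in(-\pi/2,\pi/2)$ on $[0,1)$ and $\theta(1):=\pi/2$; then $\theta\in C^2([0,1])$, $\tilde f=\sin\theta$, $g=\cos\theta$, $\theta(0)=0$, $\theta(1)=\pi/2$, and $\theta<\pi/2$ on $(0,1)$. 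Multiplying \eqref{Eq:MM-fee} by $g$, \eqref{Eq:MM-gee} by $\tilde f$ and subtracting (so that $\lambda$ cancels) shows that $\theta$ solves the \emph{scalar} ODE
\[
\theta''+\tfrac{N-1}{r}\,\theta'=F(r,\theta):=\Big[\tfrac{N-1}{r^2}-\tfrac{1}{\eta^2}\tilde W'(\cos^2\theta)\Big]\sin\theta\cos\theta\quad\text{in }(0,1),
\]
where $F$ has no singular zeroth-order term, is $C^1$ in $\theta$, is odd in $\theta$ (so $F(r,-\theta)=-F(r,\theta)$), and satisfies $\partial_r F<0$ wherever $0<\theta<\pi/2$. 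Since $\tilde f'=(\cos\theta)\theta'$ and $g'=-(\sin\theta)\theta'$, the claims reduce to $\theta>0$ and $\theta'>0$ in $(0,1)$, $\theta'(1)>0$, and $(\tilde f/r)'\le 0$.

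As a preliminary I would get $\theta'(1)>0$ from Hopf's lemma: with $\hat\theta:=\theta-\pi/2<0$ on $(0,1)$, $\hat\theta(1)=0$, and using that $\sin\theta\cos\theta$ vanishes to first order at $\theta=\pi/2$ and $\tilde W\in C^2$, the equation becomes $\hat\theta''+\frac{N-1}{r}\hat\theta'=b(r)\hat\theta$ with $b$ continuous near $r=1$, whence $\hat\theta'(1)>0$. Next comes positivity, $\tilde f>0$ i.e. $\theta>0$ in $(0,1)$. Suppose not; since $\theta(1)=\pi/2>0$, set $r_1:=\sup\{r\in(0,1):\theta(r)\le 0\}\in(0,1)$, so $\theta(r_1)=0$, $\theta>0$ on $(r_1,1)$, and ODE uniqueness (a double zero of $\theta$ at $r_1$ would give $\theta\equiv0$) forces $\theta'(r_1)>0$, hence $\theta<0$ just left of $r_1$. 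Following the proof of Proposition \ref{Prop:fPos}: oddness of $F$ makes $-\theta$ solve the same equation where $\theta<0$, so $|\theta|$ is a subsolution of the $\theta$-equation with equality on $(r_1,1)$ and convex corners at the sign changes of $\theta$; running the moving-plane scheme of the proof of Proposition \ref{Prop:ODEMonotonicity}, specialized to this scalar equation, on $|\theta|$ — for $s\in(r_1,1)$ the reflected points $2s-r$ stay in $(r_1,1)$, where $|\theta|=\theta$ solves the equation and $F$ is decreasing in $r$ — yields $|\theta|(2s-r)\ge|\theta|(r)$ on $(\max(0,2s-1),s)$ for all such $s$. Letting $s\downarrow r_1$: $|\theta|$ and its reflection $r\mapsto|\theta|(2r_1-r)$ both vanish at $r_1$ with the same one-sided derivative, so Hopf's lemma forces them to coincide near $r_1$, and then the equality case of the moving-plane inequality (cf. \eqref{new_23}) gives $\theta'(2r_1-r)\equiv0$ near $r_1$, contradicting $\theta'(r_1)>0$. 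Hence $\theta\ge0$, and then $\theta>0$ in $(0,1)$ by the strong maximum principle.

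With $\theta\in(0,\pi/2)$ on $(0,1)$, $F(\cdot,\theta)$ is strictly decreasing in $r$ and $F(r,\cdot)$ is locally Lipschitz on the range of $\theta$, so the moving-plane argument of the proof of Proposition \ref{Prop:ODEMonotonicity} applies essentially verbatim to the scalar equation for $\theta$ (starting from $\theta'(1)>0$ and sliding the reflection parameter down to $0$), giving $\theta'>0$ in $(0,1]$. Consequently $\tilde f'=(\cos\theta)\theta'>0$ and $g'=-(\sin\theta)\theta'<0$ in $(0,1)$, while $g'(1)=-\theta'(1)<0$ and, differentiating the constraint at $r=1$, $\tilde f'(1)=-g(1)g'(1)/\tilde f(1)=0$ (so $\tilde f'>0$ holds on $(0,1)$). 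Finally, for $(\tilde f/r)'\le0$: put $v:=\tilde f/r\in C^2([0,1])$ (Lemma \ref{Lem:MMONSym}); substituting $\tilde f=rv$ in \eqref{Eq:MM-fee} gives $(r^{N+1}v')'=-\lambda\, r^{N+1}v$ on $(0,1)$, and since \eqref{Eq:TWCond'} makes every term of \eqref{Eq:lamDef} nonnegative, $\lambda\ge0$; together with $v\ge0$ (from positivity) this yields $(r^{N+1}v')'\le0$, so $r^{N+1}v'$ is nonincreasing on $[0,1]$ and $\to0$ as $r\to0$ (because $v\in C^1$), hence $v'\le0$ on $(0,1]$.

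The main obstacle is the Lagrange multiplier $\lambda$: it is a solution-dependent coefficient which is not well-behaved under the reflections $r\mapsto 2s-r$ underlying the moving-plane method, so the argument for the non-escaping profile $\bar m_\eps$ cannot be transplanted directly. The phase substitution $\theta=\arctan(\tilde f/g)$ is the device that removes it, at the cost of a mild singularity of $F$ at $r=0$ (the term $\frac{N-1}{r^2}\sin\theta\cos\theta$), handled exactly as the $\frac{N-1}{r^2}\tilde f$ term in Proposition \ref{Prop:ODEMonotonicity}. The other delicate point is the positivity of $\tilde f$ (equivalently $g<1$ in $(0,1)$), which admits no purely local proof and requires the reflection/Hopf obstruction argument of Proposition \ref{Prop:fPos}.
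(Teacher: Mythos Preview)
Your proof is correct and follows essentially the same route as the paper: pass to the phase $\theta$ with $(\tilde f_\eta, g_\eta) = (\sin\theta, \cos\theta)$ to eliminate the Lagrange multiplier, then run the moving-plane/Hopf arguments of Propositions~\ref{Prop:ODEMonotonicity} and~\ref{Prop:fPos} on the resulting scalar equation for $\theta$, and finish with the $(r^{N+1}v')'$ identity for $(\tilde f_\eta/r)'$. Your observation that $\tilde f_\eta'(1)=0$ (from differentiating the constraint $\tilde f_\eta^2+g_\eta^2=1$ at $r=1$) is in fact a slight sharpening of the stated conclusion, which should read $\tilde f_\eta'>0$ in $(0,1)$ rather than $(0,1]$.
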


\begin{proof} We adapt the strategy in the proof of Propositions \ref{Prop:ODEMonotonicity} and \ref{Prop:fPos}. By Lemma \ref{Lem:MMONSym}, $(\tilde f_\eta, g_\eta) \in C^2([0,1], \mathbb{S}^1)$ and $f(0) = 0$. Recalling also that $g_\eta > 0$, we may thus write $\tilde f_\eta = \sin \theta$, $g_\eta = \cos\theta$ in $[0,1]$ where the lifting $\theta: [0,1]\rightarrow [-\pi/2,\pi/2]$ is $C^2$, $\theta(0) = 0$ and $\theta(1) = \pi/2$. Then $\theta$ satisfies 
\begin{equation}
\theta'' + \frac{N-1}{r} \theta' = \frac{N-1}{r^2} \sin \theta\,\cos\theta  - \frac{1}{\eta^2} \tilde W'(\cos^2\theta) \sin\theta\cos\theta =: P(r,\theta) \text{ in } (0,1).
	\label{Eq:ThetaODE}
\end{equation}
Since $\theta(1) = \pi/2$, $\theta \leq \pi/2$ in $(0,1)$, and $\pi/2$ is a constant solution of \eqref{Eq:ThetaODE}, the maximum principle  and the Hopf lemma applied to \eqref{Eq:ThetaODE} yield $\theta < \pi/2$ in $(0,1)$ and $\theta'(1) > 0$.

Let $r_1 \in [0,1)$ be such that $\theta(r_1) = 0$ and $\theta > 0$ in $(r_1,1]$. Observe that, if $r_1 > 0$, then by applying the Hopf lemma to \eqref{Eq:ThetaODE} in $(r_1,1)$, we have $\theta'(r_1) > 0$. In particular, $\theta < 0$ in some small interval $(r_1 - \delta, r_1)$ when $r_1 > 0$.

Observe that, since $P(r,\theta)$ is odd in $\theta$, $|\theta|$ satisfies in the sense of distribution
\begin{align*}
|\theta|'' + \frac{N-1}{r} |\theta|' = P(r,|\theta|) \textrm{ in } (r_1,1), \quad \text{ and } \quad 
|\theta|'' + \frac{N-1}{r} |\theta|' \geq P(r,|\theta|) \textrm{ in } (0,1).
\end{align*}
Since $P$ is non-increasing in $r$, we can apply the proof of Proposition \ref{Prop:ODEMonotonicity} to obtain
\[
(|\theta|)_s \geq |\theta| \text{ in } \max(0, 2s - 1) < r < s \text{ for all } r_1 \leq s < 1,
\]
where $(|\theta|)_s(r) = |\theta|(2s - r)$. As in the proof of Proposition \ref{Prop:fPos}, the Hopf lemma then implies that $r_1 = 0$, i.e. $\theta > 0$ in $(0,1)$, and so the above gives
\[
\theta_s \geq \theta \text{ in } \max(0, 2s - 1) < r < s \text{ for all } 0 < s < 1.
\]
In addition, we have that $\theta' > 0$ in $(0,1]$ (see Fact 2 in the proof of Proposition \ref{Prop:ODEMonotonicity}). In particular, $0 = \theta(0) < \theta < \theta(1) = \pi/2$ in $(0,1)$.

Returning to $(\tilde f_\eta, g_\eta)$, we have shown that $\tilde f_\eta > 0$, $\tilde f'_{\eta}>0$ and $g'_{\eta} < 0$ in $(0,1]$. The statement $\big(\frac{\tilde f_{\eta}}{r}\big)' \leq 0$ in $(0,1]$ is obtained in the same way as in the last part of the proof of Proposition \ref{Prop:ODEMonotonicity} using the following equivalent form of \eqref{Eq:MM-fee}
\[
\Big(r^{N+1} \big(\frac{\tilde f_\eta}{r}\big)'(r)\Big)' = - r^{N+1}\lambda(r) \frac{\tilde f_\eta(r)}{r}\leq 0, \quad r\in (0,1).
\]
The proof is complete.
\end{proof}

Next we prove the uniqueness of escaping solutions of \eqref{Eq:metaform}--\eqref{Eq:MM-feegeeBC}.

\begin{proposition}\label{Prop:MMUniqueness}
Let $N\geq 2$ and $\eta > 0$. Suppose that $\tilde W \in C^2([0, \infty))$ satisfies \eqref{Eq:TWCond'}. Then the system \eqref{Eq:metaform}--\eqref{Eq:MM-feegeeBC} has at most one escaping solution $(\tilde f_\eta, g_\eta)$ with $g_\eta > 0$ in $(0,1)$. Furthermore, when it exists, then $(\tilde f_\eta, \pm g_\eta)$ are the only two minimizers of the functional $I_{\eta}^{MM}$ in $\mcB^{MM}$.
\end{proposition}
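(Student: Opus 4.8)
The plan is to follow the strategy of Proposition~\ref{pro:unique_eps_eta}, with the convexity of $W$ used there replaced by the unit-length constraint $|m|^2\equiv 1$. Assume $(\tilde f_\eta,g_\eta)$ solves \eqref{Eq:metaform}--\eqref{Eq:MM-feegeeBC} with $g_\eta>0$ in $(0,1)$; by Proposition~\ref{Prop:MM-Monotonicity} we also have $\tilde f_\eta>0$ in $(0,1)$, and by Lemma~\ref{Lem:MMONSym} the map $m_\eta(x)=(\tilde f_\eta(r)n(x),g_\eta(r))$ belongs to $C^2(\bar B^N)$ and the Lagrange multiplier $\lambda$ in \eqref{Eq:lamDef} is bounded on $[0,1]$. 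Given an arbitrary competitor $(f,g)\in\mcB^{MM}$, set $m(x)=(f(r)n(x),g(r))$ and $V=m-m_\eta=((f-\tilde f_\eta)(r)n(x),(g-g_\eta)(r))\in H^1_0(B^N,\RR^{N+1})$, so that $m_\eta\cdot V=\tfrac12(|m|^2-|m_\eta|^2-|V|^2)=-\tfrac12|V|^2$ a.e. Expanding $|\nabla m|^2=|\nabla m_\eta|^2+2\nabla m_\eta\cdot\nabla V+|\nabla V|^2$, integrating by parts (legitimate since $m_\eta\in C^2(\bar B^N)$ and $V\in H^1_0$) and invoking the Euler--Lagrange identity $-\Delta m_\eta=\lambda(r)m_\eta-\tfrac1{\eta^2}\tilde W'(g_\eta^2)g_\eta\,e_{N+1}$ coming from \eqref{Eq:MM-fee}--\eqref{Eq:MM-gee}, together with the convexity bound $\tilde W(m_{N+1}^2)-\tilde W(g_\eta^2)\ge\tilde W'(g_\eta^2)(m_{N+1}^2-g_\eta^2)$, one checks that the terms linear in $V$ cancel exactly and
\[
I_\eta^{MM}[f,g]-I_\eta^{MM}[\tilde f_\eta,g_\eta]\ \ge\ \frac1{2|\Sphere^{N-1}|}\,\mathcal{F}_\eta[V],\qquad
\mathcal{F}_\eta[V]:=\int_{B^N}\Big[|\nabla V|^2-\lambda(r)|V|^2+\tfrac1{\eta^2}\tilde W'(g_\eta^2)V_{N+1}^2\Big]\,dx.
\]

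The heart of the matter is to show $\mathcal{F}_\eta[V]\ge 0$, with a quantitative remainder. By the density of $C_c^\infty(B^N\setminus\{0\},\RR^{N+1})$ in $H^1_0$, the continuity of $\mathcal{F}_\eta$ (as $\lambda,\tilde W'(g_\eta^2)\in L^\infty(B^N)$) and Fatou's lemma, it suffices to treat $V=(s(r)n(x),q(r))$ with $s,q\in C_c^\infty(0,1)$. For these I would apply \cite[Lemma~A.1]{ODE_INSZ} to the operators $L=-\Delta-\lambda(r)$ (acting on the $N$ in-plane components) and $T=-\Delta-\lambda(r)+\tfrac1{\eta^2}\tilde W'(g_\eta^2)$ (acting on the $(N+1)$-st component), factoring $V_j=\tilde f_\eta\,\hat V_j$ for $j\le N$ and $V_{N+1}=g_\eta\,\hat V_{N+1}$. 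Since \eqref{Eq:MM-fee} gives $L\tilde f_\eta=-\tfrac{N-1}{r^2}\tilde f_\eta$ and \eqref{Eq:MM-gee} gives $Tg_\eta=0$, the computation in \eqref{Eq:10IV21-E1} goes through verbatim (using $|\nabla n|^2=\tfrac{N-1}{r^2}$) and yields
\[
\mathcal{F}_\eta[V]=\int_{B^N}\Big[\tilde f_\eta^2\Big|\Big(\tfrac{s}{\tilde f_\eta}\Big)'\Big|^2+g_\eta^2\Big|\Big(\tfrac{q}{g_\eta}\Big)'\Big|^2\Big]\,dx\ \ge\ 0.
\]
Here the positivity of $\tilde f_\eta$ and $g_\eta$ on the supports of the test functions (which lie in a compact subset of $(0,1)$) is exactly what makes the factoring licit. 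Consequently $(\tilde f_\eta,g_\eta)$, and likewise $(\tilde f_\eta,-g_\eta)$, minimize $I_\eta^{MM}$ over $\mcB^{MM}$.

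For the classification of minimizers and the uniqueness statement: if $(\tilde f,\tilde g)\in\mcB^{MM}$ is another minimizer, then the chain above forces $\mathcal{F}_\eta[\tilde V]=0$ for $\tilde V=((\tilde f-\tilde f_\eta)n,\tilde g-g_\eta)$, hence $\big(\tfrac{\tilde f-\tilde f_\eta}{\tilde f_\eta}\big)'=\big(\tfrac{\tilde g-g_\eta}{g_\eta}\big)'=0$ in $(0,1)$. As $\tilde f(1)=\tilde f_\eta(1)=1$, the first identity gives $\tilde f\equiv\tilde f_\eta$; the constraint $\tilde f^2+\tilde g^2=\tilde f_\eta^2+g_\eta^2=1$ then forces $\tilde g^2\equiv g_\eta^2$, and since the second identity gives $\tilde g=a\,g_\eta$ for a constant $a$ (recall $g_\eta>0$ in $(0,1)$), we get $a=\pm1$, i.e. $(\tilde f,\tilde g)=(\tilde f_\eta,\pm g_\eta)$. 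Finally, any escaping solution $(\check f,\check g)$ of \eqref{Eq:metaform}--\eqref{Eq:MM-feegeeBC} with $\check g>0$ is, by the argument above, a minimizer of $I_\eta^{MM}$ over $\mcB^{MM}$, hence equal to $(\tilde f_\eta,g_\eta)$ (the sign being pinned down by $\check g>0$); this is the asserted uniqueness.

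The main obstacle is the bookkeeping in the second-variation inequality of the first paragraph — in particular, verifying that the $V$-linear terms cancel (which, in contrast to the $E_{\eps,\eta}$ case, rests on $|m|^2\equiv|m_\eta|^2\equiv1$ rather than on the convexity of $W$) and that \cite[Lemma~A.1]{ODE_INSZ} is applied with the correct signs so that $\mathcal{F}_\eta$ collapses to a manifestly non-negative quadratic form. The supporting facts — $m_\eta\in C^2(\bar B^N)$ and $\lambda\in L^\infty$ from Lemma~\ref{Lem:MMONSym}, and $\tilde f_\eta,g_\eta>0$ from Proposition~\ref{Prop:MM-Monotonicity} — are precisely what license the integration by parts and the factoring; the remainder parallels the proof of Proposition~\ref{pro:unique_eps_eta}.
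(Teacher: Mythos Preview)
Your proposal is correct and follows essentially the same approach as the paper: the same expansion using the constraint $|m|^2=|m_\eta|^2=1$ to obtain $m_\eta\cdot V=-\tfrac12|V|^2$, the same quadratic form $\mathcal{F}_\eta$ (the paper's $F_\eta^{MM}$), the same Hardy decomposition with the operators $-\Delta-\lambda(r)$ and $-\Delta-\lambda(r)+\tfrac1{\eta^2}\tilde W'(g_\eta^2)$, and the same classification and uniqueness argument. The only differences are notational.
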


\begin{proof} By Proposition \ref{Prop:MM-Monotonicity}, we have $\tilde f_\eta > 0$ in $(0,1)$ for any escaping $(\tilde f_\eta, g_\eta)$ with $g_\eta > 0$ in $(0,1)$ of the system \eqref{Eq:metaform}--\eqref{Eq:MM-feegeeBC}. To prove the uniqueness, we follow a similar argument to the proof of Proposition \ref{pro:unique_eps_eta}, adapted to the new target space $\bS^{N}$. Indeed, denoting $m_\eta=(\tilde f_\eta(r) n(x), g_\eta(r))\in H^1(B^N, \bS^{N})$ for a solution $(\tilde f_\eta, g_\eta)$ in $(0,1)$ of the system \eqref{Eq:metaform}--\eqref{Eq:MM-feegeeBC} with $g_\eta>0$ in $(0,1)$, we consider an arbitrary radial configuration $m=(f(r) n(x), g(r))\in H^1(B^N, \bS^{N})$ with $m=(n,0)$ on $\partial B^N$. Setting $V=m-m_\eta=(s(r)n, q(r))\in H_0^1(B^N, \RR^{N+1})$, the constraints $|m|=|m_\eta|=1$ yield $\tilde f_\eta s+g_\eta q=m_\eta\cdot V=-\frac12 |V|^2$ in $B^N$. Together with the convexity of $\tilde W$ and \eqref{Eq:MM-fee}--\eqref{Eq:MM-gee}, we compute
\begin{align*}
&I_{\eta}^{MM}[f,g] - I_{\eta}^{MM}[\tilde f_{\eta},g_{\eta}]\\
&	\geq \frac{1}{2} \int_0^1 r^{N-1}\, \Big\{ 2\tilde f_{\eta}' s' + (s')^2 + 2g_{\eta}' q' + (q')^2 + \frac{N-1}{r^2}(2\tilde f_{\eta}s + s^2) + \frac{1}{\eta^2} \tilde W'(g_{\eta}^2)(  2g_{\eta}q + q^2)  \Big\}\,dr\\
	&= \frac{1}{2} \int_0^1 r^{N-1} \Big\{ (s')^2  + (q')^2 + \frac{N-1}{r^2} s^2
	 + \frac{1}{\eta^2} \tilde W'(g_{\eta}^2) q^2 +2\lambda(r) (\tilde f_\eta s+g_\eta q) \Big\}\,dr\\
	&=\frac1{2|\bS^{N-1}|} \int_{B^N}\Big\{ |\nabla V|^2+ \frac{1}{\eta^2} \tilde W'(g_{\eta}^2) V_{N+1}^2  -\lambda(r) |V|^2\Big\}\,dx
	=: \frac1{2|\bS^{N-1}|} F^{MM}_{\eta}[V].
\end{align*}

\noindent\underline{Claim:} For every $V(x)=(s(r)n(x), q(r))\in H^1_0(B^N, \RR^{N+1})$, it holds
\[
F^{MM}_{\eta}[V] \geq \int_{B^N} \Big\{\tilde f_{\eta}^2(|x|) \Big| \nabla\Big(\frac{s}{\tilde f_{\eta}}\Big)(|x|)\Big|^2 + g_{\eta}^2(|x|) \Big|\nabla\Big(\frac{q}{g_{\eta}}\Big)(|x|)\Big|^2 \Big\}\,dx.
\]
\nd \underline{Proof of Claim:} Since $F^{MM}_{\eta}$ is continuous in $H_0^1(B^N, \RR^{N+1})$ (because $\lambda, \tilde W'(g_{\eta}^2)\in L^\infty(B^N)$ by Lemma \ref{Lem:MMONSym}), by standard density results and Fatou's lemma, it suffices to show the claim for $V=(s(r)n, q(r)) \in C_c^\infty(B^N\setminus \{0\}, \RR^{N+1})$. For that, we apply \cite[Lemma A.1]{INSZ3} to the operators 
$${\tilde L}:=-\Delta-\lambda(r) \quad\textrm{ and } \quad {\tilde T}:=-\Delta+ \frac{1}{\eta^2} \tilde W'(g_{\eps,\eta}^2)-\lambda(r).$$ Writing $V=(s(r)n, q(r))=(V_1, \dots, V_N, V_{N+1})\in C_c^\infty(B^N\setminus \{0\}, \RR^{N+1})$ and decomposing $V_j= \tilde f_{\eta}\hat V_j$ with $\hat V_j= \frac{V_j}{\tilde f_{\eta}}$ for $j=1,\dots, N$ and $V_{N+1}=g_{\eta} \hat V_{N+1}$ with $\hat V_{N+1}=\frac{q}{g_{\eta}}$,
\begin{align*}
\nonumber F^{MM}_{\eta}[V] &= \sum_{j=1}^N \int_{B^N} \, {\tilde L} V_j\cdot V_j \, dx+\int_{B^N} \, {\tilde T} V_{N+1}\cdot 
V_{N+1} \, dx \\
\nonumber&=\sum_{j=1}^N \int_{B^N} \, \Big\{\tilde f_\eta^2 |\nabla \hat V_j|^2 + \hat V_j^2 {\tilde L}\tilde f_\eta\cdot \tilde f_\eta\Big\}\, dx+\int_{B^N} \, \Big\{g_{\eta}^2 |\nabla \hat V_{N+1}|^2 + \hat V_{N+1}^2 {\tilde T}g_{\eta}\cdot g_{\eta}\Big\}\, dx\\
&\nonumber=\int_{B^N} \,\Big\{ \tilde f_\eta^2 \Big|\nabla \big(\frac{s(r)}{\tilde f_\eta(r)} n(x)\big)\Big|^2 -\frac{N-1}{r^2}s^2+g_{\eta}^2(|x|) \Big|\nabla\Big(\frac{q}{g_{\eta}}\Big)(|x|)\Big|^2 \Big\}\, dx\\
&=\int_{B^N} \Big\{\tilde f_\eta^2(|x|) \Big| \nabla\Big(\frac{s}{\tilde f_\eta}\Big)(|x|)\Big|^2 + g_{\eta}^2(|x|) \Big|\nabla\Big(\frac{q}{g_\eta}\Big)(|x|)\Big|^2 \Big\}\,dx,
	\end{align*}
because ${\tilde L}\tilde f_\eta=-\frac{N-1}{r^2} \tilde f_\eta$, ${\tilde T}g_{\eta}=0$ (by \eqref{Eq:MM-fee}--\eqref{Eq:MM-gee}) and $(\hat V_1, \dots, \hat V_N)=\frac{s(r)}{\tilde f_\eta(r)} n(x)$ with $|\nabla n|^2=\frac{N-1}{r^2}$.
Hence, the claim is proved.

As direct consequence of the claim, $(\tilde f_\eta, \pm g_{\eta})$ minimizes $I^{MM}_{\eta}$ in $\mcB^{MM}$. If $(\hat f_{\eta},\hat g_{\eta})$ also minimizes $I^{MM}_{\eta}$ in $\mcB^{MM}$, the argument in Step 1 of the proof of Proposition \ref{pro:unique_eps_eta} gives 
\[
\frac{\hat f_{\eta}  -  \tilde f_{\eta} }{\tilde f_{\eta} } 
	\text{ and }
	\frac{\hat g_{\eta}  - g_{\eta} }{ g_{\eta} }
	\text{ are constant in } (0,1).
\]
This together with $\hat f_{\eta}(1) -\tilde f_{\eta}(1) = 0$ gives $\hat f_{\eta}  \equiv \tilde f_{\eta}$ and $\hat g_{\eta}  \equiv a g_{\eta}$ in $(0,1)$ for some constant $a \in \RR$. Since $\tilde f_\eta^2 + g_\eta^2 = 1 = \hat f_\eta^2 + \hat g_\eta^2$ we deduce that $\hat g_{\eta}  \equiv \pm g_{\eta}$ in $(0,1)$. This proves that  $(\tilde f_\eta, \pm g_{\eta})$ are the only two minimizers of $I^{MM}_{\eta}$ in $\mcB^{MM}$

Lastly, if $(\check f_{\eta},\check g_{\eta})$ is also a solution to \eqref{Eq:metaform}--\eqref{Eq:MM-feegeeBC} with $\check g_{\eta} > 0$ in $(0,1)$, then the claim yields that $(\check f_{\eta}, \check g_{\eta})$ also minimizes $I^{MM}_{\eta}$ in $\mcB^{MM}$, and by the above, $\check f_{\eta}  \equiv \tilde f_{\eta}$ and $\check g_{\eta}  \equiv g_{\eta}$ in $(0,1)$. The proof is complete.
\end{proof}

\begin{proof}[Proof of Theorem \ref{Thm:MMExist}.] Recall that in dimension $N \geq 7$, since $\tilde W \geq 0$, the equator map $\nespm(x) = (n(x),0)$ is the unique minimizer of $E_\eta^{MM}$ in $\mcA$ for every $\eta > 0$ (see Remark \ref{Rem:R1}). Thus, by \eqref{Eq:TWCond'} and Proposition \ref{Prop:MMUniqueness}, escaping solutions of \eqref{Eq:metaform}--\eqref{Eq:MM-feegeeBC} do not exist for any $\eta > 0$.

Suppose in the rest of the proof that $2 \leq N \leq 6$ and fix some $\eta > 0$. The uniqueness of escaping solution $(\tilde f_\eta, g_\eta)$ of \eqref{Eq:metaform}--\eqref{Eq:MM-feegeeBC} with $g_\eta > 0$ together with its minimality, monotonicity and positivity were proved in Propositions \ref{Prop:MM-Monotonicity} and \ref{Prop:MMUniqueness} and its regularity follows from Lemma \ref{Lem:MMONSym} in Appendix \ref{App}. 

It remains to prove the existence\footnote{For the existence of an escaping solution, it suffices to assume $\tilde W \in C^2([0,\infty))$ instead of \eqref{Eq:TWCond'}.} of an escaping solution of \eqref{Eq:metaform}--\eqref{Eq:MM-feegeeBC} for $2 \leq N \leq 6$ and the instability of the non-escaping solution $(1,0)$ for $3 \leq N \leq 6$.

\medskip
\noindent\underline{Proof of the instability of $(1,0)$ when $3 \leq N \leq 6$:} We show the second variation of $I_\eta^{MM}$ in $\mcB^{MM}$ at $(1,0)$ is not non-negative semi-definite, i.e. there exists $q \in Lip_c(0,1)$ such that
\[
Q_\eta^{MM}[0,q] = \frac{d^2}{dt^2}\Big|_{t = 0} I_{\eta}^{MM}\Big(\frac{(1, tq)}{\sqrt{1 + t^2q^2}}\Big) = \int_0^1 \Big[(q')^2 - \frac{N-1}{r^2}q^2 + \frac{\tilde W'(0)}{\eta^2} q^2\Big]\,r^{N-1}\,dr < 0.
\]

To this end, we adapt the computation in Step 1 of the proof of Lemma \ref{Lem:eigen-gl}(c). Writing $q = \varphi \tilde q$ with $\varphi = r^{-\frac{N-2}{2}}$ and applying \cite[Lemma A.1]{ODE_INSZ} (for the Laplace operator), we have
\[
Q_\eta^{MM}[0,q] = \int_0^1 \Big\{(\tilde q')^2 + \frac{1}{r^2} \Big[\frac{N^2 - 8N + 8}{4} + \frac{\tilde W'(0) r^2}{\eta^2} \Big]\tilde q^2\Big\}\,r\,dr.
\]
For $0 < b < a < 1$ to be fixed, let
\[
\tilde q(r) = \left\{\begin{array}{ll}
	\sin \Big(\frac{\pi}{\ln\frac{a}{b}} \ln \frac{r}{b}\Big) &\text{ for } r \in (b,a),\\
	0 &\text{ otherwise.}
\end{array}\right.
\]
We have
\begin{align*}
Q_\eta^{MM}[0,q] 
	&\leq \int_b^a \Big\{\Big(\frac{\pi}{\ln\frac{a}{b}}\Big)^2 \cos^2 \Big(\frac{\pi}{\ln\frac{a}{b}} \ln \frac{r}{b}\Big) 
		 + \Big(\frac{N^2 - 8N + 8}{4} + \frac{\tilde W'(0)a^2}{\eta^2}\Big)\sin^2 \Big(\frac{\pi}{\ln\frac{a}{b}} \ln \frac{r}{b}\Big)\Big\}\,\frac{dr}{r} \\
	&= \frac{1}{2} \ln \frac{a}{b} \Big\{\Big(\frac{\pi}{\ln\frac{a}{b}}\Big)^2 + \frac{N^2 - 8N + 8}{4} + \frac{\tilde W'(0)a^2}{\eta^2}\Big\}.
\end{align*}
Noting that $ \frac{N^2 - 8N + 8}{4} < 0$ for $3 \leq N \leq 6$, we can select $0 \ll b \ll a \ll \eta$ such that the above quantity is negative.

\medskip
\noindent\underline{Proof of the existence of an escaping solution:} Minimizing $I_{\eta}^{MM}$ in $\mcB^{MM}$, we obtain a minimizer $(\tilde f_\eta,g_\eta) \in \mcB^{MM}$. Replacing $(\tilde f_\eta,g_\eta)$ by $(|\tilde f_\eta|,|g_\eta|)$ if necessary, we have $\tilde f_\eta \geq 0$ and $g_\eta \geq 0$. It is readily seen that $(\tilde f_\eta,g_\eta)$ satisfies \eqref{Eq:metaform}--\eqref{Eq:MM-feegeeBC}. By \eqref{Eq:MM-fee}, the fact that $\tilde f_\eta(1) = 1$ and the strong maximum principle, $\tilde f_\eta > 0$ in $(0,1)$. By \eqref{Eq:MM-gee} and the strong maximum principle, either $g_\eta > 0$ or $g_\eta \equiv 0$ in $(0,1)$. The case $g_\eta \equiv 0$ cannot hold since it would imply $\tilde f_\eta \equiv 1$ in $(0,1)$ (since $\tilde f_\eta^2 + g_\eta^2 = 1$, $\tilde f_\eta(1) = 1$ and $\tilde f_\eta \in C((0,1])$) and $N \geq 3$ (since $r^{\frac{N-3}{2}}\tilde f_\eta \in L^2(0,1)$), which contradicts the instability statement established above. \end{proof}

\begin{remark}
In dimension $N = 2$, if we define the second variation of $I_\eta^{MM}$ at $(1,0)$ (in $\mcB^{MM}$) along directions $(0,q)$ compactly supported in $(0,1)$ by
\[
Q_\eta^{MM}[0,q] = \int_0^1 \Big[(q')^2 - \frac{N-1}{r^2}q^2 + \frac{\tilde W'(0)}{\eta^2} q^2\Big]\,r^{N-1}\,dr,
\]
then the same proof above yields a perturbation $q \in Lip_c(0,1)$ such that
\be
Q_\eta^{MM}[0,q] < 0.
\label{new-pp}
\ee
\end{remark}

\begin{remark}\label{Rem:EqInst}
One can also prove Theorem \ref{Thm:MMExist} by considering the limit as $\eps \rightarrow 0$ of the escaping (minimizing) solutions $(f_{\eps,\eta} > 0,g_{\eps,\eta} > 0)$ obtained in Theorem \ref{Thm:ExtendedExist} for a fixed $\eta > 0$ with $W(t) = t^2$. The strong limit $(\tilde f_\eta, g_\eta)$ of  $\{(f_{\eps,\eta}, g_{\eps,\eta})\}_{\eps\to 0}$ in $\mcB$ is indeed escaping because the non-escaping solution $(1,0)$ (which corresponds to the equator map $\nespm(x) = (n(x),0)$) is unstable for $I_\eta^{MM}$. 
\end{remark}

\noindent\underline{Proof of the convergence of $(f_{\eps,\eta},g_{\eps,\eta})$ in $\mcB$ when $W(t) = t^2$.} By the minimality of $(f_{\eps,\eta},g_{\eps,\eta})$ for $I_{\eps,\eta}$, we have
\[
I_{\eps,\eta}[f_{\eps,\eta},g_{\eps,\eta}] \leq I_{\eps,\eta}[f,g] = I_\eta^{MM}[f,g] \text{ for all } (f,g) \in \mcB^{MM}.
\]
Recall the expression of $I_{\eps,\eta}$, we see that the sequence $\{m_{\eps,\eta} = (f_{\eps,\eta}n,g_{\eps,\eta})\}_{\eps  > 0}$ is bounded in $H^1(B^N)$ and $(1 - f_{\eps,\eta}^2 - g_{\eps,\eta}^2)^2 = W(1 - f_{\eps,\eta}^2 - g_{\eps,\eta}^2) \rightarrow 0$ in $L^1(B^N,\RR^{N+1})$. Thus, along a sequence $\eps_j \rightarrow 0$, $m_{\eps_j,\eta}$ converges weakly in $H^1(B^N,\RR^{N+1})$, strongly in $L^2(B^N,\RR^{N+1})$ and uniformly on compact subsets of $\bar B^N \setminus \{0\}$ to some limit $m_* = (f_*n, g_*)\in \mcA^{MM}$ satisfying $f_* \geq 0$, $g_* \geq 0$. Furthermore, 
\[
I_\eta^{MM}[f_*,g_*] \leq \liminf_{j \rightarrow \infty} I_{\eps_j,\eta}[f_{\eps_j,\eta},g_{\eps_j,\eta}] \leq I_\eta^{MM}[f,g] \text{ for all } (f,g) \in \mcB^{MM}.
\]
Hence $(f_*, g_*)$ is a minimizer for $I_\eta^{MM}$ in $\mcB^{MM}$. Also, by taking $(f,g) = (f_*,g_*)$ in the above inequality, we get
\[
I_\eta^{MM}[f_*,g_*] = \lim_{j \rightarrow \infty} I_{\eps_j,\eta}[f_{\eps_j,\eta},g_{\eps_j,\eta}].
\]
By inspecting the chain of equality, we also have $\|\nabla m_{\eps_j,\eta}\|_{L^2(B^N,\RR^{N+1})} \rightarrow \|\nabla m_*\|_{L^2(B^N)}$. This together with the weak convergence of $m_{\eps_j,\eta}$ in $H^1$ implies that $m_{\eps_j,\eta}$ in fact converges strongly in $H^1(B^N,\RR^{N+1})$ to $m_*$.

As explain above, we have $(f_*,g_*)$ is an escaping critical point of $I_{\eta}^{MM}$ and by its uniqueness in Proposition \ref{Prop:MMUniqueness}, it is thus independent of the sequence $(\eps_j)$. We deduce that $m_{\eps,\eta}$ converges strongly in $H^1(B^N,\RR^{N+1})$ as $\eps \rightarrow 0$ to $m_* = m_\eta$, i.e. $(f_{\eps,\eta},g_{\eps,\eta})$ converges strongly $\mcB$ to $(\tilde f_\eta, g_\eta)$.

\section{Stability analysis of vortex solutions}\label{Sec:Stab}

\subsection{An orthogonal decomposition for the second variation in the extended model}\label{SSec:OD}

Assume that $N\geq 2$ and $W \in C^2((-\infty,1]) $ and $\tilde W \in C^2([0,\infty))$. Let $m_{\eps,\eta} = (f_{\eps,\eta} n, g_{\eps,\eta})$ be any (bounded) radially symmetric critical point of $E_{\eps,\eta}$ in $\mcA$, and define the second variation $Q_{\eps,\eta}: H_0^1(B^N,\RR^{N+1}) \rightarrow \RR$ of $E_{\eps,\eta}$ at $m_{\eps,\eta}$ as follows. Under our assumptions on $W$ and $\tilde W$, $E_{\eps,\eta}$ may take on infinite value in any neighborhood of $m_{\eps,\eta}$. To bypass this technical matter, we first define the second variation $Q_{\eps,\eta}[V]$ along a direction $V = (v, q) \in C_c^\infty(B^N \setminus \{0\}, \RR^N) \times C_c^\infty(B^N \setminus \{0\},\RR) \cong C_c^\infty(B^N  \setminus \{0\},\RR^{N+1})$ by
\begin{align}
Q_{\eps,\eta}[V] 
	&= \frac{d^2}{dt^2}\Big|_{t = 0} E_{\eps,\eta}[m_{\eps,\eta} + tV]\nonumber\\
	&= \int_{B^N} \Big[|\nabla v|^2 + |\nabla q|^2
		- \frac{1}{\eps^2}W'(1 - f_{\eps,\eta}^2 - g_{\eps,\eta}^2)(|v|^2 + q^2) + \frac{1}{\eta^2} \tilde W'(g_{\eps,\eta}^2) q^2\nonumber\\
		&\qquad
		+  \frac{2}{\eps^2} W''(1 - f_{\eps,\eta}^2 - g_{\eps,\eta}^2)(f_{\eps,\eta} n \cdot v + g_{\eps,\eta} q)^2 + \frac{2}{\eta^2} \tilde W''(g_{\eps,\eta}^2) g_{\eps,\eta}^2 q^2
		\Big]\,dx,
		\label{Eq:09IV19-2VarX1}
\end{align}
and extend this definition to $V \in H_0^1(B^N,\RR^{N+1})$ by density using the fact that the right hand side of \eqref{Eq:09IV19-2VarX1} is continuous $H_0^1(B^N,\RR^{N+1})$ (because $f_{\eps,\eta}, g_{\eps,\eta} \in L^\infty(B^N)$ and $W$ and $\tilde W$ are twice continuously differentiable). We will see that this definition is appropriate for our proof of the local minimality of the escaping critical points.

In the sequel $A \colon B$ denotes the Frobenius scalar product of matrices. Writing $v = s n + w$ where $w \cdot n = 0$ with $s\in C^\infty_c(B^N \setminus \{0\},\RR)$ and $w\in C^\infty_c(B^N\setminus \{0\},\RR^N)$, we compute
\[
|\nabla v|^2 = |\nabla s|^2 + \frac{N-1}{r^2} s^2 + |\nabla w|^2 +  2\nabla(sn) \colon \nabla w
\]
and
\begin{align*}
\int_{B^N} \nabla(sn) \colon \nabla w\,dx 
	&= 	- \int_{B^N}  \Delta(sn) \cdot  w\,dx
		= 	- 2\int_{B^N}  \nabla s \cdot( (\nabla n)^t  w)\,dx
	= 	- \int_{B^N} \frac{2}{r} (w \cdot \nabla) s \,dx,
\end{align*}
where we used $w \cdot \partial_k n =\frac{w_k}r$ for $1\leq k\leq N$ because $w\cdot n=0$. It follows that
\begin{align*}
Q_{\eps,\eta}[V] 
	&= \int_{B^N} \Big[ |\nabla s|^2 + \frac{N-1}{r^2} s^2 + |\nabla w|^2  - \frac{4}{r} (w \cdot \nabla) s  + |\nabla q|^2 \nonumber\\
		&\qquad
		 -\frac{1}{\eps^2}W'(1 - f_{\eps,\eta}^2 - g_{\eps,\eta}^2)(s^2 + |w|^2 + q^2) 
		 	+ \frac{1}{\eta^2} \tilde W'(g_{\eps,\eta}^2) q^2\nonumber\\
		 &\qquad
		+  \frac{2}{\eps^2} W''(1 - f_{\eps,\eta}^2 - g_{\eps,\eta}^2)(f_{\eps,\eta} s + g_{\eps,\eta} q)^2 
		+ \frac{2}{\eta^2} \tilde W''(g_{\eps,\eta}^2) g_{\eps,\eta}^2 q^2
		\Big]\,dx.
\end{align*}

We identify  $x = (r,\theta)$ where $r = |x| \geq 0$ and $\theta = \frac{x}{|x|} \in \mathbb{S}^{N-1}$. Let $\slashed{D}$ denote the covariant derivative of the standard metric $g_{\rm round}$ on the unit sphere $\mathbb{S}^{N-1}$ and $d\sigma$ denote the surface measure on $\mathbb{S}^{N-1}$. For a tangent vector field $w$ on $\Sphere^{N-1}$ (i.e., $w\cdot n=0$), one computes
\begin{align}
\label{grad_spher}
|\nabla w|^2
=|\partial_r w|^2+\frac{1}{r^2}(|w|^2+ |\slashed{D}w|^2).
\end{align}
We have
\begin{align}
&Q_{\eps,\eta}[V] 
	=  \int_0^1 \int_{\mathbb{S}^{N-1}}  r^{N-1}\Big\{(\partial_r s)^2 + \frac{1}{r^2} |\slashed{D}s|^2 + \frac{N-1}{r^2} s^2 
		 + |\partial_r w|^2 + \frac{1}{r^2} |\slashed{D}w|^2
			+ \frac{1}{r^2} |w|^2  \nonumber\\
			&  - \frac{4}{r^2} (w \cdot \slashed{D}) s +   (\partial_r q)^2 + \frac{1}{r^2} |\slashed{D} q|^2
		-\frac{1}{\eps^2}W'(1 - f_{\eps,\eta}^2 - g_{\eps,\eta}^2)(s^2 + |w|^2 + q^2) 
		 	+ \frac{1}{\eta^2} \tilde W'(g_{\eps,\eta}^2) q^2\nonumber\\
		 & 
		+  \frac{2}{\eps^2} W''(1 - f_{\eps,\eta}^2 - g_{\eps,\eta}^2)(f_{\eps,\eta} s + g_{\eps,\eta} q)^2 
		+ \frac{2}{\eta^2} \tilde W''(g_{\eps,\eta}^2) g_{\eps,\eta}^2 q^2
		  \Big\} d\sigma\,dr
	 \label{Eq:09IV19-2VarX3}.
\end{align}

We start with an orthogonal decomposition for $Q_{\eps,\eta}$. Let $\lambda_0 = 0 < \lambda_1 \leq \lambda_2 \leq \ldots \rightarrow \infty$ be the eigenvalues of the Laplacian $-\slashed{\Delta}$ on $\mathbb{S}^{N-1}$, and let $\zeta_0, \zeta_1, \ldots$ be a corresponding orthonormal eigenbasis of $L^2(\mathbb{S}^{N-1})$. In particular, $\lambda_k=N-1$ for $k=1, \dots, N$, $\lambda_k \geq 2N$ for $k \geq N + 1$, and the first $N+1$ eigenfunctions can be taken as
\[
\zeta_0(\theta) = \frac{1}{\sqrt{|\Sphere^{N-1}|}}, \quad \zeta_k(\theta) = \sqrt{\frac{N}{|\Sphere^{N-1}|}} \theta_k, \qquad 1 \leq k \leq N.
\]
Moreover, $\int_{\mathbb{S}^{N-1}} \zeta_k d\sigma = 0$ for all $k \geq 1$.

\begin{proposition}
\label{Prop:OrthoDecomp}
Assume $N\geq 2$, $W \in C^2((-\infty,1])$, $\tilde W \in C^2([0,\infty))$. Let $m_{\eps,\eta} = (f_{\eps,\eta} n, g_{\eps,\eta})$ be a radially symmetric critical point of $E_{\eps,\eta}$ in $\mcA$ and $Q_{\eps,\eta}$ be the second variation of $E_{\eps,\eta}$ at $m_{\eps,\eta}$ defined by \eqref{Eq:09IV19-2VarX1}. Suppose that $V = (v = sn + w,q) \in C_c^\infty(B^N\setminus\{0\},\RR^{N+1})$ with $w \cdot n = 0$. For $r \in (0,1]$, let 
\begin{itemize}
\item $w(r,\cdot) = \mathring{w}(r,\cdot) + \slashed{D} \psi(r,\cdot)$ be the Helmholtz decomposition of $w(r,\cdot)$ as a tangent vector field on $\mathbb{S}^{N-1}$ so that the tangent vector field $ \mathring{w}\in C^\infty_c(B^N\setminus\{0\}, \RR^N)$ with vanishing covariant divergence $\slashed{D} \cdot \mathring{w}(r,\cdot) = 0$ and $\psi \in C^\infty_c(B^N\setminus\{0\}, \RR)$ with $\int_{\mathbb{S}^{N-1}} \psi(r,\theta) d\sigma = 0$, where we use the convention that $\mathring{w} = 0$ when $N = 2$;
\item the expansions of $s(r,\theta), \psi(r,\theta)$ and $q(r,\theta)$ in the basis $\{\zeta_i\}_{i=0}^\infty$ be 
\be
\label{decompo}
s(r,\theta) = \sum_{i=0}^\infty s_i(r) \zeta_i(\theta), \quad \psi(r,\theta) = \sum_{i=0}^\infty \psi_i(r) \zeta_i(\theta), \quad q(r,\theta) = \sum_{i=0}^\infty q_i(r) \zeta_i(\theta),
\ee
with $s_i, \psi_i, q_i\in C^\infty_c\big((0,1)\big)$ for every $i\geq 0$.\footnote{\label{foot12}Note that $\psi_0=0$ since $\psi(r,\cdot)$ as well as $\zeta_i$ have zero average on $\bS^{N-1}$ for $i\geq 1$.}
\end{itemize}
Then
$
\mathring{V} := (\mathring{w},0), V_i := (s_i \zeta_i n +  \psi_i \slashed{D} \zeta_i, q_i \zeta_i)$ belong to $C_c^\infty(B^N\setminus\{0\},\RR^{N+1}) \textrm{ for } i\geq 0,
$
and 
\begin{equation}
Q_{\eps,\eta}[V] = Q_{\eps,\eta}[\mathring{V}] + \sum_{i=0}^\infty Q_{\eps,\eta}[V_i].
	\label{Eq:QeeODecomp}
\end{equation}
\end{proposition}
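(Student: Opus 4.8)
The plan is to deduce \eqref{Eq:QeeODecomp} as a Pythagorean identity for the symmetric, $H_0^1$-continuous bilinear form $\mathcal{B}_{\eps,\eta}$ polarizing the quadratic form $Q_{\eps,\eta}$. Concretely, it suffices to check three points: (i) $\mathring V$ and each $V_i$ belong to $C_c^\infty(B^N\setminus\{0\},\RR^{N+1})$; (ii) the partial sums $\mathring V + \sum_{i\le M} V_i$ converge to $V$ in $H_0^1(B^N,\RR^{N+1})$ as $M\to\infty$; and (iii) $\mathcal{B}_{\eps,\eta}(\mathring V, V_i) = 0$ for every $i$ and $\mathcal{B}_{\eps,\eta}(V_i, V_j) = 0$ for every $i\ne j$. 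Granting these, bilinearity gives $Q_{\eps,\eta}\big[\mathring V + \sum_{i\le M} V_i\big] = Q_{\eps,\eta}[\mathring V] + \sum_{i\le M} Q_{\eps,\eta}[V_i]$, and letting $M\to\infty$ using (ii) together with the continuity of $Q_{\eps,\eta}$ on $H_0^1(B^N,\RR^{N+1})$ recorded after \eqref{Eq:09IV19-2VarX1} yields \eqref{Eq:QeeODecomp}, the convergence of the series being part of the conclusion.

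For (i), I would define $\psi(r,\cdot)$ as the unique mean-zero solution on $\mathbb{S}^{N-1}$ of $\slashed{\Delta}\psi(r,\cdot) = \slashed{D}\cdot w(r,\cdot)$ --- solvable since $\int_{\mathbb{S}^{N-1}} \slashed{D}\cdot w(r,\cdot)\,d\sigma = 0$ and the kernel of $\slashed{\Delta}$ on the closed manifold $\mathbb{S}^{N-1}$ consists of constants --- and set $\mathring w(r,\cdot) := w(r,\cdot) - \slashed{D}\psi(r,\cdot)$, so that $\slashed{D}\cdot\mathring w(r,\cdot) = 0$. Since the solution operator (the inverse of $\slashed{\Delta}$ on mean-zero data) is a fixed smoothing operator and $\partial_r$ commutes with $\slashed{D}$, the function $\partial_r\psi$ solves the analogous equation with datum $\slashed{D}\cdot\partial_r w$, and a bootstrap gives $\psi, \mathring w\in C^\infty$ on $B^N\setminus\{0\}$; moreover $w(r,\cdot)\equiv 0$ for $r$ near $0$ or near $1$ forces, by uniqueness, $\psi(r,\cdot)\equiv 0$ and $\mathring w(r,\cdot)\equiv 0$ there. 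The coefficients $s_i(r) = \int_{\mathbb{S}^{N-1}} s(r,\theta)\zeta_i(\theta)\,d\sigma$, and likewise $\psi_i, q_i$, thus lie in $C_c^\infty((0,1))$, and since $n(x) = x/r$ and $\slashed{D}\zeta_i$ (extended radially constant) are smooth on $B^N\setminus\{0\}$, it follows that $\mathring V, V_i\in C_c^\infty(B^N\setminus\{0\},\RR^{N+1})$; when $N = 2$ one takes $\mathring w = 0$ by the stated convention. Point (ii) is immediate: since $s,\psi,q\in C_c^\infty$, their spherical-harmonic partial sums converge in every $C^k$, hence in $H^1$, while $\mathring w$ is fixed.

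The heart of the matter is (iii), which I would prove directly from the pointwise formula \eqref{Eq:09IV19-2VarX3}, which expresses $Q_{\eps,\eta}[V]$ as $\int_0^1\!\int_{\mathbb{S}^{N-1}} r^{N-1}\{\cdots\}\,d\sigma\,dr$ with integrand quadratic in $s, w, q$ and their derivatives $\partial_r, \slashed{D}$. Substituting $s = \sum s_i\zeta_i$, $q = \sum q_i\zeta_i$, $w = \mathring w + \slashed{D}\psi$ with $\psi = \sum\psi_i\zeta_i$, and integrating in $\theta$ termwise, all mixed contributions cancel: the pointwise potential terms (carrying the radial weights $W'(1 - f_{\eps,\eta}^2 - g_{\eps,\eta}^2)$, $W''(\cdots)$, $\tilde W'(g_{\eps,\eta}^2)$, $\tilde W''(g_{\eps,\eta}^2) g_{\eps,\eta}^2$) together with $(\partial_r s)^2$ and $(\partial_r q)^2$ separate by the orthonormality $\int_{\mathbb{S}^{N-1}}\zeta_i\zeta_j\,d\sigma = \delta_{ij}$, using also $\int_{\mathbb{S}^{N-1}}\mathring w\cdot\slashed{D}\zeta_i\,d\sigma = -\int_{\mathbb{S}^{N-1}}(\slashed{D}\cdot\mathring w)\zeta_i\,d\sigma = 0$; the terms $|\slashed{D}s|^2$, $|\slashed{D}q|^2$ and the coupling $-\frac{4}{r^2}(w\cdot\slashed{D})s = -\frac{4}{r^2}\langle\slashed{D}\psi,\slashed{D}s\rangle - \frac{4}{r^2}\langle\mathring w,\slashed{D}s\rangle$ separate by $\int_{\mathbb{S}^{N-1}}\slashed{D}\zeta_i\cdot\slashed{D}\zeta_j\,d\sigma = \lambda_i\delta_{ij}$, the $\mathring w$-piece being annihilated again by $\slashed{D}\cdot\mathring w = 0$; since $\partial_r$ commutes with $\slashed{D}$, the term $|\partial_r w|^2 = |\partial_r\mathring w|^2 + 2\langle\partial_r\mathring w, \slashed{D}\partial_r\psi\rangle + |\slashed{D}\partial_r\psi|^2$ splits, its cross term integrating to $-\int(\slashed{D}\cdot\partial_r\mathring w)\,\partial_r\psi\,d\sigma = 0$; and finally $\frac{1}{r^2}|\slashed{D}w|^2 = \frac{1}{r^2}\big(|\slashed{D}\mathring w|^2 + 2\,\slashed{D}\mathring w\colon\slashed{D}(\slashed{D}\psi) + |\slashed{D}(\slashed{D}\psi)|^2\big)$: the cross term is disposed of by integrating by parts twice and using the Weitzenböck commutation $\slashed{D}_a\slashed{D}_a\slashed{D}_b\psi = \slashed{D}_b\slashed{\Delta}\psi + \mathrm{Ric}_{bc}\slashed{D}_c\psi = \slashed{D}_b\slashed{\Delta}\psi + (N-2)\slashed{D}_b\psi$ on the Einstein manifold $\mathbb{S}^{N-1}$, which turns it into a combination of integrals of $(\slashed{D}\cdot\mathring w)(\cdot)$ that vanish, while $\int_{\mathbb{S}^{N-1}}|\slashed{D}(\slashed{D}\psi)|^2\,d\sigma = \int_{\mathbb{S}^{N-1}}(\slashed{\Delta}\psi)^2\,d\sigma - (N-2)\int_{\mathbb{S}^{N-1}}|\slashed{D}\psi|^2\,d\sigma = \sum_i\lambda_i(\lambda_i - (N-2))\psi_i^2$ by the Bochner formula, which is diagonal in $i$. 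Collecting the surviving diagonal pieces reproduces exactly the integrands of $Q_{\eps,\eta}[\mathring V]$ and of the $Q_{\eps,\eta}[V_i]$, giving (iii) for finite partial sums, which is all we need. The main obstacle is precisely this last, second-order angular term $\frac{1}{r^2}|\slashed{D}w|^2$: recognizing $\slashed{D}(\slashed{D}\psi)$ as the Hessian, cancelling its cross term with $\slashed{D}\mathring w$, and diagonalizing its square all rest on the Bochner/Weitzenböck identity on $\mathbb{S}^{N-1}$ (equivalently, that $\mathbb{S}^{N-1}$ is Einstein with $\mathrm{Ric} = (N-2)g_{\rm round}$ and that $-\slashed{\Delta}$ and $\slashed{D}$ are simultaneously diagonalized by spherical harmonics); every remaining cancellation is a routine use of orthonormality of $\{\zeta_i\}$ and integration by parts on the closed manifold $\mathbb{S}^{N-1}$.
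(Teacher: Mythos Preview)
Your proposal is correct and follows essentially the same route as the paper: both arguments hinge on the Helmholtz decomposition of $w$, orthonormality of the $\zeta_i$, integration by parts against $\slashed{D}\cdot\mathring w=0$, and the Bochner identity on $\mathbb{S}^{N-1}$ to handle $|\slashed{D}w|^2$. The one genuine procedural difference is in how the passage from finite to infinite sums is justified: you use $H^1$-convergence of the spherical-harmonic partial sums together with the $H^1$-continuity of $Q_{\eps,\eta}$, whereas the paper instead verifies that each Dirichlet summand is non-negative (via the elementary inequality $\frac{\lambda_i+N-1}{r^2}x^2+\frac{\lambda_i(\lambda_i-N+3)}{r^2}y^2-\frac{4\lambda_i}{r^2}xy\ge0$ for $i\ge1$) and then invokes Fubini--Tonelli. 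Your route is slightly cleaner here since it sidesteps that auxiliary inequality; the paper's route has the side benefit of recording the non-negativity, which it does not otherwise need at this stage.
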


For related decomposition see \cite{Pino-Felmer-Kow, Gustafson, Mironescu-radial} (in the context of the Ginzburg--Landau functional), \cite{HangLin01-ActaSin, LiMelcher18-JFA} (in the context of micromagnetics), \cite{INSZ3, INSZ_CVPDE} (in the context of the Landau--de Gennes functional).

\begin{proof} 
Observe that for a tangent vector field $w$ (i.e., $w\cdot n=0$),
\begin{equation}
\int_{\mathbb{S}^{N-1}}(w \cdot \slashed{D}) s\,d\sigma
	=- \int_{\mathbb{S}^{N-1}}  \slashed{D} \cdot w s\,d\sigma.
	\label{Eq:04VIII18-E5}
\end{equation}
Hence, in the coupling term $(w \cdot \slashed{D}) s$ between $s$ and $w$ in the expression for $Q_{\eps,\eta}[V]$ in \eqref{Eq:09IV19-2VarX3}, the divergence-free part of the tangent vector field $w$ does not contribute. If $w = \mathring{w} + \slashed{D} \psi$ is the Helmholtz decomposition of $w$ with $\slashed{D} \cdot \mathring{w} = 0$ and $\int_{\mathbb{S}^{N-1}} \psi d\sigma = 0$, then, by \eqref{Eq:04VIII18-E5},
\begin{align*}
\int_{\mathbb{S}^{N-1}} |w|^2\,d\sigma
	&= \int_{\mathbb{S}^{N-1}} |\mathring{w}|^2\,d\sigma + \int_{\mathbb{S}^{N-1}} |w - \mathring{w}|^2\,d\sigma,\\
\int_{\mathbb{S}^{N-1}} |\partial_r w|^2\,d\sigma
	&= \int_{\mathbb{S}^{N-1}} |\partial_r \mathring{w}|^2\,d\sigma + \int_{\mathbb{S}^{N-1}} |\partial_r(w - \mathring{w})|^2\,d\sigma,\\
\int_{\mathbb{S}^{N-1}} (w \cdot \slashed{D}) s\,d\sigma
	&=  \int_{\mathbb{S}^{N-1}} ((w - \mathring{w}) \cdot \slashed{D}) s\,d\sigma,\\
	\int_{\mathbb{S}^{N-1}} |\slashed{D}w|^2\,d\sigma
	&= \int_{\mathbb{S}^{N-1}} |\slashed{D}\mathring{w}|^2\,d\sigma + \int_{\mathbb{S}^{N-1}} |\slashed{D}(w - \mathring{w})|^2\,d\sigma\\
	&= \int_{\mathbb{S}^{N-1}} |\slashed{D}\mathring{w}|^2\,d\sigma + \int_{\mathbb{S}^{N-1}} [(\slashed{\Delta}\psi)^2 - (N-2)|\slashed{D}\psi|^2]\,d\sigma
	\end{align*}
where we used the Bochner identity on the sphere (see e.g. \cite[Chapter I, Proposition 2.2]{SchoenYan-LectDG})
$$
\int_{\mathbb{S}^{N-1}} |\slashed{D}^2 \psi|^2\,d\sigma
	=\int_{\mathbb{S}^{N-1}} [(\slashed{\Delta}\psi)^2 - (N-2)|\slashed{D}\psi|^2]\,d\sigma,
$$
with $\slashed{D}^2\psi$ and $\slashed{\Delta}\psi$ standing for the covariant Hessian and Laplacian of $\psi$, respectively.
Summing up and using  \eqref{decompo}, the Dirichlet part in $Q_{\eps, \eta}[V]$ in \eqref{Eq:09IV19-2VarX3} becomes:
\begin{align*}
&\textrm{Dir}:=\int_{\mathbb{S}^{N-1}}  r^{N-1}\Big\{(\partial_r s)^2  
		 +   (\partial_r q)^2  + |\partial_r w|^2 + \frac{(N-1)s^2+|\slashed{D}s|^2+|\slashed{D} q|^2+|\slashed{D}w|^2+|w|^2-4(w \cdot \slashed{D}) s}{r^2}  
			 			  \Big\} d\sigma\\
		&= \int_{\mathbb{S}^{N-1}}  r^{N-1}\Big\{ |\partial_r \mathring{w}|^2 + \frac{1}{r^2} |\slashed{D}\mathring{w}|^2 + \frac{1}{r^2} |\mathring{w}|^2
			 +(\partial_r s)^2 + \frac{1}{r^2} |\slashed{D}s|^2 + \frac{N-1}{r^2} s^2 
				 +   (\partial_r q)^2 \nonumber\\
			&\quad 
			+ \frac{1}{r^2} |\slashed{D} q|^2	+ |\partial_r \slashed{D}\psi|^2 + \frac{1}{r^2} (\slashed{\Delta} \psi)^2 - \frac{N-3}{r^2} |\slashed{D}\psi|^2
				 - \frac{4}{r^2} \slashed{D}\psi \cdot \slashed{D} s
			  \Big\} d\sigma\\
		&= \int_{\mathbb{S}^{N-1}}  r^{N-1}\Big\{ |\partial_r \mathring{w}|^2 + \frac{1}{r^2} |\slashed{D}\mathring{w}|^2 + \frac{1}{r^2} |\mathring{w}|^2
			  \Big\} d\sigma\\
			  &
			  	+  \sum_{i=0}^\infty r^{N-1}\Big\{(s_i')^2  + \frac{\lambda_i + N-1}{r^2} s_i^2 
				 +   (q_i')^2 + \frac{\lambda_i}{r^2} q_i^2
				 + \lambda_i (\psi_i')^2 + \frac{\lambda_i(\lambda_i - N + 3)}{r^2} \psi_i^2
				 - \frac{4\lambda_i}{r^2} \psi_i   s_i
			  \Big\}.			 
\end{align*}
Noting that, as $\lambda_i(\lambda_i + N-1)(\lambda_i - N + 3) - 4\lambda_i^2 = \lambda_i(\lambda_i + N - 3)(\lambda_i - N + 1) \geq 0$ for $\lambda_i \geq N - 1$, which holds for $i \geq 1$, we have
\[
\frac{\lambda_i + N-1}{r^2} x^2 
	+ \frac{\lambda_i(\lambda_i - N + 3)}{r^2} y^2
	- \frac{4\lambda_i}{r^2} xy \geq 0 \text{ for all } i \geq 1.
\]
Recall also that $\psi_0 \equiv 0$ and $\lambda_0=0$. Hence, all the summands on the right hand side of the identity above are non-negative. Hence, by Fubini-Tonelli's theorem, we obtain the following formula for $Q_{\eps, \eta}[V]$ in \eqref{Eq:09IV19-2VarX3}:
\begin{align*}
&Q_{\eps, \eta}[V]=\int_0^1 \textrm{Dir} \, dr+ \int_0^1 \int_{\mathbb{S}^{N-1}}  r^{N-1}\Big\{-\frac{1}{\eps^2}W'(1 - f_{\eps,\eta}^2 - g_{\eps,\eta}^2)(s^2 + |w|^2 + q^2) + \frac{1}{\eta^2} \tilde W'(g_{\eps,\eta}^2) q^2
		 	\nonumber\\
		 & 
		+  \frac{2}{\eps^2} W''(1 - f_{\eps,\eta}^2 - g_{\eps,\eta}^2)(f_{\eps,\eta} s + g_{\eps,\eta} q)^2 
		+ \frac{2}{\eta^2} \tilde W''(g_{\eps,\eta}^2) g_{\eps,\eta}^2 q^2
		  \Big\} d\sigma\,dr
		   =Q_{\eps,\eta}[\mathring{V}] + \sum_{i=0}^\infty Q_{\eps,\eta}[V_i],
\end{align*}
because the same computation as for the Dirichlet energy $\textrm{Dir}$ yields %
\begin{align*}
\|\nabla \mathring{V}\|_{L^2(B^N,\RR^{N+1})}^2
	&= \int_0^1\int_{\mathbb{S}^{N-1}}  r^{N-1}\Big\{ |\partial_r \mathring{w}|^2 + \frac{1}{r^2} |\slashed{D}\mathring{w}|^2 + \frac{1}{r^2} |\mathring{w}|^2
			  \Big\} d\sigma\,dr < \infty,\\
\|\nabla V_i\|_{L^2(B^N,\RR^{N+1})}^2
	&= \int_0^1  r^{N-1}\Big\{(s_i')^2  + \frac{\lambda_i + N-1}{r^2} s_i^2 
				 +   (q_i')^2 + \frac{\lambda_i}{r^2} q_i^2\\
			&\qquad
				+ \lambda_i (\psi_i')^2 + \frac{\lambda_i(\lambda_i - N + 3)}{r^2} \psi_i^2
				 - \frac{4\lambda_i}{r^2} \psi_i   s_i
			  \Big\}\,dr < \infty
\end{align*} 
which finally gives the expressions of  $Q_{\eps,\eta}[\mathring{V}]$ and $Q_{\eps,\eta}[V_i]$ used above:
\begin{align}
Q_{\eps,\eta}[\mathring{V}] 
	&=  \int_0^1 \int_{\mathbb{S}^{N-1}}  r^{N-1}\Big\{|\partial_r \mathring{w}|^2 + \frac{ |\slashed{D}\mathring{w}|^2+ |\mathring{w}|^2}{r^2} 
		- \frac{W'(1 - f_{\eps,\eta}^2 - g_{\eps,\eta}^2) |\mathring{w}|^2 }{\eps^2}  \Big\} d\sigma\,dr
	,\nonumber\\
Q_{\eps,\eta}[V_i] 
	&=  \int_0^1  r^{N-1}\Big\{(s_i')^2 + \frac{\lambda_i + N-1}{r^2} s_i^2 + \lambda_i (\psi_i')^2 + \frac{\lambda_i(\lambda_i - N + 3)}{r^2} \psi_i^2- \frac{4\lambda_i\psi_i\, s_i}{r^2}
			  +   (q_i')^2 \nonumber\\
		&\qquad   
			 + \frac{\lambda_i}{r^2} q_i^2 -\frac{1}{\eps^2}W'(1 - f_{\eps,\eta}^2 - g_{\eps,\eta}^2)(s_i^2 + \lambda_i \psi_i^2 + q_i^2) 
		 	+ \frac{1}{\eta^2} \tilde W'(g_{\eps,\eta}^2) q_i^2\nonumber\\
		 &\qquad
		+  \frac{2}{\eps^2} W''(1 - f_{\eps,\eta}^2 - g_{\eps,\eta}^2)(f_{\eps,\eta} s_i + g_{\eps,\eta} q_i)^2 
		+ \frac{2}{\eta^2} \tilde W''(g_{\eps,\eta}^2) g_{\eps,\eta}^2 q_i^2
		  \Big\}\,dr.
	\label{Eq:10IV19-2VarX5}
\end{align}
Thus, \eqref{Eq:QeeODecomp}  holds.
\end{proof}

\nd {\bf Strategy of the proof of the stability/instability}. The aim is to study the positivity of the terms in the decomposition of $Q_{\eps, \eta}[V]$ in \eqref{Eq:QeeODecomp}. For that, we will use the Hardy decomposition \cite[Lemma A.1]{INSZ3} for the two operators $L$ and $T$ defined in \eqref{opLT} (as in the proof of 
Proposition~\ref{pro:unique_eps_eta}). By the equations \eqref{Eq:20III21-fee}--\eqref{Eq:20III21-gee}, one easily computes for $\alpha\in \RR$:
\be
\label{comput11}
\begin{cases}
&L(r^\alpha f_{\eps, \eta})=-2\alpha r^{\alpha-1} f'_{\eps, \eta}-\Big(\alpha(\alpha+N-2)+N-1\Big)r^{\alpha-2} f_{\eps, \eta},\\
&L(f_{\eps,\eta}')=-\frac{2(N-1)}{r^2}f'_{\eps, \eta}+\frac{2(N-1)}{r^3}f_{\eps, \eta}-\frac2{\eps^2} W''(1 - f_{\eps,\eta}^2 - g_{\eps,\eta}^2)(f^2_{\eps,\eta}f'_{\eps, \eta}+f_{\eps, \eta} g_{\eps, \eta}g'_{\eps, \eta}),\\
&Tg_{\eps,\eta}=0,\\
& Tg'_{\eps,\eta}=-\frac{N-1}{r^2}g'_{\eps, \eta}-\frac2{\eps^2} W''(1 - f_{\eps,\eta}^2 - g_{\eps,\eta}^2)(g_{\eps, \eta}f_{\eps,\eta}f'_{\eps, \eta}+g^2_{\eps, \eta} g'_{\eps, \eta})-\frac2{\eta^2} \tilde W''(g_{\eps,\eta}^2)g^2_{\eps, \eta}g'_{\eps, \eta},
\end{cases}
\ee
paying attention to the differences in the cases $g_{\eps,\eta} > 0$ and $g_{\eps,\eta} \equiv 0$.

\bigskip

\nd \underline{Stability in direction $\mathring{V}=(\mathring{w},0)$.}

\begin{lemma}\label{Lem:2VarDivFree}
Suppose $N \geq 3$ and $W \in C^2((-\infty,1]) $ and $\tilde W \in C^2([0,\infty))$ satisfy \eqref{Eq:WCond'} and \eqref{Eq:TWCond'}. Let $m_{\eps,\eta} = (f_{\eps,\eta} n, g_{\eps,\eta})$ be a radially symmetric critical point  of $E_{\eps,\eta}$ in $\mcA$ with $g_{\eps,\eta} \geq 0$ in $(0,1)$, and let $Q_{\eps,\eta}$ be the second variation of $E_{\eps,\eta}$ at $m_{\eps,\eta}$ defined by \eqref{Eq:09IV19-2VarX1}. Then there exists a constant $C>0$ independent of $\eps, \eta>0$ such that for every $\mathring{w} \in C_c^\infty(B^N \setminus \{0\}, \RR^{N})$ with $\mathring{w} \cdot n = 0$ and $\slashed{D}\cdot \mathring{w} = 0$:
\[
Q_{\eps,\eta}[(\mathring{w},0)] \geq C \int_{B^N} 
|\mathring{w}|^2\, dx.
\]
\end{lemma}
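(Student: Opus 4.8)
The expression for $Q_{\eps,\eta}[(\mathring{w},0)]$ in \eqref{Eq:10IV19-2VarX5} is
\[
Q_{\eps,\eta}[(\mathring{w},0)] = \int_0^1 \int_{\mathbb{S}^{N-1}} r^{N-1}\Big\{|\partial_r \mathring{w}|^2 + \frac{|\slashed{D}\mathring{w}|^2 + |\mathring{w}|^2}{r^2} - \frac{1}{\eps^2} W'(1 - f_{\eps,\eta}^2 - g_{\eps,\eta}^2)|\mathring{w}|^2\Big\} d\sigma\,dr.
\]
Since $W$ is convex and $W'(0) = 0$, on $\{f_{\eps,\eta}^2 + g_{\eps,\eta}^2 \le 1\}$ (which holds by Lemma \ref{lem:f2g2}) we have $W'(1 - f_{\eps,\eta}^2 - g_{\eps,\eta}^2) \ge 0$, so this term has the wrong sign and must be absorbed. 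The idea is that, because $\mathring{w}$ is a divergence-free tangent field on each sphere, a sharp Poincar\'e inequality forces $\int_{\mathbb{S}^{N-1}}(|\slashed{D}\mathring{w}|^2 + |\mathring{w}|^2)\,d\sigma$ to be strictly larger than $\int_{\mathbb{S}^{N-1}}|\mathring{w}|^2\,d\sigma$ with a gain large enough to beat the potential term, at least in dimension $N \ge 3$.

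\textbf{Step 1: the angular Poincar\'e inequality.} I would invoke the sharp Poincar\'e-type inequality from Appendix \ref{App:C} for divergence-free tangent vector fields $\mathring{w}$ on $\mathbb{S}^{N-1}$, which should give a constant $\mu_N$ with $\int_{\mathbb{S}^{N-1}} |\slashed{D}\mathring{w}|^2\,d\sigma \ge (\mu_N - 1)\int_{\mathbb{S}^{N-1}}|\mathring{w}|^2\,d\sigma$; in fact for $N \ge 3$ the relevant eigenvalue is $\lambda = 2(N-2)$ for the first divergence-free (rotational) vector field on $\mathbb{S}^{N-1}$, so $|\slashed{D}\mathring{w}|^2 + |\mathring{w}|^2$ integrates to at least $2(N-1)\int_{\mathbb{S}^{N-1}}|\mathring{w}|^2\,d\sigma$. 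Thus pointwise-in-$r$,
\[
\int_{\mathbb{S}^{N-1}}\Big(|\partial_r\mathring{w}|^2 + \frac{|\slashed{D}\mathring{w}|^2 + |\mathring{w}|^2}{r^2}\Big) d\sigma \ge \int_{\mathbb{S}^{N-1}}\Big(|\partial_r\mathring{w}|^2 + \frac{2(N-1)}{r^2}|\mathring{w}|^2\Big) d\sigma.
\]

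\textbf{Step 2: Hardy decomposition to absorb the potential.} Following the strategy outlined after Proposition \ref{Prop:OrthoDecomp}, I would write $\mathring{w} = f_{\eps,\eta}\,\hat w$ (using $f_{\eps,\eta} > 0$ in $(0,1)$ from Proposition \ref{Prop:fPos}, extended to $(0,1]$; note $f_{\eps,\eta}(1)=1$) and apply the Hardy-type identity \cite[Lemma A.1]{INSZ3} with the operator $L$ from \eqref{opLT}. Using $L f_{\eps,\eta} = -\frac{N-1}{r^2} f_{\eps,\eta}$ (from \eqref{Eq:20III21-fee}), one gets
\[
\int_{B^N}\Big\{|\partial_r \mathring w|^2 + \frac{N-1}{r^2}|\mathring w|^2 - \frac{1}{\eps^2}W'(\cdots)|\mathring w|^2\Big\}\,dx = \int_{B^N} f_{\eps,\eta}^2\,|\partial_r \hat w|^2\,dx \ge 0,
\]
where I have grouped one copy of $\frac{N-1}{r^2}|\mathring w|^2$ from Step 1's bound with the potential term. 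What remains from Step 2's surplus is
\[
Q_{\eps,\eta}[(\mathring w,0)] \ge \int_{B^N}\Big(\frac{2(N-1) - (N-1)}{r^2}\Big)|\mathring w|^2\,dx = (N-1)\int_{B^N}\frac{|\mathring w|^2}{r^2}\,dx \ge (N-1)\int_{B^N}|\mathring w|^2\,dx,
\]
using $r \le 1$, which gives the claim with $C = N-1$ (or any such explicit constant), manifestly independent of $\eps,\eta$.

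\textbf{Main obstacle.} The delicate points are (i) pinning down the correct sharp constant in the angular Poincar\'e inequality for \emph{divergence-free} tangent vector fields on $\mathbb{S}^{N-1}$ — one must verify that the lowest eigenvalue of the relevant operator on coclosed $1$-forms is large enough (this is where $N \ge 3$ is used, since for $N = 2$ there are no such fields and the statement is vacuous), and I would lean on Appendix \ref{App:C} for the precise value; and (ii) justifying the density/approximation so that the Hardy identity \cite[Lemma A.1]{INSZ3}, stated for functions compactly supported away from the origin, applies with the factoring function $f_{\eps,\eta}$, which vanishes at $r = 0$ but is bounded and positive on compact subsets of $(0,1]$ — since $\mathring w \in C_c^\infty(B^N \setminus \{0\})$ this is routine, but the bookkeeping of boundary terms at $r = 1$ (where $f_{\eps,\eta} = 1$ and $\mathring w = 0$) needs a line. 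If the angular gain from Step 1 turns out to be exactly $N-1$ rather than $2(N-1)$ in some normalization, the argument still closes since one then absorbs the potential using the bound $f_{\eps,\eta} \le 1$ together with strict positivity of $f_{\eps,\eta}$ away from the boundary, trading a slightly smaller explicit $C$; the key structural fact is simply that the divergence-free constraint upgrades the trivial Poincar\'e constant enough to dominate $W'(\cdots) \le \frac{1}{\eps^2}\sup|W''|\cdot(f_{\eps,\eta}^2+g_{\eps,\eta}^2)$, which is handled cleanly by the Hardy cancellation rather than by crude estimates.
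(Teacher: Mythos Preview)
Your strategy is essentially the paper's, but the argument has a genuine gap at Step~1. The sharp Poincar\'e constant for divergence-free tangent fields on $\Sphere^{N-1}$ is $N-2$, not $2N-3$: Lemma~\ref{Lem:SolPoincare} gives exactly
\[
\int_{\Sphere^{N-1}}|\slnabla v|^2\,d\sigma \geq (N-2)\int_{\Sphere^{N-1}}|v|^2\,d\sigma,
\]
with equality on the Killing fields (the eigenvalue $2(N-2)$ you quote is for the Hodge Laplacian, not the rough Laplacian $\slnabla^*\slnabla$). Consequently $\int_{\Sphere^{N-1}}(|\slnabla\mathring w|^2+|\mathring w|^2)\,d\sigma \geq (N-1)\int|\mathring w|^2\,d\sigma$, and after you extract $\frac{N-1}{r^2}|\mathring w|^2$ for the radial Hardy cancellation in Step~2, the angular surplus is \emph{zero}. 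Your computation then yields only
\[
Q_{\eps,\eta}[(\mathring w,0)] \geq \int_{B^N} f_{\eps,\eta}^2\,|\partial_r\hat w|^2\,dx \geq 0,
\]
with no strictly positive multiple of $\|\mathring w\|_{L^2}^2$. Your hedge (``absorb using $f_{\eps,\eta}\leq 1$ and strict positivity away from the boundary'') does not explain how to extract a constant independent of $\eps,\eta$ once the angular gain has been exhausted.

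The paper closes this gap by factoring $\mathring w = r^\alpha f_{\eps,\eta}\,\hat w$ with a free parameter $\alpha\in(-(N-2),0)$ instead of $\alpha=0$. After the Hardy identity \eqref{comput11} and the Poincar\'e inequality, the coefficient of $\frac{f_{\eps,\eta}^2}{r^2}|\hat w|^2$ becomes $(N-3)-(\alpha+1)(\alpha+N-3)=-\alpha(\alpha+N-2)>0$, while the new cross term $-\frac{2\alpha f_{\eps,\eta}'}{r f_{\eps,\eta}}|\hat w|^2$ is also nonnegative since $\alpha<0$ and $f_{\eps,\eta}'>0$. This yields $Q_{\eps,\eta}[(\mathring w,0)]\geq -\alpha(\alpha+N-2)\int_{B^N} r^{2\alpha-2}|\mathring w|^2\,dx\geq C\int_{B^N}|\mathring w|^2\,dx$ with $C$ independent of $\eps,\eta$. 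Alternatively, your own display $Q\geq\int f_{\eps,\eta}^2|\partial_r\hat w|^2$ can be salvaged: do not discard that term, use $f_{\eps,\eta}(r)\geq r$ (from $(f_{\eps,\eta}/r)'\leq 0$ in Proposition~\ref{Prop:ODEMonotonicity}) and the Hardy inequality in $\RR^{N+2}$ as in \eqref{ineg45} to bound it below by $\frac{N^2}{4}\int|\mathring w|^2$. Either route works, but as written your Step~1 constant is wrong and the conclusion does not follow.
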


To be clear, in the lemma above, $m_{\eps,\eta}$ can be either an escaping solution with $g_{\eps,\eta} > 0$ or a non-escaping solution with $g_{\eps,\eta} \equiv 0$. Also, in dimension $N = 2$, this inequality is obvious since $\mathring{w} = 0$ by definition.

\begin{proof} Note that $f_{\eps,\eta}> 0$ by Proposition \ref{Prop:fPos}. Let $\alpha\in \RR$ to be chosen later (see \eqref{choice_1} at the end of the proof). We factor $\mathring{w} = r^\alpha f_{\eps,\eta} \hat w$ with $\hat w=(\hat w_1, \dots, \hat w_N)\in C_c^\infty(B^N \setminus \{0\}, \RR^{N})$ and we apply \cite[Lemma A.1]{INSZ3} for the operator $L$ in \eqref{opLT}:
\begin{align}
\nonumber Q_{\eps,\eta}[(\mathring{w},0)] &= \int_{B^N}  \sum_{j=1}^N L \mathring{w}_j\cdot 
\mathring{w}_j 
\, dx \\
\nonumber&=\sum_{j=1}^N \int_{B^N} \,\Big\{ r^{2\alpha}f_{\eps,\eta}^2 |\nabla \hat w_j|^2 + \hat w_j^2 L(r^\alpha f_{\eps,\eta})\cdot (r^\alpha f_{\eps,\eta})\Big\}\, dx\\
\nonumber&= \int_0^1 \int_{\mathbb{S}^{N-1}}  r^{2\alpha + N-1}f_{\eps,\eta}^2 \Big\{ |\partial_r \hat w|^2 - \frac{2\alpha f_{\eps, \eta}'}{r f_{\eps, \eta}}  |\hat w|^2- \frac{(\alpha + 1)(\alpha + N-3)}{r^2}   |\hat w|^2\\
		&\qquad 
		 + \frac{1}{r^2} (|\slashed{D}\hat w|^2 - |\hat w|^2)
			  \Big\} d\sigma\,dr,
\label{123}
\end{align}
because of \eqref{grad_spher} for the tangent vector field $\hat w$ and \eqref{comput11}.  
By the Poincar\'e inequality for divergence-free vector field on the sphere (see Lemma \ref{Lem:SolPoincare}), we have
\[
\int_{\mathbb{S}^{N-1}} |\slashed{D}\hat w|^2 \,d\sigma \geq (N-2)\int_{\mathbb{S}^{N-1}} |\hat w|^2 d\sigma.
\]
We then choose $\alpha\in (-(N-2),0)$ yielding
\be
\label{choice_1}
\alpha<0\quad \textrm{ and } \quad (\alpha + 1)(\alpha + N-3) < N-3.
\ee
Since $f_{\eps, \eta}'>0$ (see Proposition \ref{Prop:ODEMonotonicity}) and $\frac1{r^2}>1$ in $(0,1)$, it follows $Q_{\eps,\eta}[(\mathring{w},0)]\geq C\|\mathring{w}\|_{L^2}^2$ for a constant $C>0$ independent of $\eps, \eta>0$. The lemma is proved.
 \end{proof}

\subsection{The extended model: Stability of the escaping vortex solution}

\nd \underline {Stability for the zero-mode $V_0$.}

Recall that $\lambda_0=0$ and $\zeta_0$ is a nonzero constant that satisfies $\|\zeta_0\|_{L^2(\bS^{N-1})}=1$, in particular, $\slashed{D}\zeta_0=0$; thus, the zero-mode in \eqref{Eq:QeeODecomp} is given by $V_0=(s \zeta_0 n,q \zeta_0)$ for two functions $s, q \in C_c^\infty(0,1)$.

\begin{lemma}\label{Lem:2VarZero}
Let $N \geq 2$, $W \in C^2((-\infty,1])$ and $\tilde W \in C^2([0,\infty))$. Let $m_{\eps,\eta} = (f_{\eps,\eta} n, g_{\eps,\eta})$ be a bounded radially symmetric critical point of $E_{\eps,\eta}$ in $\mcA$ and let $Q_{\eps,\eta}$ be the second variation of $E_{\eps,\eta}$ at $m_{\eps,\eta}$ defined by \eqref{Eq:09IV19-2VarX1}. Suppose that $f_{\eps,\eta} > 0$ and $g_{\eps,\eta} > 0$ in $(0,1)$. If $(s, q) \in C_c^\infty(0,1)$,
then
\begin{align*}
Q_{\eps,\eta}[(s \zeta_0 n,q \zeta_0)] 
	&= \int_0^1 r^{N-1} \Big\{f_{\eps,\eta}^2 \Big|\Big(\frac{s}{f_{\eps,\eta}}\Big)'\Big|^2
	+ g_{\eps,\eta}^2 \Big|\Big(\frac{q}{g_{\eps,\eta}}\Big)'\Big|^2 \\
		&\quad +  \frac{2}{\eps^2} W''(1 - f_{\eps,\eta}^2 - g_{\eps,\eta}^2)(f_{\eps,\eta} s + g_{\eps,\eta} q)^2 
		+ \frac{2}{\eta^2} \tilde W''(g_{\eps,\eta}^2) g_{\eps,\eta}^2 q^2
		\Big\}\,dr.
\end{align*}
\end{lemma}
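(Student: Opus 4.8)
The plan is to compute $Q_{\eps,\eta}[(s\zeta_0 n, q\zeta_0)]$ from the formula \eqref{Eq:10IV19-2VarX5} for $Q_{\eps,\eta}[V_i]$ with $i = 0$ and then rewrite the resulting one-dimensional quadratic form using the Hardy decomposition of \cite[Lemma A.1]{INSZ3} applied to the operators $L$ and $T$ from \eqref{opLT}, with the natural factoring functions $f_{\eps,\eta}$ and $g_{\eps,\eta}$. Concretely: since $\lambda_0 = 0$ and $\psi_0 \equiv 0$ (see footnote \ref{foot12}), the $i=0$ term in \eqref{Eq:10IV19-2VarX5} collapses to
\[
Q_{\eps,\eta}[(s\zeta_0 n, q\zeta_0)] = \int_0^1 r^{N-1}\Big\{ (s')^2 + \tfrac{N-1}{r^2}s^2 + (q')^2 - \tfrac{1}{\eps^2}W'(1 - f_{\eps,\eta}^2 - g_{\eps,\eta}^2)(s^2 + q^2) + \tfrac{1}{\eta^2}\tilde W'(g_{\eps,\eta}^2)q^2 + \tfrac{2}{\eps^2}W''(\cdot)(f_{\eps,\eta}s + g_{\eps,\eta}q)^2 + \tfrac{2}{\eta^2}\tilde W''(g_{\eps,\eta}^2)g_{\eps,\eta}^2 q^2\Big\}\,dr.
\]
The first five terms are exactly $\frac{1}{|\bS^{N-1}|}\big(\int_{B^N} L(sn)\cdot(sn)\,dx + \int_{B^N} Tq\cdot q\,dx\big)$ for the radial functions $sn$ and $q$, viewed as maps on $B^N$ (using $|\nabla(sn)|^2 = |\nabla s|^2 + \frac{N-1}{r^2}s^2$ and $|\nabla n|^2 = \frac{N-1}{r^2}$). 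The remaining two terms are precisely the $W''$ and $\tilde W''$ contributions appearing in the claimed right-hand side, so no manipulation is needed there.

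First I would spell out the reduction of the $i=0$ term of \eqref{Eq:10IV19-2VarX5} as above, emphasizing $\psi_0 = 0$, $\lambda_0 = 0$. Then, exactly as in the proof of Proposition~\ref{pro:unique_eps_eta} (Claim 1, equation \eqref{Eq:10IV21-E1}), I would apply \cite[Lemma A.1]{INSZ3}: write $s = f_{\eps,\eta}\hat s$ and $q = g_{\eps,\eta}\hat q$ (valid since $f_{\eps,\eta} > 0$ and $g_{\eps,\eta} > 0$ in $(0,1)$ by hypothesis), obtaining
\[
\int_{B^N} L(sn)\cdot(sn)\,dx = \int_{B^N}\Big\{ f_{\eps,\eta}^2\big|\nabla(\hat s\, n)\big|^2 + \hat s^2\, L(f_{\eps,\eta}n)\cdot(f_{\eps,\eta}n)\Big\}\,dx
\]
and similarly for the $T$-part with $g_{\eps,\eta}$. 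Using the ODEs \eqref{Eq:20III21-fee}--\eqref{Eq:20III21-gee} one has $L(f_{\eps,\eta}n) = -\frac{N-1}{r^2}f_{\eps,\eta}n$ and $Tg_{\eps,\eta} = 0$ (these are recorded in \eqref{comput11}); hence the $\hat s^2 L(f_{\eps,\eta}n)\cdot(f_{\eps,\eta}n)$ term is $-\frac{N-1}{r^2}s^2$, and since $|\nabla(\hat s\,n)|^2 = |\hat s'|^2 + \frac{N-1}{r^2}\hat s^2$, the $+\frac{N-1}{r^2}s^2$ in the Dirichlet part is cancelled, leaving exactly $f_{\eps,\eta}^2|(\frac{s}{f_{\eps,\eta}})'|^2$; the $\tilde W'$ term is absorbed because $Tg_{\eps,\eta}=0$, leaving $g_{\eps,\eta}^2|(\frac{q}{g_{\eps,\eta}})'|^2$. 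Passing back to the radial integral $\int_0^1 r^{N-1}(\cdots)\,dr$ and adding the untouched $W''$ and $\tilde W''$ terms yields the claimed identity. To be careful about the density/approximation point, I would note that for $(s,q)\in C_c^\infty(0,1)$ the corresponding $sn, q$ lie in $C_c^\infty(B^N\setminus\{0\})$ so \cite[Lemma A.1]{INSZ3} applies directly without any limiting argument — this is the only place one must check hypotheses.

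The main obstacle, such as it is, is purely bookkeeping: correctly matching the coefficients in \eqref{Eq:10IV19-2VarX5} (at $i=0$) against $\int L(sn)\cdot(sn) + \int Tq\cdot q$, and correctly invoking \cite[Lemma A.1]{INSZ3} with the right factoring functions while tracking the boundary condition $f_{\eps,\eta} > 0$, $g_{\eps,\eta} > 0$ on $(0,1)$ needed to divide by them. There is no genuine difficulty, since the computation is a direct specialization of the one already carried out in \eqref{Eq:10IV21-E1}; in fact the lemma could be stated as an immediate corollary of that computation. I would present it accordingly: a short derivation referencing \eqref{Eq:10IV19-2VarX5}, \eqref{opLT}, \eqref{comput11} and the identity \eqref{Eq:10IV21-E1}, concluding with the displayed formula.
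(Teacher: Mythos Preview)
Your approach is correct and matches the paper's proof exactly: specialize \eqref{Eq:10IV19-2VarX5} to $i=0$ (so $\lambda_0=0$, $\psi_0\equiv 0$), rewrite in terms of the operators $L,T$ from \eqref{opLT}, factor $s=f_{\eps,\eta}\hat s$, $q=g_{\eps,\eta}\hat q$, and apply the Hardy decomposition \cite[Lemma~A.1]{INSZ3} together with \eqref{comput11}, exactly as in \eqref{Eq:10IV21-E1}.

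One bookkeeping slip worth correcting, since you flag bookkeeping as the only obstacle: the Hardy decomposition is applied componentwise with the \emph{scalar} factoring function $f_{\eps,\eta}$, which yields the second term $\hat s^2\,(Lf_{\eps,\eta})\,f_{\eps,\eta}$, not $\hat s^2\,L(f_{\eps,\eta}n)\cdot(f_{\eps,\eta}n)$. Correspondingly, the relevant identity from \eqref{comput11} is $Lf_{\eps,\eta}=-\tfrac{N-1}{r^2}f_{\eps,\eta}$ for the scalar radial function, whereas in fact $L(f_{\eps,\eta}n)=0$ as a vector (since $\Delta(fn)=(f''+\tfrac{N-1}{r}f'-\tfrac{N-1}{r^2}f)n$ and the ODE \eqref{Eq:20III21-fee}). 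Your two slips cancel, so the displayed final formula is correct, but the intermediate line should read
\[
\int_{B^N} L(sn)\cdot(sn)\,dx=\int_{B^N}\Big\{f_{\eps,\eta}^2\,|\nabla(\hat s\,n)|^2+\hat s^2\,(Lf_{\eps,\eta})\,f_{\eps,\eta}\Big\}\,dx.
\]
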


\begin{proof} 
Recalling the operators $L$ and $T$ defined in \eqref{opLT},
by \eqref{Eq:10IV19-2VarX5},
\begin{align*}
Q_{\eps,\eta}[(s \zeta_0 n,q \zeta_0)] 
	&= \frac1{|\bS^{N-1}|}\int_{B^N}  \Big\{Ls\cdot s + \frac{N-1}{|x|^2} s^2  
	 	 +   Tq\cdot q\\
				 &\qquad
		+  \frac{2}{\eps^2} W''(1 - f_{\eps,\eta}^2 - g_{\eps,\eta}^2)(f_{\eps,\eta} s + g_{\eps,\eta} q)^2 
		+ \frac{2}{\eta^2} \tilde W''(g_{\eps,\eta}^2) g_{\eps,\eta}^2 q^2
		  \Big\} \,dx.
\end{align*}
We factor $s = f_{\eps,\eta} \hat s$ and $q = g_{\eps,\eta} \hat q$ and \eqref{comput11} combined with \cite[Lemma A.1]{INSZ3} yields the conclusion. (For details, see \eqref{Eq:10IV21-E1}.)
\end{proof}

\bigskip

\nd \underline{Stability for the modes $V_i$, $i\geq 1$.}

\begin{lemma}\label{Lem:2VarRest}
Assume $N \geq 2$ and $W \in C^2((-\infty,1])$ and $\tilde W \in C^2([0,\infty))$ satisfy \eqref{Eq:WCond'} and \eqref{Eq:TWCond'}. Let $m_{\eps,\eta} = (f_{\eps,\eta} n, g_{\eps,\eta})$ be a radially symmetric critical point of $E_{\eps,\eta}$ in $\mcA$ and let $Q_{\eps,\eta}$ be the second variation of $E_{\eps,\eta}$ at $m_{\eps,\eta}$ defined by \eqref{Eq:09IV19-2VarX1}. Suppose that $g_{\eps,\eta}> 0$ in $(0,1)$. 
If $s, \psi, q \in C_c^\infty(0,1)$ then, for $i \geq 1$ and $V_i = (s \zeta_i n + \psi \slashed{D}\zeta_i,q \zeta_i)$,
\begin{align*}
Q_{\eps,\eta}[V_i] 
	&\geq 
	\int_0^1 r^{N-1} \Big\{(f_{\eps,\eta}')^2 \Big|\Big(\frac{s}{f_{\eps,\eta}'}\Big)'\Big|^2
	+ \frac{\lambda_i}{r^2} f_{\eps,\eta}^2 \Big|\Big(\frac{r \psi}{f_{\eps,\eta}}\Big)'\Big|^2
	+ (g_{\eps,\eta}')^2 \Big|\Big(\frac{q}{g_{\eps,\eta}'}\Big)'\Big|^2 \\
	&\quad + \frac{2}{r^3} f_{\eps,\eta}\,f_{\eps,\eta}' \Big(\frac{\sqrt{N-1}\,s}{f_{\eps,\eta}'} - \frac{\sqrt{\lambda_i}\, r \,\psi}{f_{\eps,\eta}}\Big)^2 
	\Big\} \,dr 
	\geq 0.
\end{align*}
Moreover, there exists a constant $C>0$ independent of $\eps, \eta>0$ such that 
$$Q_{\eps,\eta}[V_i] \geq C\|V_i\|^2_{L^2(B^N)} \quad \textrm{ for every}\quad  i\geq N+1.$$ 
\end{lemma}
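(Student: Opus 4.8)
The plan is to treat the quadratic form $Q_{\eps,\eta}[V_i]$ in \eqref{Eq:10IV19-2VarX5} by the Hardy decomposition \cite[Lemma A.1]{INSZ3} applied separately to the three ``channels'': the $s$-channel and $\psi$-channel coupled through the $\frac{4\lambda_i}{r^2}\psi s$ term and using the operator $L$ from \eqref{opLT}, and the $q$-channel using the operator $T$. For the $q$-channel, since $Tg_{\eps,\eta}' = -\frac{N-1}{r^2} g_{\eps,\eta}' - (\textrm{nonnegative})$ by \eqref{comput11} (using $W'', \tilde W'' \geq 0$, $f_{\eps,\eta} > 0$, $g_{\eps,\eta}' < 0$ from Propositions \ref{Prop:fPos} and \ref{Prop:ODEMonotonicity}), factoring $q = g_{\eps,\eta}' \hat q$ handles the full $q$-part: the $-\frac1{\eps^2}W' q^2 + \frac1{\eta^2}\tilde W' q^2 + \frac2{\eps^2}W'' g_{\eps,\eta}^2 q^2 + \frac2{\eta^2}\tilde W'' g_{\eps,\eta}^2 q^2 + \frac{\lambda_i}{r^2}q^2$ terms combine, via $Tq \cdot q$, to leave $(g_{\eps,\eta}')^2|(q/g_{\eps,\eta}')'|^2 + \frac{\lambda_i}{r^2}q^2 + (\textrm{nonneg})q^2 \geq (g_{\eps,\eta}')^2|(q/g_{\eps,\eta}')'|^2$. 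Note that $g_{\eps,\eta}'$ is the natural factoring function here even though $g_{\eps,\eta}$ vanishes at $r=0$, because $q$ is compactly supported in $(0,1)$ and $g_{\eps,\eta}' < 0$ throughout $(0,1]$.

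For the coupled $s$-$\psi$ channel I would write $s = f_{\eps,\eta}' \hat s$ and $r\psi = f_{\eps,\eta} \hat\psi$ (equivalently $\psi = \frac{f_{\eps,\eta}}{r}\hat\psi$), apply \cite[Lemma A.1]{INSZ3} to the $s$-part using $L(f_{\eps,\eta}')$ from \eqref{comput11} and to the $\psi$-part using $L(r^{-1}f_{\eps,\eta}) = L(r^\alpha f_{\eps,\eta})$ with $\alpha = -1$, which by \eqref{comput11} equals $\frac{2}{r^2}f_{\eps,\eta}' - \frac{N-1}{r^2}\cdot\frac{f_{\eps,\eta}}{r}$ (the $\alpha(\alpha+N-2)+N-1$ coefficient becomes $-(N-3)+(N-1) = 2$, wait — recompute: $(-1)(N-3)+(N-1) = -N+3+N-1 = 2$). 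The $-\frac1{\eps^2}W'$ terms on $s$ and $\lambda_i\psi^2$ are absorbed into $L$ acting on the factoring functions, the $\frac2{\eps^2}W''(f_{\eps,\eta}s + g_{\eps,\eta}q)^2$ term is simply dropped (nonnegative), and after collecting the pointwise (non-derivative) terms one obtains a quadratic form in $\frac{\sqrt{N-1}\,s}{f_{\eps,\eta}'}$ and $\frac{\sqrt{\lambda_i}\,r\psi}{f_{\eps,\eta}}$ whose cross term, using $\lambda_i(\lambda_i + N-1)(\lambda_i - N+3) - 4\lambda_i^2 = \lambda_i(\lambda_i + N-3)(\lambda_i - N+1) \geq 0$ for $\lambda_i \geq N-1$, is dominated so that it collapses to the perfect square $\frac{2}{r^3}f_{\eps,\eta}f_{\eps,\eta}'\big(\frac{\sqrt{N-1}\,s}{f_{\eps,\eta}'} - \frac{\sqrt{\lambda_i}\,r\psi}{f_{\eps,\eta}}\big)^2$. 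This matches the quantity appearing in equations \eqref{comput11} and the bookkeeping in \eqref{Eq:09IV19-2VarX3}; I would cite the analogous computation in \cite[Lemma A.1]{ODE_INSZ} or the proof of Proposition~\ref{pro:unique_eps_eta} to keep the algebra contained. Since $f_{\eps,\eta} > 0$, $f_{\eps,\eta}' > 0$, $g_{\eps,\eta}' < 0$ and $\lambda_i > 0$ for $i \geq 1$, every term in the resulting lower bound is manifestly nonnegative, giving $Q_{\eps,\eta}[V_i] \geq 0$.

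For the quantitative bound when $i \geq N+1$, i.e. $\lambda_i \geq 2N$, I would argue as follows. Going back one step in the Hardy identity \emph{before} discarding the good terms, keep the full Dirichlet contribution $(s')^2 + \frac{\lambda_i+N-1}{r^2}s^2 + \lambda_i(\psi')^2 + \frac{\lambda_i(\lambda_i-N+3)}{r^2}\psi^2 - \frac{4\lambda_i}{r^2}\psi s$ from \eqref{Eq:10IV19-2VarX5}. Since $f_{\eps,\eta}, g_{\eps,\eta}$ are bounded and $W'', \tilde W', \tilde W''$ are bounded on the relevant range, there is $c_1>0$ independent of $\eps,\eta$ with $Q_{\eps,\eta}[V_i] \geq \int_0^1 r^{N-1}\big[(s')^2 + (q')^2 + \lambda_i(\psi')^2 + \frac{\lambda_i+N-1}{r^2}s^2 + \frac{\lambda_i}{r^2}q^2 + \frac{\lambda_i(\lambda_i-N+3)}{r^2}\psi^2 - \frac{4\lambda_i}{r^2}\psi s\big]\,dr - \frac{c_1}{\eps^2}\|V_i\|_{L^2(B^N)}^2$ — but this has the wrong sign structure, so instead I would split: use half of the already-established nonnegative lower bound (which controls $\|V_i\|_{H^1}^2$ up to the boundary-type terms) and for the other half use the strict spherical gap $\lambda_i \geq 2N$ together with the Hardy inequality $\int_0^1 r^{N-1}(h')^2\,dr \geq \frac{(N-2)^2}{4}\int_0^1 r^{N-3}h^2\,dr$ to show that the whole Dirichlet-plus-potential form dominates a positive multiple of $\int_0^1 r^{N-1}(s^2 + q^2 + \lambda_i\psi^2)\,dr = \|V_i\|_{L^2(B^N)}^2/|\bS^{N-1}|$, uniformly in $\eps,\eta$. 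Concretely: the matrix $\begin{psmallmatrix}\lambda_i+N-1 & -2\lambda_i\\ -2\lambda_i & \lambda_i(\lambda_i-N+3)\end{psmallmatrix}$ acting on $(s,\psi)$ has determinant $\lambda_i(\lambda_i+N-3)(\lambda_i-N+1)$ and trace $\geq \lambda_i$, so for $\lambda_i \geq 2N$ its smallest eigenvalue is bounded below by a fixed positive constant times $\lambda_i$; combined with $\frac1{r^2} > 1$ on $(0,1)$ this yields the claimed $Q_{\eps,\eta}[V_i] \geq C\|V_i\|_{L^2(B^N)}^2$. The main obstacle is purely bookkeeping: ensuring that the factoring-function identities from \eqref{comput11} are applied with the correct powers of $r$ so that all the $\frac1{\eps^2}W'$ and $\lambda_i$-weighted mass terms cancel exactly against $L$ and $T$ acting on $f_{\eps,\eta}', \frac{f_{\eps,\eta}}{r}, g_{\eps,\eta}'$, and that the leftover non-derivative quadratic form in $(s/f_{\eps,\eta}', r\psi/f_{\eps,\eta})$ is indeed a nonnegative multiple of a perfect square for all $i \geq 1$ — this is exactly where the algebraic identity $\lambda_i(\lambda_i+N-3)(\lambda_i-N+1)\geq 0$ enters and must be invoked carefully.
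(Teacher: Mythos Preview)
Your approach for the first inequality has the right shape---same factorings $s=f_{\eps,\eta}'\hat s$, $\psi=\frac{f_{\eps,\eta}}{r}\hat\psi$, $q=g_{\eps,\eta}'\hat q$---but there is a genuine gap: you cannot simply drop the cross term $\frac{2}{\eps^2}W''(f_{\eps,\eta}s+g_{\eps,\eta}q)^2$ before applying the Hardy decomposition. Once you factor through $f_{\eps,\eta}'$ and $g_{\eps,\eta}'$, the identities in \eqref{comput11} for $Lf_{\eps,\eta}'$ and $Tg_{\eps,\eta}'$ themselves produce $W''$-terms whose signs are \emph{not} determined: for instance $Lf_{\eps,\eta}'\cdot f_{\eps,\eta}'$ contains $-\frac{2}{\eps^2}W''\,f_{\eps,\eta}f_{\eps,\eta}'(f_{\eps,\eta}f_{\eps,\eta}'+g_{\eps,\eta}g_{\eps,\eta}')$, and the sign of $f_{\eps,\eta}f_{\eps,\eta}'+g_{\eps,\eta}g_{\eps,\eta}'=\tfrac12(f_{\eps,\eta}^2+g_{\eps,\eta}^2)'$ is not known. (Your parenthetical claim that $Tg_{\eps,\eta}'=-\frac{N-1}{r^2}g_{\eps,\eta}'-(\text{nonneg})$ is therefore wrong.) The paper keeps the $(f_{\eps,\eta}s+g_{\eps,\eta}q)^2$ and $\tilde W''g_{\eps,\eta}^2q^2$ terms and performs the \emph{exact} Hardy identity; all the $W''$ and $\tilde W''$ contributions then combine to the single term $-\frac{2}{\eps^2}W''f_{\eps,\eta}f_{\eps,\eta}'g_{\eps,\eta}g_{\eps,\eta}'(\hat s-\hat q)^2$, which is nonnegative since $f_{\eps,\eta}f_{\eps,\eta}'g_{\eps,\eta}g_{\eps,\eta}'\le 0$. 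The algebraic inequality actually used on the remaining $(\hat s,\hat\psi)$-quadratic form is $4\sqrt{\lambda_i(N-1)}+2\sqrt{\lambda_i}(\lambda_i-(N-1))-4\lambda_i=2\sqrt{\lambda_i}\big[(\sqrt{\lambda_i}-1)^2-(\sqrt{N-1}-1)^2\big]\ge 0$, not the determinant identity you quote (that one belongs to Proposition~\ref{Prop:OrthoDecomp}).

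For the uniform $L^2$-bound when $i\ge N+1$, your eigenvalue/matrix argument treats only the Dirichlet part and does not absorb the potential $-\frac{1}{\eps^2}W'(1-f_{\eps,\eta}^2-g_{\eps,\eta}^2)$, so it cannot give a constant independent of $\eps$. The paper's point is to run a \emph{second}, different Hardy decomposition with factoring functions $f_{\eps,\eta}$, $f_{\eps,\eta}$, $g_{\eps,\eta}$ (not $f_{\eps,\eta}'$, $f_{\eps,\eta}/r$, $g_{\eps,\eta}'$): since $Lf_{\eps,\eta}=-\frac{N-1}{r^2}f_{\eps,\eta}$ and $Tg_{\eps,\eta}=0$, the potential is absorbed \emph{exactly}, and using $\lambda_i\ge 2N$ one arrives at a lower bound involving $\int r^{N-1}f_{\eps,\eta}^2(\tilde s')^2$, $\int r^{N-1}\lambda_i f_{\eps,\eta}^2(\tilde\psi')^2$ and $\frac{\lambda_i}{r^2}q^2$. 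The uniformity then follows from the Hardy inequality in $\RR^{N+2}$ combined with the $\eps,\eta$-independent bounds $r\le f_{\eps,\eta}\le 1$ (see \eqref{ineg45}).
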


\begin{proof} By Proposition \ref{Prop:fPos}, $f_{\eps,\eta} > 0$ in $(0,1)$. By Proposition \ref{Prop:ODEMonotonicity} we have that $f_{\eps,\eta}' > 0$ and $g_{\eps,\eta}' < 0$ in $(0,1)$. We factor
\[
s = f_{\eps,\eta}' \hat s, \qquad \psi = \frac{f_{\eps,\eta}}{r} \hat \psi, \qquad \text{ and } \qquad  q = g_{\eps,\eta}' \hat q.
\]
Recalling the operators $L$ and $T$ defined in \eqref{opLT},
by \eqref{Eq:10IV19-2VarX5}, we have
\begin{align}
Q_{\eps,\eta}[V_i] &=  \frac1{|\bS^{N-1}|}\int_{B^N}
\Big\{Ls\cdot s  +\lambda_i L\psi\cdot \psi + Tq\cdot q+
						 \frac{\lambda_i +N-1}{r^2}  s^2 
			\nonumber\\
		&\qquad  
		+ \frac{\lambda_i(\lambda_i - N +3)}{r^2}  \psi^2 
		- \frac{4\lambda_i}{r^2}  s \psi+\frac{\lambda_i }{r^2}  q^2
		\nonumber\\
			\nonumber	&\qquad
			  +\frac{2}{\eps^2} W''(1 - f_{\eps,\eta}^2 - g_{\eps,\eta}^2)  (f_{\eps,\eta} s+  g_{\eps,\eta} q)^2+\frac{2}{\eta^2} \tilde W''(g_{\eps,\eta}^2)  g^2_{\eps,\eta} q^2\Big\} \,dx\\
	&=  \int_0^1  r^{N-1}\Big\{(f_{\eps,\eta}')^2 (\hat s')^2  
			+ \frac{2(N-1)}{r^3} f_{\eps,\eta}\,f_{\eps,\eta}' \hat s^2 
			+ \frac{\lambda_i - (N-1)}{r^2} (f_{\eps,\eta}')^2 \hat s^2 
			\nonumber\\
		&\qquad  + \frac{\lambda_i }{r^2} f_{\eps,\eta}^2 (\hat \psi')^2 
		+ \frac{2\lambda_i}{r^3} f_{\eps,\eta} f_{\eps,\eta}'  \hat \psi^2 
		+ \frac{\lambda_i(\lambda_i - (N - 1))}{r^4} f_{\eps,\eta}^2 \hat \psi^2 
		- \frac{4\lambda_i}{r^3} f_{\eps,\eta}' f_{\eps,\eta} \hat s \hat \psi
		\nonumber\\
		&\qquad
			 + (g_{\eps,\eta}')^2 (\hat q')^2 
			 + \frac{\lambda_i - (N-1)}{r^2} (g_{\eps,\eta}')^2 \hat q^2
		\nonumber\\
		&\qquad
			  - \frac{2}{\eps^2} W''(1 - f_{\eps,\eta}^2 - g_{\eps,\eta}^2)  f_{\eps,\eta} f_{\eps,\eta}'  g_{\eps,\eta} g_{\eps,\eta}'(\hat s - \hat q)^2\Big\} \,dr,
	\label{Eq:10IV19-2VarX6}
\end{align}
where we used \cite[Lemma A.1]{INSZ3} and \eqref{comput11}. Using $f_{\eps,\eta} > 0$, $f_{\eps,\eta}' > 0$, and $\lambda_i \geq N - 1$ for $i \geq 1$, we have
\[
\frac{\lambda_i - (N-1)}{r^2} (f_{\eps,\eta}')^2 \hat s^2 
			+ \frac{\lambda_i(\lambda_i - (N - 1))}{r^4} f_{\eps,\eta}^2 \hat \psi^2 
			\geq \frac{2 \sqrt{\lambda_i}(\lambda_i - (N-1))}{r^3} f_{\eps,\eta}\,f_{\eps,\eta}' |\hat s \hat \psi|.
\]
Also, for $i \geq 1$,
\[
4 \sqrt{\lambda_i (N-1)} 
	+ 2 \sqrt{\lambda_i}(\lambda_i - (N-1)) 
	- 4\lambda_i 
	= 2\sqrt{\lambda_i}[(\sqrt{\lambda_i} - 1)^2 - (\sqrt{N-1} - 1)^2] \geq 0,
\]
which implies
\[
\frac{2 \sqrt{\lambda_i}(\lambda_i - (N-1))}{r^3}  f_{\eps,\eta}\,f_{\eps,\eta}' |\hat s \hat \psi|
	\geq \frac{4\lambda_i  - 4 \sqrt{\lambda_i (N-1)}}{r^3} f_{\eps,\eta}\,f_{\eps,\eta}' \hat s \hat \psi .
\]
Putting these inequalities in \eqref{Eq:10IV19-2VarX6}, we conclude
\begin{align*}
Q_{\eps,\eta}[V_i] 
	&\geq  \int_0^1  r^{N-1}\Big\{(f_{\eps,\eta}')^2 (\hat s')^2  
		+ \frac{\lambda_i }{r^2} f_{\eps,\eta}^2 
		(\hat \psi')^2  + (g_{\eps,\eta}')^2 (\hat q')^2 \\
		&\quad
		+ \frac{2}{r^3} f_{\eps,\eta}\,f_{\eps,\eta}' (\sqrt{N-1} \hat s - \sqrt{\lambda_i} \hat \psi)^2 
			\Big\}\,dr.
\end{align*}
This proves the first assertion.

Consider the second assertion concerning the case $i\geq N+1$. We can prove a uniform $L^2$ lower bound by a different Hardy decomposition using the fact that $\lambda_i \geq 2N$. Indeed, 
we factor
$$
s = f_{\eps,\eta} \tilde s, \qquad \psi = f_{\eps,\eta} \tilde \psi, \qquad \text{ and } \qquad  q = g_{\eps,\eta} \tilde q$$
and we compute using \cite[Lemma A.1]{INSZ3} and \eqref{comput11}:
\begin{align}
Q_{\eps,\eta}[V_i] &=  \frac1{|\bS^{N-1}|}\int_{B^N}
\Big\{f_{\eps,\eta}^2 |\nabla \tilde s|^2 + \tilde s^2 Lf_{\eps,\eta}\cdot f_{\eps,\eta}  +\lambda_i \Big(f_{\eps,\eta}^2 |\nabla \tilde \psi|^2 + \tilde \psi^2 Lf_{\eps,\eta}\cdot f_{\eps,\eta}\Big) \nonumber\\
&\qquad  
		+ g_{\eps,\eta}^2 |\nabla \tilde q|^2 +
						 \frac{\lambda_i +N-1}{r^2}  s^2 
					+ \frac{\lambda_i(\lambda_i - N +3)}{r^2}  \psi^2 
		- \frac{4\lambda_i}{r^2}  s \psi+\frac{\lambda_i }{r^2}  q^2
		\nonumber\\
			\nonumber	&\qquad
			  +\frac{2}{\eps^2} W''(1 - f_{\eps,\eta}^2 - g_{\eps,\eta}^2)  (f_{\eps,\eta} s+  g_{\eps,\eta} q)^2+\frac{2}{\eta^2} \tilde W''(g_{\eps,\eta}^2)  g^2_{\eps,\eta} q^2\Big\} \,dx\\
	&=  \int_0^1  r^{N-1}\Big\{f_{\eps,\eta}^2 (\tilde s')^2  + \frac{\lambda_i}{r^2}  s^2+\lambda_i  f_{\eps,\eta}^2 (\tilde \psi')^2 
						+ \frac{\lambda_i(\lambda_i - 2N +4)}{r^2} \psi^2 + g_{\eps,\eta}^2 (\tilde q')^2 
			 + \frac{\lambda_i}{r^2} q^2
				\nonumber\\
		&\qquad
		- \frac{4\lambda_i}{r^2} s \psi	 +\frac{2}{\eps^2} W''(1 - f_{\eps,\eta}^2 - g_{\eps,\eta}^2)  (f_{\eps,\eta} s+  g_{\eps,\eta} q)^2+\frac{2}{\eta^2} \tilde W''(g_{\eps,\eta}^2)  g^2_{\eps,\eta} q^2  \Big\} \,dr,\nonumber\\
	&\geq  \int_0^1  r^{N-1}\Big\{f_{\eps,\eta}^2 (\tilde s')^2 +\lambda_i  f_{\eps,\eta}^2 (\tilde \psi')^2 
						+ \frac{\lambda_i}{r^2} (s-2\psi)^2  
			 + \frac{\lambda_i}{r^2} q^2
				 \Big\} \,dr,	
	\label{Eq:new2}
\end{align}
where we used \eqref{Eq:WCond'} and $\lambda_i\geq 2N$ for $i\geq N+1$. Finally, the $L^2$ lower bound (uniform in $\eps, \eta>0$) follows by the Hardy inequality in $\RR^{N+2}$ using $r\leq f_{\eps,\eta}(r)\leq 1$ for every $r\in (0,1)$ (as in \eqref{ineg55}):
\be
\label{ineg45}
\int_0^1  r^{N-1} f_{\eps,\eta}^2 (h')^2\,dr
	\geq \int_0^1  r^{N+1}  (h')^2\,dr \geq \frac{N^2}{4}\int_0^1  r^{N-1} h^2\,dr \geq \frac{N^2}{4} \int_0^1  r^{N-1}f_{\eps,\eta}^2 h^2\,dr,
\ee
where $h$ stands for either $\tilde s$ or $\tilde \psi$.
\end{proof}

We are in position to give:

\begin{proof}[Proof of Theorem \ref{Thm:ExtendedMain}(a) and (b)]
By Theorem \ref{Thm:ExtendedExist}, we only need to prove that, when an escaping critical point $m_{\eps,\eta}(x) = (f_{\eps,\eta}(r)n(x),g_{\eps,\eta}(r))$ with $g_{\eps,\eta} > 0$ exists, the second variation $Q_{\eps,\eta}$ of $E_{\eps,\eta}$ at $m_{\eps,\eta}$ is positive definite, and that $m_{\eps,\eta}$ is a local minimizer of $E_{\eps,\eta}$. 

\medskip
\noindent\underline{Proof of the positive definiteness of $Q_{\eps,\eta}$.}
Fix some $V \in C_c^\infty(B^N \setminus \{0\},\RR^{N+1})$ and define $\mathring{V} = (\mathring{w},0)$, $V_i = (s_i \zeta_i n +  \psi_i \slashed{D} \zeta_i, q_i \zeta_i)$ as in Proposition \ref{Prop:OrthoDecomp}. By the orthogonal decomposition \eqref{Eq:QeeODecomp}, Lemmas \ref{Lem:2VarDivFree}, \ref{Lem:2VarZero} and \ref{Lem:2VarRest}, we have
\begin{align}
Q_{\eps,\eta}[V]
	&\geq C  \Big\|V - \sum_{i=0}^N V_i\Big\|_{L^2(B^N)}^2\nonumber\\
		&\quad 
		+ \int_0^1 r^{N-1} \Big\{f_{\eps,\eta}^2 \Big|\Big(\frac{s_0}{f_{\eps,\eta}}\Big)'\Big|^2
		+ g_{\eps,\eta}^2 \Big|\Big(\frac{q_0}{g_{\eps,\eta}}\Big)'\Big|^2 \nonumber\\
			&\qquad +  \frac{2}{\eps^2} W''(1 - f_{\eps,\eta}^2 - g_{\eps,\eta}^2)(f_{\eps,\eta} s_0 + g_{\eps,\eta} q_0)^2 
			+ \frac{2}{\eta^2} \tilde W''(g_{\eps,\eta}^2) g_{\eps,\eta}^2 q_0^2
			\Big\}\,dr\nonumber\\
		&\quad 
		+ \sum_{i=1}^N \int_0^1 r^{N-1} \Big\{(f_{\eps,\eta}')^2 \Big|\Big(\frac{s_i}{f_{\eps,\eta}'}\Big)'\Big|^2
		+ \frac{N-1}{r^2} f_{\eps,\eta}^2 \Big|\Big(\frac{r \psi_i}{f_{\eps,\eta}}\Big)'\Big|^2\nonumber\\
			&\qquad 
			+ (g_{\eps,\eta}')^2 \Big|\Big(\frac{q_i}{g_{\eps,\eta}'}\Big)'\Big|^2
			+ \frac{2(N-1)}{r^3} f_{\eps,\eta}\,f_{\eps,\eta}' \Big(\frac{s_i}{f_{\eps,\eta}'} - \frac{r \,\psi_i}{f_{\eps,\eta}}\Big)^2 
			\Big\} \,dr .
			\label{Eq:ExtPD1}
\end{align}
By the density of $C_c^\infty(B^N \setminus \{0\},\RR^{N+1})$ in $H_0^1(B^N,\RR^{N+1})$ and Fatou's lemma, the above inequality holds for all $V \in H_0^1(B^N,\RR^{N+1})$, proving that $Q_{\eps,\eta}$ is non-negative semi-definite.

Suppose next that $Q_{\eps,\eta}[V] = 0$ for some non-trivial $V \in H_0^1(B^N,\RR^{N+1})$. The above inequality implies that $V=\sum_{i=0}^N V_i$, $s_0=c_0 f_{\eps,\eta}$, $q_0=\tilde c_0 g_{\eps,\eta}$, $s_i=c_i f'_{\eps,\eta}$, $\psi_i=\frac{\hat c_i}{r} f_{\eps,\eta}$, $q_i=\tilde c_i g'_{\eps,\eta}$ in $(0,1)$ for $1\leq i\leq N$ and some constants $c_i, \tilde c_i, \hat c_i$. As $V_i$ are compactly supported in $B^N$ and $f_{\eps,\eta}(1), f'_{\eps,\eta}(1), g'_{\eps,\eta}(1)\neq 0$, we deduce that 
$V = V_0 = (0, q_0 \zeta_0)$. 

Suppose by contradiction that $\tilde c_0\neq 0$. Then $q_0$ has no zeros inside $(0,1)$, therefore $W''(1 - f_{\eps,\eta}^2 - g_{\eps,\eta}^2) \equiv \tilde W''(g_{\eps,\eta}^2) \equiv 0$ in $(0,1)$. It follows that $W'$ is constant in $[\min (1 - f_{\eps,\eta}^2 - g_{\eps,\eta}^2), \max (1 - f_{\eps,\eta}^2 - g_{\eps,\eta}^2)] =: [0,\tau]$ and hence $W' = 0$ in $[0,\tau]$ since $W'(0) = 0$ (by \eqref{Eq:WCond'}). Recalling \eqref{Eq:20III21-gee}, we thus have that $-\Delta g_{\eps,\eta}+\frac1{\eta^2} \tilde W'(g^2_{\eps,\eta}) g_{\eps,\eta}=0$ in $B^N$. Since $\tilde W'\geq \tilde W'(0)\geq 0$ in $[0, \infty)$ (by \eqref{Eq:TWCond'}) and $g_{\eps,\eta}=0$ on $\partial B^N$, we deduce that $g_{\eps,\eta}=0$ in $B^N$ which gives a contradiction to the assumption $g_{\eps,\eta}>0$ in $B^N$. Thus,  $\tilde c_0=0$, leading to $q_0=0$ and $V=0$. This proves that $Q_{\eps,\eta}$ is positive definite. 

By \eqref{Eq:09IV19-2VarX1}, the convexity of $W$ and $\tilde W$, the fact that $\tilde W'\geq 0$ and the boundedness of $(f_{\eps,\eta},g_{\eps,\eta})$, we have for some constant $C_1 = C_1(\eps) > 0$ that 
\[
Q_{\eps,\eta}[V] \geq \|\nabla V\|_{L^2(B^N)}^2 - C_1 \|V\|_{L^2(B^N)}^2 \text{ for all } V \in H_0^1(B^N,\RR^{N+1}).
\]
This together with the weak lower semi-continuity of $Q_{\eps,\eta}$ in $H_0^1(B^N,\RR^{N+1})$ implies that $\min \{Q_{\eps,\eta}[V]: V \in H_0^1(B^N ,\RR^{N+1}), \|V\|_{L^2(B^N)} = 1\}$ is achieved and positive (as $Q_{\eps,\eta}$ is positive definite), yielding for some constant $C_2 = C_2(\eps,\eta) > 0$ 
\[
Q_{\eps,\eta}[V] \geq \frac{1}{C_2} \|V\|_{L^2(B^N)}^2 \text{ for all } V \in H_0^1(B^N,\RR^{N+1}). 
\]
The above two inequalities imply for $C_3 = C_3(\eps,\eta) = 1 + C_2(C_1 + 1)$ that
\[
Q_{\eps,\eta}[V] \geq \frac{1}{C_3} \|V\|_{H^1(B^N)}^2  \text{ for all } V \in H_0^1(B^N,\RR^{N+1}).
\]

\medskip
\noindent\underline{Proof of the local minimality of $m_{\eps,\eta}$.} We note a subtlety in this step due to the fact that $E_{\eps, \eta}$ may not be finite in a $H^1_0$ neighborhood of $m_{\eps, \eta}$ as we make no growth assumption for $W$ and $\tilde W$. Since $m_{\eps,\eta}$ is a critical point for $E_{\eps,\eta}$ in $\mcA$, we have, for $V = (v,q) \in H_0^1(B^N,\RR^{N+1})$,
\begin{align*}
&E_{\eps,\eta}[m_{\eps,\eta} + V] - E_{\eps,\eta}[m_{\eps,\eta}] -\frac{1}{2} Q_{\eps,\eta}[V]=\int_{B^N} h(x, V(x))\, dx,\\
 h(x, y)	&=    \frac{1}{2\eps^2} \Big\{W(1 - |m_{\eps,\eta}(x) + y|^2) - W(1 - |m_{\eps,\eta}(x)|^2)\\
			&\,\, \, + W'(1 - |m_{\eps,\eta}(x)|^2)(2m_{\eps,\eta}(x) \cdot y + |y|^2) - 2 W''(1 - |m_{\eps,\eta}(x)|^2)(m_{\eps,\eta}(x) \cdot y)^2\Big\} \\
		&\,\, \,   + \frac{1}{2\eta^2} \Big\{ \tilde W((g_{\eps,\eta}(x) + y_{N+1})^2) - \tilde W(g_{\eps,\eta}^2(x)) 
			 - \tilde W'(g_{\eps,\eta}^2(x)) (2g_{\eps,\eta}(x) y_{N+1} + y_{N+1}^2)\\
		&\,\, \, - 2 \tilde W''(g_{\eps,\eta}^2(x)) g_{\eps,\eta}^2(x) y_{N+1}^2\Big\}.
\end{align*}
Note that $h\in C^0(\bar B^N, C^2(\RR^{N+1}))$, $h(x,0)=0$, $\nabla_y h(x, 0)=0$, $\nabla^2_y h(x,0)=0$ (thus, \eqref{rrr} holds true in Lemma~\ref{Lem:RA}) and, due to the convexity of $W$ and $\tilde W$, $h$ satisfies the growth assumptions in Lemma \ref{Lem:RA} for $p=2$, namely
$$h(x,y)\geq -\frac1{\eps^2} W''(1 - |m_{\eps,\eta}(x)|^2)(m_{\eps,\eta}(x) \cdot y)^2-\frac1{\eta^2} \tilde W''(g_{\eps,\eta}^2(x)) g_{\eps,\eta}^2(x) y_{N+1}^2\geq -C(\eps, \eta)|y|^2$$
for every $x\in B^N$ and $y\in \RR^{N+1}$ and a constant $C(\eps, \eta)>0$. Therefore, Lemma \ref{Lem:RA} together with the positive definiteness of $Q_{\eps,\eta}$ yield for some constants $\delta>0$ and $\tilde C>0$ (depending on $\eps$ and $\eta$),
$$E_{\eps,\eta}[m_{\eps,\eta} + V] \geq E_{\eps,\eta}[m_{\eps,\eta}] + \tilde C\|V\|^2_{H^1(B^N)}
\,\,  \textrm{for all } V \in H_0^1(B^N,\RR^{N+1}) \textrm{ with }\|V\|_{H^1(B^N)}<\delta.$$
This proves the local minimality of $m_{\eps,\eta}$.
\end{proof}

\begin{remark}\label{Rem:WWLocalCond}
The above result can be used to obtain the local minimality of any escaping radially symmetric critical point $m_{\eps,\eta} = (f_{\eps,\eta} n, g_{\eps,\eta})$ of $E_{\eps,\eta}$ with $g_{\eps,\eta} > 0$ and $f_{\eps,\eta}^2+g_{\eps,\eta}^2\leq 1$ under a slightly weaker assumption that $W \in C^2([0,1])$, $\tilde W \in C^2([0,1])$ and
\begin{align}
&W(0) =  0, W(t)\geq 0 \text{ in }(-\infty,1], W''(t) \geq 0 \text{ in } [0,1], 
	\label{Eq:WCondX}\\
& \tilde W(0) = 0, \tilde W(t)\geq 0 \text{ in } [0,1], \tilde W(t) \geq \tilde W(1) \text{ in } [1,\infty), \tilde W''(t) \geq 0 \text{ in } [0,1].
	\label{Eq:TWCondX}
\end{align}
In the Ginzburg--Landau context, similar conditions appeared in \cite{LiebLoss95-JEDP}.
\end{remark}

\begin{proof}
For $m \in \mcA$, define the truncation $Tm \in \mcA$ of $m$ by
\[
Tm(x) = \left\{\begin{array}{ll}
	m(x) & \text{ if } |m(x)| \leq 1,\\
	\frac{m(x)}{|m(x)|} & \text{ if } |m(x)| > 1.
\end{array}\right.
\]
Observe that, by \eqref{Eq:WCondX}--\eqref{Eq:TWCondX}, $E_{\eps,\eta}[m] \geq E_{\eps,\eta}[Tm]$ for $m \in \mcA$. On the other hand, by applying Theorem \ref{Thm:ExtendedMain} to a pair of potentials satisfying \eqref{Eq:WCond'}--\eqref{Eq:TWCond'} which agree with $(W, \tilde W)$ in $[0,1]$ (e.g. by using suitable quadratic polynomials outside of $[0,1]$), we obtain that there exist $\delta > 0$ and $C > 0$ such that $E_{\eps,\eta}[Tm] \geq E_{\eps,\eta}[m_{\eps,\eta}] + \frac{1}{C} \|Tm - m_{\eps,\eta}\|_{H^1(B^N,\RR^{N+1})}$ whenever $m \in \mcA$ and $\|Tm - m_{\eps,\eta}\|_{H^1(B^N,\RR^{N+1})} \leq \delta$. Therefore, to prove the local minimality of $m_{\eps,\eta}$, it suffices to show that the truncation map is continuous at $m_{\eps,\eta}$, i.e. if $m_j \rightarrow m_{\eps,\eta}$ in $H^1(B^N,\RR^{N+1})$, then $Tm_j \rightarrow m_{\eps,\eta}$ in $H^1(B^N,\RR^{N+1})$.

Indeed, observe that, for $a, b \in \RR^N$ with $ |a| \geq 1, |b| \leq 1$,
\[
|a - b|^2
	 = \Big(|a| -  \frac{b \cdot a}{|a|}\Big)^2 + \Big|b - \frac{b \cdot a}{|a|^2}a\Big|^2 
	\geq \Big(1 -  \frac{b \cdot a}{|a|}\Big)^2 + \Big|b - \frac{b \cdot a}{|a|^2}a\Big|^2 
		= \Big|\frac{a}{|a|} - b\Big|^2. 
\]
This implies that
\[
\|m_j - m_{\eps,\eta}\|_{L^2(B^N,\RR^{N+1})}^2 \geq \|Tm_j - m_{\eps,\eta}\|_{L^2(B^N,\RR^{N+1})}^2,
\]
and so $Tm_j \rightarrow m_{\eps,\eta}$ in $L^2(B^N,\RR^{N+1})$. Since $\|Tm_j\|_{H^1(B^N,\RR^{N+1})} \leq \|m_j\|_{H^1(B^N,\RR^{N+1})}$, $\{Tm_j\}$ has a $H^1$-weakly convergent subsequence $\{T_{m_{j_k}}\}$, whose weak limit must be $m_{\eps,\eta}$ (in view of the strong $L^2$ convergence of $Tm_j$), and 
\[
\|\nabla m_{\eps,\eta}\|_{L^2(B^N,\RR^{N+1})} \leq \liminf_{k\rightarrow \infty} \|\nabla Tm_{j_k}\|_{L^2(B^N,\RR^{N+1})}.
\]
On the other hand, by construction, 
\[
\|\nabla Tm_j\|_{L^2(B^N,\RR^{N+1})} \leq \|\nabla m_j\|_{L^2(B^N,\RR^{N+1})} \rightarrow \|\nabla m_{\eps,\eta}\|_{L^2(B^N,\RR^{N+1})}.
\]
We thus have that $\|\nabla Tm_{j_k}\|_{L^2(B^N,\RR^{N+1})} \rightarrow \|\nabla m_{\eps,\eta}\|_{L^2(B^N,\RR^{N+1})} $ and so $Tm_{j_k} \rightarrow m_{\eps,\eta}$ in $H^1(B^N,\RR^{N+1})$.  Since the above argument can be applied to any subsequence of $\{Tm_j\}$, we deduce that $Tm_j \rightarrow m_{\eps,\eta}$ in $H^1(B^N,\RR^{N+1})$ as desired.
\end{proof}

\subsection{The $\RR^N$-valued GL model: Stability of the vortex solution}

Assume that $N\geq 2$ and $W \in C^2((-\infty,1])$ satisfies \eqref{Eq:WCond'}. Let $u_{\eps} = f_{\eps} n$ be the radially symmetric critical point of the Ginzburg--Landau energy $E_{\eps}^{GL}$ in $\mcA^{GL}$ obtained in Theorem \ref{Thm:GLExist}, and let $\Qgl$ be the second variation of $E_{\eps}^{GL}$ at $u_\eps = \barfeps n$,
\begin{align}
\Qgl[v]&:=\int_{B^N} \Big[|\nabla v|^2 
		- \frac{1}{\eps^2}W'(1 - \barfeps^2)|v|^2 +  \frac{2}{\eps^2} W''(1 - \barfeps^2)\barfeps^2( n \cdot v)^2 \Big]\,dx,
		\label{second_gl}
\end{align}
where $v \in H_0^1(B^N,\RR^N)$.

\begin{proof}[Proof of Theorem \ref{Thm:GLStab}.]
We will only prove the positive definiteness of $Q_\eps^{GL}$ in $C^\infty_c(B^N\setminus\{0\}, \RR^N)$. As in the proof of Theorem \ref{Thm:ExtendedMain}(a), the estimate we obtain (see \eqref{Eq:QeGLS} below) implies that $Q_\eps^{GL}[v] \geq C\|v\|_{H^1(B^N)}^2$ for $v \in H_0^1(B^N,\RR^N)$ and that $u_\eps$ is a local minimizer of $E^{GL}_\eps$ in $\mcA^{GL}$, more precisely, 
for some constants $\delta>0$ and $\tilde C>0$ (depending on $\eps$),
$$E^{GL}_{\eps}[u_{\eps} + v] \geq E^{GL}_{\eps}[u_{\eps}] + \tilde C\|v\|^2_{H^1(B^N)}
\,\,  \textrm{for all } v \in H_0^1(B^N,\RR^{N}) \textrm{ with }\|v\|_{H^1(B^N)}<\delta.$$

Take an arbitrary $v\in C^\infty_c(B^N\setminus\{0\}, \RR^N)$. We use the decomposition in Proposition \ref{Prop:OrthoDecomp} in the orthonormal basis $(\zeta_i)_{i\geq 0}$ of $L^2(\bS^{N-1})$. We write $v = sn + \mathring{w}+\slashed{D} \psi$ with $s\in C_c^\infty(B^N \setminus \{0\})$, $\mathring{w}\in C^\infty_c(B^N\setminus\{0\}, \RR^N)$ being a tangent vector field (i.e.,  $\mathring{w} \cdot n = 0$) having vanishing covariant divergence $\slashed{D} \cdot \mathring{w}(r,\cdot) = 0$ on $\bS^{N-1}$ and $\psi \in C^\infty_c(B^N\setminus\{0\}, \RR)$ satisfying $\int_{\mathbb{S}^{N-1}} \psi(r,\theta) d\sigma = 0$, and decompose
$$s(r,\theta) = \sum_{i=0}^\infty s_i(r) \zeta_i(\theta), \qquad \psi(r,\theta) = \sum_{i=0}^\infty \psi_i(r) \zeta_i(\theta),$$
with $s_i, \psi_i\in C^\infty_c\big((0,1)\big)$ for every $i\geq 0$ and for every $r \in (0,1]$. We will prove
\begin{align}
Q_{\eps}^{GL}[v]
	&\geq C  \Big\|v - \sum_{i=1}^N v_i\Big\|_{L^2(B^N)}^2\nonumber\\
		&\quad 
		+ \sum_{i=1}^N \int_0^1 r^{N-1} \Big\{(f_{\eps}')^2 \Big|\Big(\frac{s_i}{f_{\eps}'}\Big)'\Big|^2\nonumber\\
		&\qquad + \frac{2(N-1)}{r^3} \barfeps\,\barfeps' \Big(\frac{s_i}{f_{\eps}'} - \frac{r \psi_i}{f_{\eps}}\Big)^2 
		+ \frac{N-1}{r^2} f_{\eps}^2 \Big|\Big(\frac{r \psi_i}{f_{\eps}}\Big)'\Big|^2
			\Big\} \,dr ,
			\label{Eq:QeGLS}
\end{align}
where $v_i = s_i \zeta_i n +  \psi_i \slashed{D} \zeta_i\in C_c^\infty(B^N\setminus\{0\},\RR^{N})$ for 
$i\geq 0$.

Proposition \ref{Prop:OrthoDecomp} yields
$$
\Qgl[v]=\Qgl[\mathring{w}] + \sum_{i=0}^\infty \Qgl[v_i].
	$$
First, Lemma \ref{Lem:2VarDivFree} yields a constant $C>0$ independent of $\eps$ such that 
$$\Qgl[\mathring{w}] \geq C \|\mathring{w}\|^2_{L^2}$$ for every 
 tangent vector field $\mathring{w}\in C^\infty_c(B^N\setminus\{0\}, \RR^N)$ of vanishing covariant divergence. Second, for the zero mode $v_0 = s_0 \zeta_0 n$, the argument in the proof of Lemma \ref{Lem:2VarZero} yields
 \begin{align*}
 \Qgl[s_0 \zeta_0 n] 
	&= \int_0^1 r^{N-1} \Big\{\barfeps^2 \Big|\Big(\frac{s_0}{\barfeps}\Big)'\Big|^2
	+  \frac{2}{\eps^2} W''(1 - \barfeps^2) f^2_\eps s_0^2\Big\}\,dr\\
	&\geq \int_0^1 r^{N+1} \Big|\Big(\frac{s_0}{\barfeps}\Big)'\Big|^2\, dr\geq \frac{N^2}4 \int_0^1 r^{N-1} s_0^2\, dr=\frac{N^2}{4} \|v_0\|^2_{L^2},
	\end{align*}
where we used $r\leq \barfeps\leq 1$ in $(0,1)$ and the Hardy inequality in $\RR^{N+2}$ (as in \eqref{ineg55}). Third, for the modes  $v_i = s_i \zeta_i n + \psi_i \slashed{D}\zeta_i$ for $1\leq i\leq N$ (so that $\lambda_i = N-1$), we factor 
$s_i = \barfeps' \hat s_i$ and $\psi_i = \frac{\barfeps}{r} \hat \psi_i$, and the computation in the proof of  Lemma \ref{Lem:2VarRest} yields
\begin{align*}
\Qgl[v_i] &=   \int_0^1  r^{N-1}\Big\{(\barfeps')^2 (\hat s_i')^2  
			+ \frac{2(N-1)}{r^3} \barfeps\,\barfeps' (\hat s_i-\hat \psi_i)^2 
			 + \frac{N-1 }{r^2} \barfeps^2 (\hat \psi_i')^2 
				\Big\} \,dr\geq 0.
	\end{align*}
Finally, for the modes  $v_i = s_i \zeta_i n + \psi_i \slashed{D}\zeta_i$ for $i\geq N+1$, we factor $s_i = \barfeps \tilde s$ and $\psi_i = \barfeps \tilde \psi_i$, and the computation in the proof of  Lemma \ref{Lem:2VarRest} (see \eqref{Eq:new2}) yields 
\[
\Qgl[v_i] \geq C\|v_i\|^2_{L^2(B^N)} \text{ for every } i\geq N+1
\]
for some $C > 0$ independent of $\eps$ and $i$. Combining the above estimates, we get \eqref{Eq:QeGLS}.
\end{proof}

\subsection{The extended model: Stability-instability dichotomy of the non-escaping vortex solutions}

Let $N \geq 2$. Assume $W \in C^2((-\infty,1])$ and $\tilde W \in C^2([0,\infty))$ satisfy \eqref{Eq:WCond'} and \eqref{Eq:TWCond'}. Let $\bar m_{\eps} = (\barfeps n, 0)$ be the in-plane radially symmetric critical point of $E_{\eps,\eta}$ in $\mcA$, where $f_\eps$ is given by Theorem \ref{Thm:GLExist}. Let $\bar Q_{\eps,\eta}$ be the second variation of $E_{\eps,\eta}$ at $\bar m_{\eps}$: For $V = (v, q) \in H_0^1(B^N, \RR^N) \times H_0^1(B^N,\RR) \cong H_0^1(B^N,\RR^{N+1})$, 
\begin{align*}
\bar Q_{\eps,\eta}[V] 
	&	= \Qgl[v]+ \bar Q_{\eps,\eta}[(0,q)],
	\nonumber\\ 
\bar Q_{\eps,\eta}[(0,q)] 
	&= \int_{B^N} \Big[ |\nabla q|^2
		- \frac{1}{\eps^2}W'(1 - \barfeps^2) q^2
		+ \frac{1}{\eta^2} \tilde W'(0) q^2
		\Big]\,dx\nonumber\\
	&=\int_{B^N} \Big[ L_\eps^{GL} q \cdot q
		+ \frac{1}{\eta^2} \tilde W'(0) q^2
		\Big]\,dx,
\end{align*}
where $\Qgl$ is the second variation at the critical point $u_\eps = \barfeps n$ of the Ginzburg--Landau energy $E_\eps^{GL}$ given in \eqref{second_gl} and $L_\eps^{GL}$ is defined by \eqref{Eq:LeGLDef}. 

\begin{proof}[Proof of Theorem \ref{Thm:ExtendedMain}(c)] 
We will only discuss the positive definiteness of $\bar Q_{\eps,\eta}$. As in the proof of Theorem \ref{Thm:ExtendedMain}(a), in the case when $\bar Q_{\eps,\eta}$ is positive definite, we have that $\bar Q_{\eps, \eta}[V] \geq C\|V\|_{H^1(B^N)}^2$ for $V \in H_0^1(B^N,\RR^{N+1})$ and that $\bar m_\eps$ is a local minimizer of $E_{\eps, \eta}$ in $\mcA$, more precisely, 
for some constants $\delta>0$ and $\tilde C>0$ (depending on $\eps$ and $\eta$),
$$E_{\eps, \eta}[\bar m_{\eps} + V] \geq E_{\eps, \eta}[\bar m_{\eps}] + \tilde C\|V\|^2_{H^1(B^N)}
\,\,  \textrm{for all } V \in H_0^1(B^N,\RR^{N+1}) \textrm{ with }\|V\|_{H^1(B^N)}<\delta.$$

By Theorem \ref{Thm:GLStab}, $Q_\eps^{GL}$ is positive definite. Therefore, $\bar Q_{\eps,\eta}$ is positive definite if and only if $\bar Q_{\eps,\eta}[(0,\cdot)]$ is positive definite, i.e.
\[
\ell(\eps) + \frac{1}{\eta^2} \tilde W'(0) > 0,
\]
where $\ell(\eps)$ is the first eigenvalue of $L_\eps^{GL}$ on $B^N$ with zero Dirichlet boundary value. Recalling that we are assuming that \eqref{Eq:feegeeH1}--\eqref{Eq:20III21-feegeeBC} has no escaping solutions, we deduce from Theorem \ref{Thm:ExtendedExist}(a) and (b), Lemma \ref{Lem:eigen-gl} and the fact that $\tilde W'(0) \geq 0$ that the above inequality fails if and only if $2 \leq N \leq 6$, $W'(1) > 0$, $\tilde W'(0) > 0$, $0 < \eps < \eps_0$ and $\eta  = \eta_0(\eps)$. In this case, 
$\ell(\eps) + \frac{1}{\eta^2} \tilde W'(0) =0$, 
$\bar Q_{\eps,\eta}$ is non-negative semi-definite with the kernel 
\[
\Big\{(0,q): q \in H_0^1(B^N), L_{\eps}^{GL} q = \ell(\eps) q\Big\},
\]
which is one-dimensional and generated by $(0,q_{\eps})$ for any first eigenfunction $q_{\eps}$ of $L_\eps^{GL}$. 
\end{proof}

\subsection{The $\Sphere^N$-valued GL model: Stability of the escaping vortex solution}

Assume that $N\geq 2$ and $\tilde W \in C^2([0,\infty))$. Let $m_{\eta} = (\tilde f_{\eta} n, g_{\eta})$ be the escaping radially symmetric critical point of $E_{\eta}^{MM}$ in $\mcA^{MM}$  with $\tilde f_\eta > 0$ and $g_\eta > 0$, and let $Q_{\eta}^{MM}$ be the second variation of $E_{\eta}^{MM}$ at $m_{\eta}$: For $V = (v, q) \in H_0^1(B^N, \RR^N) \times H_0^1(B^N,\RR) \cong H_0^1(B^N,\RR^{N+1})$ with $V \cdot m_{\eta} = 0$, 
\begin{align*}
Q_{\eta}^{MM}[V] 
	&= \frac{d^2}{dt^2}\Big|_{t = 0} E_{\eta}^{MM}\Big[\frac{m_{\eta} + tV}{|m_{\eta} + tV|}\Big]\nonumber\\
	&= \int_{B^N} \Big[|\nabla V|^2  - \lambda(r) |V|^2 
		+ \frac{1}{\eta^2}\tilde W'(g_{\eta}^2) q^2
		+  \frac{2}{\eta^2} \tilde W''(g_{\eta}^2) g_{\eps,\eta}^2 q^2
		\Big]\,dx
\end{align*}
where $\lambda \in C^1([0,1])$ is given by \eqref{Eq:lamDef}. In particular, $Q_\eta^{MM}$ is continuous in $H_0^1(B^N,\RR^{N+1})$.

\begin{proof}[Proof of Theorem \ref{Thm:MStab}] By the instability of the equator map proved in Theorem \ref{Thm:MMExist}(b), we only need to prove the stability and local minimality of the escaping solution $m_\eta$.

\medskip
\noindent\underline{Proof of the positive definiteness of $Q_{\eta}^{MM}$.} Let $W(t) = t^2$ and let $\eps_0$ and $\eta_0 \in C^0([0,\eps_0))$ be as in Theorem \ref{Thm:ExtendedExist}; those are well-defined as $W'(1) > 0$. If $\tilde W'(0)>0$, then $\eta_0$ is increasing and $\lim_{\eps \rightarrow \eps_0} \eta_0(\eps) = \infty$ (see Remark \ref{Rem:eta0}), so $\eta_0$ has an increasing inverse $\eta_0^{-1}:[0,\infty) \rightarrow [0,\eps_0)$. If $\tilde W'(0)=0$, then $\eta_0(\eps)=0$ for all $\eps\in (0, \eps_0)$ and by abuse of notation, we set $\eta_0^{-1}(\eta)=\eps_0$ for every $\eta>0$. In both cases, by Theorem \ref{Thm:ExtendedExist}, for $0 < \eps < \eta_0^{-1}(\eta)$, \eqref{Eq:feegeeH1}--\eqref{Eq:20III21-feegeeBC} has an escaping solution $(f_{\eps,\eta},g_{\eps,\eta})$ with $f_{\eps,\eta} > 0$ and $g_{\eps,\eta} > 0$. By Remark \ref{Rem:EqInst}, $(f_{\eps,\eta},g_{\eps,\eta}) \rightarrow (\tilde f_\eta, g_\eta)$ in $\mcB$ as $\eps\to 0$, and so uniformly on compact subsets of $(0,1]$.

We would like to deduce the positive definiteness of $Q_\eta^{MM}$ from the positive definiteness of the second variation $Q_{\eps,\eta}$ of the escaping critical point $m_{\eps,\eta} = (f_{\eps,\eta}n,g_{\eps,\eta})$ of $E_{\eps,\eta}$ (established in Theorem \ref{Thm:ExtendedMain}(a)).

Fix some $V = (v,q) \in C_c^\infty(B^N \setminus \{0\},\RR^{N+1})$ with $V \cdot m_\eta = 0$ in $B^N$. We write $v = sn + \mathring{w}+\slashed{D} \psi$ with $s\in C_c^\infty(B^N \setminus \{0\})$, $\mathring{w}\in C^\infty_c(B^N\setminus\{0\}, \RR^N)$ being a tangent vector field (i.e.,  $\mathring{w} \cdot n = 0$) having vanishing covariant divergence $\slashed{D} \cdot \mathring{w}(r,\cdot) = 0$ on $\bS^{N-1}$ and $\psi \in C^\infty_c(B^N\setminus\{0\}, \RR)$ satisfying $\int_{\mathbb{S}^{N-1}} \psi(r,\theta) d\sigma = 0$. 

For $0 < \eps < \eta_0^{-1}(\eta)$, define $V_\eps = (v,q_\eps) \in C_c^\infty(B^N \setminus \{0\},\RR^{N+1})$ by
\[
q_\eps = q - \frac{f_{\eps,\eta} - \tilde f_\eta}{g_{\eps,\eta}} s - \frac{g_{\eps,\eta} - g_\eta}{g_{\eps,\eta}} q.
\]
Then $\mathrm{supp} V_\eps \subset\mathrm{supp} V\subset B^N\setminus \{0\}$, and $V_\eps \rightarrow V$ uniformly in $\bar B^N$ and in $H^1(B^N)$ as $\eps \rightarrow 0$ and $V_\eps \cdot m_{\eps,\eta} = 0$ in $B^N$. We decompose
\begin{align*}
s(r,\theta) &= \sum_{i=0}^\infty s_i(r) \zeta_i(\theta), \qquad \psi(r,\theta) = \sum_{i=0}^\infty \psi_i(r) \zeta_i(\theta),\\
q(r,\theta) &= \sum_{i=0}^\infty q_i(r) \zeta_i(\theta), \qquad q_\eps(r,\theta) = \sum_{i=0}^\infty q_{\eps,i}(r) \zeta_i(\theta),
\end{align*}
define $\mathring{V} = (\mathring{w},0)$, $V_i = (s_i \zeta_i n +  \psi_i \slashed{D} \zeta_i, q_i \zeta_i)$ and $V_{\eps,i} = (s_i \zeta_i n +  \psi_i \slashed{D} \zeta_i, q_{\eps,i} \zeta_i)$ as in Proposition \ref{Prop:OrthoDecomp}. Note that $V_{\eps, i}\to V_i$ uniformly in $\bar B^N$ and in $H^1(B^N)$ as $\eps\to 0$ for every $i\geq 0$, 
\[
0 = V \cdot m_{\eta} = s \tilde f_{\eta} + q g_{\eta} = \sum_{i=0}^\infty (s_i \tilde f_{\eta} + q_{i}g_{\eta})\zeta_i
\]
and so $s_i \tilde f_{\eta} + q_{i}g_{\eta} = 0$ for all $i \geq 0$. By the positivity inequality \eqref{Eq:ExtPD1} for $Q_{\eps,\eta}$, we have 
\begin{align}
Q_{\eps,\eta}[V_\eps]
	&\geq C  \Big\|V_\eps - \sum_{i=0}^N V_{\eps,i}\Big\|_{L^2(B^N)}^2
		+ \int_0^1 r^{N-1} f_{\eps,\eta}^2 \Big|\Big(\frac{s_0}{f_{\eps,\eta}}\Big)'\Big|^2\,dr
		\nonumber\\
		&\quad 
		+ (N-1)\sum_{i=1}^N \int_0^1 r^{N-3} \Big\{  f_{\eps,\eta}^2 \Big|\Big(\frac{r \psi_i}{f_{\eps,\eta}}\Big)'\Big|^2
			+ \frac{2}{r} f_{\eps,\eta}\,f_{\eps,\eta}' \Big(\frac{s_i}{f_{\eps,\eta}'} - \frac{r \,\psi_i}{f_{\eps,\eta}}\Big)^2
			\Big\} \,dr .
		\label{Eq:MMS-1}
\end{align}

\medskip
\noindent\underline{Claim:} $Q_{\eps,\eta}[V_\eps] \rightarrow Q_\eta^{MM}[V]$ as $\eps \rightarrow 0$. Indeed, since $f_{\eps,\eta}$ converges to $\tilde f_\eta$ in $H^1_{loc}(0,1)$, we deduce that, for any open set $K$ compactly supported in $B^N \setminus \{0\}$ and $(\varphi_\eps) \subset H_0^1(K)$ converging in $H^1$ to $\varphi \in H_0^1(K)$, by multiplying from \eqref{Eq:20III21-fee} and \eqref{Eq:MM-fee} with $\varphi_\eps/f_{\eps, \eta}$ and $\varphi / \tilde f_{\eta}$ respectively,
\[
\lim_{\eps \rightarrow 0} \int_{B^N} \frac{1}{\eps^2} W'(1 - f_{\eps,\eta}^2 - g_{\eps,\eta}^2) \varphi_\eps\,dx = \int_{B^N} \lambda(r) \varphi\,dx .
\]
Recalling the expressions of $Q_{\eps,\eta}[V_\eps]$ and  $Q_\eta^{MM}[V]$ together with the fact that $s f_{\eps,\eta} + q_\eps g_{\eps,\eta} = V_\eps \cdot m_{\eps,\eta} = 0$, $\mathrm{supp} V_\eps \subset \mathrm{supp} V \subset B^N \setminus \{0\}$, and $|V_\eps|^2 \rightarrow |V|^2$ in $H_0^1(\textrm{supp} V)$, the claim is readily seen from the above identity.

Passing $\eps \rightarrow 0$ in \eqref{Eq:MMS-1} using the claim on the left hand side and Fatou's lemma on the right hand side, we obtain
\begin{align}
Q_{\eta}^{MM}[V]
	&\geq C  \Big\|V - \sum_{i=0}^N V_{i}\Big\|_{L^2(B^N)}^2
		+ \int_0^1 r^{N-1} \tilde f_{\eta}^2 \Big|\Big(\frac{s_0}{\tilde f_{\eta}}\Big)'\Big|^2\,dr
		\nonumber\\
		&\quad 
		+ (N-1)\sum_{i=1}^N \int_0^1 r^{N-3} \Big\{  \tilde f_{\eta}^2 \Big|\Big(\frac{r \psi_i}{\tilde f_{\eta}}\Big)'\Big|^2
			+ \frac{2}{r} \tilde f_{\eta}\,\tilde f_{\eta}' \Big(\frac{s_i}{\tilde f_{\eta}'} - \frac{r \,\psi_i}{\tilde f_{\eta}}\Big)^2
			\Big\} \,dr 
		\label{Eq:MMS-2}
\end{align}
for any $V \in C_0^\infty(B^N\setminus \{0\}, \RR^{N+1})$ satisfying $V \cdot m_\eta = 0$ in $B^N$.

Suppose next that $V  \in H_0^1(B^N,\RR^{N+1})$ with $V \cdot m_\eta = 0$ in $B^N$. Pick a sequence $\{V_j\} \subset C_c^\infty(B^N \setminus \{0\},\RR^{N+1})$ which converges in $H^1(B^N,\RR^{N+1})$ to $V$. Let $\tilde V_j = V_j - (V_j \cdot m_\eta) m_\eta \in C_c^\infty(B^N \setminus \{0\},\RR^{N+1})$. Then $\{\tilde V_j\}$ also converges in $H^1(B^N,\RR^{N+1})$ to $V$. Applying \eqref{Eq:MMS-2} to $\tilde V_j$ (since $\tilde V_j \cdot m_\eta = 0$), and sending $j \rightarrow \infty$ (using the continuity of $Q_\eta^{MM}$ on the left hand side and Fatou's lemma on the right hand side), we see that \eqref{Eq:MMS-2} holds for $V \in H_0^1(B^N, \RR^{N+1})$ satisfying $V \cdot m_\eta = 0$ in $B^N$. Moreover, if $Q_{\eta}^{MM}[V] = 0$, then $V = \sum_{i = 0}^N V_i$, and $\frac{s_0}{\tilde f_\eta}$, $\frac{r\psi_i}{\tilde f_\eta}$ are constant and $\frac{s_i}{\tilde f_{\eta}'} - \frac{r \,\psi_i}{\tilde f_{\eta}} = 0$ for $1 \leq i \leq N$. Recalling also that $s_i \tilde f_{\eta} + q_{i}g_{\eta} = 0$ in $(0,1)$ and $s_i(1) = \psi_i(1) = 0$ for all $i \geq 0$, we deduce that $V \equiv 0$. This proves the required positive definiteness of $Q_\eta^{MM}$.

\medskip
\noindent\underline{Proof of the local minimality of $m_\eta$.}

We need to relate the functional $E_\eta^{MM}$ in a neighborhood of $m_\eta$ to the second variation $Q_\eta^{MM}$ notwithstanding the fact that $H^1(B^N,\Sphere^N)$ is not a manifold.\marginnote{add references}

Consider a map $m_\eta + V \in \mcA^{MM}$, and write $V = (v,q)$ and $\tilde V := V - (V \cdot m_\eta) m_\eta = (\tilde v, \tilde q)$ so that $V, \tilde V \in H_0^1(B^N,\RR^{N+1})$ and $\tilde V \cdot m_\eta = 0$. By the Euler-Lagrange equation for $m_{\eta}$ (as a critical point for $E_{\eta}^{MM}$ in $\mcA^{MM}$) and $V \cdot m_\eta = - \frac{1}{2}|V|^2$ (since $|m_\eta + V|^2 = |m_\eta|^2 = 1$),
\begin{align}
&E_{\eta}^{MM}[m_{\eta} + V] - E_{\eta}^{MM}[m_{\eta}] -\frac{1}{2} Q_{\eta}^{MM}[\tilde V]\nonumber\\
	&\quad= \frac{1}{2}\int_{B^N} \Big\{(|\nabla V|^2 - |\nabla \tilde V|^2) - \lambda(r) (|V|^2 - |\tilde V|^2)\nonumber\\
		&\qquad
		+ \frac{1}{\eta^2} \Big[\tilde W'(g_\eta^2) + 2\tilde W''(g_\eta^2) g_\eta^2 \Big](q^2 - \tilde q^2)\Big\}\,dx
		+ 	\int_{B^N} h(x, V(x))\, dx,
			\label{Eq:MLM-1}
\end{align}
\begin{align*}
 h(x, y)	&=   \frac{1}{2\eta^2} \Big\{ \tilde W((g_{\eta}(x) + y_{N+1})^2) - \tilde W(g_{\eta}^2(x)) 
			 - \tilde W'(g_{\eta}^2(x)) (2g_{\eta}(x) y_{N+1} + y_{N+1}^2)\\
			 &\quad - 2 \tilde W''(g_{\eta}^2(x)) g_{\eta}^2(x) y_{N+1}^2\Big\}.
\end{align*}
As in the proof of Theorem \ref{Thm:ExtendedMain}(a), the positive definiteness of $Q_\eta^{MM}$ implies that there is a constant $c > 0$ depending only on $\eta$, $W$ and $\tilde W$ such that
\[
Q_\eta^{MM}[\tilde V] \geq c\|\nabla \tilde V\|_{L^2(B^N)}^2 \, \,  \textrm{for every } \tilde V \in H_0^1(B^N,\RR^{N+1}) \textrm{ with } \tilde V \cdot m_\eta = 0.
\]
Since $h\in C^0(\bar B^N, C^2(\RR^{N+1}))$, $h(x,0)=0$, $\nabla_y h(x, 0)=0$, $\nabla^2_y h(x,0)=0$ and $h$ satisfies the growth assumptions in Lemma \ref{Lem:RA} for $p=2$ (due to the convexity of $\tilde W$), by Lemma \ref{Lem:RA}, for any $a > 0$, there exists $\delta > 0$ such that
\[
\int_{B^N} h(x, V(x))\, dx \geq -a \|\nabla V\|_{L^2(B^N)}^2 \text{ whenever } V\in H^1_0(B^N, \RR^{N+1}), \|V\|_{H^1(B^N)} \leq \delta.
\]

Let us consider the first integral on the right hand side of \eqref{Eq:MLM-1}. We start with the term $|V|^2 - |\tilde V|^2$, using the fact that $V \cdot m_\eta = - \frac{1}{2}|V|^2$,
\[
|V|^2 - |\tilde V|^2 = |V\cdot m_\eta|^2 = \frac{1}{4}|V|^4.
\]
Likewise, since $|q| \leq |V|$, $|\tilde q| \leq |\tilde V| \leq |V|$, $0 < g_\eta \leq 1$ and $q - \tilde q = (V \cdot m_\eta) g_\eta$,
\[
|q^2 - \tilde q^2| = |q - \tilde q||q + \tilde q| \leq 2 |V \cdot m_\eta| |V| = |V|^3.
\]
Next, the term $|\nabla V|^2 - |\nabla \tilde V|^2$ is estimated as follows, using the fact that $\nabla (V - \tilde V) = \nabla((V \cdot m_\eta) m_\eta) = -\frac{1}{2} \nabla (|V|^2 m_\eta)$ and $ -m_\eta\cdot \partial_j \tilde V=\partial_j m_\eta\cdot \tilde V$ for $1\leq j\leq N$,
\begin{align*}
|\nabla V|^2 - |\nabla \tilde V|^2 
	&= |\nabla (V - \tilde V)|^2 + 2\nabla(V -\tilde V) : \nabla \tilde V
		=  |\nabla (V - \tilde V)|^2 -\nabla(|V|^2 m_\eta) : \nabla \tilde V\\
	&=  |\nabla (V - \tilde V)|^2 + \sum_{j=1}^N \partial_j (|V|^2)  \tilde V \cdot \partial_j m_\eta  -|V|^2 \nabla m_\eta : \nabla \tilde V\\
	&\geq |\nabla (V - \tilde V)|^2 -C |V|^2(|\nabla V|+|V|^2)
\end{align*}
for some $C = C(\|\nabla m_\eta\|_{C^1(\bar B^N)})$, where we have used $|V| \leq |m_\eta + V| + |m_\eta| = 2$ and $|\nabla \tilde V|= |\nabla V+\frac12\nabla(|V|^2 m_\eta)| \leq C (|\nabla V| + |V|^2)$.

Putting things together in \eqref{Eq:MLM-1} with $a = \frac{1}{8}\min(c,1)$, by the Cauchy-Schwarz and triangle inequalities, we get for all $m_\eta + V \in \mcA^{MM}$ with $\|V\|_{H^1(B^N)} \leq \delta$ that
\begin{align*}
E_{\eta}^{MM}[m_{\eta} + V] - E_{\eta}^{MM}[m_{\eps,\eta}]
	&\geq \frac{c}{2}\|\nabla \tilde V\|_{L^2(B^N)}^2 + \frac{1}{2} \|\nabla (V - \tilde V)\|_{L^2(B^N)}^2
		- a\|\nabla V\|_{L^2(B^N)}^2
	\\	
	&\qquad 
		- C(\|\nabla V\|_{L^2(B^N)}\|V\|_{L^4(B^N)}^2  + \|V\|_{L^4(B^N)}^4 + \|V\|_{L^3(B^N)}^3)\\
	&\geq \frac{\min(c,1)}{8} \|\nabla V\|_{L^2(B^N)}^2
		-  \tilde C( \|V\|_{L^4(B^N)}^4 + \|V\|_{L^3(B^N)}^3).
\end{align*}
Note also that, since $|V| \leq 2$ and by the Sobolev embedding theorem for $V\in H^1_0(B^N)$, we have for any fixed $2 < p < \min(3,\frac{2N}{N-2})$ that
\[
\|V\|_{L^4(B^N)}^4 + \|V\|_{L^3(B^N)}^3 \leq C_p\|V\|_{L^p(B^N)}^p \leq C_{N,p} \|\nabla V\|_{L^2(B^N)}^p.
\]
Putting the last two estimates together, for small $\delta>0$,  we obtain for some $\hat C>0$:
$$E^{MM}_{\eta}[m_{\eta} + V] \geq E^{MM}_{\eta}[m_{\eta}]  + \hat C\|\nabla V\|^2_{L^2(B^N)}
\,\,  \textrm{if } m_\eta+V \in \mcA^{MM} \textrm{ with }\|V\|_{H^1(B^N)}<\delta$$
yielding the desired local minimality of $m_\eta$ for $E_\eta^{MM}$ in $\mcA^{MM}$.
\end{proof}

\appendix

\section{Radially symmetric vector-valued maps}\label{App}

In the sequel, let $SO(N)$ denote the group of $N \times N$ special orthogonal matrices, equipped with the Haar measure. Naturally, $SO(N) \times B^N$ is equipped with the product measure.

\begin{definition}\label{Def:RSMap}
Let $N \geq 2$ and $k \geq 0$. A measurable map $m: B^N \rightarrow \RR^{N+k}$ is said to be $SO(N)$-equivariant, or simply radially symmetric, if 
\[
m(Rx) = \tilde Rm(x) \text{ for almost all } (R,x) \in SO(N) \times B^N,
\]
where $\tilde R = \begin{pmatrix}R &0_{N \times k}\\0_{k \times N}&I_{k \times k}\end{pmatrix}  \in SO(N + k)$, 
and $0_{i \times j}$ and $I_{k \times k}$ denote respectively the $i \times j$ zero matrix and the $k \times k$ identity matrix.
\end{definition}

\begin{lemma}\label{Lem:RSRep}
Let $N \geq 2$, $k \geq 0$ and $m \in L^1_{loc}(B^N, \RR^{N+k})$.

\begin{enumerate}[(a)]
\item If $N \geq 3$, then $m$ is radially symmetric if and only if there exist functions $f, g_1, \ldots, g_k \in L^1_{loc}(0,1)$ such that
\[
m(x) = \Big(f(|x|) \frac{x}{|x|}, g_1(|x|), \ldots, g_k(|x|)\Big) \text{ for almost all } x \in B^N.
\]

\item If $N = 2$, then $m$ is radially symmetric if and only if there exist functions $f_1, f_2 , g_1, \ldots, g_k \in L^1_{loc}(0,1)$ such that
\[
m(x) = \Big(f_1(|x|) \frac{x}{|x|} + f_2(|x|) \frac{x^\perp}{|x|} , g_1(|x|), \ldots, g_k(|x|)\Big) \text{ for almost all } x \in B^2,
\]
where $(x_1,x_2)^\perp = (-x_2, x_1)$.
\end{enumerate} 
\end{lemma}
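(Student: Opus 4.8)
The plan is to prove the ``if'' direction by direct verification and the ``only if'' direction by first producing a genuinely (everywhere-defined) equivariant representative of $m$ via averaging over $SO(N)$, and then running the elementary computation of $SO(N)$-equivariant vectors along a fixed sphere.

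For the ``if'' part, if $m$ has the stated form then for every $R\in SO(N)$ one has $|Rx|=|x|$ and $R\frac{x}{|x|}=\frac{Rx}{|Rx|}$, and, when $N=2$, also $R\frac{x^\perp}{|x|}=\frac{(Rx)^\perp}{|Rx|}$ because rotation by $\pi/2$ commutes with $R$; this is precisely $m(Rx)=\tilde R m(x)$, which in particular holds for a.e.\ $(R,x)$.

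For the ``only if'' part, the first step is to set $\bar m(x):=\int_{SO(N)}\tilde R^{-1}m(Rx)\,dR$. Since rotations preserve Lebesgue measure on every centered ball, Fubini shows that the set $E$ on which this integral is absolutely convergent is $SO(N)$-invariant and of full measure; the hypothesis $m(Rx)=\tilde R m(x)$ for a.e.\ $(R,x)$ gives $\bar m=m$ a.e.; and the substitution $R\mapsto RS$ gives the pointwise identity $\bar m(Sx)=\tilde S\,\bar m(x)$ for all $S\in SO(N)$ and all $x\in E$. By Fubini once more, for a.e.\ $r\in(0,1)$ the sphere $\{|x|=r\}$ meets $E$ in a set of full surface measure, hence---$E$ being $SO(N)$-invariant and $SO(N)$ acting transitively on that sphere---it is entirely contained in $E$; call such radii admissible. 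The second step works on an admissible sphere: fix admissible $r$, take $\theta_0\in\Sphere^{N-1}$ (e.g.\ $\theta_0=e_1$, admissible for a.e.\ $r$), and write $c=\bar m(r\theta_0)=(c',c'')\in\RR^N\times\RR^k$. Equivariance gives $\bar m(rR\theta_0)=\tilde R c$ for \emph{every} $R\in SO(N)$, and restricting to the stabilizer $H=\mathrm{Stab}_{SO(N)}(\theta_0)\cong SO(N-1)$ forces $\tilde R c=c$ for all $R\in H$, i.e.\ the $\theta_0^\perp$-component of $c'$ is fixed by the natural $SO(N-1)$-action on $\RR^{N-1}\cong\theta_0^\perp$ (while $c''$ and the $\theta_0$-component of $c'$ are unconstrained). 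For $N\geq3$ that action has no nonzero fixed vector, so $c'=f(r)\theta_0$ for a scalar $f(r)$, and $\bar m(r\theta)=(f(r)\theta,c'')$ on the whole sphere; setting $g_j(r):=c''_j$ gives the claimed form. For $N=2$, $H$ is trivial and no constraint arises: writing $c'=f_1(r)\theta_0+f_2(r)\theta_0^\perp$ and using $R\theta_0^\perp=(R\theta_0)^\perp$ yields $\bar m(r\theta)=(f_1(r)\theta+f_2(r)\theta^\perp,c'')$.

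Finally I would check that the profiles lie in $L^1_{loc}(0,1)$: they can be recovered by spherical averages, e.g.\ $f(r)=\frac1{|\Sphere^{N-1}|}\int_{\Sphere^{N-1}}\bar m(r\theta)\cdot(\theta,0)\,d\sigma(\theta)$ and $g_j(r)=\frac1{|\Sphere^{N-1}|}\int_{\Sphere^{N-1}}\bar m(r\theta)\cdot e_{N+j}\,d\sigma(\theta)$ (and analogously $f_1,f_2$ when $N=2$), so Fubini gives measurability in $r$, and integrating $|m|=|\bar m|$ over an annulus $\{\delta\leq|x|\leq1-\delta\}$, on which $r^{N-1}$ is bounded above and below, shows each profile is integrable on $[\delta,1-\delta]$. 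The main technical obstacle is the first step---passing from a.e.\ equivariance of $m$ to a representative that is equivariant at \emph{every} point of a.e.\ sphere---since only then can one invoke the fixed-vector computation without tracking null sets; once that replacement is made, the rest is routine.
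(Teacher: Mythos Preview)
Your proof is correct and follows a route that parallels the paper's but differs in one key technical step. Both arguments reduce to the same fixed-vector computation under the stabilizer $SO(N-1)$ of a point on the sphere; the difference lies in how one passes from the a.e.\ equivariance hypothesis to a representative that is equivariant at \emph{every} point of (almost every) sphere. The paper does this by mollifying: it shows that $m_\eps = m * \varrho_\eps$ is continuous and satisfies $m_\eps(Rx)=\tilde R m_\eps(x)$ for all $(R,x)$, runs the stabilizer argument on $m_\eps$, and then lets $\eps\to0$. You instead average over the group, setting $\bar m(x)=\int_{SO(N)}\tilde R^{-1}m(Rx)\,dR$, and observe that $\bar m$ agrees with $m$ a.e.\ and is genuinely equivariant on an $SO(N)$-invariant full-measure set, hence on entire spheres for a.e.\ radius.

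Your averaging approach is arguably more direct: it produces in one stroke a representative on which the pointwise algebra can be performed, and the spherical-average formulas you write down for $f,g_j,f_1,f_2$ make the $L^1_{loc}$ regularity transparent. The paper's mollification approach has the mild advantage of producing continuous approximants (useful if one later wants continuity statements), but for the bare representation lemma your method is at least as clean. One small remark: once you have shown that for admissible $r$ the entire sphere $\{|x|=r\}$ lies in $E$, the parenthetical ``admissible for a.e.\ $r$'' after the choice $\theta_0=e_1$ is superfluous---any $\theta_0$ works for every admissible $r$.
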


\begin{proof}
It is clear that if $m$ has the stated form, then $m$ is radially symmetric. For the converse, suppose that $m$ is radially symmetric.

Let us make an observation on mollifications of a radially symmetric map. Let $(\varrho_\eps)$ be a sequence of smooth radially symmetric  mollifiers (i.e. $\varrho_\eps(x) = \varrho_\eps(|x|)$) satisfying $\textrm{supp}\,\varrho_\eps \subset (-\eps,\eps)$ and let $m_\eps = m * \varrho_\eps$ in $B_{1-\eps}$ where $B_r$ is the ball centered at zero of radius $r>0$. We claim that $m_\eps$ is radially symmetric in $B_{1-\eps}$. Indeed, by Fubini's theorem, for almost all $R \in SO(N)$, we have
\[
m(Rx) = \tilde Rm(x) \text{ for almost all } x \in B^N.
\]
Therefore, for almost all $R \in SO(N)$ and for all $0 < |x| < 1 - \eps$,
\begin{align*}
m_{\eps}(Rx) 
	&= \int_{B^N} m(y) \varrho_\eps(Rx - y)\,dy
		= \int_{B^N} m(Rz) \varrho_\eps(Rx - Rz)\,dz\\
	&= \int_{B^N} \tilde R m(z) \varrho_\eps(x - z)\,dz
		= \tilde R m_\eps(x),
\end{align*}
i.e. $m_\eps$ is radially symmetric in $B_{1-\eps}$.

By the above claim, it suffices to consider continuous $m$ in our proof. In this case, 
\begin{equation}
m(Rx) = \tilde Rm(x) \text{ for all } (R,x) \in SO(N) \times B^N.
	\label{Eq:EVCont}
\end{equation}
Clearly \eqref{Eq:EVCont} implies that, for $1 \leq j \leq k$ and $x \in B^N$, $m_{N + j}(Rx) = m_{N + j}(x)$ for all $R \in SO(N)$ and so $m_{N + j}(x) = g_j(|x|)$ for some $g_j \in C(0,1)$. We thus assume without loss of generality that $k = 0$, i.e., $m:B^N\to \RR^N$.

Let $e_N = (0, \ldots, 0, 1)$.  For $r \in (0,1)$, we write $m(re_N) = (a(r),b(r))$ where $a(r) \in \RR^{N-1}$ and $b(r) \in \RR$. Since $m$ is continuous, $a$ and $b$ are continuous in $(0,1)$.

\medskip
\noindent\underline{Case (a):} $N \geq 3$. Taking $R$ of the form $R = \begin{pmatrix}S & 0_{(n-1)\times 1}\\0_{1 \times (n-1)} & 1\end{pmatrix}$  where $S \in SO(N-1)$, we obtain from \eqref{Eq:EVCont} that
\[
a(r) = S a(r) \text{ for all } S \in SO(N-1).
\]
As $N \geq 3$, there exists $S(r) \in SO(N-1)$ so that $S(r)a(r) = -a(r)$ and so the above implies that $a(r) = 0$. In particular, $m(re_N) = b(r) e_N$ for every $r\in (0,1)$. Now if $|x| = r \in (0,1)$, we select $R \in SO(N)$ such that $R(re_N) = x$ and obtain from \eqref{Eq:EVCont} that
\[
m(x) = m(R(re_N)) = R m(re_N) = b(r) Re_N = b(r) \frac{x}{r}.
\]
The conclusion follows with $f(r) = b(r)$.

\medskip
\noindent\underline{Case (b):} $N = 2$. In this case, $a(r)$ is a scalar so that
\[
m(re_2) = -a(r) e_2^\perp + b(r) e_2.
\]

Now if $x = (r\cos\varphi,r\sin\varphi)$ for some $r > 0$ and $\varphi \in [0,2\pi)$, setting 
\[
R_\varphi := \begin{pmatrix}\sin\varphi & \cos\varphi\\-\cos\varphi& \sin\varphi\end{pmatrix} \in SO(2),
\] 
then we have
\[
R_\varphi(re_2) = x \text{ and }R_\varphi(re_2^\perp)  = x^\perp.
\]
We thus obtain from \eqref{Eq:EVCont} that
\[
m(x) = m(R_\varphi(re_2)) = R_\varphi m(re_2) = - a(r) R_\varphi e_2^{\perp} + b(r) R_\varphi e_2 = -a(r)\frac{x^\perp}{r} + b(r)\frac{x}{r}.
\]
The conclusion follows with $f_1(r) = b(r)$ and $f_2(r) = -a(r)$.
\end{proof}

\begin{remark}
In a similar fashion as in Definition \ref{Def:RSMap}, one can also define $O(N)$-equivariant maps. It is easy to see from the above lemma that, for $N \geq 3$ and $k \geq 0$, $SO(N)$-equivariant maps are $O(N)$-equivariant. For $N = 2$ and $k \geq 0$, $m \in L^1_{loc}(B^2;\RR^{2+k})$ is $O(2)$-equivariant if and only if there exist functions $f, g_1, \ldots g_k \in L^1_{loc}(0,1)$ such that
\[
m(x) = \Big(f(|x|) \frac{x}{|x|}  , g_1(|x|), \ldots, g_k(|x|)\Big) \text{ for almost all } x \in B^2.
\]
This is because the map $x \mapsto f_2(|x|) \frac{x^\perp}{|x|}$ is $O(2)$-invariant if and only if $f_2 = 0$, in view of the fact that $(Rx)^\perp = -R(x^\perp)$ with $R$ being the reflection about the $x_1$-axis, i.e. $R(x_1,x_2) = (x_1,-x_2)$.
\end{remark}

\begin{lemma}\label{Lem:ONSym}
Suppose $N \geq 2$, $\eps > 0$ and $W \in C^2((-\infty,1])$. If $m$ is a bounded\footnote{If $W$ satisfies the condition \eqref{Eq:WCond'}, then  the boundedness of $m$ is a consequence of Corollary \ref{Cor:fFinite}.} radially symmetric critical point of $E_{\eps}^{GL}$ in $\mcA^{GL}$, then $m \in C^2(\bar B^N)$ and takes the form
\[
m(x) = f(|x|) \frac{x}{|x|}
\]
for some $f \in C^2([0,1])$ with $\frac{f}{r} \in C^2([0,1])$. In particular, $f(0) = 0$ and $m$ is $O(N)$-equivariant.
\end{lemma}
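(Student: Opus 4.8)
The plan is to first use the equivariance hypothesis and the structural result of Lemma~\ref{Lem:RSRep} to reduce the form of $m$, then use the critical point equation to promote regularity. By Lemma~\ref{Lem:RSRep}(b), when $N = 2$ a radially symmetric $m \in L^1_{loc}(B^2,\RR^2)$ has the form $m(x) = f_1(|x|)\frac{x}{|x|} + f_2(|x|)\frac{x^\perp}{|x|}$; for $N \geq 3$ part~(a) of that lemma already gives $m(x) = f(|x|)\frac{x}{|x|}$ with a single radial profile, so the main extra step is to handle $N = 2$, namely to show $f_2 \equiv 0$. For this I would argue that the component $f_2(|x|)\frac{x^\perp}{|x|}$ is orthogonal (pointwise in the target) to $f_1(|x|)\frac{x}{|x|}$, and that the GL energy $E_\eps^{GL}$ is invariant under the reflection $R(x_1,x_2) = (x_1,-x_2)$ combined with the corresponding orthogonal change of target; since $m$ is assumed to be a \emph{bounded} critical point and (by uniqueness of the associated radial ODE, cf.\ \eqref{Eq:24VII18-X1}--\eqref{Eq:24VII18-X2} and Theorem~\ref{Thm:GLExist}) the profile $f_1$ is determined, the reflected map $\tilde m$ differs from $m$ only in the sign of $f_2$; one then shows $m$ and $\tilde m$ solve the same elliptic system and, invoking the energy comparison / strong maximum principle (or simply plugging the ansatz into the weak Euler--Lagrange equation and isolating the $x^\perp$-component, which gives a homogeneous linear ODE for $f_2$ with the boundary condition $f_2(1) = 0$ forced by $m(x) = x$ on $\partial B^2$), conclude $f_2 \equiv 0$. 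This is the step I expect to be the main obstacle, since it requires ruling out a genuinely different (``spiraling'') equivariant critical point, and the cleanest route is to observe that the weak critical point equation tested against $\varphi \in C_c^\infty(B^N \setminus \{0\},\RR^N)$ decouples into an equation for $f_1$ and the equation $f_2'' + \frac{1}{r}f_2' - \frac{1}{r^2}f_2 = -\frac{1}{\eps^2}W'(1 - f_1^2 - f_2^2)f_2$ for $f_2$, whence multiplying by $r f_2$ and integrating, together with $f_2(1) = 0$ and $W'(1 - |m|^2)(\cdot)$ being bounded, forces $f_2 \equiv 0$ once one knows $f_2$ does not blow up at the origin --- which follows a posteriori from the regularity obtained below, so a short bootstrap is needed.

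Once $m(x) = f(|x|)\frac{x}{|x|}$ is established in all dimensions $N \geq 2$, the next step is regularity. Since $m$ is a bounded critical point, the function $1 - |m|^2 = 1 - f(r)^2$ stays in the domain $(-\infty,1]$ where $W \in C^2$, so $W'(1 - |m|^2) \in L^\infty(B^N)$ and $m$ is a weak (distributional) solution of $\Delta m = -\frac{1}{\eps^2}W'(1 - |m|^2)m$ on $B^N \setminus \{0\}$ with $m \in H^1 \cap L^\infty$. Because the singular set $\{0\}$ has zero $H^1$-capacity (it is a point, $N \geq 2$), the equation in fact holds across the origin in the sense of distributions on all of $B^N$; I would invoke the standard removability argument (test functions vanishing near $0$ are dense in $H_0^1(B^N)$, which is already noted in the excerpt). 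Then $L^p$ elliptic regularity with the bounded right-hand side gives $m \in W^{2,p}_{loc}$ for all $p < \infty$, hence $m \in C^{1,\alpha}_{loc}$; the right-hand side $W'(1 - |m|^2)m$ is then $C^{1,\alpha}_{loc}$ since $W \in C^2$, so Schauder estimates bootstrap to $m \in C^{2,\alpha}_{loc}(B^N)$, and elliptic regularity up to the boundary (with the smooth boundary datum $m(x) = x$ on $\partial B^N$ and $\partial B^N$ smooth) gives $m \in C^2(\bar B^N)$.

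The final step is to transfer this regularity to the radial profile $f$ and to its ratio $f/r$. Writing $v(x) := m(x)$, we have $v$ is $C^2$ on $\bar B^N$ and $v(x) = f(|x|)\frac{x}{|x|}$; evaluating along a ray, $f(r) = v(re_N)\cdot e_N$ shows $f \in C^2([0,1])$, with $f(0) = 0$ (since $v(0)$ must be well-defined, and $v(0) = \lim_{r\to 0} f(r)\frac{x}{|x|}$ forces $f(0) = 0$). For $f/r \in C^2([0,1])$, I would use the identity $\frac{f(|x|)}{|x|} = v(x)\cdot \frac{x}{|x|^2}\cdot|x| $ --- more precisely, consider the first partial derivatives: differentiating $v_j(x) = f(r)\frac{x_j}{r}$ gives $\partial_k v_j = f'(r)\frac{x_j x_k}{r^2} + \frac{f(r)}{r}(\delta_{jk} - \frac{x_j x_k}{r^2})$, so evaluating at $x$ with $j \neq k$ chosen so that $x_j x_k \neq 0$, or better, taking the trace relation, one recovers $\frac{f(r)}{r}$ as a $C^1$ (indeed $C^2$, by repeating with second derivatives of $v \in C^2$) function of $x$, which being radial gives $f/r \in C^2([0,1])$; alternatively invoke Lemma~\ref{Lem:RSRep} together with the cited Appendix lemmas on radial representations of $C^2$ equivariant maps (cf.\ Lemma~\ref{lem:f2g2} and Lemma~\ref{Lem:FullONSym}, whose proofs contain exactly this passage). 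Finally, that $m$ is $O(N)$-equivariant is immediate once $f_2 \equiv 0$ (for $N = 2$) or automatic for $N \geq 3$, as observed in the Remark following Lemma~\ref{Lem:RSRep}. The only genuinely delicate point, as noted, is the vanishing of the spiraling component $f_2$ in dimension two; everything else is standard elliptic bootstrapping plus bookkeeping on radial representations.
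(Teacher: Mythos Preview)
Your overall architecture---representation via Lemma~\ref{Lem:RSRep}, removable singularity at the origin, then elliptic bootstrap to $m\in C^2(\bar B^N)$---matches the paper. However, your treatment of the key step $f_2\equiv 0$ in dimension $N=2$ has a genuine gap. The energy argument you describe (multiply the $f_2$-equation by $rf_2$ and integrate) yields, after the integration by parts,
\[
\int_0^1 \Big[r(f_2')^2+\frac{1}{r}f_2^2\Big]\,dr=\int_0^1 \frac{r}{\eps^2}W'(1-|m|^2)\,f_2^2\,dr,
\]
and this forces $f_2=0$ only if the right-hand side is nonpositive. Under the sole hypothesis $W\in C^2((-\infty,1])$ of Lemma~\ref{Lem:ONSym} (no convexity, no sign on $W'$), there is nothing preventing $W'(1-|m|^2)\geq 0$, and for the standard potential $W(t)=t^2/2$ with $|m|\leq 1$ it \emph{is} nonnegative. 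Your alternative suggestions (reflection symmetry combined with uniqueness of the radial profile from Theorem~\ref{Thm:GLExist}) also require \eqref{Eq:WCond'}, which is not assumed here; and reflection alone produces another critical point, not necessarily equal to $m$.

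The paper closes this gap by a different, hypothesis-free mechanism: since each component $m_j$ satisfies $\Delta m_j=-\eps^{-2}W'(1-|m|^2)m_j$ with the \emph{same} scalar coefficient, one has the conservation law $\nabla\cdot(m_1\nabla m_2-m_2\nabla m_1)=0$ in $B^2$. Integrating over $B_r$ gives the Wronskian identity $f_1 f_2'-f_1' f_2=0$ in $(0,1)$. Near $r=1$ where $f_1>0$ this says $(f_2/f_1)'=0$; since $f_2(1)=0$ and $f_1(1)=1$, one gets $f_2\equiv 0$ on an interval $(r_1,1]$, hence $f_2(1)=f_2'(1)=0$, and uniqueness for the (regular at $r=1$) linear ODE \eqref{try} gives $f_2\equiv 0$ globally. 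This argument uses no sign or structural assumption on $W$ beyond $C^2$.

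For the regularity of $f/r$, your derivative-tracing sketch can be made to work but is awkward. The paper's route is cleaner: setting $v=f/r$ and using the ODE for $f$, one finds $v''+\frac{N+1}{r}v'+d(r)v=0$ with $d=\eps^{-2}W'(1-f^2)\in C^1([0,1])$; i.e.\ $v$ is a radially symmetric solution of $\Delta v+dv=0$ on the $(N+2)$-dimensional ball. Since $r^{(N-3)/2}f\in L^2(0,1)$ (from $m\in H^1(B^N)$) translates into $v\in H^1(B^{N+2})$, the equation holds across the origin there, and elliptic regularity gives $v\in C^2(\bar B^{N+2})$, hence $f/r\in C^2([0,1])$.
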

 
\begin{lemma}\label{Lem:FullONSym}
Suppose $N \geq 2$, $\eps, \eta > 0$, $W \in C^2((-\infty,1])$ and $\tilde W \in C^2([0,\infty))$. If $m$ is a bounded\footnote{If $W$ and $\tilde W$ satisfy the conditions \eqref{Eq:WCond'}-\eqref{Eq:TWCond'}, then  the boundedness of $m$ follows from Lemma~\ref{lem:f2g2}.} radially symmetric critical point of $E_{\eps,\eta}$ in $\mcA$, then $m \in C^2(\bar B^N)$ and takes the form
\[
m(x) = (f(|x|) \frac{x}{|x|} ,g(|x|))
\]
for some $f, g \in C^2([0,1])$ with $\frac{f}{r} \in C^2([0,1])$. In particular, $f(0) = 0$, $g'(0) = 0$ and $m$ is $O(N)$-equivariant.
\end{lemma}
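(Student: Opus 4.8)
\textbf{Proof plan for Lemma \ref{Lem:FullONSym}.}
The plan is to combine the abstract characterization of radially symmetric $H^1$-maps from Lemma \ref{Lem:RSRep} with elliptic regularity to upgrade the a priori merely-measurable radial profiles to $C^2$ profiles with the claimed structure at the origin. First I would invoke Lemma \ref{Lem:RSRep}(a) (for $N \geq 3$) and Lemma \ref{Lem:RSRep}(b) (for $N = 2$) to write a radially symmetric bounded critical point $m$ of $E_{\eps,\eta}$ in the form $m(x) = (f_1(|x|)\frac{x}{|x|} + f_2(|x|)\frac{x^\perp}{|x|}, g(|x|))$ for $L^1_{loc}$ profiles $f_1, f_2, g$, with the convention $f_2 \equiv 0$ when $N \geq 3$. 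Since $m \in H^1(B^N,\RR^{N+1})$ by hypothesis and is bounded, the weak Euler--Lagrange equation $\langle DE_{\eps,\eta}[m],\varphi\rangle = 0$ for all $\varphi \in C_c^\infty(B^N\setminus\{0\},\RR^{N+1})$ reads
\[
\Delta m = -\frac{1}{\eps^2} W'(1 - |m|^2) m + \frac{1}{\eta^2} \tilde W'(m_{N+1}^2) m_{N+1} e_{N+1} \quad \text{in } B^N \setminus \{0\}
\]
in the sense of distributions. Because $m \in L^\infty \cap H^1$ and $W, \tilde W \in C^2$, the right-hand side lies in $L^\infty(B^N)$; moreover $m \in H^1(B^N)$ means the origin is a removable singularity (a point has zero $H^1$-capacity in dimension $N \geq 2$), so the equation holds across $B^N$. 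Then standard elliptic regularity bootstraps: $m \in W^{2,p}_{loc}$ for all $p$, hence $m \in C^{1,\alpha}_{loc}$, and feeding this back through the $C^2$ regularity of $W, \tilde W$ gives $m \in C^2_{loc}(B^N)$; combined with the smooth boundary data $(x,0)$ on $\partial B^N$ and $C^2$ boundary regularity, $m \in C^2(\bar B^N)$.

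Next I would extract the structure of the profiles. From $m \in C^2(\bar B^N)$ and the representation above, $g(r) = m_{N+1}(x)$ for $|x| = r$ is $C^2$ on $(0,1]$ and extends to an even $C^2$ function on $(-1,1)$ by radial symmetry, giving $g \in C^2([0,1])$ and $g'(0) = 0$. For the in-plane part, write $P(x) := m(x) - (0,\dots,0,g(|x|)) = (f_1(|x|)\frac{x}{|x|} + f_2(|x|)\frac{x^\perp}{|x|}, 0)$, which is $C^2(\bar B^N)$ valued in $\RR^N \times \{0\}$. Evaluating $P$ along the ray $x = r e_1$ (and $x = r e_2$ when $N = 2$) shows $r \mapsto f_1(r)$ and $r \mapsto f_2(r)$ are $C^2$ on $(0,1]$. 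To handle the origin: the map $x \mapsto \frac{P(x)}{|x|}$ has $i$-th component (for $1 \le i \le N$) equal to $\frac{f_1(r)}{r}\frac{x_i}{r} + (\text{rotation of } f_2\text{-term})$; since $P \in C^2$ and $P(0) = 0$ (continuity at $0$ forces $f_1(0) = f_2(0) = 0$), one has $\frac{P(x)}{|x|} = \int_0^1 \nabla P(tx)\,dt \cdot \frac{x}{|x|}$, which is bounded near $0$, and a further argument using that $\nabla P$ is $C^1$ shows $x \mapsto \frac{P(x)}{|x|}$ is $C^1$; iterating (using $P \in C^2$) gives that $\frac{P(x)}{|x|}$ is $C^2$ on $\bar B^N$, whence $\frac{f_1}{r}, \frac{f_2}{r} \in C^2([0,1])$ and in particular $f_1(0) = f_2(0) = 0$.

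It remains to show $f_2 \equiv 0$ (including in the case $N = 2$), so that $m(x) = (f(|x|)\frac{x}{|x|}, g(|x|))$ with $f = f_1$, which then yields $O(N)$-equivariance by the Remark following Lemma \ref{Lem:RSRep}. I expect this to be the main obstacle in dimension $N = 2$. The idea is to plug the ansatz into the equation: the in-plane part $P$ solves $\Delta P = -\frac{1}{\eps^2} W'(1 - |m|^2) P$ in $B^N$, and writing $P = (f_1(r) n + f_2(r) n^\perp, 0)$ and using $\Delta(f_1 n) = (f_1'' + \frac{N-1}{r} f_1' - \frac{N-1}{r^2} f_1) n$, $\Delta(f_2 n^\perp) = (f_2'' + \frac{N-1}{r} f_2' - \frac{N-1}{r^2} f_2) n^\perp$ (valid for $N = 2$ with $n^\perp$ the rotated radial field), the $n$ and $n^\perp$ components decouple into the same scalar ODE
\[
f_j'' + \frac{N-1}{r} f_j' - \frac{N-1}{r^2} f_j = -\frac{1}{\eps^2} W'(1 - |m|^2) f_j, \qquad j = 1,2.
\]
For $j = 2$ this is a linear homogeneous second-order ODE with $f_2(1) = 0$ (boundary condition $m = (x,0)$ on $\partial B^N$) and $\frac{f_2}{r} \in C^2([0,1])$ so $f_2(0) = 0$; the solution space with these two boundary/regularity conditions is one-dimensional, but the condition $\frac{f_2}{r}$ bounded at $0$ selects the regular branch and together with $f_2(1) = 0$ forces $f_2 \equiv 0$ — concretely, testing the ODE for $f_2$ against $r^{N-1} f_2$ and integrating by parts over $(0,1)$ (legitimate since $r^{(N-1)/2} f_2', r^{(N-3)/2} f_2 \in L^2$ as $m \in H^1$), together with $W'(1 - |m|^2) \le 0$ is \emph{not} automatic, so instead I would use that $f_2$ satisfies $(r^{N+1}(f_2/r)')' = -\frac{r^{N+1}}{\eps^2} W'(1-|m|^2)(f_2/r)$ with the potential bounded, and invoke uniqueness for the initial value problem $f_2(0) = f_2'(0) = 0$ — but $f_2'(0)$ need not vanish, so the cleanest route is: the two linearly independent solutions of the $f_2$-ODE near $r = 0$ behave like $r$ and $r^{-(N-1)}$ (for $N \ge 2$, $r^{-1}$ when $N = 2$), the latter being incompatible with $\frac{f_2}{r} \in L^\infty$, so $f_2$ is a multiple of the $r$-branch, which is uniquely determined up to scaling; since that branch with $f_2(1) = 0$ can only be the trivial one (as $f_2 \not\equiv 0$ on the regular branch would have $f_2(r)/r \to c \ne 0$ and one checks $f_2$ cannot then vanish at $r=1$ unless $c = 0$, using a Sturm-type or energy argument with the boundedness of the potential), we conclude $f_2 \equiv 0$. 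Finally, with $f_2 \equiv 0$ we set $f := f_1$, obtain $m(x) = (f(|x|)\frac{x}{|x|}, g(|x|))$ with $f, g \in C^2([0,1])$, $\frac{f}{r} \in C^2([0,1])$, $f(0) = 0$, $g'(0) = 0$, and $O(N)$-equivariance follows from the Remark after Lemma \ref{Lem:RSRep}. \eproof
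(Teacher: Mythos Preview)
Your overall architecture matches the paper's: use Lemma~\ref{Lem:RSRep}, bootstrap the PDE to get $m\in C^2(\bar B^N)$, then analyze the profiles. But two steps contain genuine gaps.

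\textbf{The claim $\frac{f}{r}\in C^2([0,1])$ does not follow from $m\in C^2$ alone.} Your Taylor/integral-remainder argument breaks down at second order. Concretely, take $f(r)=r+r^{5/2}$: then $m_1(x)=x_1(1+|x|^{3/2})$ has all second partials bounded by $C|x|^{1/2}$ and hence lies in $C^2(\bar B^N)$, yet $f(r)/r=1+r^{3/2}$ has $(f/r)''=\tfrac34 r^{-1/2}\to\infty$. So the equation is essential here. The paper's device is to set $v=f/r$, observe that the ODE for $f$ becomes $v''+\tfrac{N+1}{r}v'+d(r)v=0$ with $d=\tfrac1{\eps^2}W'(1-|m|^2)\in C^1([0,1])$, recognize this as $\Delta v+dv=0$ for $v$ viewed as a radial function on $B^{N+2}$, note that $r^{(N-1)/2}f',\,r^{(N-3)/2}f\in L^2(0,1)$ gives $v\in H^1(B^{N+2})$, and then invoke elliptic regularity in dimension $N+2$ to conclude $v\in C^2$.

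\textbf{The argument that $f_2\equiv 0$ (for $N=2$) is incomplete.} You correctly derive that $f_1$ and $f_2$ satisfy the \emph{same} linear ODE and that $f_2$ must lie on the one-dimensional regular branch near $r=0$. But your final step --- ``the regular branch with $f_2(1)=0$ can only be the trivial one\ldots\ by a Sturm-type or energy argument with boundedness of the potential'' --- is false in general: for large enough potential the regular solution certainly can vanish at $r=1$ (this is exactly what happens at an eigenvalue). The missing observation is already in your hands: $f_1$ is \emph{also} a regular solution of that same ODE, with $f_1(1)=1\neq 0$. Since the regular branch is one-dimensional, $f_2=c f_1$ for some constant $c$, and then $0=f_2(1)=c$. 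The paper obtains the same linear dependence via the conservation law $\nabla\cdot(-m_2\nabla m_1+m_1\nabla m_2)=0$, which after integrating over $B_r$ yields $f_1 f_2'-f_1' f_2=0$; near $r=1$ where $f_1>0$ this gives $(f_2/f_1)'=0$ and hence $f_2=0$ there, then ODE uniqueness from $f_2(1)=f_2'(1)=0$ propagates $f_2\equiv 0$ to all of $(0,1)$.
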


We will only give the proof of the latter one. The proof of the other one requires minor modifications and is omitted.

\begin{proof}[Proof of Lemma \ref{Lem:FullONSym}.] As a bounded radially symmetric critical point of $E_{\eps,\eta}$, $m$ satisfies
\begin{equation}
\left\{\begin{array}{rcl} -\Delta m - \frac{1}{\eps^2} W'(1 - |m|^2) m + \frac{1}{\eta^2} \tilde W'(m_{N+1}^2) m_{N+1} e_{N+1} &=& 0 \text{ in } B^N \setminus \{0\},\\
m(x) & =& x \text{ on } \partial B^N.
\end{array}\right.
	\label{Eq:mPDE}
\end{equation}
Due to $m\in H^1\cap L^\infty(B^N)$ (in particular, $W'(1 - |m|^2),  \tilde W'(m_{N+1}^2)\in L^\infty(B^N)$), it follows that \eqref{Eq:mPDE} holds in all of $B^N$, and, by elliptic regularity theory, $m \in C^2(\bar B^N)$.

On the other hand, using Lemma \ref{Lem:RSRep} and the regularity of $m$, we write
\begin{equation}
m(x) = \left\{\begin{array}{ll}  \Big(f_1(|x|) \frac{x}{|x|} + f_2(|x|) \frac{x^\perp}{|x|} , g(|x|)\Big) &\text{ if } N = 2,\\
	\Big(f_1(|x|) \frac{x}{|x|} , g(|x|)\Big) &\text{ if } N \geq 3,
\end{array}\right.
	\label{Eq:M2-24V21}
\end{equation}
where $f_1, f_2 \in C^2\cap L^\infty((0,1])$ and $g \in C^2([0,1])$ with $f_1(0) = f_2(0) = 0$ and $g'(0) = 0$. 

To conclude, we show that $\frac{f_1}{r} \in C^2([0,1])$ and, when $N = 2$, $f_2 = 0$ in $(0,1)$.

Let us show that $f_2 = 0$ in $(0,1)$ when $N = 2$. We use ideas from the proof of \cite[Proposition 2.3]{INSZ_AnnIHP}. From \eqref{Eq:mPDE}, we have that 
\[
\nabla \cdot (- m_2 \nabla m_1 + m_1 \nabla m_2) =  (m_1, m_2)^\perp  \cdot \Delta (m_1,m_2)= 0 \text{ in } B^2.
\]
Integrating over balls $B_r$ of radius $r \in (0,1)$, the Gauss formula yields
\begin{equation}
\int_{\partial B_r} (m_1, m_2)^\perp \cdot  \partial_r (m_1,m_2)\,dS = \int_{\partial B_r} (- m_2 \partial_r m_1 + m_1 \partial_r m_2 )\,dS = 0.
	\label{Eq:M1-24V21}
\end{equation}
Using \eqref{Eq:M2-24V21} in \eqref{Eq:M1-24V21}, we obtain
\begin{equation}
-f_1'f_2 + f_2' f_1 = 0 \text{ in } (0,1).
	\label{Eq:M3-24V21}
\end{equation}
Since $f_1(1) = 1$, we have that $f_1 > 0$ in some interval $(r_1, 1)$ with $0 \leq r_1 < 1$. Dividing \eqref{Eq:M3-24V21} by $f_1^2$ in $(r_1, 1)$, we get $(f_2/f_1)' = 0$, and using the fact that $f_2(1) = 0$, we have $f_2 = 0$ in $(r_1,1)$. In particular $f_2'(1) = 0$. Now, by \eqref{Eq:mPDE}, we have that
\be
\label{try}
f_2'' + \frac{N-1}{r} f_2' + c(r) f_2 = 0 \text{ in } (0,1),
\ee
where $c(r) :=  - \frac{N-1}{r^2}  + \frac{1}{\eps^2} W'(1 - f_1^2 - f_2^2 - g^2)$ belongs to $C^1((0,1])$. Since $f_2(1) = f_2'(1) = 0$, standard uniqueness results for ODEs implies that $f_2 = 0$ in $(0,1)$ as desired.

Let us show next that $\frac{f_1}{r} \in C^2([0,1])$ for any $N \geq 2$. By \eqref{Eq:mPDE} and \eqref{Eq:M2-24V21}, we have
\[
f_1'' + \frac{N-1}{r} f_1' +  \Big(- \frac{N-1}{r^2}  + \frac{1}{\eps^2} W'(1 - f_1^2 - g^2)\Big) f_1 = 0 \text{ in } (0,1).
\]
Setting $v = \frac{f_1}{r}$ and $d = \frac{1}{\eps^2} W'(1 - f_1^2 - g^2)= \frac{1}{\eps^2} W'(1 -|m|^2)\in C^1([0,1])$ (as $m\in C^2(\bar B^N)$), we then have 
\[
v'' + \frac{N+1}{r} v' + d(r) v(r) = 0 \text{ in } (0,1).
\]
Considering $v$ as a radially symmetric function on the $(N+2)$-dimensional ball $B^{N+2}$, we have that $v$ satisfies $\Delta v + dv = 0$ in $B^{N+2} \setminus \{0\}$. On the other hand, since $m  \in H^1(B^N)$, we have $r^{\frac{N-1}{2}} f_1', r^{\frac{N-3}{2}}f_1 \in L^2(0,1)$ and so $v \in H^1(B^{N+2})$. It follows that $\Delta v + dv = 0$ in $B^{N+2}$ and since $d\in  C^1([0,1])$, we deduce that $v \in C^2(B^{N+2})$. The conclusion follows.
\end{proof}

\begin{lemma}\label{Lem:MMONSym}
Suppose $N \geq 2$, $\eta > 0$, and $\tilde W \in C^2([0,1])$. If $m$ is a radially symmetric critical point of $E_{\eta}^{MM}$ in $\mcA^{MM}$, then $m$ takes the form
\begin{equation}
m(x) = (f(|x|) \frac{x}{|x|} ,g(|x|))
	\label{Eq:mF-25VI21}
\end{equation}
for some $f, g \in C^2_{loc}((0,1])$ with $f^2 + g^2 = 1$ and $r^{\frac{N-1}{2}}(|f'| + |g'|) + r^{\frac{N-3}{2}}|f| \in L^2(0,1)$. In particular, $m$ is $O(N)$-equivariant. Furthermore, either $\frac{f}{r}, g \in C^2([0,1])$ or both $(f,g) \equiv (1,0)$ and $N \geq 3$, where in the former case one has also that $m \in C^2(\bar B^N)$, $f(0) = 0$ and $g'(0) = 0$.
\end{lemma}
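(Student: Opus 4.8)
The plan is to establish the representation \eqref{Eq:mF-25VI21} first, and then analyze the regularity dichotomy. Since a radially symmetric critical point $m$ of $E_\eta^{MM}$ in $\mcA^{MM}$ satisfies $|m| = 1$ a.e., Lemma \ref{Lem:RSRep} applies and gives (for $N \geq 3$) the form $m(x) = (f(|x|)\tfrac{x}{|x|}, g(|x|))$ with $f^2 + g^2 = 1$ a.e., while for $N = 2$ it gives an extra component $f_2(|x|)\tfrac{x^\perp}{|x|}$. The membership $m \in H^1(B^N,\Sphere^N)$ translates into the weighted integrability $r^{\frac{N-1}{2}}(|f'| + |g'|) + r^{\frac{N-3}{2}}|f| \in L^2(0,1)$ (and the analogous bound on $f_2$). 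Away from the origin, $m$ solves the harmonic-map-type system with Lagrange multiplier $\lambda(r)$ as in \eqref{Eq:metaform}--\eqref{Eq:lamDef}; since $\tilde W \in C^2$ and $|m| = 1$, the coefficients are $C^1$ on compact subsets of $(0,1]$, so elliptic regularity yields $f, g \in C^2_{loc}((0,1])$ (and $f_2 \in C^2_{loc}$ when $N = 2$).

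Next I would rule out the $\tfrac{x^\perp}{|x|}$ term when $N = 2$, exactly as in the proof of Lemma \ref{Lem:FullONSym}. The constraint $|m| = 1$ together with the Euler--Lagrange system gives, after projecting onto $(m_1, m_2)^\perp$, the identity $\nabla \cdot(-m_2 \nabla m_1 + m_1 \nabla m_2) = 0$ in $B^2 \setminus \{0\}$; integrating over circles $\partial B_r$ gives $-f_1' f_2 + f_2' f_1 = 0$ in $(0,1)$. Since $f_1^2 + f_2^2 = 1$ and $f_1(1) = 1$, $f_1 > 0$ near $r = 1$, so $(f_2/f_1)' = 0$ there and $f_2(1) = 0$ forces $f_2 \equiv 0$ near $1$, hence $f_2'(1) = 0$; then $f_2$ solves a linear second-order ODE with $C^1$ coefficients on compact subsets and vanishing Cauchy data at $r = 1$, so $f_2 \equiv 0$ on $(0,1)$ by ODE uniqueness. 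This reduces the $N = 2$ case to the form \eqref{Eq:mF-25VI21}, so henceforth $m = (f(r)n(x), g(r))$ for all $N \geq 2$, with $f^2 + g^2 = 1$.

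Now I would address the dichotomy. Writing $\tilde f_\eta := f$, $g_\eta := g$, the profile $f$ solves \eqref{Eq:MM-fee}, which near $r = 0$ (using that $\lambda \geq 0$ and $|f| \leq 1$) behaves like $f'' + \tfrac{N-1}{r}f' - \tfrac{N-1}{r^2}f = (\text{bounded-ish})\cdot f$; the indicial roots of the homogeneous operator are $1$ and $-(N-1)$. The $H^1$ condition $r^{\frac{N-3}{2}}f \in L^2$ excludes the singular branch $r^{-(N-1)}$, so either $f(r)/r$ stays bounded near $0$ (the ``$f(0) = 0$'' case) or $f$ tends to a nonzero limit. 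In the latter situation, since $|f| \leq 1$ and $f(1) = 1$, one argues $f \equiv 1$: the function $\theta$ with $f = \sin\theta$, $g = \cos\theta$ (possible where $f$ is bounded away from $-1$, and one first shows $g$ has constant sign via the strong maximum principle applied to \eqref{Eq:MM-gee} read as a PDE on $B^N$, exactly as in Remark \ref{Rem:R1}) satisfies \eqref{Eq:ThetaODE}, and a solution with $\liminf_{r\to0}\theta(r) = \pi/2 = \theta(1)$ and $\theta \leq \pi/2$ must be $\equiv \pi/2$ by the maximum principle; hence $(f,g) \equiv (1,0)$. In that case $r^{\frac{N-3}{2}} \in L^2(0,1)$ forces $N \geq 3$. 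In the complementary case $f(0) = 0$, I would promote the regularity exactly as in Lemma \ref{Lem:FullONSym}: set $v = f/r$, observe $v$ solves $v'' + \tfrac{N+1}{r}v' + d(r)v = 0$ on $(0,1)$ with $d \in C^1([0,1])$ coming from $\tilde W'$ (here one needs $m \in C^2(\bar B^N)$, which follows since now $f(0) = 0$ makes $m$ continuous at $0$, so the PDE \eqref{Eq:mPDE}-type equation holds across $0$ and elliptic regularity applies), view $v$ as a radial function on $B^{N+2}$, use $v \in H^1(B^{N+2})$ from the weighted $L^2$ bounds to conclude $\Delta v + dv = 0$ holds on all of $B^{N+2}$, and get $v \in C^2(B^{N+2})$, i.e. $f/r \in C^2([0,1])$; similarly $g \in C^2([0,1])$ with $g'(0) = 0$ since $g$ extends to an even function. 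The $O(N)$-equivariance is immediate from the form \eqref{Eq:mF-25VI21} and the remark following Lemma \ref{Lem:RSRep}.

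The main obstacle I anticipate is handling the case $f(0) \neq 0$ cleanly: proving that boundedness of $f$ near the origin combined with $f(1) = 1$ and $|f| \leq 1$ forces $(f,g) \equiv (1,0)$ requires first establishing that $g$ cannot change sign (strong maximum principle on the PDE \eqref{Eq:MM-gee}, noting $\tilde W' \geq 0$) and then a careful maximum-principle argument for the lifting $\theta$, paying attention to the fact that a priori $f$ could oscillate or that $\theta$ is only defined where $f > -1$; one must verify $\theta$ is globally defined and $C^2$ on $(0,1]$ before invoking \eqref{Eq:ThetaODE}. The rest is routine adaptation of Lemmas \ref{Lem:RSRep} and \ref{Lem:FullONSym}.
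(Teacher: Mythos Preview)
Your outline for the representation \eqref{Eq:mF-25VI21} and the elimination of the $x^\perp$-component when $N=2$ matches the paper. The dichotomy argument, however, has several genuine gaps.

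\textbf{The indicial-root heuristic is not justified.} You write that the ODE for $f$ ``behaves like $f'' + \tfrac{N-1}{r}f' - \tfrac{N-1}{r^2}f = (\text{bounded-ish})\cdot f$'' near $r=0$. But the Lagrange multiplier $\lambda(r)$ in \eqref{Eq:lamDef} contains $(f')^2$, $(g')^2$ and $\tfrac{N-1}{r^2}f^2$, none of which is a priori bounded near the origin; so you cannot read off the behaviour of $f$ from the indicial roots of the Euler operator. The paper sidesteps this by passing to the lifting $\theta$ with $f=\sin\theta$, $g=\cos\theta$: the ODE \eqref{Eq:ThetaODE} for $\theta$ has a right-hand side that is genuinely bounded, and this is what makes the analysis tractable.

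\textbf{Continuity of $\theta$ at $0$ is not free.} Even with the $\theta$-equation in hand, the paper needs real work (for $N\geq 3$) to show $\theta\in C([0,1])$ and $\theta(0)\in\tfrac{\pi}{2}\ZZ$: integrating \eqref{Eq:ThetaODE} gives only $|\theta'|\leq C/r$ and $|\theta|\leq C(1+|\log r|)$, and one then introduces the Pohozaev-type quantity $P(r)=r^2(\theta')^2+(N-1)\cos^2\theta-\tfrac{r^2}{\eta^2}\tilde W(\cos^2\theta)$, uses its monotonicity to get $r(\theta')^2\in L^1$, and bootstraps to $r\theta'(r)\to 0$.

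\textbf{The equator case.} When $\theta(0)$ is an odd multiple of $\pi/2$, your plan (``$\theta\leq\pi/2$ and $\liminf_{r\to0}\theta=\pi/2$, so $\theta\equiv\pi/2$ by the maximum principle'') is not available: the lemma assumes only $\tilde W\in C^2$, so you cannot invoke $\tilde W'\geq 0$ to force $g$ to have constant sign, nor do you have $\theta\leq\pi/2$ a priori. The paper instead uses the Pohozaev quantity $P$ again to deduce $P\equiv 0$ near $0$ and hence $\theta'(1)=0$, after which ODE uniqueness at $r=1$ gives $\theta\equiv\pi/2$.

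\textbf{The regular case has a circularity.} In the $f(0)=0$ branch you assert ``$d\in C^1([0,1])$\dots here one needs $m\in C^2(\bar B^N)$, which follows since now $f(0)=0$ makes $m$ continuous at $0$''. Continuity of $m$ at $0$ does not give $\lambda\in L^\infty$ (hence not $m\in C^2$): $\lambda$ contains $|\nabla m|^2$. The paper first proves $\theta'\in L^\infty(0,1)$ by comparing $\theta-\theta(0)$ with explicit barriers $r^{1-\delta}$ and $r-Ar^2$ for the linearised operator, which yields $m\in C^{0,1}$ and $\lambda\in L^\infty$; only then does the bootstrap you describe go through.
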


\begin{proof} We adapt the proof of Lemma \ref{Lem:FullONSym}. Without loss of generality, we may assume that $\tilde W(0) = 0$. As a critical point of $E_{\eta}^{MM}$ in $\mcA^{MM}$, $m$ satisfies
\begin{equation}
\left\{\begin{array}{rcl} -\Delta m - \lambda(x) m + \frac{1}{\eta^2} \tilde W'(m_{N+1}^2) m_{N+1} e_{N+1} &=& 0 \text{ in } B^N,\\
m(x) & =& x \text{ on } \partial B^N,
\end{array}\right.
	\label{Eq:mMMPDE}
\end{equation}
where $\lambda = |\nabla m|^2 + \frac{1}{\eta^2} \tilde W'(m_{N+1}^2) m_{N+1}^2 \in L^1(B^N)$. By Lemma \ref{Lem:RSRep}, $m$ takes the form \eqref{Eq:M2-24V21}. In particular, $\lambda=\lambda(r)\in L^1_{loc}((0,1])$, which together with \eqref{Eq:mMMPDE} (recast as ODEs for $f_1, f_2, g$) implies that $f_1'', f_2'', g'' \in L^1_{loc}((0,1])$ where $f_2$ is absent when $N\geq 3$. This in turn implies that $\lambda\in C^0((0,1])$ and then again, by regularity theory, $f_1, f_2, g \in C^2((0,1])$ (and hence $m \in C^2(\bar B^N \setminus \{0\})$. Next, as in the proof of Lemma \ref{Lem:FullONSym}, when $N = 2$, we prove that \eqref{Eq:M1-24V21}-\eqref{Eq:M3-24V21} hold also here yielding $f_2=0$ in $(0,1)$. We have thus shown that $m$ has the form \eqref{Eq:mF-25VI21} where $f^2 + g^2 = 1$, $r^{\frac{N-1}{2}}(|f'| + |g'|) + r^{\frac{N-3}{2}}|f| \in L^2(0,1)$, and $f, g \in C^2((0,1])$.

\medskip
\noindent
\underline{Step 1:} {\it We prove that $f, g \in C([0,1])$.} We distinguish the cases $N = 2$ and $N \geq 3$. 

\medskip

\noindent {\it Case 1: $N = 2$}. It is known that the continuity of $m$ in $\bar B^2$ can be proved using Wente's lemma (see e.g. H\'elein \cite{Helein} or Carbou \cite[Theorem 1]{Carbou97-CVPDE}). However, in this ODE setting, the continuity of $f$ (and hence of $g$) in $[0,1]$ is a consequence of the fact that $r^{\frac{1}{2}}|f'|  + r^{-\frac{1}{2}}|f| \in L^2(0,1)$, since
\[
|f^2(r_1) - f^2(r_2)| 
	\leq 2\int_{r_2}^{r_1} |f'(r)||f(r)|dr 
	\leq \int_{r_2}^{r_1} (r|f'(r)|^2 + \frac{1}{r}|f(r)|^2)\,dr
		 \stackrel{r_1,r_2 \rightarrow 0}{\longrightarrow} 0.
\]
Also, since $r^{-\frac{1}{2}}|f| \in L^2(0,1)$, we also have that $f(0) = 0$. It follows that $m \in C(\bar B^2)$. 

\medskip

\noindent {\it Case 2: $N \geq 3$}. As $f, g\in C^2((0,1])$ and $f^2 + g^2 = 1$, we can find a lifting $\theta \in C^2((0,1])$ such that $r^{\frac{N-1}{2}}|\theta'| \in L^2(0,1)$, $f = \sin \theta$, $g = \cos\theta$ in $(0,1]$ and $\theta(1) = \pi/2$. (To prepare for Steps 2 and 3 later on, we note that the existence of such a lifting $\theta$ also holds for $N = 2$ where we have in addition to the above that $\theta \in C([0,1])$, $r^{-1/2}\sin\theta \in L^2(0,1)$ and $\theta(0) \in \pi\mathbb{Z}$.)

A direct computation using \eqref{Eq:mMMPDE} gives
\begin{equation}
\theta'' + \frac{N-1}{r} \theta' - \frac{N-1}{r^2} \sin \theta\,\cos\theta  + \frac{1}{\eta^2} \tilde W'(\cos^2\theta) \sin\theta\cos\theta = 0 \text{ in } (0,1).
	\label{Eq:T1-21VI21}
\end{equation}
Set $F(r) =  [(N-1)  - \frac{1}{\eta^2} r^2\tilde W'(\cos^2\theta(r)) ]\sin \theta(r)\,\cos\theta(r) \in L^\infty(0,1)$ so that \eqref{Eq:T1-21VI21} is equivalent to $(r^{N-1} \theta')' = F(r) r^{N-3}$. Therefore, for some constant $c$,
\[
\theta'(r) = \frac{c}{r^{N-1}} +  \frac{1}{r^{N-1}} \int_0^r F(s)\,s^{N-3}\,ds = \frac{c}{r^{N-1}} + O(\frac{1}{r}) \text{ as } r\rightarrow 0.
\]
Using that $r^{\frac{N-1}{2}}|\theta'| \in L^2(0,1)$, we deduce that $c = 0$ and
\begin{equation}
\theta'(r) = \frac{1}{r^{N-1}} \int_0^r F(s)\,s^{N-3}\,ds.
	\label{Eq:TInt-0}
\end{equation}
It follows that, for some positive constant $C$ independent of $r$,
\begin{equation}
|\theta'(r)| \leq \frac{C}{r} \quad\text{ and }\quad |\theta(r)| \leq C(1 + |\log r|) \text{ in }(0,1).
	\label{Eq:TInt-1}
\end{equation}

\medskip

\noindent 
{\it Claim:} We prove that $\theta \in C([0,1])$ and $\theta(0)= \frac{k\pi}2$ for some $k\in \ZZ$.

\medskip

\noindent 
{\it Proof of Claim}: Indeed, let 
\[
P(r) = r^2 (\theta')^2 + (N-1) \cos^2\theta - \frac{r^2}{\eta^2} \tilde W(\cos^2\theta).
\] 
By \eqref{Eq:TInt-1}, $P \in L^\infty(0,1)$. Multiplying \eqref{Eq:T1-21VI21} by $2r^2 \theta'$, we see that 
\begin{equation}
P'(r) = - 2(N-2)r (\theta')^2 - \frac{2r}{\eta^2} \tilde W(\cos^2\theta).
	\label{Eq:Pohozaev}
\end{equation}
In particular, the function $\tilde P(r) := P(r) + \int_0^r \frac{2s}{\eta^2} \tilde W(\cos^2\theta(s))\,ds$ satisfies $\tilde P\in L^\infty(0,1)$ and $\tilde P'(r) = - 2(N-2)r (\theta')^2 \leq 0$. It follows that $r (\theta')^2 =  \frac{1}{2(N-2)} |\tilde P'| \in L^1(0,1)$ and $\tilde P, P \in W^{1,1}(0,1)\subset  C([0,1])$.

By \eqref{Eq:TInt-0} and integrating by parts,
\begin{align*}
\theta'(r) 
	&= \frac{F(r)}{(N-2) r} -  \frac{1}{(N-2) r^{N-1}} \int_0^r F'(s)\,s^{N-2}\,ds.
\end{align*}
Since $|F'(r)|\leq C(|\theta'(r)|+r)$ for every $r\in (0,1)$, we obtain
\begin{align*}
|F(r)| 
	&= \Big|(N-2)r \theta'(r) + \frac{1}{r^{N-2}} \int_0^r F'(s)\,s^{N-2}\,ds\Big|\\
	&\leq Cr^2 +  C r |\theta'(r)| +  \frac{C}{r^{N-2}} \int_0^r |\theta'(s)|\,s^{N-2}\,ds.
\end{align*}
Noting that, by Cauchy-Schwarz' inequality,
\[
\int_0^r |\theta'(s)| s^{N-2}\,ds \leq Cr^{N-2} \Big(\int_0^r s|\theta'(s)|^2\,ds\Big)^{1/2},
\]
we deduce from the above bound for $|F|$ that
\begin{align*}
\int_0^r |F(s)|\,s^{N-3}\,ds
	&\leq Cr^{N} + C\underbrace{\int_0^r |\theta'(s)|\,s^{N-2}\,ds}_{\leq Cr^{N-2} (\int_0^r s|\theta'(s)|^2\,ds)^{1/2}} + C\int_0^r \underbrace{\frac{1}{s} \int_0^s  |\theta'(t)|\,t^{N-2}\,dt}_{\leq Cs^{N-3}(\int_0^r t|\theta'(t)|^2\,dt)^{1/2}}\,ds\\
	&\leq Cr^{N} + Cr^{N-2} \Big(\int_0^r s|\theta'(s)|^2\,ds\Big)^{1/2}.
\end{align*}
Returning to \eqref{Eq:TInt-0}, since $r|\theta'(r)|^2\in L^1(0,1)$, we have that
\[
r|\theta'(r)| \leq Cr^2 + C \Big(\int_0^r s|\theta'(s)|^2\,ds\Big)^{1/2} \rightarrow 0 \text{ as } r \rightarrow 0.
\]
Recalling the expression of $P$ and its continuity, we deduce that $\cos^2\theta$ and hence $\theta$ belong to $C([0,1])$. By \eqref{Eq:TInt-0} and the continuity of $F$, $r\theta'(r) = \frac{1}{N-2} F(0) + o(1)$ for small $r > 0$. We hence have that $F(0) = 0$, i.e. $\theta(0) = \frac{k \pi}{2}$ for some $k \in \mathbb{Z}$. 
Step 1 is now completed.

\medskip
\noindent\underline{Step 2:} {\it We prove that if $k$ is odd, then $(f,g) \equiv (1,0)$ and $N \geq 3$.}

When $k$ is odd, $f(0) \neq 0$. We saw in Step 1 that this is possible only if $N \geq 3$.

In the absence of $\tilde W$ (i.e. for the harmonic map problem), the assertion that $(f,g) \equiv (1,0)$ can be dealt as in \cite{JagerKaul83-JRAM} as follows: \eqref{Eq:Pohozaev} implies that $P' \leq 0$, which leads to $0 = P(0) \geq P(r) \geq P(1) = (\theta'(1))^2 \geq 0$. Thus $\theta'(1) = 0$; since $\theta(1)=\frac\pi2$, uniqueness results for second order ODEs give that $\theta \equiv \frac{\pi}{2}$.

To account for the presence of $\tilde W$ in \eqref{Eq:T1-21VI21}, we argue as follows. By \eqref{Eq:Pohozaev}, $P'(r) \leq 2ar$ for some constant $a > 0$. Since $r\theta'(r) \rightarrow 0$ as $r \rightarrow 0$ and $k$ is odd, $P(r) \rightarrow 0$ as $ r\rightarrow 0$. Hence $P(r) \leq ar^2$. By \eqref{Eq:Pohozaev}, we have $(r^{-2}P)' \leq 0$ and since $\cos \theta(1)=0$, $\tilde W(0)=0$, 
\begin{equation}
P(r) \geq P(1) r^2 = (\theta'(1))^2 r^2 \geq 0 \text{ in } (0,1).
	\label{Eq:PoC-1}
\end{equation}
Also by \eqref{Eq:Pohozaev}, we have that
\[
(r^{-1}P)' \leq  - \frac{(N-1)}{r^2} \cos^2\theta - \frac{1}{\eta^2} \tilde W(\cos^2\theta).
\]
Using the fact that $\cos\theta(0) = 0$, $\tilde W(0) = 0$ and $\tilde W \in C^1$, in particular, $|\tilde W(t)|\leq \tilde c t$ for $t\in [0,1]$, we thus have that $(r^{-1}P)' \leq 0$ in some interval $(0,r_0)$. But as $r^{-1}P(r) \rightarrow 0$ as $r \rightarrow 0$ (as $0\leq P(r)\leq a r^2$), we deduce that 
\begin{equation}
P(r) \leq 0 \text{ in } (0,r_0)
	\label{Eq:PoC-2}
\end{equation}
and so, $P\equiv 0$ in $(0,r_0)$. Putting together \eqref{Eq:PoC-1} and \eqref{Eq:PoC-2}, we have that $\theta'(1) = 0$. By uniqueness results for ODEs, we then have that $\theta \equiv \frac{\pi}{2}$, i.e. $(f,g) \equiv (1,0)$.

\medskip
\noindent\underline{Step 3:} {\it We prove that if $\theta(0)\in \pi \ZZ$ and $N\geq 2$, then $\frac{f}{r}, g \in C^2([0,1])$.} Since $\theta(0)\in \pi \ZZ$,
\[
F(r) = (N-1)  d(r) (\theta(r) - \theta(0)) \text{ where } d(r) = 1 + O(r^2 + |\theta(r) - \theta(0)|^2) \text{ as } r \rightarrow 0.
\]
We can then recast \eqref{Eq:T1-21VI21} in the form
\[
L(\theta - \theta(0)) := (\theta - \theta(0))'' + \frac{N-1}{r} (\theta - \theta(0))' - \frac{(N-1) d(r)}{r^2} (\theta - \theta(0)) = 0.
\]
It is straightforward to check that, for $\delta \in (0,1)$, there exists $r_\delta > 0$ such that
\[
L(r^{-(N-1)+\delta}) < 0  \text{ and }  L(r^{1 - \delta}) < 0 \text{ in } (0,r_\delta).
\]
Thus, by the maximum principle (see e.g. \cite[Lemma B.1]{ODE_INSZ}), we have that
\[
\frac{|\theta(r_\delta) - \theta(0)|}{r_\delta^{1-\delta}} r^{1-\delta}  \pm (\theta(r) - \theta(0)) \geq 0 \text{ in } (0,r_\delta).
\]
This shows that $r^{-(1-\delta)} |\theta - \theta(0)| \in L^\infty(0,1)$ for all $\delta \in (0,1)$.

Taking $\delta = 1/2$ above, we have that $d(r) = 1 + O(r)$. Then, for some large $A > 0$ and small $r_0 > 0$, we have
\[
L(r - Ar^2) < 0 \text{ and } r - Ar^2 > 0 \text{ in } (0,r_0).
\]
Again, by the maximum principle, we then have that 
\[
\frac{|\theta(r_0) - \theta(0)|}{r_0 - Ar_0^2} (r - Ar^2)   \pm (\theta(r) - \theta(0)) \geq 0 \text{ in } (0,r_0).
\]
We thus have that $r^{-1}(\theta - \theta(0)) \in L^\infty(0,1)$. This yields $F(r)=O(r)$ and by \eqref{Eq:TInt-0}, 
\[
\theta' \in L^\infty(0,1).
\]
Since $f(0)=\sin \theta(0)=0$, we get $\frac{f}{r}\in L^\infty(0,1)$. Returning to $m$, as $|\nabla m|^2=(\theta')^2+\frac{(N-1)f^2}{r^2}$, we see that $m \in C^{0,1}(B^N)$ and $\lambda\in L^\infty(B^N)$ (given in \eqref{Eq:mMMPDE}), and by bootstrapping \eqref{Eq:mMMPDE}, $m \in C^2(B^N)$ and $\lambda\in C^1(B^N)$. By the same argument in Lemma \ref{Lem:FullONSym}, it follows that $\frac{f}{r},g \in C^2([0,1])$, $f(0) = 0$ and $g'(0) = 0$ as desired.
\end{proof}

\section{Some properties of the $\RR^N$-valued GL vortex radial profile}

\begin{proposition}\label{Prop:B.1}
Suppose that $N \geq 2$, $W \in C^2((-\infty,1])$ satisfies \eqref{Eq:WCond'} and let $f_\eps:[0,1]\to [0,1]$ be given by Theorem \ref{Thm:GLExist} and $f_\eps^{-1}:[0,1]\to [0,1]$ its inverse. Then: 
\begin{enumerate}[(i)]
\item For $0 < \tilde\eps \leq \eps$, $f_\eps(\eps r) \geq f_{\tilde \eps}(\tilde\eps r)$ \text{ for } $0 < r < 1/\eps$.

\item If $W'(1) > 0$ and $t_0 := \sup \{0 \leq t < 1: W(t) = 0\}$, then $t_0 < 1$, $\lim_{\eps\rightarrow 0} \frac{f_\eps^{-1}(\sqrt{1 - t_0})}{\eps} = \infty$, and, for every $\delta \in (0,1-t_0)$, $\lim_{\eps\rightarrow 0} \frac{f_\eps^{-1}(\sqrt{1 - t_0 - \delta})}{\eps} \in (0,\infty)$. In particular, for every $a > 0$, there exists $\eps_{a} > 0$ such that 
\[
f_\eps^2 \leq 1 - t_0 \text{ in } [0,a \eps] \text{ for every } \eps \in (0, \eps_{a}],
\]
and, for every $\delta \in (0,1-t_0)$, there exists $C_\delta > 0$ such that
\[
1 - t_0 - \delta \leq f_\eps^2\text{ in } [C_\delta\eps,1] \text{ for every } \eps \in (0,1/C_\delta].
\]
\end{enumerate}
\end{proposition}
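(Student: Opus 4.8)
The plan is to reduce the inequality to a comparison between energy minimisers on nested balls. Set $R = 1/\eps$ and $F_R(\rho) := f_\eps(\eps\rho)$ for $\rho \in (0,R)$; then \eqref{Eq:24VII18-X1}--\eqref{Eq:24VII18-X2} show that $F_R$ solves the $R$-independent ``blow-up'' ODE $F_R'' + \frac{N-1}{\rho}F_R' - \frac{N-1}{\rho^2}F_R = -W'(1-F_R^2)F_R$ on $(0,R)$ with $F_R(R) = 1$, and that $v_R(y) := F_R(|y|)n(y)$ is, by Theorem~\ref{Thm:GLExist} after rescaling, the unique radially symmetric minimiser of $\mathcal{E}_R[v] := \int_{B_R}[\frac12|\nabla v|^2 + \frac12 W(1-|v|^2)]$ among $H^1$ maps equal to $n$ on $\partial B_R$. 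Fix $0 < \tilde\eps \le \eps$, i.e. $R_1 := 1/\eps \le R_2 := 1/\tilde\eps$; the claim is $F_{R_1} \ge F_{R_2}$ on $(0,R_1)$. I would test $\mathcal{E}_{R_1}$ with the competitor $\max(F_{R_1},F_{R_2})(|y|)n(y)$ on $B_{R_1}$ (which equals $n$ on $\partial B_{R_1}$ since $F_{R_2} < 1$ there, by the rescaling of Corollary~\ref{Cor:fFinite}) and $\mathcal{E}_{R_2}$ with $\min(F_{R_1}^{\rm ext},F_{R_2})(|y|)n(y)$ on $B_{R_2}$, where $F_{R_1}^{\rm ext}$ extends $F_{R_1}$ by $1$ on $[R_1,R_2]$. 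Using the pointwise identities $a^2 + b^2 = F_{R_1}^2 + F_{R_2}^2$, $W(1-a^2) + W(1-b^2) = W(1-F_{R_1}^2) + W(1-F_{R_2}^2)$ and $(a')^2 + (b')^2 = (F_{R_1}')^2 + (F_{R_2}')^2$ a.e. for $a = \max(F_{R_1},F_{R_2})$, $b = \min(F_{R_1},F_{R_2})$, the sum of the two competitor energies equals $\mathcal{E}_{R_1}[v_{R_1}] + \mathcal{E}_{R_2}[v_{R_2}]$; since each competitor is radially symmetric and admissible, the minimality of $v_{R_1}$ and $v_{R_2}$ among radial maps forces both energies to be equalities, so by uniqueness $\max(F_{R_1},F_{R_2}) = F_{R_1}$ on $(0,R_1)$, which is the claim.

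\noindent\textbf{Proof of (ii).} First, $t_0 < 1$: if $t_k \uparrow 1$ with $W(t_k) = 0$, then each $t_k \in (0,1)$ is a global minimum of the nonnegative $W$, hence $W'(t_k) = 0$, and by continuity of $W'$ this gives $W'(1) = 0$, contradicting the hypothesis. If $t_0 = 0$ then $\sqrt{1-t_0} = 1$ and $f_\eps^{-1}(1) = 1$, so all assertions are immediate; assume henceforth $t_0 \in (0,1)$ and write $c := \sqrt{1-t_0} \in (0,1)$. As in (i) put $F_\eps(\rho) := f_\eps(\eps\rho)$ on $(0,1/\eps)$; then $F_\eps$ solves the blow-up ODE, $F_\eps(0) = 0$, $0 < F_\eps < 1$, $F_\eps$ is increasing and $\rho \mapsto F_\eps(\rho)/\rho$ is decreasing (rescaling Corollaries~\ref{Cor:fFinite} and \ref{Prop:GLMonotonicity}), and $F_\eps(1/\eps) = 1$. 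By part (i), $\eps \mapsto F_\eps(\rho)$ is non-decreasing, so I would set $F_* := \lim_{\eps \to 0}F_\eps$, which by interior elliptic estimates is attained in $C^2_{loc}(0,\infty)$, solves the blow-up ODE on $(0,\infty)$, is non-decreasing with $0 \le F_* \le 1$, and satisfies $F_*(0^+) = 0$ (dominate $F_*$ by any fixed $F_{\eps_0}$ near $0$).

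The heart of the argument is the description of $F_*$, and I expect this to be the main obstacle. First, $F_* < c$ on all of $(0,\infty)$: if $F_*(\rho_0) \ge c$ then $F_* \ge c$ on $[\rho_0,\infty)$, so there $1 - F_*^2 \le t_0$, $W'(1-F_*^2) \equiv 0$, and $F_*$ is a bounded increasing solution of the harmonic-map ODE $F'' + \frac{N-1}{\rho}F' - \frac{N-1}{\rho^2}F = 0$, i.e. $F_* = A\rho + B\rho^{-(N-1)}$ on $[\rho_0,\infty)$; boundedness forces $A = 0$, and then $F_*$ is non-positive or decreasing, contradicting $F_* \ge c > 0$ and monotone. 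Next, for each $\mu \in (0,c)$ I claim $F_\eps^{-1}(\mu) \le C_\mu$ with $C_\mu$ independent of $\eps$: on $\{F_\eps < \mu\} = (0, F_\eps^{-1}(\mu))$ one has $W'(1-F_\eps^2) \ge W'(1-\mu^2) =: m_\mu > 0$, so, viewing $F_\eps$ as a radial function on $\RR^N$, $\Delta F_\eps = (\frac{N-1}{|x|^2} - W'(1-F_\eps^2))F_\eps \le -\frac{m_\mu}{2}F_\eps$ once $|x| \ge \bar\rho_\mu := \sqrt{2(N-1)/m_\mu}$; thus $F_\eps > 0$ is a supersolution of $-\Delta - \frac{m_\mu}{2}$ on the annulus $A_\eps := \{\bar\rho_\mu < |x| < F_\eps^{-1}(\mu)\}$ (the case $F_\eps^{-1}(\mu) \le \bar\rho_\mu$ being trivial), which by the maximum-principle characterisation of the principal eigenvalue (existence of a positive supersolution of $-\Delta - m$ forces $\lambda_1 \ge m$; Berestycki--Nirenberg--Varadhan) gives $\lambda_1(A_\eps) \ge m_\mu/2$; since $A_\eps$ contains a ball of radius $(F_\eps^{-1}(\mu) - \bar\rho_\mu)/2$, domain monotonicity of $\lambda_1$ bounds $F_\eps^{-1}(\mu)$ by a constant depending only on $\mu$ and $W$. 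Consequently $F_*(C_\mu) \ge \mu$ for every $\mu < c$, which together with $F_* < c$ yields $\lim_{\rho \to \infty}F_*(\rho) = c$.

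Finally I would conclude as follows. Write $\rho_\eps := F_\eps^{-1}(c)$ and, for $\delta \in (0,1-t_0)$, $\sigma_\eps := F_\eps^{-1}(\sqrt{1-t_0-\delta}) = F_\eps^{-1}(\sqrt{c^2-\delta})$ (both well defined since $F_\eps$ increases from $0$ to $1$); part (i) makes both non-decreasing as $\eps \downarrow 0$. If $\rho_\eps$ stayed bounded it would converge to some $\rho_\infty < \infty$, and local uniform convergence $F_\eps \to F_*$ (Dini) would give $F_*(\rho_\infty) = \lim_{\eps\to0} F_\eps(\rho_\eps) = c$, contradicting $F_* < c$; hence $\rho_\eps \to \infty$, i.e. $f_\eps^{-1}(\sqrt{1-t_0})/\eps = \rho_\eps \to \infty$. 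The uniform bound $\sigma_\eps \le C_{\sqrt{c^2-\delta}}$ from the barrier step makes $\sigma_\eps$ convergent, while $\sigma_\eps$ is bounded below by a positive constant (for $\eps < \eps_0$ one has $\sqrt{c^2-\delta} = F_\eps(\sigma_\eps) \le F_{\eps_0}(\sigma_\eps)$ and $F_{\eps_0}(0) = 0$), so $f_\eps^{-1}(\sqrt{1-t_0-\delta})/\eps = \sigma_\eps$ converges to a limit in $(0,\infty)$. The two ``in particular'' statements then follow from the monotonicity of $f_\eps$, which gives $f_\eps^2 \le 1-t_0$ exactly on $[0, f_\eps^{-1}(\sqrt{1-t_0})]$ and $f_\eps^2 \ge 1-t_0-\delta$ exactly on $[f_\eps^{-1}(\sqrt{1-t_0-\delta}), 1]$: the first limit forces $f_\eps^{-1}(\sqrt{1-t_0}) \ge a\eps$ for $\eps$ small, and the second (being finite, hence $f_\eps^{-1}(\sqrt{1-t_0-\delta})/\eps$ bounded for small $\eps$, and trivially $\le 1/\eps$ for $\eps$ bounded away from $0$) yields a constant $C_\delta$ with $f_\eps^{-1}(\sqrt{1-t_0-\delta}) \le C_\delta\eps \le 1$ for $\eps \in (0,1/C_\delta]$.
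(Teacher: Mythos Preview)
Your proof is correct and takes a genuinely different route from the paper on both parts.

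For (i), the paper applies an ODE comparison principle from \cite{ODE_INSZ} to the rescaled profiles $\hat f_\eps$, using that $\hat f_\eps'(0)>0$ and that the extension by $1$ is a supersolution. Your max/min energy-competitor argument is instead variational and self-contained: it uses only the uniqueness of the radial minimiser from Theorem~\ref{Thm:GLExist} and the pointwise identity $(\max)^2+(\min)^2=F_{R_1}^2+F_{R_2}^2$ (and the analogous identities for $W$ and for the derivatives). This is a clean alternative that avoids any external comparison lemma.

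For (ii), both approaches study the monotone limit $F_*=\lim_{\eps\to 0}F_\eps$ on $(0,\infty)$. The paper cites \cite[Proposition~2.4]{ODE_INSZ} to obtain $F_*\not\equiv 0$ and $W'(1-F_*(\infty)^2)F_*(\infty)=0$, and then rules out $F_*(\infty)>\sqrt{1-t_0}$ by the explicit solution $A\rho+B\rho^{1-N}$ of the linear ODE. You reverse the logic: first you prove $F_*<\sqrt{1-t_0}$ everywhere by exactly that linear-ODE contradiction, and then you show $F_*(\infty)\ge\sqrt{1-t_0}$ by a direct, self-contained argument---a positive supersolution of $-\Delta-\tfrac{m_\mu}{2}$ on the annulus forces $\lambda_1\ge m_\mu/2$, which bounds $F_\eps^{-1}(\mu)$ uniformly via domain monotonicity. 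This replaces the citation by an elementary spectral estimate, at the cost of a slightly heavier computation.

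One small slip: when $t_0=0$ you assert that ``all assertions are immediate'' because $f_\eps^{-1}(1)=1$. This dispatches the first limit, but not the second one $\lim_{\eps\to 0} f_\eps^{-1}(\sqrt{1-\delta})/\eps\in(0,\infty)$. However, your main argument for $t_0\in(0,1)$ works verbatim when $t_0=0$ (with $c=1$): the proof that $F_*<c$ still goes through since $F_*\equiv 1$ does not solve the linear ODE, and the eigenvalue bound uses only $W'(1-\mu^2)>0$ for $\mu<c$, which holds because $1-\mu^2>t_0$. So simply drop the case distinction.
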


\begin{proof}
For $\eps > 0$, define
\[
\hat f_\eps(r) = \left\{\begin{array}{ll}
f_\eps(\eps r) & \text{ if } r \in (0,1/\eps),\\
1 & \text{ if } r \in (1/\eps,\infty).
\end{array}\right.
\]
Note that 
\[
\hat f_ \eps'' + \frac{N-1}{r} \hat f_\eps' - \frac{N-1}{r^2} \hat f_\eps 
	= -W'(1 - \hat f_\eps^2) \hat f_\eps  \text{ in } (0,1/{\eps})
\]
and, the function $\hat v_\eps := \frac{\hat f_\eps}{r}$, considered as a radially symmetric function in $\RR^{N+2}$ satisfies
\begin{equation}
\Delta \hat v_\eps =  -W'(1 - \hat f_\eps^2) \hat v_\eps \leq 0 \text{ in } B(0,1/{\eps}).
	\label{Eq:Delhef}
\end{equation}
As at the end of the proof of Proposition \ref{Prop:ODEMonotonicity}, we deduce that $\hat v_\eps$ is non-increasing in $(0,1/{\eps})$ and so in $(0,\infty)$.

\medskip
\noindent\underline{Proof of (i).} This is equivalent to prove that $\hat f_\eps \geq \hat f_{\tilde \eps}$ for $0 < \tilde\eps \leq \eps$. This is a direct consequence of the comparison principle\footnote{Though the comparison principle \cite[Proposition 3.5]{ODE_INSZ} was stated with the assumption that $W' > 0$ in $(0,1)$ and $W''(0) > 0$, it is straightforward to see that it remains valid under the weaker condition that $W' \geq 0$ in $(0,1)$. Alternatively, one can first apply \cite[Proposition 3.5]{ODE_INSZ} for the unique radial profiles corresponding to the strictly convex potentials $t \mapsto W(t) + \delta t^2$ with $\delta > 0$ and then send $\delta \rightarrow 0$.} \cite[Proposition 3.5]{ODE_INSZ} and the fact that $\hat f_\eps'(0) = \hat v_\eps(0) > 0$ (since $\frac{\hat f_\eps}{r} = \hat v_\eps$ is non-increasing), $\hat f_\eps(1/{\tilde\eps}) = \hat f_{\tilde \eps}(1/{\tilde \eps}) = 1$, and
\begin{align*}
\hat f_{\tilde \eps}'' + \frac{N-1}{r} \hat f_{\tilde \eps}' - \frac{N-1}{r^2} \hat f_{\tilde \eps} 
	&= -W'(1 - \hat f_{\tilde \eps}^2) \hat f_{\tilde \eps}  \text{ in } (0,1/{\tilde\eps}),\\
\hat f_ \eps'' + \frac{N-1}{r} \hat f_\eps' - \frac{N-1}{r^2} \hat f_\eps 
	&\leq -W'(1 - \hat f_\eps^2) \hat f_\eps  \text{ in } (0,1/{\tilde\eps}).
\end{align*}

\medskip
\noindent\underline{Proof of (ii).} By \eqref{Eq:WCond'}, we have $t_0 < 1$, $W > 0$ and $W' > 0$ in $(t_0,1]$. We need to prove
\begin{equation}
\lim_{\eps\rightarrow 0} \hat f_\eps^{-1}(\sqrt{1 - t_0}) = \infty \text{ and } \lim_{\eps\rightarrow 0} \hat f_\eps^{-1}(\sqrt{1 - t_0 - \delta}) \in (0,\infty).
	\label{Eq:B1e}
\end{equation}
By (i), $\{\hat f_\eps\}$ is non-increasing as $\eps \rightarrow 0$ and hence converges pointwise to some limit function $\hat f_*$. In particular, $\hat f_*(0) = 0$, $0 \leq \hat f_* \leq 1$ in $(0,\infty)$, $\hat f_*$ is continuous at $0$,  and, by the monotonicity of $\hat f_\eps$, $\hat f_*$ is non-decreasing. By the equation of $\hat f_\eps$ and the bound $0 \leq \hat f_\eps \leq 1$, for every compact interval $[1/C,C] \subset (0,\infty)$, the family $\{\hat f_\eps\}_{0 < \eps < 1/C}$ is bounded in $C^{3}([1/C,C])$. By the Arzel\`a-Ascoli theorem, it follows that $\hat f_* \in C^2((0,\infty))$, $\hat f_\eps$ converges to $\hat f_*$ in $C^2_{loc}((0,\infty))$ as $\eps \rightarrow 0$ and 
\[
\hat f_ *'' + \frac{N-1}{r} \hat f_*' - \frac{N-1}{r^2} \hat f_* 
	= -W'(1 - \hat f_*^2) \hat f_*  \text{ in } (0,\infty).
\]
Since $W' > 0$ in $(t_0,1]$, one can argue as in Step 3 of the proof of \cite[Proposition 2.4]{ODE_INSZ} to show that $W'(1 - \hat f_*(\infty)^2) \hat f_*(\infty) = 0$, which implies that $\hat f_*(\infty) \in \{0\} \cup [\sqrt{1 - t_0}, 1]$. Moreover, using again that $W' > 0$ in $(t_0,1]$, we can argue as in Steps 4 and 5 of the proof of \cite[Proposition 2.4]{ODE_INSZ} to show that $\hat f_* \not\equiv 0$ and so $\hat f_*(\infty) \in [\sqrt{1 - t_0}, 1]$. Differentiating the equation for $\hat f_*$ and applying the strong maximum principle, we have that $\hat f_*' > 0$ in $(0,\infty)$.

\medskip
\noindent \underline{Claim}: $\hat f_*(\infty) = \sqrt{1 - t_0}$. Once this claim is proved, since $\{\hat f_\eps^{-1}\}$ is non-decreasing as $\eps \rightarrow 0$, the desired estimate \eqref{Eq:B1e} follows.

\noindent \underline{Proof of the claim}: Indeed, suppose by contradiction that this does not hold, i.e. $\hat f_*(\infty) > \sqrt{1 - t_0}$. Then we can select $r_0 \in (0,\infty)$ so that $\hat f_*(r_0) = \sqrt{1 - t_0}$, $\hat f_* \in [\sqrt{1-t_0},1]$ and so $W'(1 - \hat f_*^2) = 0$ in $[r_0,\infty)$. It follows that $
\hat f_ *'' + \frac{N-1}{r} \hat f_*' - \frac{N-1}{r^2} \hat f_*  = 0$ in $[r_0,\infty)$ and so
\[
\hat f_*(r) = c_1 r + c_2 r^{1-N} \text{ in } [r_0,\infty) \text{ for some constants } c_1, c_2.
\]
Since $\hat f_*$ is bounded, we must have $c_1 = 0$, which implies that $\hat f_*(\infty) = 0$, which gives a contradiction. The claim is proved.
\end{proof}

\section{A sharp Poincar\'e inequality for solenoidal vector fields on the sphere}\label{App:C}
\begin{lemma}\label{Lem:SolPoincare}
Suppose $N \geq 3$ and let $\slashed{D}$ and $d\sigma$ denote the covariant derivative and the volume form on the standard sphere $\Sphere^{N-1}$. For every smooth divergence-free vector field $v$ on $\Sphere^{N-1}$, i.e., $\slashed{D}\cdot v=0$ on $\Sphere^{N-1}$, one has
\[
\int_{\Sphere^{N-1}}|\slashed{D} v|^2\,d\sigma = (N-2)\int_{\Sphere^{N-1}}|v|^2\,d\sigma + 2\int_{\Sphere^{N-1}} |Sym(\slashed{D} v)|^2\,d\sigma.
\]
In particular,
\[
\int_{\Sphere^{N-1}}|\slashed{D} v|^2\,d\sigma \geq (N-2)\int_{\Sphere^{N-1}}|v|^2\,d\sigma,
\]
and equality holds if and only if $v$ is a Killing field, i.e. $Sym(\slashed{D} v) = 0$.
\end{lemma}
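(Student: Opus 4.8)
The plan is to prove the stated Bochner-type identity by a direct integration-by-parts computation on $\Sphere^{N-1}$, exploiting the fact that the sphere is an Einstein manifold with Ricci curvature $\mathrm{Ric} = (N-2)g_{\rm round}$, and then to read off the inequality and the equality case as immediate consequences.

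First I would recall the Bochner--Weitzenb\"ock formula for a $1$-form (identified with the vector field $v$ via the metric): for any smooth vector field $v$ on a Riemannian manifold $(M,g)$,
\[
\tfrac12 \slashed{\Delta}|v|^2 = |\slashed{D} v|^2 + \langle \slashed{D}^* \slashed{D} v, v\rangle \quad\text{(rough Laplacian version)},
\]
and, more usefully, the Hodge-theoretic version relating the Hodge Laplacian $\Delta_H = d\delta + \delta d$ to the rough Laplacian: $\Delta_H v = \slashed{D}^*\slashed{D} v + \mathrm{Ric}(v)$. Integrating over the closed manifold $\Sphere^{N-1}$ and using $\int_{\Sphere^{N-1}}\langle \Delta_H v, v\rangle\,d\sigma = \int_{\Sphere^{N-1}}(|\delta v|^2 + |dv|^2)\,d\sigma$ together with $\int_{\Sphere^{N-1}}\langle\slashed{D}^*\slashed{D}v,v\rangle\,d\sigma = \int_{\Sphere^{N-1}}|\slashed{D}v|^2\,d\sigma$, one gets
\[
\int_{\Sphere^{N-1}} |dv|^2 + |\delta v|^2 \,d\sigma = \int_{\Sphere^{N-1}} |\slashed{D}v|^2\,d\sigma + (N-2)\int_{\Sphere^{N-1}}|v|^2\,d\sigma.
\]
Since $v$ is divergence-free, $\delta v = -\slashed{D}\cdot v = 0$, so the term $|\delta v|^2$ drops. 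It then remains to identify $|dv|^2$ in terms of the antisymmetric part of $\slashed{D}v$: writing $\slashed{D}v = \mathrm{Sym}(\slashed{D}v) + \mathrm{Asym}(\slashed{D}v)$, one has $|\slashed{D}v|^2 = |\mathrm{Sym}(\slashed{D}v)|^2 + |\mathrm{Asym}(\slashed{D}v)|^2$ pointwise, and $|dv|^2 = 2|\mathrm{Asym}(\slashed{D}v)|^2$ with the convention that $dv$ is the alternation of $\slashed{D}v$ (the factor $2$ coming from the normalization of the norm on $2$-forms). Substituting $|\mathrm{Asym}(\slashed{D}v)|^2 = |\slashed{D}v|^2 - |\mathrm{Sym}(\slashed{D}v)|^2$ into the displayed identity and rearranging yields exactly
\[
\int_{\Sphere^{N-1}} |\slashed{D}v|^2\,d\sigma = (N-2)\int_{\Sphere^{N-1}}|v|^2\,d\sigma + 2\int_{\Sphere^{N-1}} |\mathrm{Sym}(\slashed{D}v)|^2\,d\sigma.
\]

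From this identity the inequality $\int |\slashed{D}v|^2 \geq (N-2)\int|v|^2$ is immediate since $|\mathrm{Sym}(\slashed{D}v)|^2 \geq 0$, and equality holds iff $\mathrm{Sym}(\slashed{D}v) \equiv 0$, i.e. iff $v$ is a Killing field; one can note in passing that the round sphere does admit nontrivial Killing fields (the rotations), so the constant $N-2$ is sharp. The main obstacle I anticipate is purely bookkeeping: pinning down the correct numerical constants (the $(N-2)$ from the Ricci tensor of $\Sphere^{N-1}$, and the factor $2$ relating $|dv|^2$ to $|\mathrm{Asym}(\slashed{D}v)|^2$ under whatever norm convention on $2$-tensors one adopts) so that everything is internally consistent. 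An alternative, more self-contained route that avoids invoking the Hodge Laplacian is to work with a pointwise commutation identity: compute $\slashed{D}^*\slashed{D} v$ versus the "Lichnerowicz" operator via the Ricci identity $\slashed{D}_i \slashed{D}_j v^k - \slashed{D}_j \slashed{D}_i v^k = R^k{}_{lij}v^l$, contract appropriately using $\slashed{D}\cdot v = 0$, and integrate; on $\Sphere^{N-1}$ the curvature tensor is explicit ($R_{ijkl} = g_{ik}g_{jl} - g_{il}g_{jk}$), so the contraction with $v$ produces precisely the $(N-2)|v|^2$ term. I would present whichever version is cleanest, most likely citing the Bochner formula as in \cite[Chapter I, Proposition 2.2]{SchoenYan-LectDG} which the paper already uses, and then doing the symmetric/antisymmetric decomposition explicitly.
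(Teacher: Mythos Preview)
Your proposal is correct. Both routes you sketch lead to the identity, and the ``alternative, more self-contained route'' you mention at the end is essentially the one the paper actually takes, so there is no gap.

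The paper's argument is shorter and more bare-handed than your primary route: rather than invoking the Weitzenb\"ock formula $\Delta_H = \slashed{D}^*\slashed{D} + \mathrm{Ric}$ and then decomposing $|dv|^2$ into the antisymmetric part, it computes the single cross term $\int \slashed{D}_i v_j\, \slashed{D}^j v^i$ directly. One integration by parts, the commutator $[\slashed{D}^j,\slashed{D}_i]v_j = \mathrm{Ric}_{ki}v^k = (N-2)v_i$, and the divergence-free condition give $\int \slashed{D}_i v_j\, \slashed{D}^j v^i = -(N-2)\int|v|^2$; expanding $|\slashed{D}_i v_j + \slashed{D}_j v_i|^2$ then yields the identity in one line. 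Your Hodge-theoretic version packages the same Ricci commutation inside the Weitzenb\"ock formula, at the cost of having to track the normalization of $|dv|^2$ versus $|\mathrm{Asym}(\slashed{D}v)|^2$ --- exactly the bookkeeping issue you flagged. The paper's version sidesteps that convention issue entirely since it never names $dv$ or the Hodge Laplacian, only the tensor $\slashed{D}v$ and its symmetric part.
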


\begin{proof} In the following computation, we raise and lower indices using the standard metric $g$ on the round sphere, i.e. $\slashed{D}^i = g^{ij}\slashed{D}_j$, $v_i = g_{ij} v^j$, etc. Also, repeated upper-lower indices are summed from $1$ to $N-1$. As the commutator $[\slashed{D}^j, \slashed{D}_i]v_j=Ric_{ki} v^k$, integration by parts yields:
\begin{align*}
\int_{\Sphere^{N-1}} \slashed{D}_i v_j \slashed{D}^j v^i\,d\sigma 
	&= -\int_{\Sphere^{N-1}} \slashed{D}^j \slashed{D}_i v_j  v^i\,d\sigma 
	= -\int_{\Sphere^{N-1}} \Big(\slashed{D}_i  \underbrace{ \slashed{D}^jv_j }_{=0} + \underbrace{Ric_{ki}}_{=(N-2)g_{ki}} \,v^k\Big) v^i\,d\sigma \\
	&= -(N-2)\int_{\Sphere^{N-1}} |v|^2\,d\sigma.
\end{align*}
It follows that
\[
4\int_{\Sphere^{N-1}} |Sym(\slashed{D} v)|^2\,d\sigma=\int_{\Sphere^{N-1}} |\slashed{D}_i v_j + \slashed{D}_j v_i|^2\,d\sigma
	= 2\int_{\Sphere^{N-1}}\Big[|\slashed{D} v|^2 - (N-2)|v|^2\Big]\,d\sigma,
\]
which clearly gives the assertion.
\end{proof}

\section{Miscellaneous}

\begin{lemma}\label{Lem:RA}
Suppose $N \geq 2$, $M \geq 1$, and $2 \leq p < \infty$ if $N = 2$ and $2 \leq p \leq \frac{2N}{N-2}$ if $N \geq 3$. Let $\Omega$ be a bounded smooth open subset of $\RR^N$ and $h \in C^0(\Omega\times \RR^M)$ satisfies 
\be
\label{rrr}
\lim_{|y| \rightarrow 0, \, y\neq 0} \sup_{x\in \Omega}\frac{|h(x, y)|}{|y|^2} = 0
\ee
and, for some $C > 0$,
\be
\label{add-now}
h(x, y) \geq -C|y|^2(|y|^{p-2} + 1) \text{ for all } x\in \Omega, y \in \RR^M.
\ee
Then
\[
\liminf_{\substack{\|v\|_{H^1(\Omega,\RR^M)} \rightarrow 0\\v\neq 0, \, v\in H_0^1(\Omega,\RR^M)}} \frac{\int_{\Omega} h(x, v(x))\,dx}{\|v\|_{H^1(\Omega,\RR^M)}^2} \geq 0.
\]
\end{lemma}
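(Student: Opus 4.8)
The statement is a quantitative lower bound on the ``higher-order remainder'' term in the expansion of an energy near a critical point. The plan is to argue by contradiction: suppose the $\liminf$ is a negative number $-3\delta_0 < 0$. Then there is a sequence $v_j \in H_0^1(\Omega,\RR^M) \setminus \{0\}$ with $\|v_j\|_{H^1} \to 0$ and
\[
\int_\Omega h(x,v_j(x))\,dx \leq -\delta_0 \|v_j\|_{H^1(\Omega,\RR^M)}^2
\]
for all large $j$. Normalize by setting $w_j = v_j / \|v_j\|_{H^1}$, so $\|w_j\|_{H^1} = 1$; up to a subsequence, $w_j \rightharpoonup w_*$ weakly in $H^1_0(\Omega,\RR^M)$, strongly in $L^q(\Omega,\RR^M)$ for $q < \frac{2N}{N-2}$ (and in every $L^q$ if $N=2$), and a.e. in $\Omega$. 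Write $t_j = \|v_j\|_{H^1} \to 0$, so $v_j = t_j w_j$.

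\textbf{Main steps.} First I would split $\Omega$ according to the size of $v_j$: fix a small parameter $\tau > 0$ and write $\Omega = \Omega_j^{\rm sm} \cup \Omega_j^{\rm lg}$ where $\Omega_j^{\rm sm} = \{x : 0 < |v_j(x)| \leq \tau\} \cup \{x : v_j(x) = 0\}$ and $\Omega_j^{\rm lg} = \{x : |v_j(x)| > \tau\}$. On $\Omega_j^{\rm sm}$, the hypothesis \eqref{rrr} gives, for any prescribed $\epsilon > 0$, a choice of $\tau$ (independent of $j$) so that $h(x,v_j(x)) \geq -\epsilon |v_j(x)|^2$ pointwise there; hence
\[
\int_{\Omega_j^{\rm sm}} h(x,v_j)\,dx \geq -\epsilon \int_\Omega |v_j|^2\,dx = -\epsilon\, t_j^2 \|w_j\|_{L^2}^2 \geq -\epsilon\, t_j^2 \, C_\Omega
\]
using the Poincaré/Sobolev bound $\|w_j\|_{L^2} \leq C_\Omega$. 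On $\Omega_j^{\rm lg}$, I would use the coercivity-type lower bound \eqref{add-now}: there $h(x,v_j) \geq -C|v_j|^2(|v_j|^{p-2}+1)$, so
\[
\int_{\Omega_j^{\rm lg}} h(x,v_j)\,dx \geq -C\int_{\Omega_j^{\rm lg}} (|v_j|^p + |v_j|^2)\,dx \geq -C\big(t_j^p \|w_j\|_{L^p}^p + t_j^2 \|w_j\|_{L^2}^2\big).
\]
Since $p \leq \frac{2N}{N-2}$ (resp. $p < \infty$ for $N=2$), the Sobolev embedding gives $\|w_j\|_{L^p} \leq C_{N,p,\Omega}$ (this is where the hypothesis on $p$ is used — note $p = \frac{2N}{N-2}$ is the borderline case which is still allowed since $w_j \in H^1_0$), so the right-hand side is $-C(t_j^p + t_j^2)$. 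Adding the two pieces,
\[
\int_\Omega h(x,v_j)\,dx \geq -\epsilon\, C_\Omega\, t_j^2 - C(t_j^p + t_j^2).
\]
Dividing by $t_j^2 = \|v_j\|_{H^1}^2$ and using $t_j \to 0$ together with $p \geq 2$,
\[
\frac{\int_\Omega h(x,v_j)\,dx}{\|v_j\|_{H^1}^2} \geq -\epsilon\, C_\Omega - C(t_j^{p-2} + 1) \cdot o(1) \longrightarrow \text{(something} \geq -\epsilon\, C_\Omega - o(1)\text{)},
\]
wait — more carefully: the $t_j^2$ terms divided by $t_j^2$ give constants, so I should track them. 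Actually the clean bookkeeping is: divide through to get $\liminf_j \frac{\int h}{t_j^2} \geq -\epsilon C_\Omega - C$ from the $|v_j|^2$ contributions and $-C t_j^{p-2} \to 0$ when $p > 2$ (or $-C$ when $p=2$, absorbed into the constant) from the $|v_j|^p$ contribution. This is not yet $\geq 0$, so the naive splitting at a fixed $\tau$ is insufficient — the obstacle is genuinely the borderline terms.

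\textbf{The main obstacle and its resolution.} The difficulty is that \eqref{add-now} only controls $h$ from below by $-C|v_j|^2$ on the ``large'' set, not by $-\epsilon|v_j|^2$, so a fixed threshold $\tau$ cannot make the large-set contribution negligible relative to $\|v_j\|_{H^1}^2$. The resolution is to let the threshold shrink with $j$: the key point is that $v_j \to 0$ in $L^p$ (not merely bounded there) because $v_j = t_j w_j$ with $w_j$ bounded in $L^p$ and $t_j \to 0$. More precisely, the measure of the large set $|\Omega_j^{\rm lg}(\tau)| = |\{|v_j| > \tau\}| \leq \tau^{-p}\|v_j\|_{L^p}^p$, which we can make small. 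The cleanest route: for the large-set estimate use \eqref{add-now} as above but with an $L^p$ factor pulled out — write $\int_{\Omega_j^{\rm lg}} |v_j|^2\,dx \leq |\Omega_j^{\rm lg}|^{1-2/p}\|v_j\|_{L^p}^2$ by Hölder (when $p > 2$), and since $\|v_j\|_{L^p}^2 = t_j^2\|w_j\|_{L^p}^2 = O(t_j^2)$ while $|\Omega_j^{\rm lg}| \to 0$ (as $\|v_j\|_{L^p} \to 0$ forces, say, $|\{|v_j|>\tau\}| \to 0$ for every fixed $\tau$), the whole large-set contribution is $o(t_j^2)$ for each fixed $\tau$. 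Meanwhile the $|v_j|^p$ term on the large set is $\leq \|v_j\|_{L^p}^p = O(t_j^p) = o(t_j^2)$ since $p > 2$. Thus for each fixed $\tau > 0$,
\[
\liminf_{j\to\infty} \frac{\int_\Omega h(x,v_j)\,dx}{\|v_j\|_{H^1}^2} \geq -\epsilon(\tau)\, C_\Omega,
\]
where $\epsilon(\tau) \to 0$ as $\tau \to 0$ by \eqref{rrr}. Letting $\tau \to 0$ gives $\liminf \geq 0$, contradicting the assumed strict negativity and completing the argument. The case $p = 2$ is handled slightly differently (or simply absorbed, since then \eqref{add-now} already reads $h \geq -C|y|^2$ on all of $\Omega$ and one argues directly via the small/large split with $\tau \to 0$ using only \eqref{rrr} on the small set and $\int_{\{|v_j|>\tau\}}|v_j|^2 \to 0$ relative to... hmm, this last point for $p=2$ needs $v_j \to 0$ in $L^2$ which holds, but the ratio to $t_j^2$ requires $\|w_j\|_{L^2(\{|v_j|>\tau\})}^2 \to 0$, which follows since $|\{|v_j|>\tau\}| \to 0$ and $w_j \to w_*$ in $L^2$, so by equi-integrability of $\{|w_j|^2\}$ the integral over shrinking sets vanishes). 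For $N = 2$ the Sobolev embedding $H^1_0 \hookrightarrow L^p$ for all $p < \infty$ covers every admissible $p$, and the same argument applies verbatim.
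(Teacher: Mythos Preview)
Your argument is correct and follows the same overall strategy as the paper's proof: contradict, normalize to $v_j = t_j w_j$ with $\|w_j\|_{H^1}=1$ and $t_j\to 0$, then split $\Omega$ into a ``good'' part where \eqref{rrr} applies and a ``bad'' part of small measure handled via \eqref{add-now}, H\"older and Sobolev. The one genuine difference is in how the split is produced. The paper invokes Egorov's theorem: after passing to a subsequence with $w_j\to w_*$ a.e., it chooses a fixed set $B$ with $|\Omega\setminus B|\le\delta$ on which $w_j\to w_*$ uniformly (hence is uniformly bounded), so $t_j w_j\to 0$ uniformly on $B$ and \eqref{rrr} kills the contribution there; on $\Omega\setminus B$ it uses H\"older with the Sobolev exponent $q$ and the bound $|\Omega\setminus B|\le\delta$. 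You instead cut at the level set $\{|v_j|\le\tau\}$, apply \eqref{rrr} directly there, and on $\{|v_j|>\tau\}$ use that its measure tends to $0$ (Chebyshev) together with equi-integrability of $|w_j|^2$ (from compact $H^1_0\hookrightarrow L^2$) to make that contribution $o(t_j^2)$. Your route avoids Egorov and is slightly more elementary; the paper's route avoids the equi-integrability step and treats all $p\ge 2$ uniformly without a separate $p=2$ discussion. Both are standard and equally valid.
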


Note that by the Sobolev embedding theorem and the lower bound of $h$, the integral $\int_{\Omega} h(x, v(x))\,dx \in \RR \cup \{+\infty\}$ makes sense for $v \in H_0^1(\Omega,\RR^M)$.

\begin{proof}
Suppose by contradiction that the conclusion fails. Then there exist $t_j \rightarrow 0^+$ and $v_j \in H_0^1(\Omega,\RR^M)$ with $\|v_j\|_{H^1} = 1$ such that, for some $\eps > 0$ independent of $j$,
\begin{equation}
\int_{\Omega} \frac{1}{t_j^2} h(x, t_j v_j(x))\,dx \leq -\eps < 0.
	\label{Eq:RA-1}
\end{equation}
Without loss of generality, we may also assume that $v_j$ converges weakly in $H^1$ and a.e. in $\Omega$ to some $v \in H_0^1(\Omega,\RR^M)$.

Fix some small $\delta > 0$. By Egorov's theorem, we can select a measurable set $A \subset  \Omega$ such that $v_j$ converges uniformly to $v$ in $A$ and $|\Omega\setminus A| \leq \delta/2$. Also, since $v \in L^2(\Omega)$, then for large $K=K(\delta)\geq 1$, we can select a measurable set $B \subset A$ such that $|v| \leq K$ in $B$ and $|A \setminus B| \leq \delta/2$. In particular, we have $|v_j| \leq 2K$ in $B$ for all large $j$. Hence, by \eqref{rrr}, 
\[
\lim_{j \rightarrow \infty} \int_B \frac{1}{t_j^2} |h(x, t_j v_j(x))| \,dx  = 0.
\]

Let $q = \frac{2N}{N-2}$ if $N \geq 3$ and $q$ be arbitrary in $(p,\infty)$ if $N = 2$. Using the bound $h(x, y) \geq -C 
|y|^2 (|y|^{p-2} + 1)$, H\"older's inequality, the Sobolev embedding theorem for $\|v_j\|_{H^1} = 1$ and the fact that $|\Omega \setminus B| \leq \delta$, we have for some constant $C' > 0$ (independent of $\delta$) that
\[
\int_{\Omega \setminus B}  \frac{1}{t_j^2} h(x, t_j v_j(x)) \,dx
	\geq - C \int_{\Omega \setminus B}  (t_j^{p-2} |v_j|^p + |v_j|^2) \,dx\\
	\geq - C' \Big(t_j^{p-2} \delta^{1 - \frac{p}{q}} +  \delta^{1 - \frac{2}{q}} \Big).
\]
Putting together the last two estimates, we get
\[
\liminf_{j \rightarrow \infty} \int_\Omega \frac{1}{t_j^2} h(x, t_j v_j(x)) \,dx  \geq  - C' \limsup_{j \rightarrow \infty}\Big(t_j^{p-2} \delta^{1 - \frac{p}{q}} +   \delta^{1 - \frac{2}{q}}\Big).
\]
Clearly, when $\delta$ is sufficiently small, this gives a contradiction to \eqref{Eq:RA-1}.
\end{proof}

\def\cprime{$'$}

\end{document}